\numberwithin{equation}{section}
\newtheorem{defn}{Definition}[section]
\newtheorem{theorem}{Theorem}[section]
\newtheorem{prop}{Proposition}[section]
\newtheorem{lemma}{Lemma}[section]
\newtheorem{coro}{Corollary}[section]
\newtheorem{remark}{Remark}[section]
\newtheorem{assumption}{Assumption}[section]
\DeclareMathOperator{\dive}{div}
\DeclareMathOperator{\loc}{loc}
\DeclareMathOperator{\supp}{supp}
\DeclareMathOperator{\erf}{Erf}
\DeclareMathOperator{\co}{co}
\begin{document}
    \title{Local and global existence for the stochastic Prandtl equation driven by multiplicative noises in two and three dimensions
}
    \author{Ya-Guang Wang\footnote{email address: ygwang@sjtu.edu.cn}}
    \affil{School of Mathematical Sciences, Center for Applied Mathematics, MOE-LSC and SHL-MAC, Shanghai Jiao Tong University, 200240 Shanghai, China}
    \author{Meng Zhao\footnote{corresponding author, email address:  mathematics\_zm@sjtu.edu.cn}}
    \affil{School of Mathematical Sciences, Shanghai Jiao Tong University, 200240 Shanghai, China; Université Paris Cité and Sorbonne Université, IMJ-PRG, F-75013 Paris, France}

    \date{}
	\maketitle
	\fontsize{12}{15}
	\selectfont
    \begin{abstract}
        In this paper, we are concerned with the local and global existence for the stochastic Prandtl equation in two and three dimensions, which governs the velocity field inside the boundary layer that appears in the inviscid limit of the stochastic Navier-Stokes equation with non-slip boundary condition.  New problem arises when establishing the well-posedness in the stochastic regime: one can never derive a pathwise control of the energy functional which is used to describe the analytic radius of the solution in the deterministic setting. To this end, we establish higher-order estimates in the conormal Sobolev spaces in order to get rid of the dependence of the analytic radius on the unknown. Three approximate schemes are constructed  for solving the stochastic Prandtl equation,  and a local well-posedness is obtained in a tangentially analytic and normally Sobolev-type space. Furthermore, we study the stochastic Prandtl equation driven by a tangential random diffusion, and find that the noise regularizes the equation in the sense that in both two and three dimensions, there exists a global Gevrey-2 solution with high probability and the radius growing linearly in time.
    \end{abstract}
	\tableofcontents
    \newpage
    \section{Introduction}\label{Intro}
    In the present paper, we address the local and global existence of the following initial-boundary value problem for the stochastic Prandtl equation in two and three space variables,
    \begin{align}\label{Intro1}
    \begin{cases}
    \mathrm{d} u+(u\cdot\nabla_{x}  u+v\partial_y u-\partial_y^2 u+\nabla_x P)\mathrm{d}t=\mathbb{F}(x,y,u)\mathrm{d}W(t),\qquad t>0, ~(x,y)\in\mathbb{R}^{d}_+,\\
    \nabla_x \cdot u+\partial_y v=0,\\
    u|_{t=0}=u_0,\qquad u|_{y=0}=v|_{y=0}=0,\qquad \lim_{y\to\infty}u=U,
    \end{cases}
    \end{align}
    where $\mathbb{R}^{d}_+:=\mathbb{R}^{d-1}\times\mathbb{R}_+$ with $d=2$ or $3$, and $u:= u$ is scalar, $\nabla_x :=\partial_x$ as $d=2$; while $u:=(u^1,u^2), \nabla_x:=(\partial_{x_1},\partial_{x_2})$ as $d=3$. Here, the unknowns $(u,v)$ represent respectively the tangential and normal velocities of the boundary layer, while $(U,\nabla_x P)$ denote respectively the traces at $\{y=0\}$ of the tangential velocity and the pressure of the corresponding stochastic Euler flow, which obeys the following stochastic Bernoulli's law:
    \begin{align}\label{Intro2}\mathrm{d} U+(U\cdot\nabla_x U+\nabla_x P)\mathrm{d}t=\overline{\mathbb{F}}(x,U)\mathrm{d}W,\qquad (t,x)\in\mathbb{R}_+\times \mathbb{R}^{d-1}.\end{align}
    The driving process $W$ is a cylindrical Wiener process defined on some filtered probability space $(\Omega,\mathscr{F},\{\mathscr{F}_t\}_{t\ge0},\mathbb{P})$ satisfying the usual condition, cf. Definition 2.25 in \cite{KS91}, and the diffusion coefficient $\mathbb{F}$ is a nonlinear map satisfying certain conditions which shall be specified later with $\overline{\mathbb{F}}(x,\cdot)$ denoting the limit of $\mathbb{\mathbb{F}}(x,y,\cdot)$ as $y\to\infty$.

    The deterministic Prandtl equation, which corresponds to the case when $\mathbb{F}\equiv 0$ in \eqref{Intro1}, is introduced by Ludwig Prandtl \cite{P1904} to describe the behavior of the boundary layer in the inviscid limit of the Navier-Stokes (NS, for short) equation with non-slip boundary condition. The well-posedness of the Prandtl equation is one of the key steps to rigorously justify the Prandtl theory. However, the lack of tangential diffusion in \eqref{Intro1} and the nonlinear convection term $v\partial_y u$, which behaves like $\partial_y^{-1}(\nabla_x\cdot u)\partial_y u$, lead to the loss of one tangential derivative in the process of energy estimate, which is the main obstruction for establishing the well-posedness in general Sobolev spaces even in the deterministic case. Mainly, the well-posedness of the deterministic Prandtl equation holds for the following three kinds of initial data.

     \begin{itemize}
        \item Under a monotonic assumption of the tangential velocity in the normal variable, Oleinik and her collaborators
        \cite{Oleinik63, OS1999} obtained the local existence and uniqueness of classical solutions of the Prandtl equation in two space variables. With an additional favourable condition on the pressure $P$ of the outflow,  the global existence of weak solutions of this equation is obtained in \cite{XZ2004, XinZhangZhao}. The main technique used in these works is the so-called Crocco transformation, which transforms the two-dimensional Prandtl equation into a scalar degenerate parabolic equation. More recently, the local well-posedness theory under the monotonic assumption has been re-addressed independently in \cite{AWXY2015} and \cite{MW2015} via direct energy method. A well-posedness result of classical solutions to the three-dimensional Prandtl equation is obtained in \cite{LWY2017} under a special structural assumption. 
        
        \item When the data is analytic in both tangential and normal variables, Sammartino and Caflisch \cite{SC1998} proved local well-posedness of the Prandtl equation by applying the abstract Cauchy-Kowalewskaya theorem. The analyticity in normal direction was later removed by Lombardo, Cannone and Sammartino in \cite{LCS2003}. Recently, direct energy-based method was introduced to establish both local and global well-posedness in analytic solution spaces for the deterministic Prandtl equation; see \cite{KuVicol13,ZZ2016, IV2016, PZ2021}.
       
        \item For the data being Gevrey regularity in tangential variable, 
        the local well-posedness of the two dimensional Prandtl equations in Gevrey$-\frac{7}{4}$ was obtained in \cite{GeMasmoudi15}, and in Gevrey-2 by Li and Yang in \cite{LiYang20} with the tangential velocity having a single non-degenerate critical point assumption, where the exponent 2 is optimal in view of the instability mechanism observed in \cite{GD2009}. This single non-degenerate critical point assumption was removed by Dietert and G\'erard-Varet in \cite{DG2018}. A global existence of Gevrey-2 small solutions was given in Wang, Wang and Zhang \cite{WWZ2021} for the two dimensional Prandtl equations. Recently, for the three-dimensional Prandtl equation, the local existence of Gevrey-2 solutions was established in \cite{LiMYang22}, and Pan and Xu obtained an almost global existence of Gevrey-2 small solutions in \cite{PanXu}.
        
    \end{itemize}

    However, the stochastic counterpart of the Prandtl equation and the boundary layer problem remains completely unexplored. To the best of our knowledge, this is the first result conducting analysis on the Prandtl equation in the stochastic regime. Considering the wide usage of randomness in describing numerical, empirical, and physical uncertainties in fluid mechanics, it is important to strengthen the mathematical foundations of the stochastic boundary layer problem, which is one of the main concerns addressed in this paper.

    The first part of the paper is devoted to the local well-posedness theory of the stochastic Prandtl equation \eqref{Intro1}. It is worthy noting that the normal velocity $v$ does not factor into the noise so that the addition of the noise does not deteriorate the structure of the equation, as $v$ is the term that causes the loss of derivatives. Moreover, the equation \eqref{Intro1} is physically derived by formally carrying out the multi-scale analysis on the stochastic NS equation, which is consistent with Prandtl's original idea; see Appendix \ref{derivation} for the details. Our result on the local well-posedness theory of \eqref{Intro1} is stated in Section \ref{mainresults}. From probabilistic point of view, our solution is the pathwise solution, where the driving noise and the associated filtration are given as a-priori, while in terms of PDEs, we consider solutions which evolve continuously in time with values in a tangentially analytic and normally Sobolev-type space. 

     Compared with its deterministic counterpart, in order to obtain the local well-posedness, we have to address the following new problems arising from the stochastic nature.

     \begin{itemize}
         \item The first one is that one could never derive a pathwise control on the energy functional which is used to capture the analytic radius of the solution in the deterministic literature, e.g. \cite{ZZ2016} and \cite{WWZ2021}. More precisely, the analytic radius $\sigma(t)$ is set to be a functional of the energy $\mathcal{E}_u(t)$ in the deterministic case, and by energy estimate, one might derive not only uniform boundedness for the approximate solutions $u^{\epsilon}$, but also a uniform lower bound for their analytic radius $\sigma^{\epsilon}(t)$:
        \[\sigma_0(t)\le \sigma^{\epsilon}(t),\]
        which in turn gives a uniform lower bound estimate for their lifespans: 
    \[\tau^{\epsilon}:=\inf\{t\ge 0| \sigma^{\epsilon}(t)=0\}\ge\tau_0:=\inf\{t\ge 0| \sigma_0(t)=0\},\]
    so that one may pass the limit $\epsilon\to 0^+$ on the uniform interval $[0,\tau_0)$. However, in the stochastic situation, we can only obtain bounds for the moments of the energy $\mathcal{E}_u(t)$ and the analytic radius $\sigma(t)$, which is not sufficient to derive neither a pathwise lower bound $\sigma_0(t)$ nor a uniform lifespan $\tau_0$ for the approximate solutions $u^{\epsilon}$. In order to overcome the difficulty, we choose to work with higher-order Sobolev space, which gives better estimates on the convection terms and finally ensures a-priori estimates with the analytic radius $\sigma(t)$ depending only on the initial data.

    \item However, when establishing higher-order estimates, we are faced with the mild singularity at the boundary for the normal derivatives of the unknown $u$, as observed by Krylov \cite{Kry03} even for the linear parabolic SPDEs. Roughly speaking, the phenomenon is due to the fact that the noise may enter into the boundary conditions for the normal derivatives of the unknown $u$ and then causes mild blow-up of order $O(y^{-\alpha})$ for $\partial_y^ju$ near the boundary $y=0$. To remedy the boundary singularity, one could introduce ``compatibility conditions" on the noise, as adopted by Flandoli \cite{Flan1990} and Du \cite{D2020}, to prevent the noise from factoring into the boundary conditions, but this would lead to a series of unnecessary and complicated conditions, as we are working with nonlinear multiplicative noises. Therefore, we choose another strategy inspired by Krylov \cite{Kry1994}, that is, we introduce the conormal Sobolev spaces to incorporate the boundary singularity by the conormal derivatives $Z^j:=y^j\partial_y^j$.

    \item Apart from the above problems, another new phenomenon occurs on the state of $u$ as $y\to\infty$, namely, the stochastic Bernoulli's law: though the equation \eqref{Intro2} is underdetermined, the existence of sufficiently regular solutions $(U,\nabla_x P)$ is still not clear. In the deterministic situation, one could take $U$ as an arbitrary smooth function and set $\nabla_x P:=-\partial_t U-U\cdot\nabla_x U$. However, it fails to work in the stochastic case, since the stochastic term $\overline{\mathbb{F}}(x,U)\mathrm{d}W$, which is only H\"older continuous in time, prevents us from taking time derivative on both sides of \eqref{Intro2} to obtain $\nabla_x P$. Hence, in order to establish existence for the outflow $(U,\nabla_x P)$, we treat the pressure term $\nabla_x P$ as given a-prior and then solve the remaining Burgers-type SPDE in the analytic class. The details are given in Appendix \ref{bernoulli}.
     \end{itemize}
     In the second part of the paper, we address probabilistic global existence of the stochastic Prandtl equation \eqref{Intro1} perturbed by noises of special structures. Let us mention that obtaining global existence for generic nonlinear multiplicative noises seems to be out of reach, because the outflow $(U,\nabla_x P)$ might evolve to be large if perturbed by general multiplicative noises, whereas the smallness of the outflow is required in order to obtain the global existence even in the deterministic case; see \cite{PZ2021} and \cite{WWZ2021}. Moreover, the probabilistic global existence results for small initial data with general noises, such as \cite{KXZ2023} for the stochastic NS equation and \cite{LLW2024} for the stochastic Boussinesq equation, do not fit into our situation, since apart from preventing the solution from blow-up, one also needs to control the solution from ``blow-down", that is, the analytic radius of the solution should keep positive, as $t\to\infty$, in order to get the global existence for the stochastic Prandtl equation. Inspired by the recent work \cite{BNSW2020} and \cite{RS2023}, which were in turn inspired by \cite{GV2014}, we propose adding a tangential random diffusion to the Prandtl equation by considering the case when
     \[\mathbb{F}(u):=(\alpha_1+\alpha_2|\nabla_{x}|^{\frac{1}{2}
    })u,\qquad\nabla_{x}P:=-\beta U,\]
    with $\beta>0$, $\alpha_1,\alpha_2\in\mathbb{R}$,
    \[U:=\mathcal{U}\vec{e},\]
    where $\mathcal{U}$ is independent of the tangential variables and $\vec{e}$ is a constant vector in $\mathbb{R}^{d-1}$. Then, we obtain the following stochastic PDE:
    \begin{align}\label{Intro3}
    \begin{cases}
    \mathrm{d} u+(u\cdot\nabla_{x}  u+v\partial_y u-\partial_y^2 u-\beta \mathcal{U}\vec{e})\mathrm{d}t=(\alpha_1+\alpha_2|\nabla_{x}|^{\frac{1}{2}
    })u\mathrm{d}B_t,\\
    \nabla_{x}\cdot  u+\partial_y v=0,\\
    u|_{t=0}=u_0,\qquad u|_{y=0}=v|_{y=0}=0,\qquad \lim_{y\to\infty}u=\mathcal{U}\vec{e},\\
    \mathrm{d}\mathcal{U} = \beta \mathcal{U}\mathrm{d}t+\alpha_1 \mathcal{U} \mathrm{d}B_t,\qquad\mathcal{U}|_{t=0}=\mathcal{U}_0.
    \end{cases}
    \end{align}
    where $\{B_t\}_{t\ge 0}$ is a standard Brownian motion. By applying the random multiplier
    \begin{align}\label{Intro3_1}
        \Gamma_t:=\mathcal{U}^{-1}(t)e^{(\sigma_0+\lambda t-\alpha_2B_t)|\nabla_{x}|^{\frac{1}{2}}}
    \end{align}
    on both sides of \eqref{Intro3} and utilizing the It\^o formula, we obtain a deterministic PDE with random parameter for $u_{\Gamma}:=\Gamma_tu$,
    \begin{align}\label{Intro4}
    \begin{cases}
    \partial_t u_{\Gamma}+\beta (u_{\Gamma}-\vec{e})+\left((\alpha_1\alpha_2-\lambda)|\nabla_{x}|^{\frac{1}{2}}+\frac{\alpha_2^2}{2}|\nabla_{x}|\right)u_{\Gamma}+\left(u\cdot\nabla_{x} u\right)_{\Gamma}-\left(\partial_y^{-1}(\nabla_{x}\cdot u)\partial_y u\right)_{\Gamma}=\partial_y^2u_{\Gamma}\\
    u_{\Gamma}(0)=\mathcal{U}_0^{-1}e^{\sigma_0|\nabla_{x}|^{\frac{1}{2}}}u_0,\qquad u_{\Gamma}|_{y=0}=0,\qquad \lim_{y\to\infty}u_{\Gamma}=\vec{e}.
    \end{cases}
    \end{align}
    Two terms are worthy noting: $\beta(u_{\Gamma}-\vec{e})$ and $\frac{\alpha_2^2}{2}|\nabla_{x}|u_{\Gamma}$. The former comes from the outflow $\mathcal{U}$ and brings damping effect to the equation to prevent the solution from blow-up, while the latter is brought by the noise $\alpha_2 B_t$ to control the radius from blow-down to 0. Let us mention that the dissipation term $\frac{\alpha_2^2}{2}|\nabla_{x}|u_{\Gamma}$ is enhanced via the quadratic variation of the noise, which enables us to work with the radius $\sigma(t):= \sigma_0+\lambda t-\alpha_2 B_t$ with a positive constant $\lambda$ and non-zero $\alpha_2$. As a contrast, one could only obtain dissipation like $|\nabla_{x}|^{\frac{1}{2}} u_{\Gamma}$, if $\Gamma_t=\exp\{\sigma(t)|\nabla_{x}|^{\frac{1}{2}}\}$ with the radius $\sigma(t)$ being a deterministic decreasing function of $t$. Combining the additional damping effect and the enhanced dissipation, we establish a global-in-time solution with large probability which is of Gevrey-2 regularity in tangential variables with the radius growing linearly in time. Compared with the global existence result in the deterministic case, where the radius of the global solution decays to a positive constant in the two dimensional case, cf. \cite{WWZ2021}; while in the three dimensional case, the global existence in Gevrey-2 class is still an open problem, the mechanism presented here is new and therefore the result could be recognized as a type of the regularization-by-noise phenomena.

    The paper is organized as follows. In Section \ref{pre}, we review some mathematical background, both deterministic and stochastic, needed throughout the paper. Subsection \ref{noises} is devoted to the conditions posed on the nonlinear multiplicative noise $\mathbb{F}$ and related nonlinear estimates. We present precise statements of the main results in Section \ref{mainresults}. Then, we carry out some a-priori estimates in Section \ref{AP}. In Section \ref{LWP}, we combine these estimates with approximate schemes to establish local well-posedness for the stochastic Prandtl equation \eqref{Intro1}. The final section is devoted to the construction of global solutions and the phenomenon of regularization-by-noise.

    \section{Preliminaries}\label{pre} 
    Let us present some mathematical ingredients from both the deterministic and the stochastic part.
    
    \subsection{Functional framework and product estimates}
     
    In this subsection, we introduce several functional spaces and estimates which will be used frequently later.
    
    \begin{defn}For $p,q\in [1,\infty]$, define the usual Lebesgue-Bochner spaces as
    \begin{align*}
        L^p_x(L^q_y):=L^p(\mathbb{R}^{d-1}; L^q(\mathbb{R}_+)),\quad
L^q_y(L^p_x):=L^q(\mathbb{R}_+;L^p(\mathbb{R}^{d-1})), 
    \end{align*}
and for $p\in [1,\infty]$ and $s\in\mathbb{N}$, define the usual Lebesgue-Sobolev spaces as 
    \begin{align*}
        L^p_x(H^s_y):=L^p(\mathbb{R}^{d-1}; H^s(\mathbb{R}_+)),\quad
L^p_y(H^s_x):=L^p(\mathbb{R}_+; H^s(\mathbb{R}^{d-1})).
    \end{align*}
    \end{defn}

     Denote the conormal derivative by
    \[Z^k:=y^k\partial_y^k,\qquad k\ge 1.\]
    
    \begin{defn}
    For $s\in\mathbb{N}$ and $\gamma>0$, define the weighted conormal Sobolev spaces as
    \[H^s_{\gamma,\co}:=\left\{u:\mathbb{R}^d_+\to \mathbb{R}\bigg|\|u\|^2_{H^s_{\gamma,\co}}:=\sum_{|k|+j\le s}\|e^{\gamma y^2} Z^j\partial_x^k u\|^2_{L^2}<\infty\right\}.\]

    For $s\ge1$ and $\gamma>0$, we introduce 
    \[X^s_{\gamma}:=\left\{u:\mathbb{R}^d_+\to \mathbb{R}\bigg| \|u\|_{X^s_{\gamma}}^2:=\|u\|^2_{H^s_{\gamma,\co}}+\|\partial_y u\|^2_{H^{s-1}_{\gamma,\co}}<\infty\right\}.\]
    \end{defn}

    \begin{defn} For $s_1,s_2\in\mathbb{N}$ and $\gamma>0$, define the anisotropic weighted conormal Sobolev spaces as
        \[H^{s_1,s_2}_{\gamma,\co}:=\left\{u:\mathbb{R}^d_+\to \mathbb{R}\bigg|\|u\|^2_{H^{s_1,s_2}_{\gamma,\co}}:=\sum_{|k|\le s_1, j\le s_2}\|e^{\gamma y^2} Z^j\partial_x^k u\|^2_{L^2}<\infty\right\}.\]
    \end{defn}
    
    Obviously, one has 
    \begin{align}
        \label{relation}\sum_{s_1+s_2=s}\|u\|_{H^{s_1,s_2}_{\gamma,\co}}^2\lesssim_s\|u\|_{H^s_{\gamma,\co}}^2\lesssim_s\sum_{s_1+s_2=s}\|u\|_{H^{s_1,s_2}_{\gamma,\co}}^2,
    \end{align} 
    where we used the notation $A\lesssim_s B$, if $A\le C_s B$ holds for some constant $C_s$ depending on $s$.
    
    \begin{defn}
    For $s\ge1$ and $\gamma,\sigma>0$, define the anisotropic analytic-Sobolev spaces as
    \[X^{s}_{\gamma,\sigma}:=\left\{u:\mathbb{R}^d_+\to \mathbb{R}| u_{\sigma}\in X^{s}_{\gamma}\right\}\]
    endowed with the norm $\|u\|_{X^{s}_{\gamma,\sigma}}:=\|u_{\sigma}\|_{X^{s}_{\gamma}}$, where for $\sigma>0$, 
    \[u_{\sigma}:=e^{\sigma|\nabla_x|}u=\mathcal{F}_x^{-1}(e^{\sigma|\xi|}\mathcal{F}_x u)\]
    with $\mathcal{F}_x$ denoting the Fourier transform in $x$-variables.
    \end{defn}
    
    The following product estimates in $X^{s}_{\gamma,\sigma}$ will be used throughout the rest of the paper. For reader's convenience, we shall give their proofs in Appendix \ref{proof_product_estimate}.
    \begin{lemma}\label{product_estimate1}
    Let $s\ge 4$ and $\sigma,\gamma>0$. The following estimates hold:
    \begin{enumerate}
        \item $\|uv\|_{X^s_{\gamma,\sigma}}\lesssim_{s} \|u \|_{X^s_{\gamma,\sigma}}\|v \|_{X^{s-1}_{\gamma,\sigma}}+\|u \|_{X^{s-1}_{\gamma,\sigma}}\|v \|_{X^{s}_{\gamma,\sigma}}$,
        \item $\|\partial_y^{-1}v\partial_yu\|_{X^s_{\gamma,\sigma}}\lesssim_{s,\gamma}\|\partial_yu\|_{X^{s-1}_{\gamma,\sigma}}\|v\|_{X^s_{\gamma,\sigma}}+\|\partial_yu\|_{X^{s}_{\gamma,\sigma}}\|v\|_{X^{s-1}_{\gamma,\sigma}}$,
        \item $\|\langle \nabla_x\rangle^{-\frac{1}{2}}(u\cdot \nabla_x v)\|_{X^s_{\gamma,\sigma}}\lesssim_{s} \|u \|_{X^s_{\gamma,\sigma}}\|\langle \nabla_x\rangle^{\frac{1}{2}} v \|_{X^s_{\gamma,\sigma}}$,
        \item $\|\langle \nabla_x\rangle^{-\frac{1}{2}}[(\partial_y^{-1}\nabla_x \cdot u) \partial_y v]\|_{X^s_{\gamma,\sigma}}\lesssim_{s,\gamma} \|u\|_{X^s_{\gamma,\sigma}}\| \partial_y v\|_{X^s_{\gamma,\sigma}}+\|v\|_{X^s_{\gamma,\sigma}}\|\langle\nabla_x\rangle^{\frac{1}{2}}u\|_{X^s_{\gamma,\sigma}}$,
    \end{enumerate}
    where $\langle \nabla_x \rangle:=(1+|\nabla_x|^2)^{\frac{1}{2}}$ and $\partial_y^{-1}u(x,y):=\int_0^yu(x,\tilde{y})\mathrm{d}\tilde{y}.$
    \end{lemma}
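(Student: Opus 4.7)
The plan is to prove all four estimates by a common two-step reduction: first remove the analytic weight $e^{\sigma|\nabla_x|}$ by a Fourier convolution inequality, and then prove the underlying product estimates in $X^s_\gamma$ (that is, at $\sigma=0$) using Leibniz, tame Moser inequalities in the conormal Sobolev spaces, and a Hardy-type treatment of $\partial_y^{-1}$.

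For the first step, apply Leibniz to $Z^j\partial_x^k(fg)$, tangentially Fourier transform, and use the subadditivity $e^{\sigma|\xi|} \le e^{\sigma|\xi-\eta|}e^{\sigma|\eta|}$ together with the binomial expansion $(i\xi)^k=(i(\xi-\eta)+i\eta)^k$. The result is a pointwise bound on $|e^{\sigma|\xi|}\mathcal{F}_x(Z^j\partial_x^k(fg))(\xi,y)|$ by the tangential Fourier transform of a sum of products $y^{j_1}F^{+}_{\sigma,k_1,j_1}\cdot y^{j_2}G^{+}_{\sigma,k_2,j_2}$, where each nonnegative proxy factor has $L^2_x$-norm at fixed $y$ equal (by Plancherel) to $\|Z^{j_i}\partial_x^{k_i}f_\sigma(\cdot,y)\|_{L^2_x}$. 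This reduces each of (1)--(4) to its analog at $\sigma=0$; the multipliers $\langle\nabla_x\rangle^{\pm 1/2}$ are diagonal in $\xi$ and so compatible with the pointwise Fourier bound.

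For the reduced estimates, (1) is the standard tame Moser inequality in $H^s_{\gamma,\co}$: split $Z^j\partial_x^k(uv)$ by Leibniz into high-low contributions and bound the low-derivative factor in $L^\infty_{x,y}$ via the Sobolev embedding available from $X^s_\gamma$ at $s\ge 4$, using the auxiliary $\partial_y$-norm built into the definition of $X^s_\gamma$ to control the normal direction. For (2), use $Z(\partial_y^{-1}v)=yv$ together with the Cauchy-Schwarz bound $\|\partial_y^{-1}v\|_{L^\infty_y}\lesssim_\gamma \|e^{\gamma y^2}v\|_{L^2_y}$ (whence the $\gamma$-dependence of the constant) to reduce to (1) after absorbing extra $y$-factors into a slightly weakened Gaussian weight. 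For (3) and (4), the extra Fourier input is the algebraic inequality $\langle\xi\rangle^{1/2}\lesssim \langle\xi-\eta\rangle^{1/2}+\langle\eta\rangle^{1/2}$ combined with $|\eta|\le \langle\xi\rangle+\langle\xi-\eta\rangle$; these let one split the convolution so that the net half-derivative lands on the correct factor, matching the asymmetric norms on the right-hand side, and each resulting piece is then closed by (1) and, for (4), by the Hardy argument of (2).

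The main obstacle is not any single delicate computation but the simultaneous bookkeeping across three independent structures---the tangential analyticity $e^{\sigma|\nabla_x|}$, the fractional multipliers $\langle\nabla_x\rangle^{\pm 1/2}$, and the conormal-plus-Gaussian setup in $y$. The cleanest organization is to decouple them in the order above, after which the lemma reduces to a sequence of classical weighted product estimates whose proofs are standard.
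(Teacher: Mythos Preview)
Your reduction to $\sigma=0$ via the pointwise Fourier inequality $e^{\sigma|\xi|}\le e^{\sigma|\xi-\eta|}e^{\sigma|\eta|}$ and the treatment of (1)--(2) by Leibniz, Sobolev embedding from $X^s_\gamma$ (using the extra $\partial_y$-derivative built into the norm), and the Hardy-type bound $\|\partial_y^{-1}f\|_{L^\infty_yL^2_x}\lesssim_\gamma\|e^{\gamma y^2}f\|_{L^2}$ are correct and coincide with the paper's argument.

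For (3) and (4) the paper proceeds differently: it first proves, via Bony's paraproduct decomposition, the auxiliary product estimate
\[
\|\langle\nabla_x\rangle^{a}(fg)\|_{L^2_x}\lesssim \|\langle\nabla_x\rangle^{a}f\|_{L^2_x}\|g\|_{H^{d-1}_x},\qquad |a|\le d-1,
\]
and then, after the Leibniz split, applies this with $a=-\tfrac12$ only in the regime where $\nabla_x v$ carries the top conormal order; when instead $u$ carries the top order it simply drops $\langle\nabla_x\rangle^{-1/2}\le 1$ and bounds the remaining low-order $\nabla_x v$ factor by $\|v\|_{X^4_\gamma}\le\|\langle\nabla_x\rangle^{1/2}v\|_{X^s_\gamma}$.

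Your proposed substitute---combine $|\eta|\le\langle\xi\rangle+\langle\xi-\eta\rangle$ with $\langle\xi\rangle^{1/2}\lesssim\langle\xi-\eta\rangle^{1/2}+\langle\eta\rangle^{1/2}$ and then invoke (1)---does not close (3) as written. The combination yields a piece of size $\langle\xi-\eta\rangle^{1/2}$ (or $\langle\xi\rangle^{-1/2}\langle\xi-\eta\rangle$) on the $u$-side; after applying (1) this produces $\|\langle\nabla_x\rangle^{1/2}u\|_{X^s_\gamma}\|\,\cdot\,\|_{X^{s-1}_\gamma}$, which is \emph{not} controlled by $\|u\|_{X^s_\gamma}\|\langle\nabla_x\rangle^{1/2}v\|_{X^s_\gamma}$. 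The symmetric algebraic splitting cannot by itself force the net half-derivative onto $v$; some frequency localisation (a paraproduct, or equivalently the case split ``which factor is high-order'' that the paper performs after Leibniz) is needed. Your sketch for (4) fares better because the target bound there is symmetric enough to absorb $\|\langle\nabla_x\rangle^{1/2}u\|_{X^s_\gamma}$, but (3) requires the asymmetric device above.
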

    As the limit state of $u$ as $y\to\infty$ in \eqref{Intro1} is inhomogeneous, we need the following estimates when controlling the terms which come from homogenizing the state at far field; see Appendix \ref{proof_product_estimate} for the proof. 
    \begin{lemma}\label{product_estimate2}
        Let $s\ge 4$ and $\sigma,\gamma>0$. Then, for $U\in H^s_{x,\sigma}$ and $v\in X^{s}_{\gamma,\sigma}$, there holds
        \begin{enumerate}
            \item $\|U v\|_{X^{s}_{\gamma,\sigma}}\lesssim_{s}\|U\|_{H^s_{x,\sigma}}\|v\|_{X^{s}_{\gamma,\sigma}}$,
            \item $\|\langle\nabla_x\rangle^{-\frac{1}{2}}(U\nabla_x v)\|_{X^s_{\gamma,\sigma}}\lesssim_s \|U \|_{H^s_{x,\sigma}}\|\langle \nabla_x\rangle^{\frac{1}{2}} v \|_{X^s_{\gamma,\sigma}},$
        \end{enumerate}
        where 
        \[H_{x,\sigma}^s:=\{U: \mathbb{R}^{d-1}\to \mathbb{R}| U_{\sigma}\in H_x^s\}\]
    with the norm $\|U\|_{H_{x,\sigma}^s}:=\|U_{\sigma}\|_{H_x^s}$, and $H^s_x$ denotes the usual Sobolev spaces in $x$-variables.
\end{lemma}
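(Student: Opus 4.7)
The plan is to follow the approach of Lemma \ref{product_estimate1}, exploiting throughout the crucial simplification that $U=U(x)$ is independent of the normal variable $y$, so every normal-type operator ($Z^j$ and $\partial_y$) commutes with multiplication by $U$; consequently only tangential Leibniz generates multiple terms.

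First I would reduce both inequalities to their $\sigma=0$ analogues via the standard Fourier-majorization trick. Since $Z^j$, $\partial_y$ and multiplication by $e^{\gamma y^2}$ all commute with the tangential Fourier multiplier $e^{\sigma|\nabla_x|}$, one has $Z^j(Uv)_\sigma=(UZ^jv)_\sigma$, so the conormal structure is preserved. The pointwise Fourier bound $e^{\sigma|\xi|}\le e^{\sigma|\xi-\eta|}e^{\sigma|\eta|}$ then yields $|\widehat{(UZ^jv)_\sigma}(\xi,y)|\le\int|\widehat{U_\sigma}(\xi-\eta)|\,|Z^j\widehat{v_\sigma}(\eta,y)|\,d\eta$, and similarly with $\nabla_xv$ in place of $v$. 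After Plancherel the problem reduces to a product estimate for $U_\sigma\in H^s_x$ and $v_\sigma\in X^s_\gamma$, i.e., the $\sigma=0$ case, since $\|U_\sigma\|_{H^s_x}=\|U\|_{H^s_{x,\sigma}}$ and $\|v_\sigma\|_{X^s_\gamma}=\|v\|_{X^s_{\gamma,\sigma}}$.

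For (1), I expand the $X^s_\gamma$-norm by definition: using $y$-independence of $U$, Leibniz produces $Z^j\partial_x^k(Uv)=\sum_{k_1+k_2=k}\binom{k}{k_1}(\partial_x^{k_1}U)(Z^j\partial_x^{k_2}v)$, with an analogous expression for the $\partial_y$-terms. For each summand I apply H\"older's inequality in $x$ combined with the Sobolev embedding $H^{s-1}(\mathbb R^{d-1})\hookrightarrow L^\infty(\mathbb R^{d-1})$ (valid since $s\ge 4$ and $d-1\in\{1,2\}$), placing whichever factor carries the fewer tangential derivatives in $L^\infty_x$, while the weight $e^{\gamma y^2}$ stays with the $v$-factor. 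Summing over the admissible multi-indices yields $\|Uv\|_{X^s_\gamma}\lesssim_s\|U\|_{H^s_x}\|v\|_{X^s_\gamma}$.

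For (2), I exploit the commutativity of $U\cdot$, $Z^j$, $\partial_y$, $\partial_x^k$ and $\langle\nabla_x\rangle^{-1/2}$ to push all derivatives through, reducing the left-hand side to terms of the form $\|e^{\gamma y^2}\langle\nabla_x\rangle^{-1/2}\partial_x^k(U\nabla_xw)\|_{L^2_{x,y}}$ with $w\in\{Z^jv,\,Z^j\partial_yv\}$. For each fixed $y$ the identity $\|\langle\nabla_x\rangle^{-1/2}\partial_x^k f\|_{L^2_x}\le\|f\|_{H^{|k|-1/2}_x}$ combined with the Kato--Ponce fractional Leibniz rule at level $|k|-1/2$ yields
\[
\|\langle\nabla_x\rangle^{-1/2}\partial_x^k(U\nabla_xw(\cdot,y))\|_{L^2_x}\lesssim_s\|U\|_{H^s_x}\big(\|\langle\nabla_x\rangle^{1/2}w(\cdot,y)\|_{H^{|k|}_x}+\|w(\cdot,y)\|_{W^{1,\infty}_x}\big),
\]
and the $W^{1,\infty}_x$ factor is absorbed into $\|\langle\nabla_x\rangle^{1/2}w(\cdot,y)\|_{H^{|k|}_x}$ via Sobolev embedding (using $s\ge 4$). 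Integrating in $y$ against $e^{2\gamma y^2}$ and summing over the admissible multi-indices delivers the claimed bound. The most delicate point is the top level $|k|=s$, where a naive Kato--Ponce would seem to require $U\in H^{s+1/2}_x$; this is precisely circumvented by first trading the half derivative onto the Sobolev index (reducing to level $s-1/2$, for which $H^{s-1/2}_x\supset H^s_x$ is acceptable) and then charging the missing half derivative to $v$ via the hypothesis $\langle\nabla_x\rangle^{1/2}v\in X^s_{\gamma,\sigma}$. The remaining computations are routine bookkeeping entirely parallel to the proof of Lemma \ref{product_estimate1}.
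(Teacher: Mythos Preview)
Your reduction to $\sigma=0$ via the Fourier-majorization trick and your proof of (1) are correct and follow the paper's scheme exactly (the paper only says the proof ``follows from a similar argument'' to Lemma~\ref{product_estimate1}, exploiting that $U$ is independent of $y$).

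For (2) there is a gap. Your claim that ``the $W^{1,\infty}_x$ factor is absorbed into $\|\langle\nabla_x\rangle^{1/2}w(\cdot,y)\|_{H^{|k|}_x}$ via Sobolev embedding'' requires $|k|+\tfrac12>1+\tfrac{d-1}{2}$, hence $|k|\ge 2$ in both dimensions $d\in\{2,3\}$. It fails for $|k|\in\{0,1\}$, and these cases are unavoidable: when $j$ is close to $s$ (say $j=s$, $|k|=0$), you have $w=Z^s v$, and the term $\int e^{2\gamma y^2}\|Z^s v(\cdot,y)\|_{W^{1,\infty}_x}^2\,\mathrm{d}y$ would need roughly $s+1+\tfrac{d-1}{2}$ derivatives, whereas $\|\langle\nabla_x\rangle^{1/2}v\|_{X^s_\gamma}$ supplies only $s+\tfrac12$. (A related issue: for $|k|=0$ the Kato--Ponce index is $-\tfrac12<0$, outside the standard range.) So the uniform Kato--Ponce treatment you describe does not close.

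The fix is exactly the tool the paper uses in the proof of Lemma~\ref{product_estimate1}(3): the negative-index product estimate of Lemma~\ref{lemmaA1}, namely $\|\langle\nabla_x\rangle^{-1/2}(fg)\|_{L^2_x}\lesssim\|\langle\nabla_x\rangle^{-1/2}f\|_{L^2_x}\|g\|_{H^{d-1}_x}$. For small $|k|$ apply this with $f=\partial_x^{k_2}\nabla_x w$ and $g=\partial_x^{k_1}U$ after a Leibniz expansion; it shifts the negative half-derivative onto the $v$-factor without ever producing a $W^{1,\infty}_x$ norm on $w$. Equivalently, do the case-by-case Leibniz analysis of the paper (splitting on how many tangential derivatives fall on $U$), which is actually simpler here because $U$ carries no conormal derivatives.
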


    \subsection{Stochastic framework}
    The driving noise $W$ is a cylindrical Wiener process in an auxiliary separable Hilbert space $\mathcal{H}$. Formally, one has the following expansion 
    \[W(t)=\sum_{i=1}^{\infty}e_iB_i(t),\]
    where $\{B_i\}_{i=1}^{\infty}$ is a sequence of mutually independent real-valued standard Brownian motions and $\{e_i\}_{i=1}^{\infty}$ is an orthonormal basis of $\mathcal{H}$. 

    Let us recall some details of the construction of stochastic integrals in function spaces. We mainly adopt the terminologies in \cite{K1999} and \cite{MR2001} which are equivalent with the classical method in \cite{DZ1992} but more flexible for the analysis presented here. For $s\in\mathbb{N}$ and $\gamma>0$, we introduce the $\mathcal{H}$-valued weighted conormal Sobolev spaces
    \begin{align}\label{pre1}
    \mathbb{H}^{s}_{\gamma,\co}:=\left\{\mathbb{F}: \mathbb{R}_+^d\to \mathcal{H}\bigg|\|\mathbb{F}\|_{\mathbb{H}^{s}_{\gamma,\co}}^2:=\sum_{|k|+j\le s}\|e^{\gamma y^2} Z^j\partial_x^k \mathbb{F}\|^2_{L^2(\mathcal{H})}<\infty\right\}.\end{align}
    Define for $s\ge 1$ and $\sigma,\gamma>0$,
    \begin{align}\label{pre2}
    \mathbb{X}^{s}_{\gamma}:=\left\{\mathbb{F}: \mathbb{R}_+^d\to \mathcal{H}\bigg|\|\mathbb{F}\|_{\mathbb{X}^{s}_{\gamma}}^2:=\|\mathbb{F}\|_{\mathbb{H}^{s}_{\gamma,\co}}^2+\|\partial_y \mathbb{F}\|_{\mathbb{H}^{s-1}_{\gamma,\co}}^2<\infty\right\}\end{align}
    and 
    \[\mathbb{X}^s_{\gamma,\sigma}:=\{\mathbb{F}: \mathbb{R}_+^d\to \mathcal{H} |\mathbb{F}_{\sigma}\in \mathbb{X}^s_{\gamma}\}\]
    with the norm $\|\mathbb{F}\|_{\mathbb{X}^s_{\gamma,\sigma}}:=\|\mathbb{F}_{\sigma}\|_{\mathbb{X}^s_{\gamma}}$. Notice that for a function $\mathbb{F}\in \mathbb{X}^{s}_{\gamma,\sigma}$, one may define a Hilbert-Schmidt operator as
    \begin{align*}
        \mathcal{L}_{\mathbb{F}}:\mathcal{H}&\to X^{s}_{\gamma,\sigma}\\
        h&\mapsto \langle \mathbb{F}, h\rangle_{\mathcal{H}}.
    \end{align*}
    Indeed, by applying Parseval's identity,
    \begin{align}
        \|\mathcal{L}_{\mathbb{F}}\|_{HS}^2&:=\sum_{i=1}^{\infty}\|\langle \mathbb{F},e_i\rangle_{\mathcal{H}}\|^2_{X^s_{\gamma,\sigma}}=\sum_{l=0}^{1}\sum_{|k|+j+l\le s}\sum_{i=1}^{\infty}\|\langle e^{\gamma y^2} Z^j\partial_x^k \partial_y^l\mathbb{F}_{\sigma},e_i\rangle_{\mathcal{H}}\|^2_{L^2}=\|\mathbb{F}\|^2_{\mathbb{X}^s_{\gamma,\sigma}}.\label{pre3}
    \end{align}
    Therefore, for any predictable process $\mathbb{F}(t)\in L^2(\Omega;L^2_{\loc}(\mathbb{R}_+;\mathbb{X}^{s}_{\gamma,\sigma}))$, by applying the classical theory of the stochastic integration for Hilbert-Schimidt operator-valued integrands, cf. \cite{DZ1992}, one could define the It\^o stochastic integral
    \begin{align}\label{pre4}I(t):=\int_0^t\mathbb{F}(s)\mathrm{d}W=\sum_{i=1}^{\infty}\int_0^t \mathbb{F}_i\mathrm{d}B_i,\end{align}
    with $\mathbb{F}_i:=\langle \mathbb{F},e_i\rangle_{\mathcal{H}}$, which is an $X^{s}_{\gamma,\sigma}$-valued square-integrable martingale satisfying the It\^o isometry 
    \begin{align}\label{pre5}\mathbb{E}\left\|\int_0^t \mathbb{F}(s)\mathrm{d}W\right\|_{X^{s}_{\gamma,\sigma}}^2=\mathbb{E}\int_0^t\|\mathbb{F}(s)\|^2_{\mathbb{X}^{s}_{\gamma,\sigma}}\mathrm{d}s.\end{align}
    If we merely assume that the predictable process $\mathbb{F}$ belongs to $L^2_{\loc}(\mathbb{R}_+;\mathbb{X}^{s}_{\gamma,\sigma})$ almost surely, then the stochastic integral could be defined as in \eqref{pre4} with a suitable localization procedure. The stochastic integral possesses many desirable properties. Most frequently applied here is the Burkholder-Davis-Gundy inequality which takes the form:
    \begin{align}\label{pre6}
    \mathbb{E}\sup_{t\in[0,\tau]}\left|\int_0^t\left\langle u,\mathbb{F}(s)\mathrm{d}W\right\rangle_{X^{s}_{\gamma,\sigma}}\right|\le C\mathbb{E}\left(\int_{0}^{\tau}\|u\|^2_{X^s_{\gamma,\sigma}}\|\mathbb{F}(s)\|^2_{\mathbb{X}^{s}_{\gamma,\sigma}}\mathrm{d}s\right)^{\frac{1}{2}}
    \end{align}
    for any stopping time $\tau\ge 0$.

    In order to conduct estimates for the stochastic Bernoulli's law \eqref{Intro2}, let us mention that the above results \eqref{pre4}--\eqref{pre6} still hold for predictable processes $\overline{\mathbb{F}}\in L^2(\Omega; L^2_{\loc}(\mathbb{R}_+; \mathbb{H}_{x,\sigma}^{s}) )$, where
    \begin{align*}
    \mathbb{H}_{x,\sigma}^{s}:=\left\{\overline{\mathbb{F}}: \mathbb{R}^{d-1}\to \mathcal{H}\bigg|\|\overline{\mathbb{F}}\|_{\mathbb{H}_{x,\sigma}
    ^{s}}^2:=\sum_{|k|\le s}\| \partial_x^k\overline{\mathbb{F}}_{\sigma}\|^2_{L^2(\mathcal{H})}<\infty\right\}.\end{align*}
    Similarly, the resulting stochastic integral is a square-integrable martingale taking values in $H^s_{x,\sigma}$.
    
    \subsection{Structure of the multiplicative noise and related nonlinear estimates}\label{noises}

    In the section, we present the precise conditions of the force term $\mathbb{F}$ and related nonlinear estimates in the analytic class. 

    First, it is worthy noting that even in the deterministic setting, in order to make a semi-linear PDE well-posed in the analytic class, one has to impose analytic regularity on the force $\mathbb{F}$, see \cite{BRS2003} and references therein. Therefore, the first scenario under consideration is the following analytic nonlinearity:
    \begin{align}
        \label{struc0_1}\mathbb{F}(x,y,u):=\sum_{n\in\mathbb{N}^{d-1}\backslash\{0\}}a_n(x,y)u^n,
    \end{align}
    where $u^n:=\Pi_{i=1}^{d-1}u_i^{n_i}$ and the coefficient $a_n(x,y): \mathbb{R}^d_+\to \mathcal{H}^{d-1}$ satisfies the decomposition 
    \begin{align}\label{struc0_2}
        a_n(x,y)=a^0_n(x,y)+\bar{a}_n(x)
    \end{align}
    such that 
    \begin{align}\label{struc0_3}
        \sum_{n\in\mathbb{N}^{d-1}}(\|a^0_n\|_{\mathbb{X}^s_{\gamma_0,2\sigma_0}}+\|\bar{a}_n\|_{\mathbb{H}^s_{x,2\sigma_0}})\Pi_{i=1}^{d-1}|
        u_i|^{n_i}<\infty,\qquad \forall u\in\mathbb{R}^{d-1},
    \end{align}
    which ensures the convergence of the right hand-side of \eqref{struc0_1} for any $(x,y,u)\in \mathbb{R}^{d-1}\times\mathbb{R}_+\times\mathbb{R}^{d-1}$, because of the embedding $\mathbb{X}^s_{\gamma_0,2\sigma_0}\hookrightarrow L^{\infty}(\mathcal{H})$ and $\mathbb{H}^s_{x,2\sigma_0}\hookrightarrow L^{\infty}_x(\mathcal{H})$. On the other hand, the equation \eqref{Intro1} might be intuitively viewed as a hyperbolic equation along the tangential direction, due to the lack of tangential dissipation, while for the stochastic hypobolic PDEs, the nonlinear multiplicative noise usually does not depend on the derivatives of the unknown, cf. \cite{GV2014,CDK2012,JN2008}; otherwise there would be a loss of derivatives in the processing of energy estimate. However, the use of tangentially analytic class $X^s_{\gamma,\sigma}$ allows one to gain half-order tangential derivative in the energy estimate. Thus, the next scenario that can be addressed in the paper is the following generalized linear multiplicative noise:
    \begin{align}\label{struc0_4}
        \mathbb{F}(x,y,u):=\mathbb{G}(x,y)|\nabla_x|^{a}u,
    \end{align}
    where $a\in [0,\frac{1}{2}]$ and $\mathbb{G}$ satisfies the decomposition
    \begin{align}\label{struc0_5}
        \mathbb{G}(x,y)=\mathbb{G}_0(x,y)+\bar{\mathbb{G}}(x)
    \end{align}
    with $\mathbb{G}_0\in \mathbb{X}^s_{\gamma_0,2\sigma_0}$ and $\bar{\mathbb{G}}\in \mathbb{H}^s_{x,2\sigma_0}$. To summarize, the precise assumption on the noise $\mathbb{F}(x,y,u)\mathrm{d}W$ is stated as follows:
    \begin{assumption}\label{assumption} The force $\mathbb{F}$ is given by
    \[\mathbb{F}(x,y,u):=\mathbb{F}_1(x,y,u)+\mathbb{G}(x,y)|\nabla_x|^{a}u,\]
    with $\mathbb{F}_1$ satisfying \eqref{struc0_1}--\eqref{struc0_3}, $\mathbb{G}$ satisfying \eqref{struc0_5} and $a\in [0,\frac{1}{2}]$.
    \end{assumption}
    The following estimates are used to control the stochastic terms in the a-priori estimates and their proofs shall be collected in Appendix \ref{proof_nonlinear_estimate}
    
    \begin{prop}\label{Proposition_struc1} 
    Let the nonlinear force $\mathbb{F}(x,y,u)$ be as given in Assumption \ref{assumption}. For any fixed $s\ge 4$, $\gamma\in (0,\gamma_0]$ and $\sigma\in(0,\sigma_0]$, the following estimates hold.
        \begin{enumerate}
            \item There exists a positive function $\mathcal{K}_s$ depending only on $s$ and non-decreasing in its arguments such that
        \begin{equation}\label{estimate_noise1}
            \|\mathbb{F}(u)-\mathbb{F}(\tilde{u})\|_{\mathbb{X}^s_{\gamma,\sigma}}\le \mathcal{K}_s(\|v\|_{ X^s_{\gamma,\sigma}},\|\tilde{v}\|_{ X^s_{\gamma,\sigma}},\|U\|_{H^s_{x,\sigma}})\|\langle\nabla_x\rangle^{a}(u-\tilde{u})\|_{X^s_{\gamma,\sigma}}
        \end{equation}
        for any functions $u:=v+U$ and $\tilde{u}:=\tilde{v}+U$ satisfying $\langle\nabla_x\rangle^{a}v,\langle\nabla_x\rangle^{a}\tilde{v}\in X^s_{\gamma,\sigma}$ and $U\in H^{s+a}_{x,\sigma}$.
        \item There exists a positive function $\kappa_s$ depending only on $s$ and non-decreasing in its arguments such that
        \begin{align}\label{estimate_noise2}
            \|\mathbb{F}(u)-\overline{\mathbb{F}}(U)\|_{\mathbb{X}^{s}_{\gamma,\sigma}}&\le \kappa_s(\|v\|_{X^s_{\gamma,\sigma}},\|U\|_{H^s_{x,\sigma}})\left(\|\langle\nabla_x\rangle^{a}v\|_{X^s_{\gamma,\sigma}}+\|U\|_{H^{s+a}_{x,\sigma}}\right)
        \end{align}
        for any function $u:= v+U$ satisfying $\langle\nabla_x\rangle^{a}v\in X^s_{\gamma,\sigma}$ and $U\in H^{s+a}_{x,\sigma}$, where $\overline{\mathbb{F}}(U):=\lim_{y\to\infty}\mathbb{F}(u)$ denotes the limit state of $\mathbb{F}(u)$ as $y\to \infty$.
        
        \item There exists a positive function $\kappa_s$ depending only on $s$ and non-decreasing in its arguments such that
        \begin{align}\label{estimate_noise3}
            \|\nabla_x(\mathbb{F}(u)-\overline{\mathbb{F}}(U))\|_{\mathbb{X}^{s}_{\gamma,\sigma}}&\le \kappa_s(\|v\|_{X^s_{\gamma,\sigma}},\|U\|_{H^s_{x,\sigma}})\left(\|\langle\nabla_x\rangle^{a+1}v\|_{X^s_{\gamma,\sigma}}+\|U\|_{H^{s+a+1}_{x,\sigma}}\right),
        \end{align}
        for any function $u:= v+U$ satisfying $\langle\nabla_x\rangle^{a+1}v\in X^s_{\gamma,\sigma}$ and $U\in H^{s+a+1}_{x,\sigma}$.
        \end{enumerate}
    \end{prop}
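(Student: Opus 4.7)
The plan is to handle the two pieces of the noise separately: the analytic nonlinearity $\mathbb{F}_1(u)=\sum_{n}a_n(x,y)u^n$ and the generalized linear part $\mathbb{G}(x,y)|\nabla_x|^a u$. In each case, I will exploit the splittings $a_n=a_n^0+\bar a_n$ and $\mathbb{G}=\mathbb{G}_0+\bar{\mathbb{G}}$ so that every resulting product is a sum of two pieces: one with a coefficient in $\mathbb{X}^s_{\gamma_0,2\sigma_0}$, to be controlled via Lemma \ref{product_estimate1}(1), and one with a coefficient in $\mathbb{H}^s_{x,2\sigma_0}$, to be controlled via Lemma \ref{product_estimate2}(1). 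The Sobolev embeddings $X^s_{\gamma,\sigma}\hookrightarrow L^\infty$ and $H^s_{x,\sigma}\hookrightarrow L^\infty_x$, valid for $s\ge 4$, turn all appearances of $u$ and $\tilde u$ inside the summand coefficients into pointwise bounds by $M:=C_s(\|v\|_{X^s_{\gamma,\sigma}}+\|\tilde v\|_{X^s_{\gamma,\sigma}}+\|U\|_{H^s_{x,\sigma}})$, at which argument assumption \eqref{struc0_3} guarantees uniform convergence of every series that arises.

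For (1), I start from the algebraic telescoping
\begin{align*}
u^n-\tilde u^n=\sum_{i=1}^{d-1}\Bigl(\prod_{j<i}u_j^{n_j}\Bigr)(u_i^{n_i}-\tilde u_i^{n_i})\Bigl(\prod_{j>i}\tilde u_j^{n_j}\Bigr),\qquad u_i^{n_i}-\tilde u_i^{n_i}=(u_i-\tilde u_i)\sum_{k=0}^{n_i-1}u_i^k\tilde u_i^{n_i-1-k},
\end{align*}
which isolates a factor $u_i-\tilde u_i$ in each summand, multiplied by a monomial of total degree $|n|-1$ in the components of $u$ and $\tilde u$. Iterated application of the product estimate in $X^s_{\gamma,\sigma}$ bounds the corresponding $\mathbb{X}^s_{\gamma,\sigma}$-norm by a factor polynomial in $|n|$, times a pointwise bound of the monomial by $M^{|n|-1}$, times $\|u-\tilde u\|_{X^s_{\gamma,\sigma}}$, with the coefficient contribution $\|a_n^0\|_{\mathbb{X}^s_{\gamma_0,2\sigma_0}}+\|\bar a_n\|_{\mathbb{H}^s_{x,2\sigma_0}}$ recovered via the decomposition of $a_n$ and the monotone embedding $\mathbb{X}^s_{\gamma_0,2\sigma_0}\hookrightarrow\mathbb{X}^s_{\gamma,\sigma}$ valid for $\gamma\le\gamma_0$, $\sigma\le\sigma_0$. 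Summing in $n$ and invoking \eqref{struc0_3} at a slightly inflated argument $M+1$ yields the desired function $\mathcal{K}_s$. The linear piece $\mathbb{G}|\nabla_x|^a(u-\tilde u)$ is then bounded directly by the same product estimate, which accounts for the factor $\langle\nabla_x\rangle^a$ on the right-hand side.

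For (2), observe first that $\overline{\mathbb{F}}(U)=\sum_n \bar a_n(x)U^n+\bar{\mathbb{G}}(x)|\nabla_x|^a U$, since the $a_n^0$ and $\mathbb{G}_0$ pieces decay to zero as $y\to\infty$ thanks to the Gaussian weight $e^{\gamma_0 y^2}$ combined with the continuity provided by Sobolev embedding. Consequently
\begin{align*}
\mathbb{F}(u)-\overline{\mathbb{F}}(U)=\sum_n a_n\bigl(u^n-U^n\bigr)+\sum_n a_n^0(x,y)U^n+\mathbb{G}(x,y)|\nabla_x|^a v+\mathbb{G}_0(x,y)|\nabla_x|^a U.
\end{align*}
The first group is treated exactly as in (1) with $\tilde u$ replaced by $U$, which extracts the factor $\|v\|_{X^s_{\gamma,\sigma}}\le\|\langle\nabla_x\rangle^a v\|_{X^s_{\gamma,\sigma}}$. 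The second and fourth groups pair a coefficient in $\mathbb{X}^s_{\gamma_0,2\sigma_0}$ with a function depending only on $x$; the Hilbert-space-valued version of Lemma \ref{product_estimate2}(1) produces the factor $\|U\|_{H^{s+a}_{x,\sigma}}$. The third summand is immediate. Estimate (3) follows along the same lines by differentiating the above decomposition in $x$, distributing $\nabla_x$ to either the coefficient or the unknown, and repeating the procedure with exactly one extra tangential derivative lost.

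The main obstacle is controlling the infinite series coming from $\mathbb{F}_1$: the constants in Lemma \ref{product_estimate1} are polynomial in $|n|$ due to repeated use of Leibniz, so the termwise bounds must remain absolutely summable after being multiplied by those polynomial factors. The saving feature is that assumption \eqref{struc0_3} provides convergence of the series at every real argument; evaluating it at the slightly enlarged argument $M+1$ absorbs the polynomial Leibniz weights while still furnishing a finite sum, since $\sum_n (\|a_n^0\|_{\mathbb{X}^s_{\gamma_0,2\sigma_0}}+\|\bar a_n\|_{\mathbb{H}^s_{x,2\sigma_0}})\prod_i(M+1)^{n_i}$ is finite by hypothesis. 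With this bookkeeping in place, the three estimates follow by combining the termwise product estimates with uniform summation in $n$.
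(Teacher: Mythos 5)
Your proposal follows essentially the same route as the paper's own proof: telescope $u^n-\tilde u^n$ to isolate a factor of $u-\tilde u$, control each summand via Lemma \ref{product_estimate1} and \ref{product_estimate2} after splitting $a_n=a_n^0+\bar a_n$ and $\mathbb G=\mathbb G_0+\bar{\mathbb G}$, then sum over $n$ using the analytic hypothesis \eqref{struc0_3}. The decomposition you use for (2), namely $\sum_n a_n(u^n-U^n)+\sum_n a_n^0 U^n+\cdots$, regroups but is algebraically identical to the paper's $\sum_n a_n^0 u^n+\sum_n \bar a_n(u^n-U^n)$, and the treatment of (3) by distributing $\nabla_x$ is the same. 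The only genuine divergence is the summability argument: the paper observes that $\sum_n n_l\alpha_n z^{n-e_l}$ is the $\partial_{z_l}$-derivative of an everywhere-convergent power series with nonnegative coefficients and is therefore itself finite, whereas you absorb the polynomial factor $n_l$ by inflating the argument, using $n_l(C_sM)^{|n|-1}\le (C_sM+1)^{|n|}$. Both arguments are valid; the derivative formulation is slightly cleaner because it manifestly produces a function nondecreasing in its arguments. One cosmetic imprecision to fix: you describe the monomial of degree $|n|-1$ as being controlled by a ``pointwise bound,'' but what is really needed (and what the iterated product estimates deliver) is a bound on its $X^s_{\gamma,\sigma}$-norm by $M^{|n|-1}$; the $L^\infty$ embedding alone does not control the $X^s$-norm of the product. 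The underlying computation you describe is nevertheless the correct one.
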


    \section{Main results}\label{mainresults}

    Now, we state two main results of this paper, one is the local existence of solutions which are analytic in tangential variables and conormal Sobolev in the normal variable for the general initial-boundary value problem \eqref{Intro1} of the stochastic Prandtl equation in two and three space variables, and the other one is the global existence with large probability of solutions which are Gevrey$-2$ in tangential variables with the radius growing linearly in time to the problem \eqref{Intro3} of the two and three dimensional Prandtl equation with a tangential random diffusion.
    
    \subsection{Main result I: local well-posedness}
    To state the first result, let us define the space
    \[E^s_{\gamma,\sigma}:=\left\{u:\mathbb{R}^d_+\to\mathbb{R}^{d-1}\big| u|_{y=0}=0, u\in X^s_{\gamma,\sigma}\right\}\]
    to incorporate the boundary conditions of the unknown $u$ and introduce the functions
    \begin{align}\label{MR1}\gamma(t):=\frac{\gamma_0}{\langle t\rangle^{\delta}},\qquad \sigma(t):=\sigma_0-\lambda t     
    \end{align}
    with $\langle t\rangle:=1+t$ and $\delta,\lambda>0$, in order to characterize the time evolution of the decay rates in normal variable and the analytic radius of the unknown $u$. To homogenize the limit state of $u$ as $y\to \infty$, we introduce a corrector 
    \begin{align}
        \label{MR2}
        \psi(t,y):=\erf\left(\frac{y}{2\sqrt{t+(8\gamma_0)^{-1}}}\right),
    \end{align}
    with $\erf(y):=\frac{2}{\sqrt{\pi}}\int_0^ye^{-\tilde{y}^2}\mathrm{d} \tilde{y}$, which is the solution of the following heat equation:
   \[\begin{cases}
    \partial_t\psi-\partial_y^2\psi=0,\\
    \psi|_{t=0}=\erf(\sqrt{2\gamma_0}y),\qquad
    \lim\limits_{y\to\infty}\psi(t,y)=1,\qquad \psi|_{y=0}=0.
    \end{cases}\]
    The  local pathwise solutions of the stochastic Prandtl equation \eqref{Intro1} is defined as below.
    \begin{defn}\label{DEFLOCAL}
        Given a stochastic basis $(\Omega,\mathscr{F},\{\mathscr{F}_t\}_{t\ge0},\mathbb{P})$, a pair $(u,\tau)$ is said to be a local pathwise solution of the stochastic Prandtl equation \eqref{Intro1}, if 
        \begin{enumerate}
            \item $\tau$ is a strictly positive stopping time; 
            \item $u$ is a progressively measurable process valued in $L^{\infty}_yH^2_x$ and satisfies $u-\psi U\in L^{\infty}\left(0,\tau; E^s_{\gamma(\cdot),\sigma(\cdot)}\right)$ almost surely;
            \item The equation \eqref{Intro1} holds in the sense that
            \begin{align}\label{DEFLOCAL1}
                \langle\varphi,u(t\wedge\tau)\rangle&+\int_0^{t\wedge\tau}\langle \varphi,\left(u\cdot\nabla_{x}  u-\partial_y^{-1}(\nabla_x \cdot u)\partial_y u-\partial_y^2 u+\nabla_x P\right)\rangle\mathrm{d}\tilde{t}\notag\\&\qquad\qquad\qquad\qquad\qquad\qquad\qquad=\langle\varphi,u(0)\rangle+\sum_{i=1}^{\infty}\int_0^{t\wedge\tau}\langle \varphi, \mathbb{F}_i(u)\rangle \mathrm{d}B_i
            \end{align}
             for any test function $\varphi\in C^{\infty}_0(\mathbb{R}_+^d)$.
             
        \end{enumerate}
        A local pathwise solution $(u,\tau)$ is said to be unique, if for any other local pathwise solution $(\tilde{u},\tilde{\tau})$, 
        \[\mathbb{P}\{u(t)=\tilde{u}(t),\forall t\le \tau\wedge\tilde{\tau}\}=1.\]
    \end{defn}
    \begin{remark}\rm (On the progressive measurability of $u$). As the solution $u$ of \eqref{Intro1} lies in a time-varying space with inhomogeneous state as $y\to \infty$, we choose to define the progressive measurability of $u$ in a much rougher space $L^{\infty}_yH^2_x$. Nevertheless, this is sufficient to ensure the required measurability for all ingredients in the identity \eqref{DEFLOCAL1}. Indeed, for any test function $\varphi\in C^{\infty}_0(\mathbb{R}^d_+)$, 
    \[f(u):=\langle\varphi, u\cdot\nabla_{x}  u-\partial_y^{-1}(\nabla_x \cdot u)\partial_y u-\partial_y^2 u\rangle\]
    defines a continuous functional on $L_y^{\infty}H^2_x$, which ensures the desired progressive measurability of the drift part. On the other hand, by repeating the derivation of Proposition \ref{Proposition_struc1}, one could see that for any $i\ge 1$, $\mathbb{F}_i$ maps continuously from $L^{\infty}_yH^2_x$ to $L^{\infty}_yH^{\frac{3}{2}}_x$. This implies the progressive measurability of the diffusion part in \eqref{DEFLOCAL1}.
    \end{remark}
    
    The first main result of this paper is the following one.
    
    \begin{theorem} \label{MT1}{\rm (Local well-posedness for the stochastic Prandtl equation).} Let $s\ge 4$ and $\gamma_0,\sigma_0>0$. Suppose that $\mathbb{F}$ satisfies Assumptions \ref{assumption} and $U,\nabla_x P$ are progressively measurable processes valued in $H^{s+2}_{x,\sigma_0}$ and $H^{s+1}_{x,\sigma_0}$ respectively, which are solutions of the stochastic Bernoulli's law \eqref{Intro2} (the existence of such solutions is given by Corollary \ref{bernoullicoro}). Then, for any initial data
    \[u_0=w_0+\erf(\sqrt{2\gamma_0}y) U_0,\]
    with $ w_0$ being an $\mathscr{F}_0$-measurable random variable valued in $X^{s}_{\gamma_0,2\sigma_0}$, there exist $\mathscr{F}_0$-measurable random parameters $\delta,\lambda>0$ and a unique local pathwise solution $(u,\tau)$ of the stochastic Prandtl equation \eqref{Intro1} such that
    \[u-\psi U\in L^{\infty}\left(0,\tau; E^s_{\gamma(\cdot),\sigma(\cdot)}\right)\]
    almost surely, with $\gamma(\cdot),\sigma(\cdot)$ being given by \eqref{MR1}.
    \end{theorem}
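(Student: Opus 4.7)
The plan begins with homogenizing the far-field condition. Setting $w:=u-\psi U$, where $\psi$ is the heat corrector from \eqref{MR2} and $(U,\nabla_x P)$ solves the stochastic Bernoulli's law, the equation \eqref{Intro1} becomes a stochastic equation for $w$ with $w|_{y=0}=0$ and $w\to 0$ as $y\to\infty$, driven by the noise $[\mathbb{F}(u)-\psi\overline{\mathbb{F}}(U)]\,\mathrm{d}W$. This difference vanishes at $y=0$ (since $\psi(0)=0$) and at $y=\infty$ (by definition of $\overline{\mathbb{F}}$), and its size in $\mathbb{X}^s_{\gamma,\sigma}$ is exactly what Proposition \ref{Proposition_struc1} (2)--(3) controls. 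Since $\psi$ satisfies the heat equation, the deterministic drift from the corrector cancels, leaving only cross terms between $w$ and $\psi U$ which are linear in $w$ and are handled by Lemma \ref{product_estimate2}.

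\textbf{Approximation and closed a-priori estimate.} I would construct three approximate schemes layered as (i) a tangential Galerkin truncation (or equivalently a cut-off of high frequencies in $x$), (ii) a mollification of the loss-of-derivative term $\partial_y^{-1}(\nabla_x\cdot u)\partial_y u$, and (iii) an added tangential viscosity $-\epsilon\Delta_x u$, yielding strong solutions on the truncated space. The heart of the matter is the uniform a-priori estimate in $X^s_{\gamma(t),\sigma(t)}$. Apply It\^o's formula to $\|w(t)\|^2_{X^s_{\gamma(t),\sigma(t)}}$; the time derivative of the radius produces the analytic dissipation
\[
\lambda \,\bigl\||\nabla_x|^{1/2} w\bigr\|^2_{X^s_{\gamma(t),\sigma(t)}},
\]
the time derivative of the Gaussian weight produces a negative contribution of order $\dot\gamma(t)\|y\,e^{\gamma y^2}(\cdots)\|^2$, and $\partial_y^2 w$ yields the normal dissipation $\|\partial_y w\|^2_{X^s_{\gamma,\sigma}}$. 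The convective terms are estimated by Lemmas \ref{product_estimate1}--\ref{product_estimate2} so that each factor of loss-of-derivative is compensated by a half-derivative drawn from the analytic dissipation. The It\^o correction contributes $\tfrac12\|\mathbb{F}(u)-\psi\overline{\mathbb{F}}(U)\|^2_{\mathbb{X}^s_{\gamma,\sigma}}$, bounded by Proposition \ref{Proposition_struc1} by $\|\langle\nabla_x\rangle^{a}w\|^2 +\|U\|^2_{H^{s+a}_{x,\sigma}}$; the condition $a\le\tfrac12$ is precisely what allows this term to be absorbed by the analytic dissipation for $\lambda$ large. The martingale term is handled by the BDG inequality \eqref{pre6} and split via Young's inequality, half being absorbed into the supremum on the left and half into the analytic dissipation. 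Choosing $\lambda$ and $\delta$ as $\mathscr{F}_0$-measurable random variables sufficiently large depending on $\|w_0\|_{X^s_{\gamma_0,2\sigma_0}}$, $\|U_0\|_{H^{s+2}_{x,\sigma_0}}$ and the noise constants closes a bound of the form
\[
\mathbb{E}\sup_{t\le\tau_R}\|w(t)\|^2_{X^s_{\gamma(t),\sigma(t)}}+\mathbb{E}\int_0^{\tau_R}\!\bigl(\lambda\,\|\langle\nabla_x\rangle^{1/2}w\|^2_{X^s_{\gamma,\sigma}}+\|\partial_y w\|^2_{X^s_{\gamma,\sigma}}\bigr)\mathrm{d}t\le C,
\]
where $\tau_R:=\inf\{t\ge 0:\|w(t)\|_{X^s_{\gamma(t),\sigma(t)}}\ge R\}\wedge(\sigma_0/\lambda)$. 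The bound is uniform in the approximation parameters.

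\textbf{Compactness, uniqueness, and pathwise solution.} The uniform estimate, together with a Kolmogorov-type bound on increments drawn from the equation itself, yields tightness of the laws of the approximate solutions in a suitable topology (e.g.\ $C([0,T];X^{s-1}_{\gamma',\sigma'})_{\rm weak}\cap L^2(0,T;X^s_{\gamma(\cdot),\sigma(\cdot)})_{\rm weak}$ with $\sigma'<\sigma(T)$, $\gamma'<\gamma(T)$). Passing to the limit along a subsequence via Jakubowski--Skorokhod produces a martingale solution. Pathwise uniqueness is established by applying the energy estimate to the difference $w_1-w_2$ of two solutions, using the local Lipschitz bound \eqref{estimate_noise1} of Proposition \ref{Proposition_struc1} on the noise and Lemmas \ref{product_estimate1}--\ref{product_estimate2} on the drift, on the common stopping time where both solutions stay bounded; a Gr\"onwall argument closes this step. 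Finally, Gy\"ongy--Krylov converts martingale existence plus pathwise uniqueness into a genuine pathwise solution on the original probability basis, and the stopping time $\tau:=\tau_R\wedge(\sigma_0/\lambda)$ is strictly positive almost surely by continuity of $t\mapsto\|w(t)\|_{X^s_{\gamma(t),\sigma(t)}}$ at $t=0$.

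\textbf{Main obstacle.} The hardest step is the a-priori estimate in $X^s_{\gamma(t),\sigma(t)}$: one must simultaneously (a) close the deterministic loss-of-one-derivative in the convection $v\partial_y u$ against the half-derivative gain from the analytic radius decay, (b) absorb the It\^o correction with its $|\nabla_x|^a$ factor into the same half-derivative dissipation, which forces the critical scaling $a\le\tfrac12$, and (c) handle the inhomogeneous far-field via the corrector $\psi U$ without introducing additional time-integrated loss. Achieving this without letting $\sigma(t)$ depend on the unknown (unlike the deterministic strategy of \cite{ZZ2016,WWZ2021}) is exactly what forces the use of the higher-order conormal Sobolev framework, and constitutes the principal technical innovation of the proof.
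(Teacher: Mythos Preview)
Your a-priori estimate sketch is essentially correct and matches the paper's Section~4: the analytic dissipation $\lambda\||\nabla_x|^{1/2}w\|^2$, the weight term $\dot\gamma\|y e^{\gamma y^2}(\cdots)\|^2$, the product Lemmas~\ref{product_estimate1}--\ref{product_estimate2} for the convection, and Proposition~\ref{Proposition_struc1} for the It\^o correction are exactly the ingredients the paper uses, and your identification of the critical threshold $a\le\tfrac12$ is right.

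Where you diverge from the paper is in the \emph{construction}. The paper does not add tangential viscosity $-\epsilon\Delta_x$; instead it truncates the nonlinearity and the noise by a cutoff $\chi^2(\mathcal{E}_w(t))$ of the running parabolic energy \eqref{parabolic_energy}, so that the approximate equation \eqref{AP1} is globally defined and the a-priori bound (Proposition~\ref{AP_proposition1}) closes without a stopping time. On top of this it layers three iterations (\S\ref{schemes}): a frequency projection $\mathcal{R}_n$ (your (i)), a Picard iteration for fixed $n$, and an inner iteration reducing to a linear additive-noise SPDE solved by the variational method (Lemma~\ref{LWP_approximation}). Convergence is obtained not by Jakubowski--Skorokhod plus Gy\"ongy--Krylov, but by direct Cauchy-sequence arguments in $L^2_\omega$: fixed-point contractions for the inner schemes, and the abstract Cauchy lemma of Glatt-Holtz--Ziane (Lemma~\ref{abstract_cauchy_lemma}) for $\mathcal{R}_n\to\mathrm{Id}$. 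The paper's route sidesteps tightness in the time-varying spaces $X^s_{\gamma(t),\sigma(t)}$, which in your compactness approach would require nontrivial care.

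There is one point where your outline is thin and the paper's proof is explicit. To apply the Cauchy lemma (or, in your framework, to show the stopping times $\tau_R^{(n)}$ do not collapse to $0$ uniformly in $n$), one needs uniform-in-$n$ H\"older continuity of $t\mapsto\mathcal{E}_{w_n}(t)$ at $t=0$; this is Proposition~\ref{AP_proposition3}, and it in turn requires the \emph{higher-order tangential estimate} of Proposition~\ref{AP_proposition2} (one extra $\nabla_x$). Your sentence ``strictly positive almost surely by continuity at $t=0$'' is correct for the limiting solution, but does not by itself give the uniform control across approximations that the passage to the limit needs. This higher-order step is precisely where the assumption $w_0\in X^s_{\gamma_0,2\sigma_0}$ (rather than $\sigma_0$) is used, cf.\ Remark~\ref{remarkloss}.
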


    \begin{remark}\label{remarkloss}
        \rm(On the instant loss of analytic radius). One might notice that an instant loss of analytic radius appears in Theorem \ref{MT1}: the initial analytic radius is $2\sigma_0$ while the radius of the solution is $\sigma(t)=\sigma_0-\lambda t$. This phenomenon of loss of analytic radius also happens in the deterministic problems, cf. \cite{IV2016}. In the present paper, it is mainly used to ensure the higher order a-prior estimate in the tangential variables, cf. Proposition \ref{AP_proposition2}, which plays an important role when proving the convergence of the approximate solutions. But it may not be necessary to loss half of the initial radius instantly. Indeed, one can simply replace the initial radius $2\sigma_0$ by any fixed $\tilde{\sigma}_0>\sigma_0$ and Theorem \ref{MT1} still holds.
    \end{remark}
    
    \subsection{Main result I\!I: global existence}
    Our next result addresses probabilistic global existence for the stochastic Prandtl equation \eqref{Intro3} driven by tangential random diffusion, where the noise and the out flow take the following special structure
    \[\mathbb{F}(u)\mathrm{d}B_t:=(\alpha_1+\alpha_2|\nabla_{x}|^{\frac{1}{2}
    })u\mathrm{d}B_t,\qquad\nabla_{x}P:=-\beta U,\qquad U:=\mathcal{U}\vec{e}\]
    with $\vec{e}\in \mathbb{R}^{d-1}$ being a constant vector and $\mathcal{U}$ satisfying
    \begin{align}
        \label{MR3_3}\mathrm{d}\mathcal{U} = \beta \mathcal{U}\mathrm{d}t+\alpha_1 \mathcal{U} \mathrm{d}B_t,\qquad\mathcal{U}|_{t=0}=\mathcal{U}_0.
    \end{align}
    As shown in \eqref{Intro4}, by using the transformation $u_\Gamma=\Gamma_t u$ with $\Gamma_t$ the random multiplier given in \eqref{Intro3_1},  an additional damping term $\beta(u_{\Gamma}-\vec{e})$ appears in the equation of $u_\Gamma$. This gives rise to a special corrector
    \begin{align}\label{MR3_1}\Psi(y):=(1-e^{-\sqrt{\beta}y})\vec{e}\end{align}
    which corresponds to the stationary shear flow of the equation \eqref{Intro4}. 
    
    To formulate the result, let us introduce the space 
    \begin{align}\label{MR3_2}\mathcal{X}^s_{\gamma}:=\left\{u:\mathbb{R}^2_+\to \mathbb{R}\bigg|\sum_{l=0}^1\sum_{k+j+l\le s}\|\langle y\rangle^{\gamma} Z^j\partial_x^k \partial_y^lu\|^2_{L^2}<\infty\right\},\end{align}
    with $\langle y\rangle:=1+y$. Define the Gevrey spaces
    \[\mathcal{X}^s_{\gamma,\sigma,2}:=\left\{u:\mathbb{R}^2_+\to \mathbb{R}\Big|\|u\|_{\mathcal{X}^s_{\gamma,\sigma,2}}:=\|e^{\sigma |\nabla_x|^{\frac{1}{2}}}u\|_{\mathcal{X}^s_{\gamma}}<\infty\right\}\]
    and
    \[\mathcal{E}^s_{\gamma,\sigma,2}:=\left\{u:\mathbb{R}^2_+\to\mathbb{R}\big| u|_{y=0}=0, u\in\mathcal{X}^s_{\gamma,\sigma,2}\right\}.\]
   Set
    \begin{align}\label{MR4}
        \sigma(t):=\sigma_0+\lambda t-\alpha_2B_t.
    \end{align}
    The global solutions of \eqref{Intro3} with high probability is defined below.
    
    \begin{defn}\label{DEFGLOBAL}
        Given a stochastic basis $(\Omega,\mathscr{F},\{\mathscr{F}_t\}_{t\ge0},\mathbb{P})$ and $\epsilon>0$, a triplet $(u,\tau,\Omega_{\epsilon})$ is said to be a global solution of the stochastic Prandtl equation \eqref{Intro3} with probability $1-\epsilon$, if $\mathbb{P}(\Omega_{\epsilon})\ge 1-\epsilon$ and
        \begin{enumerate}
            \item $\tau$ is a stopping time satisfying $\tau=\infty$ on $\Omega_{\epsilon}$;
            \item $u$ is a progressively measurable process valued in $L^{\infty}_yH^2_x$ and satisfies $u-\Psi U\in L^{\infty}\left(0,\tau; \mathcal{E}^s_{\gamma,\sigma(\cdot),2}\right)$ almost surely;
            \item the equation \eqref{Intro3} holds in the sense of
            \begin{align*}
                \langle\varphi,u(t\wedge \tau)\rangle&+\int_0^{t\wedge \tau}\langle \varphi,\left(u\cdot\nabla_{x}  u-\partial_y^{-1}(\nabla_x\cdot u)\partial_y u-\partial_y^2 u-\beta U\right)\rangle\mathrm{d}\tilde{t}\\&\qquad\qquad\qquad\qquad\qquad\qquad\qquad=\langle\varphi,u(0)\rangle+\int_0^{t\wedge \tau}\langle \varphi, (\alpha_1+\alpha_2|\nabla_x|^{\frac{1}{2}})u\rangle \mathrm{d}B_{\tilde{t}},
                \end{align*}
                for any  $\varphi\in C^{\infty}_0(\mathbb{R}_+^{d})$.
            
        \end{enumerate}
    \end{defn}
    The second main result of the paper is stated as follows.
    \begin{theorem}\label{MT2}{\rm (Global existence with a tangential random diffusion).} Let $s\ge 4, \gamma>1, \lambda>0$ and $\mathcal{U}_0>0$. Then, for any given $\epsilon\in (0,1)$, there exist $\sigma_0,\delta>0$ and parameters $\alpha_1,\alpha_2,\beta$ such that for any initial data $u_0$ satisfying 
    \begin{align}
        \label{MR5_1}\|u_0-\Psi\mathcal{U}_0\vec{e}\|_{\mathcal{X}^s_{\gamma,\sigma_0,2}}\le \delta
    \end{align}
    almost surely, the stochastic Prandtl equation \eqref{Intro3} admits a global solution in the sense of Definition \ref{DEFGLOBAL} with probability $1-\epsilon$ and the dynamics of the radius $\sigma(\cdot)$ given by \eqref{MR4} satisfying the following lower bound:
    \begin{align}\label{MR5}
        \sigma(t)\ge \frac{\sigma_0+\lambda t}{2},\qquad \forall t\ge0.
    \end{align}
    \end{theorem}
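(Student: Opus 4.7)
The plan is to recast the SPDE \eqref{Intro3} into the pathwise PDE \eqref{Intro4} for $u_\Gamma:=\Gamma_t u$ via the random multiplier \eqref{Intro3_1} and It\^o's formula, and then to close a pathwise energy estimate in the weighted Sobolev space $\mathcal{X}^s_\gamma$ on a ``good'' event of probability at least $1-\epsilon$. I would homogenize the far-field condition by introducing $w:=u_\Gamma-\Psi\vec{e}$, with $\Psi$ the stationary shear flow \eqref{MR3_1} satisfying $\partial_y^2\Psi=\beta(\Psi-1)$, so that the equation for $w$ retains the damping $\beta w$, the normal diffusion $-\partial_y^2 w$, the enhanced tangential dissipation $\tfrac{\alpha_2^2}{2}|\nabla_x|w$, the sign-indefinite linear term $(\alpha_1\alpha_2-\lambda)|\nabla_x|^{1/2}w$, and the convective nonlinearities.

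For the probabilistic lower bound \eqref{MR5} on $\sigma(t)$, I select $\alpha_1:=0$ and $\alpha_2:=\sqrt{\lambda}$, so that the event $\Omega_\epsilon:=\{\sigma(t)\ge(\sigma_0+\lambda t)/2\text{ for all }t\ge0\}$ coincides exactly with $\{\sup_{t\ge0}(\alpha_2 B_t-\tfrac{\alpha_2^2 t}{2})\le\sigma_0/2\}$. Doob's maximal inequality applied to the non-negative exponential martingale $M_t:=\exp(\alpha_2 B_t-\tfrac{\alpha_2^2 t}{2})$ (with $\mathbb{E}M_t=1$) yields
\[
\mathbb{P}\Bigl\{\sup_{t\ge0}\bigl(\alpha_2 B_t-\tfrac{\alpha_2^2 t}{2}\bigr)\ge\tfrac{\sigma_0}{2}\Bigr\}\le e^{-\sigma_0/2},
\]
hence $\mathbb{P}(\Omega_\epsilon)\ge 1-\epsilon$ as soon as $\sigma_0\ge 2\log(1/\epsilon)$.

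On $\Omega_\epsilon$ I would carry out a pathwise $\mathcal{X}^s_\gamma$-energy estimate on $w$: pairing the equation with $w$ and each of its conormal--tangential derivatives $Z^j\partial_x^k\partial_y^l$ in the sense of \eqref{MR3_2} produces the dissipative identity
\[
\tfrac12\tfrac{d}{dt}\|w\|_{\mathcal{X}^s_\gamma}^2+\beta\|w\|_{\mathcal{X}^s_\gamma}^2+\tfrac{\alpha_2^2}{2}\bigl\||\nabla_x|^{1/2}w\bigr\|_{\mathcal{X}^s_\gamma}^2+\|\partial_y w\|_{\mathcal{X}^{s-1}_\gamma}^2\le\lambda\,\bigl\||\nabla_x|^{1/4}w\bigr\|_{\mathcal{X}^s_\gamma}^2+\mathcal{N}(u,w),
\]
where the indefinite intermediate term is absorbed into $\tfrac{\alpha_2^2}{2}\||\nabla_x|^{1/2}w\|^2+\beta\|w\|^2$ via Young's inequality (recall $\alpha_2^2=\lambda$). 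The nonlinear contribution $\mathcal{N}$ is handled by adapting Lemma \ref{product_estimate1} to the Gevrey-2 scale, the key ingredient being the convex-deficit bound $|\xi|^{1/2}+|\eta|^{1/2}-|\xi+\eta|^{1/2}\gtrsim\min\{|\xi|,|\eta|\}^{1/2}$; this supplies commutator estimates that lose exactly a half tangential derivative on the highest-frequency factor, which is precisely absorbable in $\tfrac{\alpha_2^2}{2}\||\nabla_x|^{1/2}w\|_{\mathcal{X}^s_\gamma}^2$. The normal convection $\partial_y^{-1}(\nabla_x\cdot u)\partial_y u$ is controlled as in Lemma \ref{product_estimate1}(4), combining the weight $\langle y\rangle^\gamma$ with the dissipation $\|\partial_y w\|_{\mathcal{X}^{s-1}_\gamma}^2$. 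Choosing $\beta$ large enough (depending on $\lambda$ and $\mathcal{U}_0$) and $\delta$ correspondingly small, a standard bootstrap yields $\|w(t)\|_{\mathcal{X}^s_\gamma}\le 2\delta/\mathcal{U}_0$ for all $t\ge 0$ on $\Omega_\epsilon$.

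To assemble the global solution, I would first construct a local-in-time pathwise solution in the Gevrey-2 class by adapting the approximate scheme of Section \ref{LWP} (with the analytic regularization replaced by a Gevrey-2 one), and then define the stopping time
\[
\tau^\star:=\inf\{t\ge 0:\|w(t)\|_{\mathcal{X}^s_\gamma}>2\delta/\mathcal{U}_0\}\wedge\inf\{t\ge 0:\sigma(t)<(\sigma_0+\lambda t)/2\}.
\]
The a priori bound forces the first infimum to be $+\infty$ on $\Omega_\epsilon$ while the second is $+\infty$ by construction of $\Omega_\epsilon$, so $\tau^\star=+\infty$ there and the triplet $(u,\tau^\star,\Omega_\epsilon)$ is the desired global solution. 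The hard part is the nonlinear closure in the Gevrey-2 framework with only the first-order enhanced dissipation $|\nabla_x|w$ available: one must verify that in every convective contribution the top tangential derivative is absorbed by exactly the half-derivative gain $\||\nabla_x|^{1/2}w\|_{\mathcal{X}^s_\gamma}$, and simultaneously arrange the parameters $\beta,\lambda$ so that they dominate both the indefinite lower-order term $(\alpha_1\alpha_2-\lambda)|\nabla_x|^{1/2}$ and the $\mathcal{U}_0^{-1}$-dependent contributions from passing between $u$ and $u_\Gamma$.
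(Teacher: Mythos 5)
Your overall architecture (Girsanov-type transformation $\Gamma_t$, homogenization by the stationary shear $\Psi$, pathwise Gevrey-2 energy estimate, stopping time argument) is the same as the paper's. However, your parameter choices break the argument in an essential way, and there is a secondary misidentification of what drives the nonlinear closure.

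The critical gap is the choice $\alpha_1=0$. With $\alpha_1=0$, the outflow $\mathcal{U}$ solves $\mathrm{d}\mathcal{U}=\beta\mathcal{U}\,\mathrm{d}t$, so $\mathcal{U}(t)=\mathcal{U}_0 e^{\beta t}\to\infty$ deterministically. This is fatal because passing between $w$ and $w_\Gamma=\Gamma_t w=\mathcal{U}^{-1}e^{\sigma(t)|\nabla_x|^{1/2}}w$ introduces an explicit $\mathcal{U}(t)$ factor in every nonlinear and linear-background contribution: for instance
\[
\langle w_\Gamma,(w\cdot\nabla_x w)_\Gamma\rangle_{\mathcal{X}^s_\gamma}\lesssim \mathcal{U}(t)\,\|w_\Gamma\|_{\mathcal{X}^s_\gamma}\|\langle\nabla_x\rangle^{1/2}w_\Gamma\|_{\mathcal{X}^s_\gamma}^2,
\qquad
\langle w_\Gamma,(u^s\!\cdot\!\nabla_x w)_\Gamma\rangle_{\mathcal{X}^s_\gamma}=\mathcal{U}(t)\,\langle w_\Gamma,\Psi\vec{e}\cdot\nabla_x w_\Gamma\rangle_{\mathcal{X}^s_\gamma}.
\]
Your bootstrap keeps $\|w_\Gamma\|_{\mathcal{X}^s_\gamma}\le 2\delta/\mathcal{U}_0$ on a fixed threshold, so the coefficient $\mathcal{U}(t)\|w_\Gamma\|$ is of order $\delta\,e^{\beta t}$ and eventually overwhelms the fixed dissipation $\tfrac{\alpha_2^2}{2}\||\nabla_x|^{1/2}w_\Gamma\|^2$; the linear background term $\mathcal{U}(t)\langle w_\Gamma,\Psi\vec{e}\cdot\nabla_x w_\Gamma\rangle$ blows up independently of $\delta$. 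Increasing $\beta$ only makes the growth worse. The paper avoids exactly this by taking $\alpha_1^2=4\beta$ so that $\mathcal{U}(t)=\mathcal{U}_0\exp(\alpha_1 B_t-\beta t)$ has a \emph{negative} drift, and by adding to the good event the martingale bound $\alpha_1 B_t<R+\beta t$, which forces $\mathcal{U}(t)\le e^R\mathcal{U}_0$. That boundedness is what makes all the implicit constants in \eqref{GWP3}--\eqref{GWP7} uniform in time.

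A second, related issue is that you freeze $\alpha_2=\sqrt{\lambda}$. Once the background term $\mathcal{U}\langle w_\Gamma,\Psi\vec{e}\cdot\nabla_x w_\Gamma\rangle\lesssim_{s,\beta,R,\mathcal{U}_0}\||\nabla_x|^{1/2}w_\Gamma\|^2$ is in play, closing the estimate requires $\tfrac{\alpha_2^2}{2}\ge C_{s,\gamma,\beta,R,\mathcal{U}_0}$, where the constant grows with $\beta$ through the derivatives of $\Psi(y)=(1-e^{-\sqrt{\beta}y})\vec{e}$. Since you insist $\beta$ be large, fixing $\alpha_2^2=\lambda$ leaves no room to dominate that constant. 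The paper's order of selection (first $R$, then $\beta$, then $\alpha_2$ large depending on $\beta$, then $\sigma_0$) is not cosmetic; it is the only order in which the constants decouple. Finally, your ``key ingredient'' $|\xi|^{1/2}+|\eta|^{1/2}-|\xi+\eta|^{1/2}\gtrsim\min\{|\xi|,|\eta|\}^{1/2}$ is not what the paper relies on: the half-derivative loss in the nonlinearity is a generic Prandtl-convection loss captured by Lemma \ref{product_estimate3} via the ordinary subadditivity $|\xi+\eta|^{1/2}\le|\xi|^{1/2}+|\eta|^{1/2}$, and it is absorbed by the enhanced dissipation $\tfrac{\alpha_2^2}{2}|\nabla_x|$ coming from the quadratic variation, not by a subadditivity \emph{gap}.

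Your Doob bound for $\mathbb{P}\{\sup_{t\ge0}(\alpha_2 B_t-\tfrac{\alpha_2^2 t}{2})\ge \sigma_0/2\}\le e^{-\sigma_0/2}$ is correct and essentially the same estimate the paper uses for the second stopping condition; the difficulty is entirely in the first condition, which you dropped by setting $\alpha_1=0$.
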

    \begin{remark}\rm (On the lower bound estimate \eqref{MR5}). The lower bound estimate of the Gevrey radius is not sharp near $t=0$, since one has $\sigma(t)\to\sigma_0$ as $t\to0$ almost surely. This is due to the property of Brownian path: with a high probability, Brownian motion grows like a linear function with positive coefficients. Moreover, similar to Remark \ref{remarkloss}, the factor $\frac{1}{2}$ on the right hand-side of \eqref{MR5} is not necessary, and one may replace it by any fixed $c\in(0,1)$ to have the above global existence result.
    \end{remark}

    \section{A-priori estimates}\label{AP}
    The section is devoted to a-priori estimates in the space $X^{s}_{\gamma,\sigma}$ for solutions of a homogenized and truncated version of the stochastic Prandtl equation; see \eqref{AP1}. The bounds will be frequently used throughout the rest of the paper. 
    
    \subsection{Truncated and homogenized stochastic Prandtl equation}
    
    To begin with, we homogenize the limit state of $u$ as $y\to \infty$ in \eqref{Intro1} by introducing the following processes
    \[u^s(t,x,y):=\psi(t,y)U(t,x),\qquad w(t,x,y):=u(t,x,y)-u^s(t,x,y),\]
    where $U$ corresponds to the state of $u$ as $y\to \infty$ and $\psi$ is given by \eqref{MR2}. After truncating the convection terms and the nonlinear multiplicative noise, now we assume that $w$ satisfies the following problem: 
    \begin{align}\label{AP1}
    \begin{cases}
        \mathrm{d}w-\partial_y^2w\mathrm{d}t+\chi^2(\mathcal{E}_w(t))\left[\mathcal{B}_1(w,w)+\mathcal{B}_1(w,u^s)+\mathcal{B}_1(u^s,w)+\mathcal{B}_2(w+u^s,w+u^s)\right]\mathrm{d} t\\\qquad\qquad\qquad= \left[\mathcal{B}_1(U-u^s,u^s)-(1-\psi)\nabla_x P\right]\mathrm{d} t+\chi^2(\mathcal{E}_w(t))(\mathbb{F}(w+u^s)-\psi \overline{\mathbb{F}}(U))\mathrm{d}W,\\
        w|_{y=0}=\lim_{y\to\infty} w=0,\\
        w(0)=w_0:=u_0-U_0\erf\left(\sqrt{2\gamma_0}y\right),
    \end{cases}
    \end{align}
    where 
    \begin{align}
        \label{bilinear_term}
        \mathcal{B}_1(u,\tilde{u}):=(u\cdot\nabla_x) \tilde{u},\qquad \mathcal{B}_2(u,\tilde{u}):=-\partial_y^{-1}(\nabla_x \cdot u)\partial_y \tilde{u},
    \end{align}
    and $\chi:\mathbb{R}_+\to [0,1]$ is a smooth cut-off function satisfying
    \begin{align}\label{chi}
        \chi(z):=
    \begin{cases}
    1\qquad z\le 2\mathcal{M}\\
    0\qquad z\ge 3\mathcal{M}
    \end{cases}
    \end{align}
    for some positive constant $\mathcal{M}$, and 
    \begin{align}\label{parabolic_energy}
        \mathcal{E}_w(t):=\left(\sup_{0\le \tilde{t}\le t}\|w_{\sigma}(\tilde{t})\|^2_{X^s_{\gamma}}+\int_0^{t}\left(\||\nabla_x|^{\frac{1}{2}} w_{\sigma}(\tilde{t})\|^2_{X^s_{\gamma}}+\|\partial_y w_{\sigma}(\tilde{t})\|^2_{X^s_{\gamma}}\right)\mathrm{d}\tilde{t}\right)^{\frac{1}{2}}
    \end{align}
    denotes the parabolic energy of $w_{\sigma}$. Here and in what follows, we shall not indicate the dependence of the norms of $X^s_{\gamma(t),\sigma(t)}$ and $H^s_{x,\sigma(t)}$ on the time variable $t$ in order to simplify the exposition, which will not lead to any confusion. 
    \subsection{A-priori estimates for the solution}\label{AP_for_solution}
    Suppose for the moment that 
    \begin{align}\label{APassumption}
        \|U_0\|_{H^{s+2}_{x,\sigma_0}}\le \mathcal{N},\qquad \|\nabla_x P|_{t=0}\|_{H^{s+1}_{x,\sigma_0}}\le \mathcal{N}
    \end{align}
    for some constant $\mathcal{N}>0$. The restriction will be removed in Section \ref{proof_LWP}. Define the stopping time 
    \[\tau_{\rm out}:=\inf\{t\ge0| \langle t\rangle^{\delta}\|U_{\sigma_0}(t)\|_{H^{s+2}_{x}}\ge 2\mathcal{N}\}\wedge \inf\{t\ge0| \|\nabla_xP_{\sigma_0}(t)\|_{H^{s+1}_{x}}\ge 2\mathcal{N}\}.\]
    It is clear to have $\tau_{\rm out}>0$ almost surely.

    The main result in this subsection is the following a-priori estimate before the positive stopping time
    \begin{align}\label{AP1_1}T_*:=\frac{1}{16\gamma_0}\wedge\frac{\sigma_0}{2\lambda}\wedge\tau_{\rm out}.\end{align}
    \begin{prop}\label{AP_proposition1}
        Let $s\ge 4$ and $\gamma_0,\sigma_0>0$. Suppose that $U_0,\nabla_x P|_{t=0}$ satisfy \eqref{APassumption} for some positive constant $\mathcal{N}$ and $T_*$ is the stopping time given by \eqref{AP1_1}. Then, there exist positive constants $\lambda,\delta$ such that for any smooth solution $w$ of the equation \eqref{AP1} and any stopping time $\tau\le T_*$,
        \begin{align}\label{AP53}
        \mathbb{E}\sup_{0\le t\le \tau}\|w_{\sigma}\|^2_{X^s_{\gamma}}+\mathbb{E}\int_0^{\tau}&\left(\||\nabla_x|^{\frac{1}{2}} w_{\sigma}\|^2_{X^s_{\gamma}}+\|\partial_y w_{\sigma}\|^2_{X^s_{\gamma}}\right)\mathrm{d}t\lesssim_{s,\gamma_0,\mathcal{M},\mathcal{N}} \mathbb{E}\|w_{0}\|^2_{X^s_{\gamma_0,\sigma_0}}+1.
    \end{align}
    \end{prop}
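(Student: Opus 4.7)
The plan is to apply the It\^o formula to the squared energy $\|w_\sigma\|^2_{X^s_\gamma}$ and exploit the interplay between the parabolic dissipation $\|\partial_y w_\sigma\|^2_{X^s_\gamma}$ arising from $\partial_y^2 w$ and the tangential analytic gain $\||\nabla_x|^{1/2} w_\sigma\|^2_{X^s_\gamma}$ produced by the time variation of the radius $\sigma(t)=\sigma_0-\lambda t$. First I would unpack the $X^s_\gamma$-norm into its constituent pieces $e^{\gamma y^2}Z^j\partial_x^k(w_\sigma)$ and $e^{\gamma y^2}Z^j\partial_x^k\partial_y(w_\sigma)$ and apply It\^o to each squared $L^2$-component. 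The drift then produces: (i) the two dissipative terms above, obtained via integration by parts against the Gaussian weight (which also generates signed lower-order terms of type $\gamma(t)^2\int y^2 e^{2\gamma y^2}|\cdot|^2$); (ii) the convection, corrector-source, and $\nabla_x P$ contributions; plus (iii) the It\^o correction $\|(\mathbb{F}(w+u^s)-\psi\overline{\mathbb{F}}(U))_\sigma\|^2_{\mathbb{X}^s_\gamma}$, while the time derivative of $e^{\gamma(t)y^2}$ contributes $\gamma'(t)\int y^2 e^{2\gamma y^2}|\cdot|^2$ which combines with the previous $\gamma^2 y^2$ terms.

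The choice of $\delta$ is dictated by this weighted balance: requiring $\gamma'(t)+c\gamma(t)^2\le 0$ on $[0,T_*]$ fixes $\delta$ as a function of $\gamma_0$ alone and explains the restriction $T_*\le 1/(16\gamma_0)$ in \eqref{AP1_1}. I would then bound the bilinear terms by rewriting $\langle w_\sigma,(\mathcal{B}_i(\cdot,\cdot))_\sigma\rangle_{X^s_\gamma}=\langle\langle\nabla_x\rangle^{1/2}w_\sigma,\langle\nabla_x\rangle^{-1/2}(\mathcal{B}_i(\cdot,\cdot))_\sigma\rangle_{X^s_\gamma}$ and applying items 3--4 of Lemma \ref{product_estimate1} together with Lemma \ref{product_estimate2}. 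The cut-off $\chi^2(\mathcal{E}_w)$ caps $\|w_\sigma\|_{X^s_\gamma}$ by $\sqrt{3\mathcal{M}}$ wherever the nonlinearity is active, and $\tau\le\tau_{\rm out}$ caps $\|U\|_{H^{s+2}_{x,\sigma_0}}$ by $2\mathcal{N}$, so after Young's inequality every bilinear contribution is bounded by $C(\mathcal{M},\mathcal{N})(\||\nabla_x|^{1/2}w_\sigma\|^2_{X^s_\gamma}+\|\partial_y w_\sigma\|^2_{X^s_\gamma})$. Proposition \ref{Proposition_struc1} on the support of $\chi^2(\mathcal{E}_w)$ treats the It\^o correction analogously since $a\le 1/2$. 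Choosing $\lambda=\lambda(\mathcal{M},\mathcal{N})$ large enough then lets the $\lambda$-dissipation absorb all of these penalties.

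The $\chi$-free source terms $\mathcal{B}_1(U-u^s,u^s)-(1-\psi)\nabla_x P$ are bounded directly using the $y$-decay of $U-u^s$ and $1-\psi$, producing only a constant depending on $\mathcal{N}$. The martingale part $\int_0^t 2\langle w_\sigma,(\cdots)_\sigma\,dW\rangle_{X^s_\gamma}$ is treated by the BDG inequality \eqref{pre6}: Young's inequality splits its expectation into $\tfrac{1}{2}\mathbb{E}\sup_{t\le\tau}\|w_\sigma\|^2_{X^s_\gamma}$, absorbed into the left-hand side, plus a time integral already controlled. Taking expectation and applying Gronwall on $[0,T_*]$ yields \eqref{AP53}. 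The main obstacle is the tightness of the derivative budget: the Gevrey gain $\lambda\||\nabla_x|^{1/2}w_\sigma\|^2_{X^s_\gamma}$ is the \emph{only} tangential dissipation and must simultaneously absorb convection, pressure source, and the $|\nabla_x|^{1/2}$-type It\^o correction permitted by Assumption \ref{assumption}; this forces the joint choice of $\lambda=\lambda(\mathcal{M},\mathcal{N})$ to be made only after every constant has been assembled. One must further verify that the commutators $[Z^j,\partial_y^2]$ arising in the higher-order conormal estimates produce only $Z^{j-1}\partial_y$-type losses, which is precisely why the definition of $X^s_\gamma$ already contains the extra $\|\partial_y u\|_{H^{s-1}_{\gamma,\co}}^2$ term needed to close the parabolic dissipation.
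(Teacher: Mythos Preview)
Your overall strategy---It\^o formula, tangential gain from $\dot\sigma=-\lambda$, parabolic dissipation, truncation to tame the nonlinearity, BDG, Gronwall---matches the paper. But there is a structural gap in how you propose to close the $\|\partial_y w_\sigma\|^2_{H^{s-1}_{\gamma,\co}}$ half of the $X^s_\gamma$-norm.

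When you integrate by parts in $-\langle e^{\gamma y^2}Z^j\partial_y w_\sigma,\, e^{\gamma y^2}Z^j\partial_y^3 w_\sigma\rangle$, the commutator of $\partial_y$ with the conormal weight does \emph{not} produce a $Z^{j-1}\partial_y$-type loss as you claim; it produces $2j\langle e^{\gamma y^2}Z^{j-1}\partial_y^2 w_\sigma,\, e^{\gamma y^2}Z^j\partial_y^2 w_\sigma\rangle$, which after Young costs $\|\partial_y^2 w_\sigma\|^2$ with one fewer $Z$. This quantity lies \emph{outside} $X^s_\gamma$ and cannot be absorbed by any dissipation you have written down. The paper closes this by an induction on the conormal index (its Step~III): one first proves the estimate for $\partial_y w_\sigma$ in the anisotropic space $H^{s-1,0}_{\gamma,\co}$ (no $Z$'s), harvesting the dissipation $\int_0^\tau\|\partial_y^2 w_\sigma\|^2_{H^{s-1,0}_{\gamma,\co}}\,\mathrm{d}t$; this controls the commutator at the $H^{s-2,1}_{\gamma,\co}$ level, whose dissipation then feeds the $H^{s-3,2}_{\gamma,\co}$ step, and so on up to $H^{0,s-1}_{\gamma,\co}$. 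Without this cascade the estimate does not close.

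Two further points you miss. First, the same integration by parts generates a boundary term at $y=0$ because $\partial_y w_\sigma$ does not vanish there; one must read off $\partial_y^2 w_\sigma|_{y=0}=\nabla_x P_\sigma$ from the equation and bound the resulting $\langle\partial_y w_\sigma|_{y=0},\nabla_x P_\sigma\rangle_{H^{s-1}_x}$ via a trace inequality. Second, your account of $\delta$ is incomplete: the decisive contribution to the $y$-weighted norm $\|w_\sigma\|_{\tilde X^s_\gamma}$ is not only the $\gamma^2$-remnant from the weight IBP but the linearly growing factor $\partial_y^{-1}\nabla_x\cdot U=y\,\nabla_x\cdot U$ hidden in $\mathcal{B}_2(U,\cdot)$ (Lemma~\ref{APlemma2}, term $I_2$). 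That term is paired with $\|U_\sigma\|_{H^{s+1}_x}$, and absorbing it by the $\gamma'(t)\|w_\sigma\|^2_{\tilde X^s_\gamma}$ gain forces $\delta$ to depend on $\mathcal{N}$ as well (cf.\ \eqref{AP48_1}), not on $\gamma_0$ alone.
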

    The proof of Proposition \ref{AP_proposition1} is based on the following lemmas.
    \begin{lemma}\label{APlemma1}{\rm(Estimate for the profile $u^s$).} Under the assumption of Proposition \ref{AP_proposition1} and for $t\le T_*$, there hold
    \begin{align}
    \label{AP8}
        \|(u^s-U)_{\sigma}\|_{X^s_{\gamma}}&\lesssim_{s,\gamma_0}\|U_{\sigma}\|_{H^s_{x}},\\
        \label{AP9}
        \|\langle\nabla_x\rangle^2(u^s-U)_{\sigma}\|_{X^s_{\gamma}}&\lesssim_{s,\gamma_0}\|U_{\sigma}\|_{H^{s+2}_{x}},\\
        \label{AP10}
        \|\partial_y\langle\nabla_x\rangle u^s_{\sigma}\|_{X^s_{\gamma}}&\lesssim_{s,\gamma_0} \|U_{\sigma}\|_{H^{s+1}_{x}}.
    \end{align}
    \end{lemma}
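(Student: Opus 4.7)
The plan is to exploit the separation of variables in $u^s(t,x,y) = \psi(t,y) U(t,x)$. Since $\psi$ depends only on $(t,y)$ while $U$ depends only on $(t,x)$, the Fourier multiplier $e^{\sigma|\nabla_x|}$ commutes with $\psi$, so $u^s_\sigma = \psi U_\sigma$ and $(u^s - U)_\sigma = (\psi - 1)U_\sigma$. Likewise, the conormal operator $Z^j = y^j \partial_y^j$ commutes with $\partial_x^k$, and crucially $Z^j\partial_y^l$ acts only on the $y$-factor. Therefore, for any integers $l \ge 0$, $j \ge 0$ and any multi-index $k$,
\[
e^{\gamma y^2} Z^j \partial_x^k \partial_y^l \bigl[(\psi-1) U_\sigma\bigr] = \bigl[e^{\gamma y^2} Z^j \partial_y^l(\psi-1)\bigr]\cdot \partial_x^k U_\sigma,
\]
and its $L^2_{x,y}$ norm factorizes as $\|e^{\gamma y^2} Z^j \partial_y^l(\psi-1)\|_{L^2_y}\,\|\partial_x^k U_\sigma\|_{L^2_x}$. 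So each of the three claimed inequalities reduces to a uniform-in-$t$ bound, for $t\in[0,T_*]$, on finitely many one-dimensional quantities $\|e^{\gamma y^2} Z^j\partial_y^l\psi\|_{L^2_y}$ and $\|e^{\gamma y^2} Z^j(\psi-1)\|_{L^2_y}$, multiplied against the appropriate $H^s_x$-type norm of $U_\sigma$.

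The core analytic step is a Gaussian tail estimate. Writing $\tau(t):= t + (8\gamma_0)^{-1}$, one computes $\partial_y\psi(t,y) = (\pi\tau(t))^{-1/2} e^{-y^2/(4\tau(t))}$ and by induction $\partial_y^{l+1}\psi = \tau^{-(l+1)/2} H_l\!\bigl(y/(2\sqrt{\tau})\bigr) e^{-y^2/(4\tau)}$ for some Hermite-type polynomial $H_l$ of degree $l$. Since $T_* \le (16\gamma_0)^{-1}$, for $t \le T_*$ one has $\tau(t) \in [(8\gamma_0)^{-1},\, 3/(16\gamma_0)]$, and in particular
\[
\frac{1}{4\tau(t)} \ge \frac{4\gamma_0}{3} > \gamma_0 \ge \gamma(t).
\]
Consequently $e^{\gamma y^2}\, y^m \partial_y^l\psi$ is dominated, with constants depending only on $s$ and $\gamma_0$, by $y^m(1 + y^l) e^{-(\gamma_0/3)y^2}$, whose $L^2_y$ norm is bounded uniformly in $t\in[0,T_*]$. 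The factor $\psi - 1$ itself is handled by writing $\psi - 1 = -\int_y^\infty \partial_{\tilde y}\psi\, d\tilde y$ and applying the same Gaussian bound.

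Assembling the pieces, estimate \eqref{AP8} is obtained by summing the factorized contributions of $Z^j\partial_x^k[(\psi-1) U_\sigma]$ and $\partial_y Z^j\partial_x^k[(\psi-1) U_\sigma]$ for $|k|+j \le s$ and $|k|+j \le s-1$ respectively; noting $\partial_y(\psi - 1) = \partial_y\psi$, this yields $\|U_\sigma\|_{H^s_x}$ on the right. Since $\langle\nabla_x\rangle^2$ commutes with $(\psi-1)$, the estimate \eqref{AP9} is the same argument with two extra tangential derivatives on $U_\sigma$. For \eqref{AP10}, the identities $\partial_y\langle\nabla_x\rangle u^s_\sigma = (\partial_y\psi)\langle\nabla_x\rangle U_\sigma$ and $\partial_y^2\langle\nabla_x\rangle u^s_\sigma = (\partial_y^2\psi)\langle\nabla_x\rangle U_\sigma$ let me read off both terms of the $X^s_\gamma$-norm from the same factorization, producing $\|U_\sigma\|_{H^{s+1}_x}$ on the right. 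I do not anticipate any serious obstacle; the only delicate point is checking that the strict inequality $\gamma(t) \le \gamma_0 < 4\gamma_0/3 \le (4\tau(t))^{-1}$ really absorbs the growing weight $e^{\gamma y^2}$ into the Gaussian decay of $\partial_y^l\psi$, which is precisely why $T_*$ is chosen no larger than $(16\gamma_0)^{-1}$.
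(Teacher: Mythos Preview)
Your proposal is correct and follows essentially the same approach as the paper: both arguments factorize the norm via the separation of variables $u^s=\psi U$, then reduce matters to uniform bounds on $\|e^{\gamma y^2}Z^j\partial_y^l(\psi-1)\|_{L^2_y}$, which are obtained from the Gaussian structure of $\partial_y^l\psi$ (the paper invokes Fa\`a di Bruno where you invoke Hermite polynomials) together with the key inequality $\gamma(t)\le\gamma_0<4\gamma_0/3\le(4\tau(t))^{-1}$ valid for $t\le T_*$. The only cosmetic difference is that the paper handles the $j=l=0$ case via the explicit tail bound $\int_a^\infty e^{-y^2}\,dy\le Ce^{-a^2}$, which is equivalent to your integral representation $\psi-1=-\int_y^\infty\partial_{\tilde y}\psi\,d\tilde y$.
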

    \begin{proof} Notice that
    \begin{align}\label{AP4}
    \|(u^s-U)_{\sigma}\|^2_{X^s_{\gamma}}=\sum_{l=0}^1 \sum_{|k|+j+l\le s}\|e^{\gamma y^2}Z^j\partial_y^l(\psi-1)\|^2_{L^2_y}\|\partial_x^k U_{\sigma}\|^2_{L^2_x}.
    \end{align}
    Therefore, it suffices to bound the weighted norms of $\psi-1$. By the definition \eqref{MR2},
    \[\psi-1=-\frac{2}{\sqrt{\pi}}\int_{\frac{y}{2\sqrt{t+(8\gamma_0)^{-1}}}}^{\infty}e^{-\tilde{y}^2}\mathrm{d}\tilde{y}.\]
    Recall the following tail estimate for the Gaussian integral
    \begin{align*}
     \int_a^{\infty}e^{-y^2}\mathrm{d}y\le Ce^{-a^2},\qquad\forall a\ge 0,
    \end{align*}
    where $C$ is a universal positive constant. Then,
    \begin{align}
        \label{AP5}\|e^{\gamma y^2}(\psi-1)\|_{L^2_y}\lesssim \left(\int_0^{\infty}e^{\frac{2\gamma_0 y^2}{\langle t\rangle^{\delta}}-\frac{y^2}{2t+(4\gamma_0)^{-1}}}\mathrm{d}y\right)^{\frac{1}{2}}\lesssim \left(\int_0^{\infty}e^{-\frac{2\gamma_0 y^2}{3}}\mathrm{d}y\right)^{\frac{1}{2}}<\infty,\qquad \forall t\le T_*.
    \end{align}
    For $j+l\ge1$, by applying Fa\`a di Bruno's formula, 
    \begin{align*}
        |e^{\gamma y^2} Z^j\partial_y^l (\psi-1)|=|e^{\gamma y^2} y^j\partial_y^{l+j} \psi|\lesssim e^{\gamma_0 y^2}e^{-\frac{y^2}{4t+(2\gamma_0)^{-1}}}P_{2j+l-1}(y) \lesssim e^{-\frac{\gamma_0y^2}{3}}P_{2j+l-1}(y),\qquad \forall t\le T_*, 
    \end{align*}
    where $P_{2j+l-1}(y)$ stands for a polynomial of order $2j+l-1$ with non-negative coefficients. This implies 
    \begin{align}\label{AP6}
        \|e^{\gamma y^2}Z^j\partial_y^l(\psi-1)\|_{L^2_y}\lesssim \left(\int_0^{\infty}e^{-\frac{2\gamma_0y^2}{3}}P^2_{2j+l-1}(y)\mathrm{d}y\right)^{\frac{1}{2}}<\infty,
    \end{align}
    when $t\le T_*$. Combining the estimates \eqref{AP4}--\eqref{AP6}, the desired estimate \eqref{AP8} is proved. The other estimates \eqref{AP9} and \eqref{AP10} follow similarly, their proofs are thus omitted. 
    \end{proof}
    \begin{lemma}\label{APlemma2}
        {\rm(Estimate for the nonlinear convection terms).} Under the assumption of Proposition \ref{AP_proposition1} and for $t\le T_*$, there hold
        \begin{align}
        \label{AP11}
        \langle w_{\sigma},\mathcal{B}_1(w,w)_{\sigma}\rangle_{X^s_{\gamma}}&\lesssim_s\|\langle\nabla_x\rangle^{\frac{1}{2}} w_{\sigma}\|^2_{X^s_{\gamma}}\|w_{\sigma}\|_{X^s_{\gamma}},\\
        \label{AP12}
        \langle w_{\sigma},\mathcal{B}_1(u^s,w)_{\sigma}\rangle_{X^s_{\gamma}}&\lesssim_{s,\gamma_0}\|\langle\nabla_x\rangle^{\frac{1}{2}} w_{\sigma}\|^2_{X^s_{\gamma}}\|U_{\sigma}\|_{H^{s+1}_{x}},\\\label{AP13}
        \langle w_{\sigma},\mathcal{B}_1(w,u^s)_{\sigma}\rangle_{X^s_{\gamma}}&\lesssim_{s,\gamma_0}\| w_{\sigma}\|^2_{X^s_{\gamma}}\|U_{\sigma}\|_{H^{s+1}_{x}}
        \end{align}
        and
        \begin{align}
        \label{AP13_1}
        \langle w_{\sigma},\mathcal{B}_2(w+u^s,&w+u^s)_{\sigma}\rangle_{X^s_{\gamma}}\lesssim_{s,\gamma_0}C_{\delta}\|\langle\nabla_x\rangle^{\frac{1}{2}}w_{\sigma}\|_{X^s_{\gamma}}(\|w_{\sigma}\|_{X^s_{\gamma}}+\|U_{\sigma}\|_{H^{s+1}_{x}})(\|\langle\nabla_x\rangle^{\frac{1}{2}}w_{\sigma}\|_{X^s_{\gamma}}+\|\partial_y w_{\sigma}\|_{X^s_{\gamma}})\notag\\&+C_{\delta}\|\langle\nabla_x\rangle^{\frac{1}{2}}w_{\sigma}\|_{X^s_{\gamma}}\|U_{\sigma}\|_{H^{s+1}_{x}}^2+\|w_{\sigma}\|_{\tilde{X}^s_{\gamma}}\|U_{\sigma}\|_{H^{s+1}_{x}}(\|\partial_y w_{\sigma}\|_{X^s_{\gamma}}+\|U_{\sigma}\|_{H^{s+1}_{x}}),
        \end{align}
        where $\mathcal{B}_1,\mathcal{B}_2$ are given in \eqref{bilinear_term} and $\|w_{\sigma}\|^2_{\tilde{X}^s_{\gamma}}:=\sum_{l=0}^1\sum_{|k|+j+l\le s}\|ye^{\gamma y^2}Z^j\partial^k_x \partial_y^lw_{\sigma}\|^2_{L^2}.$
    \end{lemma}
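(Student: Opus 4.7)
The plan is to estimate each inner product by pairing $w_\sigma$ with the corresponding bilinear expression in $X^s_\gamma$ and invoking the product estimates of Lemma \ref{product_estimate1} together with the outflow estimates of Lemma \ref{product_estimate2}. Whenever a tangential derivative sits on the second factor and no direct product bound is available, I would introduce a gain of half a derivative by writing
\[\langle w_\sigma, F_\sigma\rangle_{X^s_\gamma} = \langle \langle\nabla_x\rangle^{1/2} w_\sigma,\,\langle\nabla_x\rangle^{-1/2} F_\sigma\rangle_{X^s_\gamma},\]
and then absorb the resulting $\langle\nabla_x\rangle^{-1/2}$ via Lemma \ref{product_estimate1} parts 3--4 (or the outflow analogue in Lemma \ref{product_estimate2} part 2). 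Throughout, I would systematically split $u^s = (u^s-U) + U$, so that the tangentially-dependent outflow $U$ is handled by Lemma \ref{product_estimate2} while the transient part $u^s-U$ is handled by Lemma \ref{product_estimate1}, its $X^s_\gamma$-norms being controlled by $\|U_\sigma\|_{H^s_x}$ through Lemma \ref{APlemma1}.

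For \eqref{AP11}, a single application of Lemma \ref{product_estimate1} part 3 after the duality trick produces $\|\langle\nabla_x\rangle^{1/2}w_\sigma\|_{X^s_\gamma}^2\|w_\sigma\|_{X^s_\gamma}$ directly. In \eqref{AP12} the derivative falls on $w$, so the duality trick is necessary, and the combined estimate from Lemma \ref{product_estimate1} part 3 and Lemma \ref{product_estimate2} part 2 yields the bound (using $H^s_x\hookrightarrow H^{s+1}_x$ in norm only trivially; the extra derivative is kept to accommodate commutator contributions from $\gamma(t)$-weights). In \eqref{AP13} the derivative falls on $u^s$ and no duality is needed; a direct product estimate (Lemma \ref{product_estimate2} part 1 for the $U$-piece, Lemma \ref{product_estimate1} part 1 for the transient piece) costs one extra tangential derivative on $U$, producing the $H^{s+1}_x$-norm appearing in \eqref{AP13}.

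For \eqref{AP13_1}, expand
\[\mathcal{B}_2(w+u^s, w+u^s) = \mathcal{B}_2(w,w) + \mathcal{B}_2(w,u^s) + \mathcal{B}_2(u^s,w) + \mathcal{B}_2(u^s,u^s),\]
and treat each piece. The terms with $\partial_y w$ on the second argument are controlled by the duality trick combined with Lemma \ref{product_estimate1} part 4, producing the first line of \eqref{AP13_1} with the factor $\|\partial_y w_\sigma\|_{X^s_\gamma}$. The pure $\mathcal{B}_2(u^s,u^s)$ piece is deterministic: writing $\partial_y^{-1}(\nabla_x\!\cdot u^s) = (\partial_y^{-1}\psi)\nabla_x\!\cdot U$, a direct estimate using the Gaussian-weighted bounds of Lemma \ref{APlemma1} yields $\|\langle\nabla_x\rangle^{1/2}w_\sigma\|_{X^s_\gamma}\|U_\sigma\|_{H^{s+1}_x}^2$.

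The delicate piece is $\mathcal{B}_2(u^s,\cdot) = -(\partial_y^{-1}\psi)(\nabla_x\!\cdot U)\,\partial_y(\cdot)$: because $\partial_y^{-1}\psi$ grows linearly in $y$ at infinity, the natural estimate forces an unavoidable factor of $y$ on the $w_\sigma$-side, which is exactly the auxiliary weighted norm $\|w_\sigma\|_{\tilde X^s_\gamma}$ appearing in the last line of \eqref{AP13_1}. The main obstacle will be tracking how this extra $y$-weight interacts with the Gaussian factor $e^{\gamma y^2}$, the conormal derivatives $Z^j$, and the commutators generated by $\partial_y^{-1}$, while preserving integrability; the time-dependence $\gamma(t) = \gamma_0/\langle t\rangle^\delta$ is precisely what allows one to absorb the excess weight growth into the decreasing factor of $\gamma$, which is the origin of the $\delta$-dependent constant $C_\delta$ in \eqref{AP13_1}.
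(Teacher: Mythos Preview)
Your treatment of \eqref{AP11}--\eqref{AP13} via the duality trick and Lemmas~\ref{product_estimate1}--\ref{product_estimate2} matches the paper exactly. For \eqref{AP13_1} your approach is correct in spirit but differs from the paper in two respects worth noting.

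First, the paper does not expand $\mathcal{B}_2(w+u^s,w+u^s)$ into four bilinear pieces. Instead it splits only the \emph{first} argument as $(w+u^s-U)+U$, obtaining
\[
I_1=\langle w_\sigma,\mathcal{B}_2(w+u^s-U,w+u^s-U)_\sigma\rangle_{X^s_\gamma},\qquad
I_2=\langle w_\sigma,\mathcal{B}_2(U,w+u^s-U)_\sigma\rangle_{X^s_\gamma}.
\]
The point is that $w+u^s-U\in X^s_\gamma$ (decaying at infinity), so $I_1$ falls directly under Lemma~\ref{product_estimate1}(4), while for $I_2$ the relation $\partial_y^{-1}(\nabla_x\!\cdot U)=y\,\nabla_x\!\cdot U$ is \emph{exact}. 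Your route through $\partial_y^{-1}\psi$ would still work, but you would need an extra decomposition $\partial_y^{-1}\psi=y+\partial_y^{-1}(\psi-1)$ to isolate the linear growth; the paper's choice avoids this. The paper then handles the $y$-factor in $I_2$ via the explicit commutator $[Z^j,y]=jyZ^{j-1}$, which moves the weight onto $w_\sigma$ and produces the $\tilde X^s_\gamma$-norm; you allude to commutators but do not name this one.

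Second, your explanation of the constant $C_\delta$ is misplaced. In \eqref{AP13_1} the $\tilde X^s_\gamma$-term (last line) carries \emph{no} $C_\delta$; the $\delta$-dependence sits only on the $I_1$-terms. It arises because Lemma~\ref{product_estimate1}(2),(4) carry constants depending on $\gamma$, and on $[0,T_*]$ one has the lower bound $\gamma(t)\ge C_{\gamma_0,\delta}$. The decreasing-in-time nature of $\gamma(t)$ is not used in this lemma at all; it is exploited later, in the energy identity, where the term $\frac{\gamma_0\delta}{\langle t\rangle^{\delta+1}}\|w_\sigma\|^2_{\tilde X^s_\gamma}$ coming from $\partial_t e^{2\gamma y^2}$ absorbs the $\tilde X^s_\gamma$-contribution.
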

    \begin{proof} The estimates \eqref{AP11}--\eqref{AP13} follow directly from Lemma \ref{product_estimate1} and Lemma \ref{product_estimate2} combined with the estimates \eqref{AP8} and \eqref{AP9}. As for the estimate \eqref{AP13_1}, to treat the operator $\partial_y^{-1}$ in $\mathcal{B}_2$, one needs to decompose
    \begin{align*}
    \langle w_{\sigma},\mathcal{B}_2(w+u^s,w+u^s)_{\sigma}\rangle_{X^s_{\gamma}}&=\langle w_{\sigma},\mathcal{B}_2(w+u^s-U,w+u^s-U)_{\sigma}\rangle_{X^s_{\gamma}}+\langle w_{\sigma},\mathcal{B}_2(U,w+u^s-U)_{\sigma}\rangle_{X^s_{\gamma}}\\&=: I_1+I_2. 
    \end{align*}
    Notice that for $\gamma(t)$ given in \eqref{MR1}, $C_{\gamma_0,\delta}\le \gamma(t)\le \gamma_0$ for any $t\le T_*.$ Then, by applying Lemma \ref{product_estimate1} combined with \eqref{AP8}--\eqref{AP10} again, 
    \begin{align}
        \label{AP14}
        I_1&\lesssim_{s,\gamma_0,\delta}\|\langle\nabla_x\rangle^{\frac{1}{2}} w_{\sigma}\|_{X^s_{\gamma}}\| (w+u^s-U)_{\sigma}\|_{X^s_{\gamma}}\|\partial_y (w+u^s)_{\sigma}\|_{X^s_{\gamma}}\notag\\&\quad+\|\langle\nabla_x\rangle^{\frac{1}{2}} w_{\sigma}\|_{X^s_{\gamma}}\| \langle\nabla_x\rangle^{\frac{1}{2}} (w+u^s-U)_{\sigma}\|_{X^s_{\gamma}}\|(w+u^s-U)_{\sigma}\|_{X^s_{\gamma}}\notag\\&\lesssim_{s,\gamma_0,\delta}\|\langle\nabla_x\rangle^{\frac{1}{2}}w_{\sigma}\|_{X^s_{\gamma}}(\|w_{\sigma}\|_{X^s_{\gamma}}+\|U_{\sigma}\|_{H^{s+1}_{x}})(\|\langle\nabla_x\rangle^{\frac{1}{2}}w_{\sigma}\|_{X^s_{\gamma}}+\|\partial_y w_{\sigma}\|_{X^s_{\gamma}})\notag\\&\quad+\|\langle\nabla_x\rangle^{\frac{1}{2}} w_{\sigma}\|_{X^s_{\gamma}}\|U_{\sigma}\|_{H^{s+1}_x}^2.
    \end{align}
    For $I_2$, notice that $U$ is independent of $y$, which implies 
    \begin{align*}
        I_2=\langle w_{\sigma},\mathcal{B}_2(U,w+u^s-U)_{\sigma}\rangle_{X^s_{\gamma}}=-\langle w_{\sigma},\left((y\nabla_x\cdot U)\partial_y (w+u^s)\right)_{\sigma}\rangle_{X^s_{\gamma}}.
    \end{align*}
    To deal with the linearly increasing term $y\nabla_x\cdot U$, one needs the following commutator:
    \[[Z^j,y]u:=Z^j(yu)-yZ^ju=jyZ^{j-1}u,\qquad \forall j\ge 1.\]
    Then, 
    \begin{align}
        -I_2&=\sum_{l=0}^1\sum_{|k|+j+l\le s}\langle e^{\gamma y^2}Z^j\partial_x^k\partial_y^l w_{\sigma}, e^{\gamma y^2}Z^j\partial_x^k\left((y\nabla_x\cdot U)\partial_y^{1+l} (w+u^s)\right)_{\sigma}\rangle_{L^2}\notag\\&\quad+\sum_{|k|+j\le s-1}\langle e^{\gamma y^2}Z^j\partial_x^k\partial_y w_{\sigma}, e^{\gamma y^2}Z^j\partial_x^k\left((\nabla_x\cdot U)\partial_y (w+u^s)\right)_{\sigma}\rangle_{L^2}\notag\\&=\sum_{l=0}^1\sum_{|k|+j+l\le s}\langle ye^{\gamma y^2}Z^j\partial_x^k\partial_y^l w_{\sigma}, e^{\gamma y^2}Z^j\partial_x^k\partial_y^l\left((\nabla_x\cdot U)\partial_y(w+u^s)\right)_{\sigma}\rangle_{L^2}\notag\\&\quad+\sum_{l=0}^1\sum_{|k|+j+l\le s}j\langle ye^{\gamma y^2}Z^j\partial_x^k\partial_y^l w_{\sigma}, e^{\gamma y^2}Z^{j-1}\partial_x^k\partial_y^l\left((\nabla_x\cdot U)\partial_y(w+u^s)\right)_{\sigma}\rangle_{L^2}\notag\\&\quad+\langle\partial_y w_{\sigma},\left((\nabla_x\cdot U)\partial_y (w+u^s-U)\right)_{\sigma}\rangle_{H^{s-1}_{\gamma,\co}},\notag
    \end{align}
    which implies by using Lemma \ref{product_estimate2} and the estimates \eqref{AP8}--\eqref{AP10} that
    \begin{align}
        \label{AP17}
        I_2&\lesssim_s \|w_{\sigma}\|_{\tilde{X}^s_{\gamma}}\|\left((\nabla_x\cdot U)\partial_y(w+u^s)\right)_{\sigma}\|_{X^s_{\gamma}}+\|\partial_y w_{\sigma}\|_{H^{s-1}_{\gamma,\co}}\|\partial_y\left((\nabla_x\cdot U)(w+u^s-U)\right)_{\sigma}\|_{H^{s-1}_{\gamma,\co}}\notag\\ &\lesssim_s \|w_{\sigma}\|_{\tilde{X}^s_{\gamma}}\|\nabla_x\cdot U_{\sigma}\|_{H^s_{x}}\|\partial_y(w+u^s)_{\sigma}\|_{X^s_{\gamma}}+\|w_{\sigma}\|_{X^s_{\gamma}}\|\nabla_x\cdot U_{\sigma}\|_{H^{s}_{x}}\|(w+u^s-U)_{\sigma}\|_{X^{s}_{\gamma}}\notag\\ &\lesssim_{s,\gamma_0} \|w_{\sigma}\|_{\tilde{X}^s_{\gamma}}\|U_{\sigma}\|_{H^{s+1}_{x}}(\|\partial_yw_{\sigma}\|_{X^s_{\gamma}}+\|U_{\sigma}\|_{H^{s+1}_{x}})+\|w_{\sigma}\|_{X^s_{\gamma}}\|U_{\sigma}\|_{H^{s+1}_{x}}(\|w_{\sigma}\|_{X^s_{\gamma}}+\|U_{\sigma}\|_{H^{s+1}_{x}}).
    \end{align}
    Combining \eqref{AP14} and \eqref{AP17}, the estimate \eqref{AP13_1} holds.
    \end{proof}
    \begin{lemma}\label{APlemma3}
        {\rm(Estimate for the force terms).} Under the assumption of Proposition \ref{AP_proposition1} and for $t\le T_*$, there hold
        \begin{align}\label{AP22}
        \langle w_{\sigma}, \mathcal{B}_1(U-u^s,u^s)_{\sigma}-(1-\psi)\nabla_x P_{\sigma} \rangle_{X^s_{\gamma}}\lesssim_{s,\gamma_0}\|w_{\sigma}\|_{X^s_{\gamma}}(\|U_{\sigma}\|^2_{H^{s+1}_x}+\|\nabla_x P_{\sigma}\|_{H^s_{x}}),
        \end{align}
        and
        \begin{align}\label{AP21}
        \|(\mathbb{F}(w+u^s)-\psi\overline{\mathbb{F}}(U))_{\sigma}\|_{\mathbb{X}^s_{\gamma}}\lesssim_{\gamma_0}\mathcal{K}_s(\|w_{\sigma}\|_{X^s_{\gamma}},\|U_{\sigma}\|_{H^{s}_{x}})(\|\langle\nabla_x\rangle^{\frac{1}{2}}w_{\sigma}\|_{X^s_{\gamma}}+\|U_{\sigma}\|_{H^{s+1}_{x}}),
        \end{align}
        where $\mathcal{K}_s$ is a positive function depending only on $s$ which is non-decreasing in its arguments.
    \end{lemma}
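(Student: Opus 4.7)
The plan is to prove the two estimates separately by the same general scheme: peel off the solution norm by Cauchy--Schwarz for the first estimate, then reduce the remaining factors to profile bounds coming from Lemma \ref{APlemma1} together with the nonlinear estimates of Proposition \ref{Proposition_struc1}. The decay properties of $1-\psi$ and $\psi(1-\psi)$ established in Lemma \ref{APlemma1} are exactly what converts $X^s_{\gamma}$ norms of $y$-dependent profiles into $H^s_x$ norms of their tangential amplitudes.

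For the first estimate \eqref{AP22}, I would begin with Cauchy--Schwarz in $X^s_{\gamma}$ to reduce matters to estimating $\|\mathcal{B}_1(U-u^s,u^s)_{\sigma}-(1-\psi)\nabla_xP_{\sigma}\|_{X^s_{\gamma}}$. Since $u^s=\psi(t,y)U(t,x)$ with $\psi$ independent of $x$, the convection term rewrites as $\mathcal{B}_1(U-u^s,u^s)=\psi(1-\psi)(U\cdot\nabla_xU)$, a tensor product of the $(t,y)$-factor $\psi(1-\psi)$ and the $(t,x)$-factor $U\cdot\nabla_xU$. Because $\psi(1-\psi)$ inherits the Gaussian tail decay of $1-\psi$ used in the proof of Lemma \ref{APlemma1}, every weighted-conormal norm entering the definition of $X^s_{\gamma}$ factors as an $L^2_y$ weight-integral times $\|(U\cdot\nabla_xU)_{\sigma}\|_{H^s_x}$. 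The $L^2_y$ factors are uniformly bounded for $t\le T_*$, while the tangential factor is controlled by $\|U_{\sigma}\|_{H^s_x}\|U_{\sigma}\|_{H^{s+1}_x}\le\|U_{\sigma}\|^2_{H^{s+1}_x}$ via the Sobolev algebra. The same scheme, with $\nabla_xP$ in place of $U\cdot\nabla_xU$, gives $\|(1-\psi)\nabla_xP_{\sigma}\|_{X^s_{\gamma}}\lesssim_{s,\gamma_0}\|\nabla_xP_{\sigma}\|_{H^s_x}$, and \eqref{AP22} follows.

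For the second estimate \eqref{AP21}, the crucial step is the decomposition
\[\mathbb{F}(w+u^s)-\psi\overline{\mathbb{F}}(U)=\bigl(\mathbb{F}(w+u^s)-\overline{\mathbb{F}}(U)\bigr)+(1-\psi)\overline{\mathbb{F}}(U).\]
To the first bracket I would apply Proposition \ref{Proposition_struc1}(2) with the splitting $w+u^s=v+U$, where $v:=w-(1-\psi)U$ vanishes as $y\to\infty$. The resulting $\kappa_s$ in \eqref{estimate_noise2} depends on $\|v\|_{X^s_{\gamma,\sigma}}$ and $\|U\|_{H^s_{x,\sigma}}$, and Lemma \ref{APlemma1} bounds $\|v\|_{X^s_{\gamma,\sigma}}$ by $\|w_{\sigma}\|_{X^s_{\gamma}}+C\|U_{\sigma}\|_{H^s_x}$ and $\|\langle\nabla_x\rangle^{a}v\|_{X^s_{\gamma,\sigma}}$ by $\|\langle\nabla_x\rangle^{\frac12}w_{\sigma}\|_{X^s_{\gamma}}+C\|U_{\sigma}\|_{H^{s+1}_x}$, using $a\le\tfrac12$. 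The residue $(1-\psi)\overline{\mathbb{F}}(U)$ is again of the tensor-product form treated in Lemma \ref{APlemma1}, so its $\mathbb{X}^s_{\gamma,\sigma}$ norm reduces to $\|\overline{\mathbb{F}}(U)_{\sigma}\|_{\mathbb{H}^s_x}$, which in turn is controlled by $\tilde{\mathcal{K}}(\|U_{\sigma}\|_{H^s_x})\|U_{\sigma}\|_{H^{s+1}_x}$ by expanding $\overline{\mathbb{F}}=\sum\bar{a}_nU^n+\bar{\mathbb{G}}|\nabla_x|^{a}U$ term-by-term via the Sobolev algebra and invoking the summability condition \eqref{struc0_3}. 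Absorbing all the resulting constants into a single non-decreasing function $\mathcal{K}_s$ delivers \eqref{AP21}.

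The main technical point is that $\overline{\mathbb{F}}(U)$ alone, as opposed to the difference $\mathbb{F}(u)-\overline{\mathbb{F}}(U)$, is not directly covered by Proposition \ref{Proposition_struc1}, so this bound must be read off by hand from the power-series expansion in Assumption \ref{assumption}; once this observation is in place, the rest is a bookkeeping exercise combining Lemma \ref{APlemma1} with the cited product and nonlinear estimates.
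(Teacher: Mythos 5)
Your proposal is correct and takes essentially the same route as the paper. For \eqref{AP21} the decomposition $\mathbb{F}(u)-\psi\overline{\mathbb{F}}(U)=(\mathbb{F}(u)-\overline{\mathbb{F}}(U))+(1-\psi)\overline{\mathbb{F}}(U)$, the application of Proposition \ref{Proposition_struc1}(2) with $u=v+U$ and $v=w+(u^s-U)$, and the hand-computation for $\|\overline{\mathbb{F}}_\sigma(U)\|_{\mathbb{H}^s_x}$ (rather than an off-the-shelf citation) coincide with the paper's argument; for \eqref{AP22} your explicit tensor factorization $\mathcal{B}_1(U-u^s,u^s)=\psi(1-\psi)(U\cdot\nabla_xU)$ is just a slightly more concrete version of the paper's ``direct application of Lemmas \ref{product_estimate1}, \ref{product_estimate2} together with \eqref{AP8}--\eqref{AP10}.''
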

    \begin{proof}
        The estimate \eqref{AP22} follows from a direct application of Lemma \ref{product_estimate1}, Lemma \ref{product_estimate2} and the estimates \eqref{AP8}--\eqref{AP10}. For the stochastic term, by using Proposition \ref{Proposition_struc1}, 
        \begin{align}\label{AP19}
        \|(\mathbb{F}(u)-\psi\overline{\mathbb{F}}(U))_{\sigma}\|_{\mathbb{X}^s_{\gamma}}&\le \|(\mathbb{F}(u)-\overline{\mathbb{F}}(U))_{\sigma}\|_{\mathbb{X}^s_{\gamma}}+\|(1-\psi)\overline{\mathbb{F}}_{\sigma}(U)\|_{\mathbb{X}^s_{\gamma}}\notag\\&\lesssim_{s,\gamma_0}\kappa_s(\|w_{\sigma}\|_{X^s_{\gamma}},\|U_{\sigma}\|_{H^s_{x}})\left(\|\langle\nabla_x\rangle^{a}w_{\sigma}\|_{X^s_{\gamma}}+\|U_{\sigma}\|_{H^{s+a}_{x}}\right)+\|\overline{\mathbb{F}}_{\sigma}(U)\|_{\mathbb{H}^s_{x}}.
        \end{align}
        Proceeding similarly as in the derivation of \eqref{estimate_noise2}, one obtains
        \begin{align}\label{AP20}
        \|\overline{\mathbb{F}}_{\sigma}(U)\|_{\mathbb{H}^s_{x}}\le \mathcal{K}_s(\|U_{\sigma}\|_{H^{s}_{x}})\|U_{\sigma}\|_{H^{s}_{x}},
        \end{align}
        Plugging \eqref{AP20} into \eqref{AP19}, the desired estimate \eqref{AP21} holds.
    \end{proof}
    Now we are in a position to prove Proposition \ref{AP_proposition1}.
    \begin{proof}[Proof of Proposition \ref{AP_proposition1}]
        By applying the Fourier multiplier $e^{\sigma|\nabla_x|}$ on both sides of \eqref{AP1} with $\sigma$ given by \eqref{MR1},
    \begin{align}
        \label{AP2}
        &\mathrm{d}w_{\sigma}+\chi^2(\mathcal{E}_w(t))\left[\mathcal{B}_1(w,w)_{\sigma}+\mathcal{B}_1(w,u^s)_{\sigma}+\mathcal{B}_1(u^s,w)_{\sigma}+\mathcal{B}_2(w+u^s,w+u^s)_{\sigma}\right]\mathrm{d} t\notag\\&\ +(\lambda|\nabla_x|w_{\sigma}-\partial_y^2w_{\sigma})\mathrm{d}t= \left[\mathcal{B}_1(U-u^s,u^s)_{\sigma}-(1-\psi)\nabla_x P_{\sigma}\right]\mathrm{d} t+\chi^2(\mathcal{E}_w(t))(\mathbb{F}(u)-\psi \overline{\mathbb{F}}(U))_{\sigma}\mathrm{d}W.
    \end{align}

    \noindent\underline{\textbf{Step I. A-priori estimate in} $H^s_{\gamma,\co}$}. In order to treat the terms arising from taking integration by parts on the vertical dissipation term $\langle w_{\sigma},\partial_y^2 w_{\sigma}\rangle_{X^s_{\gamma}}$, one has to start with the a-priori estimate in $H^s_{\gamma,\co}$. First, by the It\^o formula, \begin{align}
        \label{AP26}
        &\mathrm{d}\|w_{\sigma}\|^2_{H^s_{\gamma,\co}}+2\left(\lambda\||\nabla_x|^{\frac{1}{2}}w_{\sigma}\|^2_{H^s_{\gamma,\co}}+\frac{\gamma_0\delta}{\langle t\rangle^{\delta+1}}\|w_{\sigma}\|^2_{\tilde{H}^s_{\gamma,\co}}-\langle w_{\sigma},\partial_y^2 w_{\sigma}\rangle_{H^s_{\gamma,\co}}\right)\mathrm{d}t\notag\\&\qquad+2\chi^2(\mathcal{E}_w(t))\left(\langle w_{\sigma},\mathcal{B}_1(w,w)_{\sigma}+\mathcal{B}_1(w,u^s)_{\sigma}+\mathcal{B}_1(u^s,w)_{\sigma}\rangle_{H^s_{\gamma,\co}}+\langle w_{\sigma},\mathcal{B}_2(w+u^s,w+u^s)_{\sigma}\rangle_{H^s_{\gamma,\co}}\right)\mathrm{d}t\notag\\&=\left(2\langle w_{\sigma}, \mathcal{B}_1(U-u^s,u^s)_{\sigma}-(1-\psi)\nabla_x P_{\sigma}\rangle_{H^s_{\gamma,\co}}+\chi^4(\mathcal{E}_w(t))\|(\mathbb{F}(u)-\psi\overline{\mathbb{F}}(U))_{\sigma}\|^2_{\mathbb{H}^{s}_{\gamma,\co}}\right)\mathrm{d} t\notag\\&\qquad+2\chi^2(\mathcal{E}_w(t))\langle w_{\sigma}, (\mathbb{F}(u)-\psi\overline{\mathbb{F}}(U))_{\sigma}\mathrm{d} W\rangle_{H^s_{\gamma,\co}}, 
    \end{align}
    where 
    \[\|w_{\sigma}\|^2_{\tilde{H}^s_{\gamma,\co}}:=\sum_{|k|+j\le s}\|ye^{\gamma y^2}Z^j\partial^k_x w_{\sigma}\|^2_{L^2}.\]
    By taking integration by parts, 
    \begin{align}\label{AP27}
    -\langle w_{\sigma},\partial_y^2 w_{\sigma}\rangle_{H^s_{\gamma,\co}}&=-\sum_{j+|k|\le s}\langle e^{\gamma y^2}Z^j\partial_x^kw_{\sigma},  e^{\gamma y^2}Z^j\partial_x^k\partial_y^2 w_{\sigma}\rangle_{L^2}\notag\\&=\|\partial_y w_{\sigma}\|^2_{H^s_{\gamma,\co}}+4\gamma\sum_{j+|k|\le s}\langle ye^{\gamma y^2}Z^j\partial_x^kw_{\sigma},  e^{\gamma y^2}Z^j\partial_x^k\partial_y w_{\sigma}\rangle_{L^2}\notag\\&\quad+2\sum_{j+|k|\le s}j\langle e^{\gamma y^2}Z^{j-1}\partial_x^k\partial_yw_{\sigma},  e^{\gamma y^2}Z^j\partial_x^k\partial_y w_{\sigma}\rangle_{L^2},
    \end{align}
    where
     \begin{align}\label{AP28}
        4\gamma\sum_{j+|k|\le s}\langle ye^{\gamma y^2}Z^j\partial_x^kw_{\sigma},  e^{\gamma y^2}Z^j\partial_x^k\partial_y w_{\sigma}\rangle_{L^2}\le \frac{4\gamma_0}{\langle t\rangle^{\delta}}\|w_{\sigma}\|_{\tilde{H}^s_{\gamma,\co}}\|\partial_yw_{\sigma}\|_{H^s_{\gamma,\co}}
    \end{align}
    and 
    \begin{align}\label{AP29}
        2\sum_{j+|k|\le s}j\langle e^{\gamma y^2}Z^{j-1}\partial_x^k\partial_yw_{\sigma},  e^{\gamma y^2}Z^j\partial_x^k\partial_y w_{\sigma}\rangle_{L^2}&\le 2s \sum_{\substack{j+|k|\le s\\ j\ge 1}}\langle e^{\gamma y^2}Z^{j-1}\partial_x^k\partial_yw_{\sigma},  e^{\gamma y^2}Z^j\partial_x^k\partial_y w_{\sigma}\rangle_{L^2}
        \notag\\&\le 2s\|\partial_y w_{\sigma}\|_{H^{s-1}_{\gamma,\co}}\|\partial_yw_{\sigma}\|_{H^s_{\gamma,\co}}.
    \end{align}
    Combining the estimates \eqref{AP27}--\eqref{AP29}, 
    \begin{align}
        \label{AP30}
        -\langle w_{\sigma},\partial_y^2 w_{\sigma}\rangle_{H^s_{\gamma,\co}}\ge \frac{1}{2}\|\partial_y w_{\sigma}\|^2_{H^s_{\gamma,\co}}-C_{s,\gamma_0}\left(\frac{1}{\langle t\rangle^{2\delta}}\|w_{\sigma}\|^2_{\tilde{X}^s_{\gamma}}+\|w_{\sigma}\|^2_{X^s_{\gamma}}\right),
    \end{align}
    where we recall that 
    \[\|w_{\sigma}\|^2_{\tilde{X}^s_{\gamma}}:=\sum_{l=0}^1\sum_{|k|+j+l\le s}\|ye^{\gamma y^2}Z^j\partial^k_x \partial_y^lw_{\sigma}\|^2_{L^2}.\]
    Taking supreme in time variable $t$ and then expectation on both sides of \eqref{AP26}, one obtains by using Lemma \ref{APlemma1}--\ref{APlemma3} combined with the estimate \eqref{AP30} and the Burkholder-Davis-Gundy inequality \eqref{pre6} that
    \begin{align}
        \label{AP31}
        &\mathbb{E}\sup_{t\le \tau}\|w_{\sigma}\|^2_{H^s_{\gamma,\co}}+2\mathbb{E}\int_0^{\tau}\left(\lambda\||\nabla_x|^{\frac{1}{2}} w_{\sigma}\|^2_{H^s_{\gamma,\co}}+\frac{1}{2}\|\partial_y w_{\sigma}\|^2_{H^s_{\gamma,\co}}+\frac{\gamma_0\delta}{\langle t\rangle^{\delta+1}}\|w_{\sigma}\|^2_{\tilde{H}^s_{\gamma,\co}}\right)\mathrm{d}t\notag\\&\qquad\qquad\qquad\qquad\qquad\qquad\qquad\qquad\qquad\qquad\qquad\qquad\qquad\qquad\lesssim_{s,\gamma_0}\|w_{\sigma}(0)\|^2_{H^s_{\gamma,\co}}+ \Lambda(\tau),
    \end{align}
    where 
    \begin{align}
    \label{AP31_1}\Lambda(\tau)&:=C_{\delta}\mathbb{E}\int_0^{\tau}\chi^2(\mathcal{E}_w(t))\|\langle\nabla_x\rangle^{\frac{1}{2}}w_{\sigma}\|^2_{X^s_{\gamma}}\left(\|w_{\sigma}\|_{X^s_{\gamma}}+\|U_{\sigma}\|_{H^{s+1}_x}\right)\mathrm{d}t\notag\\& +C_{\delta}\mathbb{E}\int_0^{\tau}\chi^2(\mathcal{E}_w(t))\|\langle\nabla_x\rangle^{\frac{1}{2}}w_{\sigma}\|_{X^s_{\gamma}}\|\partial_yw_{\sigma}\|_{X^s_{\gamma}}\left(\|w_{\sigma}\|_{X^s_{\gamma}}+\|U_{\sigma}\|_{H^{s+1}_x}\right)\mathrm{d}t\notag\\& +C_{\delta}\mathbb{E}\int_0^{\tau}\|\langle\nabla_x\rangle^{\frac{1}{2}}w_{\sigma}\|_{X^s_{\gamma}}\|U_{\sigma}\|^2_{H^{s+1}_x}\mathrm{d}t+\mathbb{E}\int_0^{\tau}(\|w_{\sigma}\|^2_{X^s_{\gamma}}+\|\nabla_x P_{\sigma}\|^2_{H^{s}_x})\mathrm{d}t\notag\\&+\mathbb{E}\int_0^{\tau}\left[\|w_{\sigma}\|_{\tilde{X}^s_{\gamma}}\|U_{\sigma}\|_{H^{s+1}_x}\left(\|\partial_y w_{\sigma}\|_{X^{s}_{\gamma}}+\|U_{\sigma}\|_{H^{s+1}_x}\right)+\frac{1}{\langle t\rangle^{2\delta}}\|w_{\sigma}\|^2_{\tilde{X}^s_{\gamma}}\right]\mathrm{d}t\notag\\&+\mathbb{E}\left(\int_0^{\tau}\chi^4(\mathcal{E}_w(t))\|w_{\sigma}\|^2_{X^s_{\gamma}}\|(\mathbb{F}(u)-\psi\overline{\mathbb{F}}(U))_{\sigma}\|^2_{\mathbb{X}^s_{\gamma}}\mathrm{d} t\right)^{\frac{1}{2}}+\mathbb{E}\int_0^{\tau}\chi^4(\mathcal{E}_w(t))\|(\mathbb{F}(u)-\psi\overline{\mathbb{F}}(U))_{\sigma}\|^2_{\mathbb{X}^s_{\gamma}}\mathrm{d} t.
    \end{align}
    \noindent\underline{\textbf{Step I\!I. A-priori estimate for $\partial_yw_{\sigma}$ in} $H^{s-1,0}_{\gamma,\co}$}. By taking $\partial_y$ on \eqref{AP2}, and using the It\^o formula, it follows
    \begin{align}
        \label{AP32}
        &\mathrm{d}\|\partial_y w_{\sigma}\|^2_{H^{s-1,0}_{\gamma,\co}}+2\left(\lambda\||\nabla_x|^{\frac{1}{2}}\partial_yw_{\sigma}\|^2_{H^{s-1,0}_{\gamma,\co}}+\frac{\gamma_0\delta}{\langle t\rangle^{\delta+1}}\|\partial_yw_{\sigma}\|^2_{\tilde{H}^{s-1,0}_{\gamma,\co}}-\langle \partial_yw_{\sigma},\partial_y^3 w_{\sigma}\rangle_{H^{s-1,0}_{\gamma,\co}}\right)\mathrm{d}t\notag\\&\ +2\chi^2(\mathcal{E}_w(t))\left(\langle \partial_yw_{\sigma},\partial_y\mathcal{B}_1(w,w)_{\sigma}+\partial_y\mathcal{B}_1(w,u^s)_{\sigma}+\partial_y\mathcal{B}_1(u^s,w)_{\sigma}+\partial_y\mathcal{B}_2(w+u^s,w+u^s)_{\sigma}\rangle_{H^{s-1,0}_{\gamma,\co}}\right)\mathrm{d}t\notag\\&=\left(2\langle \partial_yw_{\sigma}, \partial_y\mathcal{B}_1(U-u^s,u^s)_{\sigma}-\partial_y(1-\psi)\nabla_x P_{\sigma}\rangle_{H^{s-1,0}_{\gamma,\co}}+\chi^4(\mathcal{E}_w(t))\|\partial_y(\mathbb{F}(u)-\psi\overline{\mathbb{F}}(U))_{\sigma}\|^2_{\mathbb{H}^{s-1,0}_{\gamma,\co}}\right)\mathrm{d} t\notag\\&\ +2\chi^2(\mathcal{E}_w(t))\langle \partial_yw_{\sigma}, \partial_y(\mathbb{F}(u)-\psi\overline{\mathbb{F}}(U))_{\sigma}\mathrm{d} W\rangle_{H^{s-1,0}_{\gamma,\co}}, 
    \end{align}
    where for each $i$,
    \begin{align}
        \label{AP32_1}
        \|\partial_yw_{\sigma}\|^2_{\tilde{H}^{s-i-1,i}_{\gamma,\co}}:=\sum_{j\le i}\sum_{|k|\le s-i-1}\|ye^{\gamma y^2}Z^j\partial^k_x \partial_yw_{\sigma}\|^2_{L^2}.
    \end{align}
    Note that by using the equation \eqref{AP2}, one could deduce the boundary condition for $\partial_y^2 w_{\sigma}$ at $y=0$:  
    \[\partial_y^2w_{\sigma}|_{y=0}=\nabla_x P_{\sigma}.\]
    Therefore, by taking integration by parts,
    \begin{align}\label{AP33}
        -\langle \partial_yw_{\sigma},\partial_y^3 w_{\sigma}\rangle_{H^{s-1,0}_{\gamma,\co}}&=-\sum_{|k|\le s-1}\langle e^{\gamma y^2}\partial_x^k\partial_yw_{\sigma},e^{\gamma y^2}\partial_x^k\partial^3_yw_{\sigma}\rangle_{L^2}\notag\\&=\|\partial_y^2 w_{\sigma}\|^2_{H^{s-1,0}_{\gamma,\co}}+4\gamma\sum_{|k|\le s-1}\langle ye^{\gamma y^2}\partial_x^k\partial_yw_{\sigma},e^{\gamma y^2}\partial_x^k\partial^2_yw_{\sigma}\rangle_{L^2}\notag\\&\quad-\sum_{|k|\le s-1}\langle \partial_x^k\partial_yw_{\sigma}|_{y=0},\partial_x^k\nabla_xP_{\sigma}\rangle_{L^2},
    \end{align}
    where 
    \begin{align}\label{AP34}
        4\gamma\sum_{|k|\le s-1}\langle ye^{\gamma y^2}\partial_x^k\partial_yw_{\sigma},e^{\gamma y^2}\partial_x^k\partial^2_yw_{\sigma}\rangle_{L^2}\le \frac{4\gamma_0}{\langle t\rangle^{\delta}}\|\partial_yw_{\sigma}\|_{\tilde{H}^{s-1,0}_{\gamma,\co}}\|\partial_y^2 w_{\sigma}\|_{H^{s-1,0}_{\gamma,\co}}
    \end{align}
    and by using the estimate 
    \[\|f|_{y=0}\|_{L^2_x}\le \sqrt{2}\|f\|^{\frac{1}{2}}_{L^2}\|\partial_yf\|^{\frac{1}{2}}_{L^2},\]
    one has
    \begin{align}\label{AP35}
    \sum_{|k|\le s-1}\langle \partial_x^k\partial_yw_{\sigma}|_{y=0},\partial_x^k\nabla_xP_{\sigma}\rangle_{L^2}\lesssim_s \|\nabla_x P_{\sigma}\|_{H^s_x}\|\partial_yw_{\sigma}\|_{H^{s-1,0}_{\gamma,\co}}^{\frac{1}{2}}\|\partial_y^2w_{\sigma}\|_{H^{s-1,0}_{\gamma,\co}}^{\frac{1}{2}}.
    \end{align}
    Combining \eqref{AP33}--\eqref{AP35}, one obtains
    \begin{align}
        \label{AP36}
        -\langle \partial_yw_{\sigma},\partial_y^3 w_{\sigma}\rangle_{H^{s-1,0}_{\gamma,\co}}\ge \frac{1}{2}\|\partial_y^2 w_{\sigma}\|_{H^{s-1,0}_{\gamma,\co}}^2-C_{s,\gamma_0}\left(\|\nabla_x P_{\sigma}\|_{H^s_x}^2+\|w_{\sigma}\|^2_{X^s_{\gamma}}+\frac{1}{\langle t\rangle^{2\delta}}\|w_{\sigma}\|^2_{\tilde{X}^s_{\gamma}}\right).
    \end{align}
    In a way similar to the derivation of \eqref{AP31}, by using Lemmas \ref{APlemma1}--\ref{APlemma3}, the estimate \eqref{AP36} and the Burkholder-Davis-Gundy inequality \eqref{pre6}, it follows  from \eqref{AP32} that
    \begin{align}
        \label{AP37}
        &\mathbb{E}\sup_{t\le \tau}\|\partial_yw_{\sigma}\|^2_{H^{s-1,0}_{\gamma,\co}}+2\mathbb{E}\int_0^{\tau}\left(\lambda\||\nabla_x|^{\frac{1}{2}} \partial_yw_{\sigma}\|^2_{H^{s-1,0}_{\gamma,\co}}+\frac{1}{2}\|\partial^2_y w_{\sigma}\|^2_{H^{s-1,0}_{\gamma,\co}}+\frac{\gamma_0\delta}{\langle t\rangle^{\delta+1}}\|\partial_yw_{\sigma}\|^2_{\tilde{H}^{s-1,0}_{\gamma,\co}}\right)\mathrm{d}t\notag\\&\qquad\qquad\qquad\qquad\qquad\qquad\qquad\qquad\qquad\qquad\qquad\qquad\qquad\lesssim_{s,\gamma_0} \|\partial_yw_{\sigma}(0)\|^2_{H^{s-1,0}_{\gamma,\co}}+\Lambda(\tau)
    \end{align}
    for any stopping time $\tau\le T_*$, where $\Lambda(\tau)$ is given by \eqref{AP31_1}.

    \noindent\underline{\textbf{Step I\!I\!I. A-priori estimate for $\partial_y w_{\sigma}$ in} $H^{s-1}_{\gamma,\co}$}. By noting the relation \eqref{relation} between the spaces $H_{\gamma, \co}^{s-1-k, k}$ and $H_{\gamma, \co}^{s-1}$ given by Definitions 2.2 and 2.3, starting from \eqref{AP37} we shall derive the estimate of $\|\partial_y w_\sigma\|_{H^{s-1}_{\gamma, \co}}$ by induction on $k$ of $H_{\gamma, \co}^{s-1-k, k}$.
    
    Assume that for some $i-1\le s-2$, there holds
    \begin{align}
        \label{AP38}
        \mathbb{E}&\sup_{t\le \tau}\|\partial_yw_{\sigma}\|^2_{H^{s-i,i-1}_{\gamma,\co}}+2\mathbb{E}\int_0^{\tau}\left(\lambda\||\nabla_x|^{\frac{1}{2}} \partial_yw_{\sigma}\|^2_{H^{s-i,i-1}_{\gamma,\co}}+\frac{1}{2}\|\partial^2_y w_{\sigma}\|^2_{H^{s-i,i-1}_{\gamma,\co}}+\frac{\gamma_0\delta}{\langle t\rangle^{\delta+1}}\|\partial_yw_{\sigma}\|^2_{\tilde{H}^{s-i,i-1}_{\gamma,\co}}\right)\mathrm{d}t\notag\\&\qquad\qquad\qquad\qquad\qquad\qquad\qquad\qquad\qquad\qquad\qquad\qquad\qquad\lesssim_{s,\gamma_0}\|\partial_yw_{\sigma}(0)\|^2_{H^{s-i,i-1}_{\gamma,\co}}+ \Lambda(\tau)
    \end{align}
    with $\Lambda(\tau)$ being given in \eqref{AP31_1} and $\|\partial_yw_{\sigma}\|^2_{\tilde{H}^{s-i,i-1}_{\gamma,\co}}$ defined as in \eqref{AP32_1}. Then, by using the It\^o formula, from  \eqref{AP2} one has
    \begin{align}
        \label{AP39}
        &\mathrm{d}\|\partial_y w_{\sigma}\|^2_{H^{s-i-1,i}_{\gamma,\co}}+2\left(\lambda\||\nabla_x|^{\frac{1}{2}}\partial_yw_{\sigma}\|^2_{H^{s-i-1,i}_{\gamma,\co}}+\frac{\gamma_0\delta}{\langle t\rangle^{\delta+1}}\|\partial_yw_{\sigma}\|^2_{\tilde{H}^{s-i-1,i}_{\gamma,\co}}-\langle \partial_yw_{\sigma},\partial_y^3 w_{\sigma}\rangle_{H^{s-i-1,i}_{\gamma,\co}}\right)\mathrm{d}t\notag\\&+2\chi^2(\mathcal{E}_w(t))\left(\langle \partial_yw_{\sigma},\partial_y\mathcal{B}_1(w,w)_{\sigma}+\partial_y\mathcal{B}_1(w,u^s)_{\sigma}+\partial_y\mathcal{B}_1(u^s,w)_{\sigma}+\partial_y\mathcal{B}_2(w+u^s,w+u^s)_{\sigma}\rangle_{H^{s-i-1,i}_{\gamma,\co}}\right)\mathrm{d}t\notag\\&=\left(2\langle \partial_yw_{\sigma}, \partial_y\mathcal{B}_1(U-u^s,u^s)_{\sigma}-\partial_y(1-\psi)\nabla_x P_{\sigma}\rangle_{H^{s-i-1,i}_{\gamma,\co}}+\chi^4(\mathcal{E}_w(t))\|\partial_y(\mathbb{F}(u)-\psi\overline{\mathbb{F}}(U))_{\sigma}\|^2_{\mathbb{H}^{s-i-1,i}_{\gamma,\co}}\right)\mathrm{d} t\notag\\&+2\chi^2(\mathcal{E}_w(t))\langle \partial_yw_{\sigma}, \partial_y(\mathbb{F}(u)-\psi\overline{\mathbb{F}}(U))_{\sigma}\mathrm{d} W\rangle_{H^{s-i-1,i}_{\gamma,\co}}. 
    \end{align}
    For the fourth term on the left of \eqref{AP39}, we have
    \begin{align}\label{AP40}
    -\langle \partial_y w_{\sigma},\partial_y^{3} w_{\sigma}\rangle_{H^{s-i-1,i}_{\gamma,\co}}&=-\sum_{j\le i}\sum_{|k|\le s-i-1}\langle e^{\gamma y^2}Z^j\partial_x^k\partial_yw_{\sigma}, e^{\gamma y^2}Z^j\partial_x^k\partial_y^3w_{\sigma}\rangle_{L^2}\notag\\&=\|\partial_y^2 w_{\sigma}\|^2_{H^{s-i-1,i}_{\gamma,\co}}+\sum_{j\le i}\sum_{|k|\le s-i-1}2j\langle e^{\gamma y^2}Z^{j-1}\partial_x^k\partial_y^2w_{\sigma}, e^{\gamma y^2}Z^j\partial_x^k\partial_y^2w_{\sigma}\rangle_{L^2}\notag\\&\quad+4\gamma\sum_{j\le i}\sum_{|k|\le s-i-1}\langle ye^{\gamma y^2}Z^{j}\partial_x^k\partial_yw_{\sigma}, e^{\gamma y^2}Z^j\partial_x^k\partial_y^2w_{\sigma}\rangle_{L^2}\notag\\&\quad-\sum_{|k|\le s-i-1}\langle \partial_x^k\partial_yw_{\sigma}|_{y=0}, \partial_x^k\nabla_x P_{\sigma}\rangle_{L^2},
    \end{align}
    where
    \begin{align}
        \label{AP41}
        \sum_{j\le i}\sum_{|k|\le s-i-1}2j\langle e^{\gamma y^2}Z^{j-1}\partial_x^k\partial_y^2w_{\sigma}, e^{\gamma y^2}Z^j\partial_x^k\partial_y^2w_{\sigma}\rangle_{L^2}\lesssim_s\|\partial_y^2w_{\sigma}\|_{H^{s-i-1,i-1}_{\gamma,\co}}\|\partial_y^2 w_{\sigma}\|_{H^{s-i-1,i}_{\gamma,\co}}
    \end{align}
    and 
    \begin{align}
        \label{AP42}
        4\gamma\sum_{j\le i}\sum_{|k|\le s-i-1}\langle ye^{\gamma y^2}Z^{j}\partial_x^k\partial_yw_{\sigma}, e^{\gamma y^2}Z^j\partial_x^k\partial_y^2w_{\sigma}\rangle_{L^2}\le \frac{4\gamma_0}{\langle t\rangle^{\delta}}\|w_{\sigma}\|_{\tilde{X}^{s}_{\gamma}}\|\partial_y^2 w_{\sigma}\|_{H^{s-i-1,i}_{\gamma,\co}}.
    \end{align}
    Similar to \eqref{AP35}, it follows that
    \begin{align}
        \label{AP43}
        \sum_{|k|\le s-i-1}\langle \partial_x^k\partial_yw_{\sigma}|_{y=0}, \partial_x^k\nabla_x P_{\sigma}\rangle_{L^2}&\lesssim_s \|\nabla_x P_{\sigma}\|_{H^s_x}\|\partial_yw_{\sigma}\|_{H^{s-i-1,0}_{\gamma,\co}}^{\frac{1}{2}}\|\partial_y^2w_{\sigma}\|_{H^{s-i-1,0}_{\gamma,\co}}^{\frac{1}{2}}\notag\\&\lesssim_s \|\nabla_x P_{\sigma}\|_{H^s_x}\|w_{\sigma}\|_{X^s_{\gamma}}^{\frac{1}{2}}\|\partial_y^2w_{\sigma}\|_{H^{s-i-1,i}_{\gamma,\co}}^{\frac{1}{2}}.
    \end{align}
    Combining the estimates \eqref{AP40}--\eqref{AP43}, one gets
    \begin{align}
        \label{AP44}
        -\langle \partial_y w_{\sigma},\partial_y^{3} w_{\sigma}\rangle_{H^{s-i-1,i}_{\gamma,\co}}\ge \frac{1}{2}\|\partial_y^2 w_{\sigma}\|_{H^{s-i-1,i}_{\gamma,\co}}^2&-C_{s,\gamma_0}\left(\|\nabla_x P_{\sigma}\|_{H^s_x}^2+\|w_{\sigma}\|^2_{X^s_{\gamma}}+\frac{1}{\langle t\rangle^{2\delta}}\|w_{\sigma}\|^2_{\tilde{X}^s_{\gamma}}\right)\notag\\&\quad-C_s\|\partial_y^2w_{\sigma}\|_{H^{s-i-1,i-1}_{\gamma,\co}}^2.
    \end{align}
    Plugging \eqref{AP44} into \eqref{AP39}, as the derivation for \eqref{AP31}, there holds
    \begin{align}
        \label{AP45}
        \mathbb{E}&\sup_{t\le \tau}\|\partial_yw_{\sigma}\|^2_{H^{s-i-1,i}_{\gamma,\co}}+2\mathbb{E}\int_0^{\tau}\left(\lambda\||\nabla_x|^{\frac{1}{2}} \partial_yw_{\sigma}\|^2_{H^{s-i-1,i}_{\gamma,\co}}+\frac{1}{2}\|\partial^2_y w_{\sigma}\|^2_{H^{s-i-1,i}_{\gamma,\co}}+\frac{\gamma_0\delta}{\langle t\rangle^{\delta+1}}\|\partial_yw_{\sigma}\|^2_{\tilde{H}^{s-i-1,i}_{\gamma,\co}}\right)\mathrm{d}t\notag\\&\qquad\qquad\qquad\qquad\qquad\qquad\qquad\lesssim_{s,\gamma_0} \|\partial_yw_{\sigma}(0)\|^2_{H^{s-i-1,i}_{\gamma,\co}}+\Lambda(\tau)+\mathbb{E}\int_0^{\tau}\|\partial_y^2w_{\sigma}\|_{H^{s-i-1,i-1}_{\gamma,\co}}^2\mathrm{d}t
    \end{align}
    for any stopping time $\tau\le T_*$, where $\Lambda(\tau)$ is given by \eqref{AP31_1}. Notice that 
    \begin{align}
        \label{AP46}
        \mathbb{E}\int_0^{\tau}\|\partial_y^2w_{\sigma}\|_{H^{s-i-1,i-1}_{\gamma,\co}}^2\mathrm{d}t\le \mathbb{E}\int_0^{\tau}\|\partial_y^2w_{\sigma}\|_{H^{s-i,i-1}_{\gamma,\co}}^2\mathrm{d}t\lesssim_{s,\gamma_0} \Lambda(\tau),
    \end{align}
    according to the induction hypothesis \eqref{AP38}. Plugging \eqref{AP46} into \eqref{AP45}, one finally obtains
    \begin{align}
        \label{AP47}
        \mathbb{E}&\sup_{t\le \tau}\|\partial_yw_{\sigma}\|^2_{H^{s-i-1,i}_{\gamma,\co}}+2\mathbb{E}\int_0^{\tau}\left(\lambda\||\nabla_x|^{\frac{1}{2}} \partial_yw_{\sigma}\|^2_{H^{s-i-1,i}_{\gamma,\co}}+\frac{1}{2}\|\partial^2_y w_{\sigma}\|^2_{H^{s-i-1,i}_{\gamma,\co}}+\frac{\gamma_0\delta}{\langle t\rangle^{\delta+1}}\|\partial_yw_{\sigma}\|^2_{\tilde{H}^{s-i-1,i}_{\gamma,\co}}\right)\mathrm{d}t\notag\\&\qquad\qquad\qquad\qquad\qquad\qquad\qquad\qquad\qquad\qquad\qquad\qquad\qquad\lesssim_{s,\gamma_0} \|\partial_yw_{\sigma}(0)\|^2_{H^{s-i-1,i}_{\gamma,\co}}+\Lambda(\tau).
    \end{align}
    Therefore, by induction on $i$, the estimate \eqref{AP38} holds for all $0\le i\le s-1$, which leads to
 \begin{align}
        \label{AP47_1}
        \mathbb{E}&\sup_{t\le \tau}\|\partial_yw_{\sigma}\|^2_{H^{s-1}_{\gamma,\co}}+2\mathbb{E}\int_0^{\tau}\left(\lambda\||\nabla_x|^{\frac{1}{2}} \partial_yw_{\sigma}\|^2_{H^{s-1}_{\gamma,\co}}+\frac{1}{2}\|\partial^2_y w_{\sigma}\|^2_{H^{s-1}_{\gamma,\co}}+\frac{\gamma_0\delta}{\langle t\rangle^{\delta+1}}\|\partial_yw_{\sigma}\|^2_{\tilde{H}^{s-1}_{\gamma,\co}}\right)\mathrm{d}t\notag\\&\qquad\qquad\qquad\qquad\qquad\qquad\qquad\qquad\qquad\qquad\qquad\qquad\qquad\lesssim_{s,\gamma_0} \|\partial_yw_{\sigma}(0)\|^2_{H^{s-1}_{\gamma,\co}}+\Lambda(\tau).
    \end{align}

  \noindent\underline{\textbf{Step I\!V. Conclude the estimate \eqref{AP53}}}.

From the a-priori estimates \eqref{AP31} and \eqref{AP47_1}, we obtain
    \begin{align}\label{AP48}
        \mathbb{E}\sup_{t\le \tau}\|w_{\sigma}\|^2_{X^s_{\gamma}}+2\mathbb{E}\int_0^{\tau}&\left(\lambda\||\nabla_x|^{\frac{1}{2}} w_{\sigma}\|^2_{X^s_{\gamma}}+\frac{1}{2}\|\partial_y w_{\sigma}\|^2_{X^s_{\gamma}}+\frac{\gamma_0\delta}{\langle t\rangle^{\delta+1}}\|w_{\sigma}\|^2_{\tilde{X}^s_{\gamma}}\right)\mathrm{d}t\notag\\&\qquad\qquad\qquad\qquad\qquad\qquad\qquad\lesssim_{s,\gamma_0}\|w_{\sigma}(0)\|^2_{X^s_{\gamma}}+\Lambda(\tau)
    \end{align}
    with $\Lambda(\tau)$ being given by \eqref{AP31_1}. In order to conclude the estimate \eqref{AP53}, one needs to choose the parameters $\delta,\lambda $ properly. Notice that for the fourth term on the right hand-side of \eqref{AP31_1}, there holds
    \begin{align*}
        C_{s,\gamma_0}\mathbb{E}\int_0^{\tau}&\left[\|w_{\sigma}\|_{\tilde{X}^s_{\gamma}}\|U_{\sigma}\|_{H^{s+1}_x}\left(\|\partial_y w_{\sigma}\|_{X^{s}_{\gamma}}+\|U_{\sigma}\|_{H^{s+1}_x}\right)+\frac{1}{\langle t\rangle^{2\delta}}\|w_{\sigma}\|^2_{\tilde{X}^s_{\gamma}}\right]\mathrm{d}t\notag\\&\qquad\qquad\le  C_{s,\gamma_0}\mathbb{E}\int_0^{\tau}\left[\frac{\mathcal{N}}{\langle t\rangle^{\delta}}\|w_{\sigma}\|_{\tilde{X}^s_{\gamma}}\left(\|\partial_y w_{\sigma}\|_{X^{s}_{\gamma}}+\|U_{\sigma}\|_{H^{s+1}_x}\right)+\frac{1}{\langle t\rangle^{2\delta}}\|w_{\sigma}\|^2_{\tilde{X}^s_{\gamma}}\right]\mathrm{d}t\notag\\&\qquad\qquad\le C_{s,\gamma_0,\mathcal{N}}\mathbb{E}\int_0^{\tau}\frac{1}{\langle t\rangle^{2\delta}}\|w_{\sigma}\|^2_{\tilde{X}^s_{\gamma}}\mathrm{d}t+\frac{1}{4}\mathbb{E}\int_0^{\tau}\left(\|\partial_y w_{\sigma}\|_{X^{s}_{\gamma}}^2+\|U_{\sigma}\|^2_{H^{s+1}_x}\right)\mathrm{d}t,
    \end{align*}
    by using the assumption \eqref{APassumption}. Setting 
    \begin{align}\label{AP48_1}
        \delta>1\vee\frac{C_{s,\gamma_0,\mathcal{N}}}{2\gamma_0},
    \end{align} 
    one has 
    \[\frac{2\gamma_0\delta}{\langle t\rangle^{\delta+1}}>\frac{C_{s,\gamma_0,\mathcal{N}}}{\langle t\rangle^{\delta+1}}>\frac{C_{s,\gamma_0,\mathcal{N}}}{\langle t\rangle^{2\delta}},\]
    thus, from \eqref{AP48} it implies
    \begin{align}\label{AP49}
        \mathbb{E}\sup_{t\le \tau}\|w_{\sigma}\|^2_{X^s_{\gamma}}+2\mathbb{E}\int_0^{\tau}\left(\lambda\||\nabla_x|^{\frac{1}{2}} w_{\sigma}\|^2_{X^s_{\gamma}}+\frac{3}{8}\|\partial_y w_{\sigma}\|^2_{X^s_{\gamma}}\right)\mathrm{d}t\lesssim_{s,\gamma_0,\mathcal{N}} \|w_{\sigma}(0)\|^2_{X^s_{\gamma}}+\Lambda_1(\tau),
    \end{align}
    where
    \begin{align}
    \label{AP50}\Lambda_1(\tau)&:=\mathbb{E}\int_0^{\tau}\chi^2(\mathcal{E}_w(t))\|\langle\nabla_x\rangle^{\frac{1}{2}}w_{\sigma}\|^2_{X^s_{\gamma}}\left(\|w_{\sigma}\|_{X^s_{\gamma}}+\|U_{\sigma}\|_{H^{s+1}_x}\right)\mathrm{d}t\notag\\&+\mathbb{E}\int_0^{\tau}\chi^2(\mathcal{E}_w(t))\|\langle\nabla_x\rangle^{\frac{1}{2}}w_{\sigma}\|_{X^s_{\gamma}}\|\partial_yw_{\sigma}\|_{X^s_{\gamma}}\left(\|w_{\sigma}\|_{X^s_{\gamma}}+\|U_{\sigma}\|_{H^{s+1}_x}\right)\mathrm{d}t\notag\\&+\mathbb{E}\int_0^{\tau}\left(\|\langle\nabla_x\rangle^{\frac{1}{2}}w_{\sigma}\|_{X^s_{\gamma}}+1\right)\|U_{\sigma}\|^2_{H^{s+1}_x}\mathrm{d}t+\mathbb{E}\int_0^{\tau}(\|w_{\sigma}\|^2_{X^s_{\gamma}}+\|\nabla_x P_{\sigma}\|^2_{H^{s}_x})\mathrm{d}t\notag\\&+\mathbb{E}\left(\int_0^{\tau}\chi^4(\mathcal{E}_w(t))\|w_{\sigma}\|^2_{X^s_{\gamma}}\|(\mathbb{F}(u)-\psi\overline{\mathbb{F}}(U))_{\sigma}\|^2_{\mathbb{X}^s_{\gamma}}\mathrm{d} t\right)^{\frac{1}{2}}+\mathbb{E}\int_0^{\tau}\chi^4(\mathcal{E}_w(t))\|(\mathbb{F}(u)-\psi\overline{\mathbb{F}}(U))_{\sigma}\|^2_{\mathbb{X}^s_{\gamma}}\mathrm{d} t.
    \end{align}
    It remains to bound $\Lambda_1$. Define
    \begin{align}\label{AP50_1}
    \tau_{3\mathcal{M}}:=\inf\{t\ge0| \mathcal{E}_w(t)\ge 3\mathcal{M}\}
    \end{align}
    with $\mathcal{M}$ being given in \eqref{chi}, and notice that $\mathcal{E}_w(t)$ is non-decreasing, then by utilizing \eqref{AP21} in Lemma \ref{APlemma3}, it follows that
    \begin{align}\label{AP51}
    \mathbb{E}\left(\int_0^{\tau}\chi^4(\mathcal{E}_w(t))\|w_{\sigma}\|^2_{X^s_{\gamma}}\|(\mathbb{F}(u)-\psi\overline{\mathbb{F}}(U))_{\sigma}\|^2_{\mathbb{X}^s_{\gamma}}\mathrm{d} t\right)^{\frac{1}{2}}&\lesssim_{s,\mathcal{M},\mathcal{N}}\mathbb{E}\left(\int_0^{\tau\wedge \tau_{3\mathcal{M}}}\left(\|\langle\nabla_x\rangle^{a} w_{\sigma}\|_{X^s_{\gamma}}^2+1\right)\mathrm{d}t\right)^{\frac{1}{2}}\notag\\&\lesssim_{s,\gamma_0,\mathcal{M},\mathcal{N}}\mathbb{E}\left(1+\mathcal{E}_w^2(\tau_{3\mathcal{M}})\right)^{\frac{1}{2}}\le C_{s,\gamma_0,\mathcal{M},\mathcal{N}}.
    \end{align}
    Similarly, 
    \begin{align}
        \label{AP51_1}
        \mathbb{E}\int_0^{\tau}\chi^4(\mathcal{E}_w(t))\|(\mathbb{F}(u)-\psi\overline{\mathbb{F}}(U))_{\sigma}\|^2_{\mathbb{X}^s_{\gamma}}\mathrm{d} t\le C_{s,\gamma_0,\mathcal{M},\mathcal{N}}.
    \end{align}
    As for the remaining terms on the right hand-side of \eqref{AP50}, one has 
    \begin{align}
        \label{AP51_2}
        \mathbb{E}\int_0^{\tau}\chi^2(\mathcal{E}_w(t))&\|\langle\nabla_x\rangle^{\frac{1}{2}}w_{\sigma}\|^2_{X^s_{\gamma}}\left(\|w_{\sigma}\|_{X^s_{\gamma}}+\|U_{\sigma}\|_{H^{s+1}_x}\right)\mathrm{d}t\notag\\&+\mathbb{E}\int_0^{\tau}\chi^2(\mathcal{E}_w(t))\|\langle\nabla_x\rangle^{\frac{1}{2}}w_{\sigma}\|_{X^s_{\gamma}}\|\partial_yw_{\sigma}\|_{X^s_{\gamma}}\left(\|w_{\sigma}\|_{X^s_{\gamma}}+\|U_{\sigma}\|_{H^{s+1}_x}\right)\mathrm{d}t\notag\\\le& C_{s,\mathcal{M},\mathcal{N}}\mathbb{E}\int_0^{\tau\wedge \tau_{3\mathcal{M}}}\||\nabla_x|^{\frac{1}{2}} w_{\sigma}\|^2_{X^s_{\gamma}}\mathrm{d}t+\frac{1}{4}\mathbb{E}\int_0^{\tau\wedge \tau_{3\mathcal{M}}}\|\partial_yw_{\sigma}\|^2_{X^s_{\gamma}}\mathrm{d}t+C_{s,\gamma_0,\mathcal{M},\mathcal{N}},
    \end{align}
    and
    \begin{align}
        \label{AP51_3}
        \mathbb{E}\int_0^{\tau}\left(\|\langle\nabla_x\rangle^{\frac{1}{2}}w_{\sigma}\|_{X^s_{\gamma}}+1\right)&\|U_{\sigma}\|^2_{H^{s+1}_x}\mathrm{d}t+\mathbb{E}\int_0^{\tau}(\|w_{\sigma}\|^2_{X^s_{\gamma}}+\|\nabla_x P_{\sigma}\|^2_{H^{s}_x})\mathrm{d}t\notag\\&\le C_{s,\gamma_0,\mathcal{M},\mathcal{N}}+C_{s,\gamma_0,\mathcal{M},\mathcal{N}}\mathbb{E}\int_0^{\tau}\left(\||\nabla_x|^{\frac{1}{2}} w_{\sigma}\|^2_{X^s_{\gamma}}+\|w_{\sigma}\|^2_{X^s_{\gamma}}\right)\mathrm{d}t.
    \end{align}
    Set 
    \[2\lambda\ge C_{s,\gamma_0,\mathcal{M},\mathcal{N}}+C_{s,\mathcal{M},\mathcal{N}}+1,\]
    then by plugging the estimates \eqref{AP51}--\eqref{AP51_3} into \eqref{AP49}, it follows
    \begin{align}\label{AP52}
        \mathbb{E}\sup_{t\le \tau}\|w_{\sigma}\|^2_{X^s_{\gamma}}+\mathbb{E}\int_0^{\tau}&\left(\||\nabla_x|^{\frac{1}{2}} w_{\sigma}\|^2_{X^s_{\gamma}}+\|\partial_y w_{\sigma}\|^2_{X^s_{\gamma}}\right)\mathrm{d}t\lesssim_{s,\gamma_0,\mathcal{M},\mathcal{N}} \mathbb{E}\int_0^{\tau}\|w_{\sigma}\|^2_{X^s_{\gamma}}\mathrm{d}t+\mathbb{E}\|w_{0}\|^2_{X^s_{\gamma_0,\sigma_0}}+1.
    \end{align}
    Finally, an application of Gronwall's inequality then yields the desired estimate \eqref{AP53}. This completes the proof of Proposition \ref{AP_proposition1}.
    \end{proof}
    
    \subsection{Estimates of higher order derivatives in tangential variables}

    The subsection is devoted to the following higher order estimate in tangential variables, which will be used to prove the convergence of the approximate solutions later. 
    
    \begin{prop}\label{AP_proposition2}
        Let $s\ge 4$ and $\gamma_0,\sigma_0>0$. Suppose that $U_0,\nabla_x P|_{t=0}$ satisfy \eqref{APassumption} for some positive constant $\mathcal{N}$ and $T_*$ is the stopping time given by \eqref{AP1_1}. Then, there exist positive constants $\lambda,\delta$ such that for any smooth solution $w$ of the problem \eqref{AP1} and any stopping time $\tau\le T_*$, one has
        \begin{align}\label{AP68}
        \mathbb{E}\sup_{t\le \tau}\|\nabla_xw_{\sigma}\|^2_{X^s_{\gamma}}+\mathbb{E}\int_0^{\tau}&\left(\||\nabla_x|^{\frac{3}{2}} w_{\sigma}\|^2_{X^s_{\gamma}}+\|\partial_y\nabla_x w_{\sigma}\|^2_{X^s_{\gamma}}\right)\mathrm{d}t\lesssim_{s,\gamma_0,\mathcal{M},\mathcal{N}} \mathbb{E}\|\nabla_xw_{0}\|^2_{X^s_{\gamma_0,\sigma_0}}+1.
    \end{align}
    \end{prop}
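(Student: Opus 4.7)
The proof mirrors the four-step energy scheme of Proposition \ref{AP_proposition1}, applied now to the equation obtained by differentiating \eqref{AP1} once tangentially. The plan is to set $W:=\nabla_x w$, derive the evolution equation for $W_\sigma$ by applying first $\nabla_x$ and then the Fourier multiplier $e^{\sigma|\nabla_x|}$ to \eqref{AP1}, and then run the It\^o formula on $\|W_\sigma\|^2_{H^s_{\gamma,\co}}$ followed by an induction on $i$ for $\|\partial_y W_\sigma\|^2_{H^{s-i-1,i}_{\gamma,\co}}$, exactly as in Steps I--III of the previous proof. The resulting initial data term on the right-hand side is $\mathbb{E}\|\nabla_x w_0\|^2_{X^s_{\gamma_0,\sigma_0}}$, which is under control: since $w_0\in X^s_{\gamma_0,2\sigma_0}$ and the Fourier multiplier $|\xi|e^{-\sigma_0|\xi|}$ is bounded, the instant loss of half of the initial analytic radius absorbs the extra tangential derivative.

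For the drift, each bilinear term $\nabla_x\mathcal{B}_j$ is re-estimated by Lemmas \ref{product_estimate1} and \ref{product_estimate2} one order higher than in Lemma \ref{APlemma2}, and the profile estimates of Lemma \ref{APlemma1} still apply thanks to the stronger hypothesis $U\in H^{s+2}_{x,\sigma_0}$ (so that $\nabla_x u^s$ is controlled at level $s+1$). The decisive point is that the tangential analytic dissipation now produces the gain $\lambda\||\nabla_x|^{\frac12}W_\sigma\|^2_{X^s_\gamma}=\lambda\||\nabla_x|^{\frac32}w_\sigma\|^2_{X^s_\gamma}$, which matches precisely the half-derivative loss in items (3)--(4) of Lemma \ref{product_estimate1} applied to $W$. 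For $\lambda$ large enough the trilinear contributions can therefore be absorbed exactly as in \eqref{AP52}.

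For the stochastic terms, one replaces \eqref{AP21} by estimate \eqref{estimate_noise3} of Proposition \ref{Proposition_struc1}, which is tailored for $\nabla_x(\mathbb{F}(u)-\overline{\mathbb{F}}(U))_\sigma$ and yields a right-hand side of the form $\|\langle\nabla_x\rangle^{a+1}w_\sigma\|_{X^s_\gamma}+\|U_\sigma\|_{H^{s+a+1}_x}$; these are again absorbable into the new half-derivative dissipation and the prescribed out-flow regularity. The Burkholder-Davis-Gundy bound \eqref{pre6} together with the cut-off $\chi^2(\mathcal{E}_w(t))$ then controls the martingale contributions on $[0,\tau\wedge\tau_{3\mathcal{M}}]$ in the same fashion as \eqref{AP51}--\eqref{AP51_1}; all remaining moments of the zeroth-order energy that appear as source terms are bounded directly by the already established Proposition \ref{AP_proposition1}.

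The main obstacle is the boundary contribution when integrating by parts against $\partial_y^3 W_\sigma$: the boundary identity $\partial_y^2 w_\sigma|_{y=0}=\nabla_x P_\sigma$ yields $\partial_y^2 W_\sigma|_{y=0}=\nabla_x^2 P_\sigma$, which is precisely why $\nabla_x P\in H^{s+1}_{x,\sigma_0}$ has been built into the a-priori setup \eqref{APassumption}. With this regularity, the trace estimate $\|f|_{y=0}\|_{L^2_x}\le \sqrt 2\|f\|_{L^2}^{1/2}\|\partial_y f\|_{L^2}^{1/2}$ controls the boundary terms in the analogues of \eqref{AP33}--\eqref{AP35}, and the resulting factor of $\|\partial_y^2 W_\sigma\|_{H^{s-1}_{\gamma,\co}}^{1/2}$ is absorbed into the left-hand side by Young's inequality, mirroring the passage from \eqref{AP45} to \eqref{AP47}. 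A final application of Gronwall's inequality, with $\delta$ satisfying \eqref{AP48_1} and $\lambda$ sufficiently large, then delivers \eqref{AP68}.
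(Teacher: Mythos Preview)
Your proposal is correct and follows essentially the same approach as the paper: differentiate \eqref{AP1} tangentially, rerun the induction-based energy scheme of Proposition~\ref{AP_proposition1} with higher-order analogues of Lemmas~\ref{APlemma2}--\ref{APlemma3} (these are precisely the paper's Lemmas~\ref{APlemma4} and~\ref{APlemma5}), use \eqref{estimate_noise3} for the stochastic term, and absorb the boundary contribution $\nabla_x^2P_\sigma$ via the assumed $H^{s+1}_{x,\sigma_0}$ regularity of $\nabla_xP$. The paper's proof is in fact terser than yours, simply stating that \eqref{AP68} follows from the induction argument once the replacement lemmas are in hand.
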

    
    As demonstrated previously in Section \ref{AP_for_solution}, with the help of an induction-based argument, the estimate \eqref{AP68} can be obtained by establishing the following Lemmas \ref{APlemma4} and \ref{APlemma5}  for the convection and force terms.
    
    \begin{lemma}\label{APlemma4}
        {\rm(Higher order estimate for the nonlinear convection terms).} Under the assumption of Proposition \ref{AP_proposition2} and for $t\le T_*$, there hold
        \begin{align}
        \label{AP60}
        \langle \nabla_x w_{\sigma},\nabla_x\mathcal{B}_1(w,w)_{\sigma}\rangle_{X^s_{\gamma}}&\lesssim_s\| w_{\sigma}\|_{X^s_{\gamma}}(\|\nabla_x w_{\sigma}\|_{X^s_{\gamma}}^2+\|\langle\nabla_x \rangle^{\frac{3}{2}}w_{\sigma}\|_{X^s_{\gamma}}^2),\\
        \label{AP61}
        \langle \nabla_x w_{\sigma},\nabla_x\mathcal{B}_1(w,u^s)_{\sigma}\rangle_{X^s_{\gamma}}&\lesssim_{s,\gamma_0} \|U_{\sigma}\|_{H^{s+2}_{x}} (\|w_{\sigma}\|_{X^{s}_{\gamma}}^2+\|\nabla_x w_{\sigma}\|_{X^{s}_{\gamma}}^2),\\\label{AP62}
        \langle \nabla_x w_{\sigma},\nabla_x\mathcal{B}_1(u^s,w)_{\sigma}\rangle_{X^s_{\gamma}}&\lesssim_{s,\gamma_0} \|U_{\sigma}\|_{H^{s+2}_{x}} (\|\nabla_x w_{\sigma}\|_{X^s_{\gamma}}^2+\|\langle\nabla_x \rangle^{\frac{3}{2}}w_{\sigma}\|_{X^s_{\gamma}}^2)
        \end{align}
        and
        \begin{align}\label{AP62_1}
        &\langle \nabla_x w_{\sigma},\nabla_x\mathcal{B}_2(w+u^s,w+u^s)_{\sigma}\rangle_{X^s_{\gamma}}\notag\\&\qquad\qquad\qquad\lesssim_{s,\gamma_0}C_{\delta}\|\langle\nabla_x\rangle^{\frac{3}{2}} w_{\sigma}\|_{X^s_{\gamma}}(\|\langle\nabla_x\rangle^{\frac{3}{2}} w_{\sigma}\|_{X^s_{\gamma}}+\|U_{\sigma}\|_{H^{s+2}_x})(\|w_{\sigma}\|_{X^s_{\gamma}}+\|U_{\sigma}\|_{H^{s+2}_x})\notag\\&\qquad\qquad\qquad\qquad\quad+C_{\delta}\|\langle\nabla_x\rangle^{\frac{3}{2}} w_{\sigma}\|_{X^s_{\gamma}}\|\partial_y w_{\sigma}\|_{X^s_{\gamma}}(\|\nabla_x w_{\sigma}\|_{X^s_{\gamma}}+\|U_{\sigma}\|_{H^{s+2}_x})\notag\\&\qquad\qquad\qquad\qquad\quad+C_{\delta}\|\nabla_x w_{\sigma}\|_{X^s_{\gamma}}\|\partial_y\langle\nabla_x \rangle w_{\sigma}\|_{X^s_{\gamma}}(\|w_{\sigma}\|_{X^s_{\gamma}}+\|U_{\sigma}\|_{H^{s+2}_x})\notag\\&\qquad\qquad\qquad\qquad\quad+\|U_{\sigma}\|_{H^{s+2}_x}\|\nabla_x w_{\sigma}\|_{\tilde{X}^{s}_{\gamma}}(\|\partial_y\langle\nabla_x \rangle w_{\sigma}\|_{X^s_{\gamma}}+\|U_{\sigma}\|_{H^{s+2}_x}).
        \end{align}
    \end{lemma}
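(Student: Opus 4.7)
The plan is to mirror the strategy of Lemma \ref{APlemma2} but apply the product rule to distribute the extra tangential derivative across each bilinear term. For any of the four estimates, I will write $\nabla_x\mathcal{B}_j(a,b) = \mathcal{B}_j(\nabla_x a, b) + \mathcal{B}_j(a,\nabla_x b)$ and then bound the resulting terms by combining Cauchy–Schwarz in $X^s_\gamma$ with the product estimates of Lemma \ref{product_estimate1} and Lemma \ref{product_estimate2}, together with the profile bounds \eqref{AP8}--\eqref{AP10}. The key gain, as in Lemma \ref{APlemma2}, comes from the duality pairing
\[
\langle\nabla_xw_\sigma,\nabla_x\mathcal{B}_j(\cdot,\cdot)_\sigma\rangle_{X^s_\gamma}
=\langle\langle\nabla_x\rangle^{\frac{1}{2}}\nabla_xw_\sigma,\ \langle\nabla_x\rangle^{-\frac{1}{2}}\nabla_x\mathcal{B}_j(\cdot,\cdot)_\sigma\rangle_{X^s_\gamma},
\]
which turns the first factor into $\|\langle\nabla_x\rangle^{\frac{3}{2}}w_\sigma\|_{X^s_\gamma}$ and allows items (3) and (4) of Lemma \ref{product_estimate1} to absorb one extra derivative.

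For \eqref{AP60}--\eqref{AP62}, I would split $\nabla_x\mathcal{B}_1(w,w)_\sigma$ into $\mathcal{B}_1(\nabla_xw,w)_\sigma+\mathcal{B}_1(w,\nabla_xw)_\sigma$, apply Lemma \ref{product_estimate1}(3) on each piece (the second piece yields $\|w_\sigma\|_{X^s_\gamma}\|\langle\nabla_x\rangle^{\frac{3}{2}}w_\sigma\|_{X^s_\gamma}$, the first piece yields $\|\nabla_xw_\sigma\|_{X^s_\gamma}^2$ after a Young inequality), then sum. The mixed estimates \eqref{AP61} and \eqref{AP62} are obtained by the same route, but replacing one factor using Lemma \ref{product_estimate2}(2) combined with \eqref{AP8}--\eqref{AP9}; the $y$-independence of $U$ inside $u^s$ means the $H^{s+2}_{x,\sigma}$ norm of $U_\sigma$ emerges naturally since two $\nabla_x$'s and an $\langle\nabla_x\rangle^{\frac{1}{2}}$ hit $U$ in the worst term (differently in \eqref{AP61} versus \eqref{AP62}, one where $\nabla_x$ falls on $u^s$ and one where it falls on $w$).

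The hard part is \eqref{AP62_1}, exactly as in Lemma \ref{APlemma2}: the operator $\partial_y^{-1}$ in $\mathcal{B}_2$ forces me to isolate the non-decaying part of $w+u^s$. I would reuse the splitting
\[
w+u^s=\bigl(w+(u^s-U)\bigr)+U,
\]
and correspondingly write $\nabla_x\mathcal{B}_2(w+u^s,w+u^s)$ as three pieces. The two pieces where both slots are in $w+(u^s-U)$, or only one slot is $U$ while the other is in $w+(u^s-U)$ and stands in the second (differentiated) argument, are handled by $\langle\nabla_x\rangle^{\pm\frac{1}{2}}$ duality and Lemma \ref{product_estimate1}(4) combined with \eqref{AP8}--\eqref{AP10}; these produce the first two and the fourth lines of \eqref{AP62_1}. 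The genuinely troublesome piece is $\nabla_x\mathcal{B}_2(U,w+u^s-U)$, because $\partial_y^{-1}\nabla_x\cdot U=y\nabla_x\cdot U$ grows linearly in $y$. As in the proof of \eqref{AP13_1}, I would transfer the factor of $y$ onto $w_\sigma$ using the commutator
\[
[Z^j,y]u = jyZ^{j-1}u,\qquad j\ge 1,
\]
which generates the norm $\|\nabla_xw_\sigma\|_{\tilde X^s_\gamma}$ appearing in the last line of \eqref{AP62_1}, and then apply Lemma \ref{product_estimate2}(1) to the resulting product. The fact that two $\nabla_x$'s eventually land on $U$ (one from the outer derivative, one from $\nabla_x\cdot U$) accounts for the $\|U_\sigma\|_{H^{s+2}_x}$ weight; the term $\|\partial_y\langle\nabla_x\rangle w_\sigma\|_{X^s_\gamma}$ enters because after the commutator manipulation a $\partial_y$ derivative hits the second argument. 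Finally, Young's inequality separates the cross terms into the form displayed, with the $C_\delta$ constants absorbing the lower bound $\gamma(t)\ge C_{\gamma_0,\delta}$ used in Lemma \ref{product_estimate1}(2),(4). Combining the four bounds completes the proof.
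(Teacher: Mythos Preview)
Your plan is essentially the paper's own: for \eqref{AP60}--\eqref{AP62} apply the product rule to $\nabla_x\mathcal{B}_1$ and invoke Lemmas~\ref{product_estimate1}--\ref{product_estimate2} with \eqref{AP8}--\eqref{AP10}; for \eqref{AP62_1} split off $U$ in the first slot of $\mathcal{B}_2$ and handle the linearly growing factor $\partial_y^{-1}\nabla_x\cdot U=y\nabla_x\cdot U$ via the commutator $[Z^j,y]$, exactly as in the derivation of \eqref{AP17}.

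One bookkeeping correction: there are only \emph{two} nontrivial pieces, not three, since $\partial_y U=0$ kills $\mathcal{B}_2(\cdot,U)$. The paper sets
\[
J_1=\langle\nabla_x w_\sigma,\nabla_x\mathcal{B}_2(w+u^s-U,w+u^s-U)_\sigma\rangle_{X^s_\gamma},\qquad
J_2=\langle\nabla_x w_\sigma,\nabla_x\mathcal{B}_2(U,w+u^s-U)_\sigma\rangle_{X^s_\gamma},
\]
and $J_1$ produces the first \emph{three} lines of \eqref{AP62_1} while $J_2$ (the commutator piece) produces the fourth; your description lists $J_2$ both among the ``easy'' pieces and as the ``troublesome'' one, which is inconsistent. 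Also, within $J_1$ the paper uses the $\langle\nabla_x\rangle^{\pm 1/2}$ duality only on the product-rule term where $\nabla_x$ hits the first slot of $\mathcal{B}_2$ (giving $\partial_y^{-1}\nabla_x^2(\cdot)\,\partial_y(\cdot)$, estimated via Lemma~\ref{product_estimate1}(4)); for the term where $\nabla_x$ hits the second slot it pairs $\|\nabla_x w_\sigma\|_{X^s_\gamma}$ directly against $\|[\partial_y^{-1}\nabla_x(\cdot)\,\partial_y\nabla_x(\cdot)]_\sigma\|_{X^s_\gamma}$ and applies Lemma~\ref{product_estimate1}(2), which is precisely how the third line of \eqref{AP62_1} (containing $\|\partial_y\langle\nabla_x\rangle w_\sigma\|_{X^s_\gamma}$) arises.
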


    \begin{proof}
        The estimates \eqref{AP60}--\eqref{AP62} follow from a direct application of Lemma \ref{product_estimate1} and Lemma \ref{product_estimate2} combined with the estimates \eqref{AP8}--\eqref{AP10}. As for \eqref{AP62_1}, one needs to decompose 
        \begin{align*}
        \langle \nabla_x w_{\sigma},\nabla_x\mathcal{B}_2&(w+u^s,w+u^s)_{\sigma}\rangle_{X^s_{\gamma}}\\&=\langle \nabla_x w_{\sigma},\nabla_x\mathcal{B}_2(w+u^s-U,w+u^s-U)_{\sigma}\rangle_{X^s_{\gamma}}+\langle \nabla_x w_{\sigma},\nabla_x\mathcal{B}_2(U,w+u^s-U)_{\sigma}\rangle_{X^s_{\gamma}}\\&=:J_1+J_2.
    \end{align*}
    For $J_1$, 
    \begin{align*}
        J_1&\le \|\langle\nabla_x\rangle^{\frac{3}{2}} w_{\sigma}\|_{X^s_{\gamma}}\|\langle\nabla_x\rangle^{-\frac{1}{2}} [\partial_y^{-1}\nabla_x^2(w+u^s-U)\partial_y(w+u^s-U)]_{\sigma}\|_{X^s_{\gamma}}\\&\qquad\qquad\qquad\qquad\qquad+\|\nabla_xw_{\sigma}\|_{X^s_{\gamma}}\| [\partial_y^{-1}\nabla_x(w+u^s-U)\partial_y\nabla_x(w+u^s-U)]_{\sigma}\|_{X^s_{\gamma}},
    \end{align*}
    where by applying Lemma \ref{product_estimate1},  
    \begin{align*}
        &\|\langle\nabla_x\rangle^{-\frac{1}{2}} [\partial_y^{-1}\nabla_x^2(w+u^s-U)\partial_y(w+u^s-U)]_{\sigma}\|_{X^s_{\gamma}}\\&\quad\quad\lesssim_{s,\delta}\|\langle\nabla_x\rangle^{\frac{3}{2}}(w+u^s-U)_{\sigma}\|_{X^s_{\gamma}}\|(w+u^s-U)_{\sigma}\|_{X^s_{\gamma}}+\|\nabla_x(w+u^s-U)_{\sigma}\|_{X^s_{\gamma}}\|\partial_y(w+u^s)_{\sigma}\|_{X^s_{\gamma}}
    \end{align*}
    and 
    \begin{align*}
        \|&[\partial_y^{-1}\nabla_x(w+u^s-U)\partial_y\nabla_x(w+u^s-U)]_{\sigma}\|_{X^s_{\gamma}}\\&\quad\qquad\lesssim_{s,\delta}\|\nabla_x(w+u^s-U)_{\sigma}\|_{X^s_{\gamma}}\|\partial_y(w+u^s)_{\sigma}\|_{X^s_{\gamma}}+\|(w+u^s-U)_{\sigma}\|_{X^s_{\gamma}}\|\partial_y\nabla_x(w+u^s-U)_{\sigma}\|_{X^s_{\gamma}}.
    \end{align*}
    This yields 
    \begin{align}\label{AP63}
        J_1\lesssim_{s,\gamma_0,\delta}&\|\langle\nabla_x\rangle^{\frac{3}{2}} w_{\sigma}\|_{X^s_{\gamma}}(\|\langle\nabla_x\rangle^{\frac{3}{2}} w_{\sigma}\|_{X^s_{\gamma}}+\|U_{\sigma}\|_{H^{s+2}_x})(\|w_{\sigma}\|_{X^s_{\gamma}}+\|U_{\sigma}\|_{H^{s+2}_x})\notag\\+&\|\langle\nabla_x\rangle^{\frac{3}{2}} w_{\sigma}\|_{X^s_{\gamma}}\|\partial_y w_{\sigma}\|_{X^s_{\gamma}}(\|\nabla_x w_{\sigma}\|_{X^s_{\gamma}}+\|U_{\sigma}\|_{H^{s+2}_x})\notag\\+&\|\nabla_x w_{\sigma}\|_{X^s_{\gamma}}\|\partial_y\langle\nabla_x \rangle w_{\sigma}\|_{X^s_{\gamma}}(\|w_{\sigma}\|_{X^s_{\gamma}}+\|U_{\sigma}\|_{H^{s+2}_x}).
    \end{align}
     As for $J_2$, by repeating the derivation of \eqref{AP17},
    \begin{align}
    \label{AP64}
    J_2\lesssim_{s,\gamma_0}\|U_{\sigma}\|_{H^{s+2}_x}(\|\nabla_x w_{\sigma}\|_{X^s_{\gamma}}+\|\nabla_x w_{\sigma}\|_{\tilde{X}^s_{\gamma}})(\|\partial_y \langle \nabla_x\rangle w_{\sigma}\|_{X^s_{\gamma}}+\|U_{\sigma}\|_{H^{s+2}_x}).
    \end{align}
    Combining the estimates \eqref{AP63} and \eqref{AP64}, the  estimate \eqref{AP62_1} follows immediately.
    \end{proof}

        \begin{lemma}\label{APlemma5}
        {\rm(Higher order estimate for the force terms).} Under the assumption of Proposition \ref{AP_proposition2} and for $t\le T_*$, there hold
        \begin{align}
        \label{AP65}
        \langle \nabla_xw_{\sigma}, \nabla_x\mathcal{B}_1(U-u^s,u^s)_{\sigma}-(1-\psi)\nabla_x^2 P_{\sigma} \rangle_{X^s_{\gamma}}\lesssim_{s,\gamma_0}\|\nabla_xw_{\sigma}\|_{X^s_{\gamma}}(\|U_{\sigma}\|^2_{H^{s+2}_x}+\|\nabla_x P_{\sigma}\|_{H^{s+1}_{x}})
        \end{align}
        and
        \begin{align}\label{AP67}
            \|\nabla_x(\mathbb{F}(u)-\psi\overline{\mathbb{F}}(U))_{\sigma}\|_{\mathbb{X}^{s}_{\gamma}}&\lesssim_{\gamma_0} \mathcal{K}_s(\|w_{\sigma}\|_{X^s_{\gamma}},\|U_{\sigma}\|_{H^s_{x}})\left(\|\langle\nabla_x\rangle^{\frac{3}{2}}w_{\sigma}\|_{X^s_{\gamma}}+\|U_{\sigma}\|_{H^{s+2}_{x}}\right).
        \end{align}
        where $\mathcal{K}_s$ is a positive function depending only on $s$ which is non-decreasing in its arguments.
    \end{lemma}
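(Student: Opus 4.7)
The plan is to prove the two estimates separately, following the same template as Lemma \ref{APlemma3} but tracking one extra tangential derivative throughout. In both cases the strategy will be Cauchy--Schwarz in $X^s_\gamma$ (for the drift estimate) combined with direct computation of the $\mathbb{X}^s_\gamma$-norm (for the stochastic estimate), using the product estimates of Lemmas \ref{product_estimate1}--\ref{product_estimate2} and the profile bounds \eqref{AP8}--\eqref{AP10} already established for $u^s - U$ and $\partial_y u^s$.

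For estimate \eqref{AP65}, I would first apply Cauchy--Schwarz in $X^s_\gamma$ to reduce matters to bounding $\|\nabla_x\mathcal{B}_1(U-u^s,u^s)_\sigma\|_{X^s_\gamma}$ and $\|(1-\psi)\nabla_x^2 P_\sigma\|_{X^s_\gamma}$ by $\|U_\sigma\|_{H^{s+2}_x}^2$ and $\|\nabla_x P_\sigma\|_{H^{s+1}_x}$, respectively. For the convection contribution, $\mathcal{B}_1(U-u^s,u^s) = (U-u^s)\cdot\nabla_x u^s$ is a product of two profile-type functions; by Lemma \ref{product_estimate2} (applied after the extra $\nabla_x$) it is dominated by sums of the form $\|(U-u^s)_\sigma\|_{X^s_\gamma}\|\nabla_x u^s_\sigma\|_{X^s_\gamma} + \|\nabla_x(U-u^s)_\sigma\|_{X^s_\gamma}\|u^s_\sigma\|_{X^s_\gamma}$, and the profile estimates \eqref{AP8}--\eqref{AP10} convert each factor into $\|U_\sigma\|_{H^{s+k}_x}$ for $k\le 2$, yielding the quadratic bound. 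For the pressure contribution, the tail estimate for the Gaussian integral already exploited in \eqref{AP4}--\eqref{AP6} shows that $(1-\psi)$ and all of its $Z^j\partial_y^l$-derivatives are rapidly decaying in $y$ uniformly in $t\le T_*$, so the weighted conormal norm of $(1-\psi)\nabla_x^2 P_\sigma$ factorizes into a finite weighted $y$-integral times $\|\nabla_x^2 P_\sigma\|_{H^s_x}\lesssim \|\nabla_x P_\sigma\|_{H^{s+1}_x}$.

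For estimate \eqref{AP67}, I would decompose
\[
\nabla_x\bigl(\mathbb{F}(u) - \psi\overline{\mathbb{F}}(U)\bigr)_\sigma \;=\; \nabla_x\bigl(\mathbb{F}(u) - \overline{\mathbb{F}}(U)\bigr)_\sigma + \nabla_x\bigl((1-\psi)\overline{\mathbb{F}}(U)\bigr)_\sigma.
\]
The first summand is precisely the quantity controlled by estimate \eqref{estimate_noise3} of Proposition \ref{Proposition_struc1}, which yields a bound by $\mathcal{K}_s(\|w_\sigma\|_{X^s_\gamma},\|U_\sigma\|_{H^s_x})\bigl(\|\langle\nabla_x\rangle^{a+1}w_\sigma\|_{X^s_\gamma} + \|U_\sigma\|_{H^{s+a+1}_x}\bigr)$; since $a\in[0,\tfrac12]$, the exponent on the first factor is at most $\tfrac{3}{2}$ and on the second at most $s+\tfrac{3}{2}\le s+2$, matching the right-hand side of \eqref{AP67}. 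For the second summand, $(1-\psi)\overline{\mathbb{F}}(U)$ factorizes as a function of $y$ alone times an $\mathcal{H}$-valued function of $x$, so the same exponential-decay argument used in Lemma \ref{APlemma1} reduces its $\mathbb{X}^s_\gamma$-norm to $\|\nabla_x\overline{\mathbb{F}}(U)_\sigma\|_{\mathbb{H}^s_x}$, which is controlled by $\mathcal{K}_s(\|U_\sigma\|_{H^s_x})\|U_\sigma\|_{H^{s+1}_x}$ by repeating the derivation of \eqref{AP20} with one extra tangential derivative.

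The only mildly delicate step is the commutation of $Z^j\partial_x^k\partial_y^l$ through the product of $(1-\psi)(t,y)$ and $\nabla_x\overline{\mathbb{F}}(U)_\sigma(x)$: the $y$-derivatives only hit the $(1-\psi)$ factor, and every such derivative is again exponentially decaying in $y$ by the Fa\`a di Bruno computation of \eqref{AP6}, so no growth in $y$ appears. No new analytical idea beyond those used in Lemma \ref{APlemma3} is required; the essential new input is the half-derivative-gaining bound \eqref{estimate_noise3}, which was tailored precisely to propagate one tangential derivative without spoiling the Gevrey-type structure of the noise.
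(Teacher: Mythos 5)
Your proposal is correct and takes essentially the same route as the paper: for \eqref{AP65}, Cauchy--Schwarz in $X^s_\gamma$ followed by the product estimates of Lemmas \ref{product_estimate1}--\ref{product_estimate2} and the profile bounds \eqref{AP8}--\eqref{AP10}; for \eqref{AP67}, the decomposition $\mathbb{F}(u)-\psi\overline{\mathbb{F}}(U)=(\mathbb{F}(u)-\overline{\mathbb{F}}(U))+(1-\psi)\overline{\mathbb{F}}(U)$ exactly as in \eqref{AP19}, now with one extra $\nabla_x$, using \eqref{estimate_noise3} in place of \eqref{estimate_noise2} and the Gaussian-decay bounds on $(1-\psi)$ to absorb the $y$-dependence. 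The observation that $a\in[0,\tfrac12]$ gives $a+1\le\tfrac32$ and $s+a+1\le s+2$, so the right-hand side of \eqref{estimate_noise3} is dominated by that of \eqref{AP67}, is the correct way to close the argument.
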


    \begin{proof}
        The estimate \eqref{AP65} follows from a direct application of Lemma \ref{product_estimate1}, Lemma \ref{product_estimate2} together with the estimates \eqref{AP8}--\eqref{AP10}, while the estimate \eqref{AP67} can be obtained by similarly repeating the derivation of \eqref{AP21} and using Proposition \ref{Proposition_struc1}.
    \end{proof}

    As a Corollary of Proposition \ref{AP_proposition2}, we deduce the following result concerning H\"older continuity of the parabolic energy at $t=0$, which will be used in Section \ref{convergence}, in order to obtain the almost-sure positivity of the lifespan of the solution; see \eqref{LWP26} in Lemma \ref{abstract_cauchy_lemma}.

    \begin{prop}\label{AP_proposition3}
        Let $s\ge 4$ and $\gamma_0,\sigma_0>0$. Suppose that $U_0,\nabla_x P|_{t=0}$ satisfy \eqref{APassumption} for some positive constant $\mathcal{N}$, and $T_*$ is the stopping time given by \eqref{AP1_1}. Then, there exist positive constants $\lambda,\delta$ such that for any smooth solution $w$ of the equation \eqref{AP1}, any stopping time $\tau\le T_*$ and any $t^{'}\ge0$, 
        \begin{align}
        \label{AP59}
        \mathbb{E}\sup_{t\le t^{'}\wedge\tau}\left|\|w_{\sigma}\|^2_{X^s_{\gamma}}-\|w_{\sigma}(0)\|^2_{X^s_{\gamma}}\right|&+\mathbb{E}\int_0^{t^{'}\wedge\tau}\left(\||\nabla_x|^{\frac{1}{2}} w_{\sigma}\|^2_{X^s_{\gamma}}+\|\partial_y w_{\sigma}\|^2_{X^s_{\gamma}}\right)\mathrm{d}t\notag\\&\qquad\qquad\qquad\lesssim_{s,\gamma_0,\mathcal{M},\mathcal{N}} (\sqrt{t^{'}}+t^{'})\left(\mathbb{E}\|\langle\nabla_x\rangle w_{0}\|^2_{X^s_{\gamma_0,\sigma_0}}+1\right).
    \end{align}

    \begin{proof}
        Repeating the derivation of the estimate \eqref{AP49}, there holds
        \begin{align}\label{AP59_1}
        \mathbb{E}\sup_{t\le \tau}\left|\|w_{\sigma}\|^2_{X^s_{\gamma}}-\|w_{\sigma}(0)\|^2_{X^s_{\gamma}}\right|+2\mathbb{E}\int_0^{\tau}\left(\lambda\||\nabla_x|^{\frac{1}{2}} w_{\sigma}\|^2_{X^s_{\gamma}}+\frac{3}{8}\|\partial_y w_{\sigma}\|^2_{X^s_{\gamma}}\right)\mathrm{d}t\lesssim_{s,\gamma_0,\mathcal{N}} \Lambda_1(\tau),
    \end{align}
    where $\Lambda_1(\tau)$ is given by \eqref{AP50}. It remains to bound the terms of $\Lambda_1(\tau)$ in an appropriate way. First, we have
        \begin{align}
        \label{AP54}
        \mathbb{E}\int_0^{\tau}\chi^2(\mathcal{E}_w(t))&\|\langle\nabla_x\rangle^{\frac{1}{2}}w_{\sigma}\|^2_{X^s_{\gamma}}\left(\|w_{\sigma}\|_{X^s_{\gamma}}+\|U_{\sigma}\|_{H^{s+1}_x}\right)\mathrm{d}t\notag\\&+\mathbb{E}\int_0^{\tau}\chi^2(\mathcal{E}_w(t))\|\langle\nabla_x\rangle^{\frac{1}{2}}w_{\sigma}\|_{X^s_{\gamma}}\|\partial_yw_{\sigma}\|_{X^s_{\gamma}}\left(\|w_{\sigma}\|_{X^s_{\gamma}}+\|U_{\sigma}\|_{H^{s+1}_x}\right)\mathrm{d}t\notag\\\le& C_{s,\mathcal{M},\mathcal{N}}\mathbb{E}\int_0^{\tau}\|\langle\nabla_x\rangle^{\frac{1}{2}} w_{\sigma}\|^2_{X^s_{\gamma}}\mathrm{d}t+\frac{1}{4}\mathbb{E}\int_0^{\tau}\|\partial_yw_{\sigma}\|^2_{X^s_{\gamma}}\mathrm{d}t,
    \end{align}
    and
    \begin{align}
        \label{AP56}
        \mathbb{E}\int_0^{t^{'}\wedge\tau}\left(\|\langle\nabla_x\rangle^{\frac{1}{2}}w_{\sigma}\|_{X^s_{\gamma}}+1\right)\|U_{\sigma}\|^2_{H^{s+1}_x}\mathrm{d}t+&\mathbb{E}\int_0^{t^{'}\wedge\tau}(\|w_{\sigma}\|^2_{X^s_{\gamma}}+\|\nabla_x P_{\sigma}\|^2_{H^{s}_x})\mathrm{d}t\notag\\&\qquad\lesssim_{s,\mathcal{M},\mathcal{N}}\mathbb{E}\int_0^{t^{'}\wedge\tau}\left(\|\langle\nabla_x\rangle^{\frac{1}{2}}w_{\sigma}\|^2_{X^s_{\gamma}}+1\right)\mathrm{d}t,
    \end{align}
    and by applying \eqref{AP21} in Lemma \ref{APlemma3},
    \begin{align}\label{AP56_1}
        \mathbb{E}\int_0^{\tau}\chi^4(\mathcal{E}_w(t))\|(\mathbb{F}(u)-\psi\overline{\mathbb{F}}(U))_{\sigma}\|^2_{\mathbb{X}^s_{\gamma}}\mathrm{d} t\lesssim_{s,\gamma_0,\mathcal{M},\mathcal{N}}\mathbb{E}\int_0^{t^{'}\wedge\tau}\left(\|\langle\nabla_x\rangle^{\frac{1}{2}}w_{\sigma}\|^2_{X^s_{\gamma}}+1\right)\mathrm{d}t.
    \end{align}
    As for the remaining term of $\Lambda_1$, by utilizing Propositions \ref{AP_proposition1} and \ref{AP_proposition2}, 
    \begin{align}\label{AP58}
        \notag\mathbb{E}&\left(\int_0^{t^{'}\wedge\tau}\chi^4(\mathcal{E}_w(t))\|w_{\sigma}\|^2_{X^s_{\gamma}}\|(\mathbb{F}(u)-\psi\overline{\mathbb{F}}(U))_{\sigma}\|^2_{\mathbb{X}^s_{\gamma}}\mathrm{d} t\right)^{\frac{1}{2}}\lesssim_{s,\gamma_0,\mathcal{M},\mathcal{N}}\mathbb{E}\left(\int_0^{t^{'}\wedge\tau}\left(\|\langle\nabla_x\rangle^{\frac{1}{2}} w_{\sigma}\|_{X^s_{\gamma}}^2+1\right)\mathrm{d}t\right)^{\frac{1}{2}}\\&\qquad \qquad\qquad\qquad\qquad\qquad\qquad\qquad\qquad\qquad\lesssim_{s,\gamma_0,\mathcal{M},\mathcal{N}}\sqrt{t^{'}} \mathbb{E}\left(\sup_{t\le T_*} \|\langle\nabla_x\rangle w_{\sigma}\|^2_{X^s_{\gamma}}+1\right)^{\frac{1}{2}}\notag\\&\qquad \qquad\qquad\qquad\qquad\qquad\qquad\qquad\qquad\qquad\lesssim_{s,\gamma_0,\mathcal{M},\mathcal{N}}\sqrt{t^{'}}\left(\mathbb{E}\|\langle\nabla_x\rangle w_{0}\|^2_{X^s_{\gamma_0,\sigma_0}}+1\right)^{\frac{1}{2}}.
    \end{align}
    Combining the estimates \eqref{AP54}--\eqref{AP58}, one derives with the help of Proposition \ref{AP_proposition1} that there exists a large constant $\lambda>0$ such that
    \begin{align*}
        \mathbb{E}\sup_{t\le t^{'}\wedge\tau}\left|\|w_{\sigma}\|^2_{X^s_{\gamma}}-\|w_{\sigma}(0)\|^2_{X^s_{\gamma}}\right|&+\mathbb{E}\int_0^{t^{'}\wedge\tau}\left(\||\nabla_x|^{\frac{1}{2}} w_{\sigma}\|^2_{X^s_{\gamma}}+\|\partial_y w_{\sigma}\|^2_{X^s_{\gamma}}\right)\mathrm{d}t\notag\\&\lesssim_{s,\gamma_0,\mathcal{M},\mathcal{N}} \sqrt{t^{'}}\left(\mathbb{E}\|\langle\nabla_x\rangle w_{0}\|^2_{X^s_{\gamma_0,\sigma_0}}+1\right)^{\frac{1}{2}}+t^{'}\left(1+\mathbb{E}\sup_{t\le\tau}\|w_{\sigma}\|^2_{X^s_{\gamma}}\right)\notag\\&\lesssim_{s,\gamma_0,\mathcal{M},\mathcal{N}} (\sqrt{t^{'}}+t^{'})\left(\mathbb{E}\|\langle\nabla_x\rangle w_{0}\|^2_{X^s_{\gamma_0,\sigma_0}}+1\right).
    \end{align*}
    This concludes the estimate \eqref{AP59}.
    \end{proof}
    \end{prop}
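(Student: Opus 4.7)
The plan is to revisit the energy identity used in Proposition \ref{AP_proposition1}, keep the difference $\|w_{\sigma}(t)\|^2_{X^s_{\gamma}} - \|w_{\sigma}(0)\|^2_{X^s_{\gamma}}$ under the supremum rather than the energy itself, and then carefully redistribute the time dependence in the remainder term so that short time intervals produce the factor $\sqrt{t'} + t'$. More precisely, applying It\^o's formula to $\|w_{\sigma}\|^2_{X^s_{\gamma}}$ and following the same integration-by-parts, conormal-induction, and commutator arguments used in Steps I--IV of the proof of Proposition \ref{AP_proposition1}, I would arrive at the analog of \eqref{AP49}, namely
\[
\mathbb{E}\sup_{t \le t'\wedge\tau}\Bigl|\|w_{\sigma}\|^2_{X^s_{\gamma}} - \|w_{\sigma}(0)\|^2_{X^s_{\gamma}}\Bigr|
+ 2\mathbb{E}\int_0^{t'\wedge\tau}\!\!\Bigl(\lambda\||\nabla_x|^{1/2}w_{\sigma}\|^2_{X^s_{\gamma}} + \tfrac{3}{8}\|\partial_y w_{\sigma}\|^2_{X^s_{\gamma}}\Bigr)\mathrm{d}t \lesssim \Lambda_1(t'\wedge\tau),
\]
where $\Lambda_1$ is the quantity defined in \eqref{AP50}. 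The goal then reduces to bounding $\Lambda_1(t'\wedge\tau)$ by $(\sqrt{t'}+t')(\mathbb{E}\|\langle\nabla_x\rangle w_0\|^2_{X^s_{\gamma_0,\sigma_0}}+1)$.

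The main obstacle, and the reason why the higher-order bound of Proposition \ref{AP_proposition2} is essential, lies in the martingale contribution
\[
\mathbb{E}\Bigl(\int_0^{t'\wedge\tau}\chi^4(\mathcal{E}_w(t))\|w_{\sigma}\|^2_{X^s_{\gamma}}\|(\mathbb{F}(u)-\psi\overline{\mathbb{F}}(U))_{\sigma}\|^2_{\mathbb{X}^s_{\gamma}}\,\mathrm{d}t\Bigr)^{1/2}.
\]
Using Lemma \ref{APlemma3} to dominate $\|(\mathbb{F}(u)-\psi\overline{\mathbb{F}}(U))_{\sigma}\|^2_{\mathbb{X}^s_{\gamma}}$ by $\mathcal{K}_s\bigl(\|\langle\nabla_x\rangle^{1/2}w_{\sigma}\|^2_{X^s_{\gamma}}+1\bigr)$, pulling the $\chi^2$-cutoff out so that the pointwise factor is bounded by $\mathcal{M}$, and then estimating $\|\langle\nabla_x\rangle^{1/2}w_{\sigma}\|^2 \le \|\langle\nabla_x\rangle w_{\sigma}\|^2$, one dominates this term by
\[
\sqrt{t'}\,\mathbb{E}\Bigl(\sup_{t \le T_*}\|\langle\nabla_x\rangle w_{\sigma}\|^2_{X^s_{\gamma}} + 1\Bigr)^{1/2},
\]
and the gradient moment is now supplied by Proposition \ref{AP_proposition2}, yielding the factor $\sqrt{t'}(\mathbb{E}\|\langle\nabla_x\rangle w_0\|^2_{X^s_{\gamma_0,\sigma_0}}+1)^{1/2}$. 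This is precisely the step where the choice of working with $\nabla_x w$ a priori in Proposition \ref{AP_proposition2} pays off.

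The remaining drift pieces of $\Lambda_1$ are handled by deterministic-looking arguments. For the trilinear terms involving $\chi^2(\mathcal{E}_w)$, the cutoff makes each integrand pointwise bounded up to a single factor of $\|\langle\nabla_x\rangle^{1/2}w_{\sigma}\|^2_{X^s_{\gamma}}$ or $\|\partial_y w_{\sigma}\|\,\|\langle\nabla_x\rangle^{1/2}w_{\sigma}\|$; I would absorb the $\|\partial_y w_{\sigma}\|^2$ contribution into the $\tfrac38\|\partial_yw_\sigma\|^2$ dissipation on the left, and then bound what is left by $C\int_0^{t'\wedge\tau}(\|\langle\nabla_x\rangle^{1/2}w_{\sigma}\|^2_{X^s_{\gamma}}+1)\,\mathrm{d}t$, which by Proposition \ref{AP_proposition1} contributes at most $t'(\mathbb{E}\|w_0\|^2_{X^s_{\gamma_0,\sigma_0}}+1)$. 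The terms involving $\|U_{\sigma}\|_{H^{s+1}_x}$, $\|\nabla_x P_{\sigma}\|_{H^s_x}$ and the It\^o isometry contribution are treated identically, yielding factor $t'$ via the boundedness of $U,\nabla_x P$ on $[0,T_*]$ ensured by the stopping time \eqref{AP1_1}. Summing all contributions produces the stated bound of order $(\sqrt{t'}+t')(\mathbb{E}\|\langle\nabla_x\rangle w_0\|^2_{X^s_{\gamma_0,\sigma_0}}+1)$, which concludes the argument. The only delicate point is making sure that the higher-order a priori bound is applied outside the square root before splitting by Cauchy--Schwarz, as otherwise one would only obtain a factor of $t'$ rather than $\sqrt{t'}$ on that term, which would be insufficient for the application of the abstract Cauchy lemma in Section \ref{convergence}.
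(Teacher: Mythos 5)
Your proposal follows essentially the same route as the paper's proof: revisit the derivation of \eqref{AP49} with the difference $\|w_{\sigma}(t)\|^2_{X^s_{\gamma}}-\|w_{\sigma}(0)\|^2_{X^s_{\gamma}}$ inside the supremum, identify the BDG term as the place where Proposition~\ref{AP_proposition2} is needed, extract a $\sqrt{t'}$ factor there by Cauchy--Schwarz in time before taking expectation, and handle the drift pieces of $\Lambda_1$ by absorption and time-length bounds. The treatment of the martingale term in particular is exactly the paper's \eqref{AP58}.

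There is, however, a genuine slip in how you close the drift estimates. You claim that
\[
\mathbb{E}\int_0^{t'\wedge\tau}\bigl(\|\langle\nabla_x\rangle^{1/2}w_{\sigma}\|^2_{X^s_{\gamma}}+1\bigr)\,\mathrm{d}t
\]
\emph{contributes at most} $t'(\mathbb{E}\|w_0\|^2_{X^s_{\gamma_0,\sigma_0}}+1)$ \emph{by Proposition~\ref{AP_proposition1}}. Proposition~\ref{AP_proposition1} bounds $\mathbb{E}\int_0^{\tau}\||\nabla_x|^{1/2}w_{\sigma}\|^2_{X^s_{\gamma}}\,\mathrm{d}t$ by a constant times $\mathbb{E}\|w_0\|^2+1$, \emph{with no $t'$ gain}; the dissipative part of $\|\langle\nabla_x\rangle^{1/2}w_{\sigma}\|^2$ therefore does not shrink as $t'\to0$ under that proposition, and the inequality you assert would yield a term that remains $O(1)$ for small $t'$, breaking the application to \eqref{LWP26}. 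The paper instead absorbs the $\||\nabla_x|^{1/2}w_\sigma\|^2$ piece into the $\lambda$-dissipation on the left (choosing $\lambda$ large, as in \eqref{AP54}), keeping only the constant term and $\|w_\sigma\|^2$, for which $\mathbb{E}\int_0^{t'\wedge\tau}\|w_\sigma\|^2\,\mathrm{d}t\le t'\,\mathbb{E}\sup_{t\le\tau}\|w_\sigma\|^2$ \emph{does} deliver the $t'$ factor via the pathwise-supremum bound of Proposition~\ref{AP_proposition1}. An alternative fix, compatible with your framing, is to observe $\|\langle\nabla_x\rangle^{1/2}w_\sigma\|\le\|\langle\nabla_x\rangle w_\sigma\|$ and bound $\mathbb{E}\int_0^{t'\wedge\tau}\|\langle\nabla_x\rangle w_\sigma\|^2\,\mathrm{d}t\le t'\,\mathbb{E}\sup_{t\le\tau}\|\langle\nabla_x\rangle w_\sigma\|^2$ using Proposition~\ref{AP_proposition2}, which again produces the desired factor but requires the higher-order bound, not Proposition~\ref{AP_proposition1}. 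Finally, your concluding remark that a factor of $t'$ alone ``would be insufficient for the application of the abstract Cauchy lemma'' is not accurate: condition \eqref{LWP26} only requires the right-hand side to vanish as $t\to0$, and $t'$ does so; what would be fatal is losing the $t'$-factor entirely, which is exactly what your current handling of the dissipation integral does.
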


    \section{Local well-posedness of the stochastic Prandtl equation}\label{LWP}
    In this section, we study the local well-posedness of the stochastic Prandtl equation by using the following three approximation schemes, with the help of the a-priori estimates just obtained.

    \subsection{The approximation schemes}\label{schemes}
    \noindent\underline{\textbf{Approximation scheme I}}. Inspired by \cite{KWX2023}, we introduce the following approximation scheme for the homogenized and truncated stochastic Prandtl equation \eqref{AP1}:
    \begin{align}\label{LWP1}
    \begin{cases}
        \mathrm{d}w_n-\partial_y^2w_n\mathrm{d}t+\chi^2(\mathcal{E}_{w_n}(t))\left[\mathcal{B}_1(w_n,\mathcal{R}_{n}w_n)+\mathcal{B}_1(w_n,u^s)+\mathcal{B}_1(u^s,\mathcal{R}_{n}w_n)\right]\mathrm{d} t\\\qquad\qquad\qquad\quad+\chi^2(\mathcal{E}_{w_n}(t))\mathcal{B}_2(\mathcal{R}_{n}w_n+u^s,w_n+u^s)\mathrm{d} t= \left[\mathcal{B}_1(U-u^s,u^s)-(1-\psi)\nabla_x P\right]\mathrm{d} t\\\qquad\qquad\qquad\quad+\chi^2(\mathcal{E}_{w_n}(t))[\mathbb{F}(\mathcal{R}_{n}w_n+u^s)-\psi \overline{\mathbb{F}}(U)]\mathrm{d}W,\\
        w_n|_{y=0}=\lim_{y\to\infty} w_n=0,\\
        w_n(0)=w_0:=u_0-U_0\erf(\sqrt{2\gamma_0}y),
    \end{cases}
    \end{align}
    where $\mathcal{R}_{n}f:=\mathcal{F}_x^{-1}1_{|\xi|\le n}\mathcal{F}_xf$ denotes  the tangential regularizing operator and $\mathcal{B}_1,\mathcal{B}_2$ are given by \eqref{bilinear_term}. 
    
    \noindent\underline{\textbf{Approximation scheme I\!I}}. For each $n\in\mathbb{N}$, the well-posedness of the equation \eqref{LWP1} remains unclear especially in the stochastic setting. In order to establish its well-posedness in the analytic-Sobolev class $X^{s}_{\gamma,\sigma}$, we adopt the following iteration scheme:
    \begin{align}\label{LWP2}
    \begin{cases}
        \mathrm{d}w^{m+1}-\partial_y^2w^{m+1}\mathrm{d}t+\chi(\mathcal{E}_{w^{m+1}}(t))\chi(\mathcal{E}_{w^{m}}(t))\left[\mathcal{B}_1(w^{m},\mathcal{R}w^{m})+\mathcal{B}_1(w^{m},u^s)+\mathcal{B}_1(u^s,\mathcal{R}w^{m})\right]\mathrm{d} t\\\qquad+\chi(\mathcal{E}_{w^{m+1}}(t))\chi(\mathcal{E}_{w^{m}}(t))\mathcal{B}_2(\mathcal{R}w^{m}+u^s,w^{m}+u^s)\mathrm{d} t= \left[\mathcal{B}_1(U-u^s,u^s)-(1-\psi)\nabla_x P\right]\mathrm{d} t\\\qquad+\chi(\mathcal{E}_{w^{m+1}}(t))\chi(\mathcal{E}_{w^{m}}(t))[\mathbb{F}(\mathcal{R}w^{m}+u^s)-\psi \overline{\mathbb{F}}(U)]\mathrm{d}W,\\
        w^{m+1}|_{y=0}=\lim_{y\to\infty} w^{m+1}=0,\\
        w^{m+1}(0)=w_0:=u_0-U_0\erf\left(\sqrt{2\gamma_0}y\right),
    \end{cases}
    \end{align}
    where $\mathcal{R}:=\mathcal{R}_n$ is the tangential regularizing operator given in \eqref{LWP1} for a fixed $n\ge 1$, in which we do not indicate the dependence on $n$ without any confusion, and the iteration initial state $w^0$ is the solution of the following degenerated heat equation:
     \begin{align}\label{LWP3}
    \begin{cases}
        \partial_t w^0=\partial_y^2w^0\\
        w^{0}|_{y=0}=\lim_{y\to\infty} w^{0}=0,\\
        w^{0}(0)=w_0.
    \end{cases}
    \end{align}

    \begin{remark}\rm
         (On the truncation functions in \eqref{LWP2}). We intend to apply a fix point argument to get the solution $w_n$ of \eqref{LWP1}, through the iteration scheme \eqref{LWP2}, for which one needs to study the difference of bilinear terms, for example,
         \begin{align}\label{LWP3_1}
             \mathcal{B}_1(w^m,\mathcal{R}w^m)-\mathcal{B}_1(w^{m-1},\mathcal{R}w^{m-1})=\mathcal{B}_1(z^m,\mathcal{R}w^m)+\mathcal{B}_1(w^{m-1},\mathcal{R}z^m)
         \end{align}
         with $z^m:=w^m-w^{m-1}$. In the deterministic setting, the a-priori estimate usually ensures a uniform bound for the approximate solutions $w^m$ so that one may bound the right hand-side of \eqref{LWP3_1} only in terms of $z^m$ and the fix point argument is then applicable. However, in the stochastic situation, no pathwise estimate is available for $w^m$. Therefore, for the truncation $\chi^2(\mathcal{E}_{w_n}(t))$ of  \eqref{LWP1}, in  \eqref{LWP2} we introduce the iteration $\chi(\mathcal{E}_{w^{m+1}}(t))\chi(\mathcal{E}_{w^{m}}(t))$ in order to have 
         \begin{align}\label{LWP3_2}
             \chi(\mathcal{E}_{w^{m+1}}(t))\chi(\mathcal{E}_{w^{m}}(t))\mathcal{B}_1&(w^m,\mathcal{R}w^m)-\chi(\mathcal{E}_{w^{m}}(t))\chi(\mathcal{E}_{w^{m-1}}(t))\mathcal{B}_1(w^{m-1},\mathcal{R}w^{m-1})\notag\\&= \left[\chi(\mathcal{E}_{w^{m+1}}(t))\chi(\mathcal{E}_{w^{m}}(t))-\chi(\mathcal{E}_{w^{m}}(t))\chi(\mathcal{E}_{w^{m-1}}(t))\right]\mathcal{B}_1(w^m,\mathcal{R}w^m)\notag\\&\quad+\chi(\mathcal{E}_{w^{m}}(t))\chi(\mathcal{E}_{w^{m-1}}(t))\left[\mathcal{B}_1(z^m,\mathcal{R}w^m)+\mathcal{B}_1(w^{m-1},\mathcal{R}z^m)\right],
         \end{align}
         from which one may bound the right hand-side of \eqref{LWP3_2} in terms of $z^m,z^{m-1}$, with the help of those truncation functions, so that one could perform the fix point iteration.
    \end{remark}
    
    \noindent\underline{\textbf{Approximation scheme I\!I\!I}}. For each $m\in\mathbb{N}_+$, the equation \eqref{LWP2} is a semi-linear stochastic PDE for $w^{m+1}$ with nonlinear multiplicative noise, if the process $w^{m}$ is given already. In order to illustrate the well-posedness of \eqref{LWP2}, we introduce the following iteration scheme:
    \begin{align}\label{LWP4}
    \begin{cases}
        \mathrm{d}w^{(m+1)}-\partial_y^2w^{(m+1)}\mathrm{d}t+\chi(\mathcal{E}_{w^{(m)}}(t))\chi(\mathcal{E}_{w}(t))\left[\mathcal{B}_1(w,\mathcal{R}w)+\mathcal{B}_1(w,u^s)+\mathcal{B}_1(u^s,\mathcal{R}w)\right]\mathrm{d} t\\\qquad+\chi(\mathcal{E}_{w^{(m)}}(t))\chi(\mathcal{E}_{w}(t))\left[\mathcal{B}_2(\mathcal{R}w+u^s,w+u^s)\right]\mathrm{d} t= \left[\mathcal{B}_1(U-u^s,u^s)-(1-\psi)\nabla_x P\right]\mathrm{d} t\\\qquad+\chi(\mathcal{E}_{w^{(m)}}(t))\chi(\mathcal{E}_{w}(t))[\mathbb{F}(\mathcal{R}w+u^s)-\psi \overline{\mathbb{F}}(U)]\mathrm{d}W,\\
        w^{(m+1)}|_{y=0}=\lim_{y\to\infty} w^{(m+1)}=0,\\
        w^{(m+1)}(0)=w_0:=u_0-U_0\erf\left(\sqrt{2\gamma_0}y\right),
    \end{cases}
    \end{align}
    where $w$ is a given progressively measurable process evolving in $H^1_yH^2_x$ and satisfying
    \begin{align}
        \label{LWP5}w|_{y=0}=0,\qquad\mathcal{E}_w(T_*)<\infty
    \end{align}
    almost surely with $T_*$ given by \eqref{AP1_1}. Again, $w^{(0)}$ is set to be the solution of \eqref{LWP3}.

    Now the equation \eqref{LWP4} is a linear stochastic PDE driven by additive noise, if the processes $w^{(m)}$ and $w$ are given already. Therefore, by applying standard arguments, e.g. the varational approach, cf. \cite{LR2015}, we have the following well-posedness result whose proof will be given in Appendix \ref{proof_LWP_approximation}.
    \begin{lemma}
        \label{LWP_approximation}
        Under the assumption of Proposition \ref{AP_proposition1}, for any given progressively measurable processes $w^{(m)},w$ in $H^1_yH^2_x$ satisfying \eqref{LWP5}, there exists a unique solution $w^{(m+1)}$ of the problem \eqref{LWP4} which is progressively measurable in $H^1_yH^2_x$ and satisfies \eqref{LWP5}.
    \end{lemma}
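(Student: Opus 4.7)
Once the processes $w^{(m)}$ and $w$ are frozen, the problem \eqref{LWP4} is a linear parabolic SPDE with \emph{additive} noise for $w^{(m+1)}$, driven only by the normal Laplacian $-\partial_y^2$ with Dirichlet condition at $y=0$. The approach will be to cast it schematically as
\begin{equation*}
dw^{(m+1)}-\partial_y^2 w^{(m+1)}\,dt=F(t)\,dt+G(t)\,dW,\qquad w^{(m+1)}|_{y=0}=\lim_{y\to\infty}w^{(m+1)}=0,
\end{equation*}
with $F,G$ progressively measurable processes depending only on the given $w,w^{(m)},u^s,U,\nabla_x P$. Thanks to the truncation factors $\chi(\mathcal{E}_{w^{(m)}})\chi(\mathcal{E}_w)\in[0,1]$, combined with Lemmas \ref{product_estimate1}, \ref{product_estimate2}, Proposition \ref{Proposition_struc1} and the profile bound Lemma \ref{APlemma1}, both $F$ and $G$ will lie in $L^2_{\mathrm{loc}}(0,T_*;X^s_{\gamma,\sigma})$ and $L^2_{\mathrm{loc}}(0,T_*;\mathbb{X}^s_{\gamma,\sigma})$ almost surely.

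\textbf{Existence and uniqueness.} I will invoke the variational framework of Krylov--Rozovskii \cite{LR2015} with a Gelfand triple $V\subset H\subset V^*$, taking $H$ to be built on $L^2_y$ with enough $x$-regularity and $V$ additionally encoding the $\partial_y$-control together with the Dirichlet trace baked into the domain. The operator $-\partial_y^2$ is linear, self-adjoint and coercive on $V$, so the abstract hypotheses of monotonicity, hemicontinuity, coercivity and growth are immediate. This yields a unique weak solution $w^{(m+1)}\in L^2(\Omega;C([0,T_*];H)\cap L^2(0,T_*;V))$. An equivalent alternative is the mild/semigroup formulation using the Dirichlet heat semigroup in $y$.

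\textbf{Higher regularity.} To promote this weak solution to the required $X^s_{\gamma,\sigma}$-regularity (and in particular to $H^1_y H^2_x$), I will apply the Fourier multiplier $e^{\sigma|\nabla_x|}$, take conormal derivatives $Z^j\partial_x^k$, and use It\^o's formula for $\|w^{(m+1)}_\sigma\|^2_{X^s_\gamma}$. The resulting energy identity is a strict simplification of the one handled in Proposition \ref{AP_proposition1}: all nonlinear convection and noise contributions are now external and bounded by $F$ and $G$, while the only unknown-dependent term is $-\partial_y^2 w^{(m+1)}$, treated by integration by parts exactly as in \eqref{AP27}--\eqref{AP30} and the inductive Steps II--III of that proof. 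Gr\"onwall's inequality then delivers $\mathcal{E}_{w^{(m+1)}}(T_*)<\infty$ almost surely, and the embedding $X^s_{\gamma,\sigma}\hookrightarrow H^1_yH^2_x$ for $s\geq 4$ secures the stated progressive measurability.

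\textbf{Main obstacle.} The main subtlety will lie in the boundary contribution in the conormal estimate for $\partial_y w^{(m+1)}$: the equation forces $\partial_y^2 w^{(m+1)}|_{y=0}$ to be the trace of $F$ at $y=0$, which enters the integration-by-parts identity nontrivially. This will be controlled precisely as in \eqref{AP33}--\eqref{AP35} via the trace inequality $\|f|_{y=0}\|_{L^2_x}\lesssim\|f\|_{L^2}^{1/2}\|\partial_y f\|_{L^2}^{1/2}$ and then absorbed into the normal dissipation; every other aspect reduces to standard linear SPDE theory, so no further genuinely new difficulty is expected.
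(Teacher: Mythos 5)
Your approach matches the paper's almost exactly: the paper also casts \eqref{LWP4} as a linear additive-noise SPDE for $w^{(m+1)}$, solves it via the variational framework of \cite{LR2015} with a Gelfand triple $V\subset H\subset V^*$ (the paper takes $H=L^2(\mathbb{R}_+^d)$ and $V=\{u:\ u,\partial_y u\in L^2,\ u|_{y=0}=0\}$), and then bootstraps the $X^s_{\gamma,\sigma}$-regularity by re-running the a-priori estimates of Proposition \ref{AP_proposition1}, handling the boundary contribution for $\partial_y w^{(m+1)}$ precisely as in \eqref{AP33}--\eqref{AP35}.

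The one place where your plan is too glib is the claim that the abstract hypotheses of \cite{LR2015} ``are immediate.'' Monotonicity, hemicontinuity, coercivity, and growth are indeed trivial for the linear operator $\partial_y^2$ plus additive terms, but the variational theorem also requires $A$ and $B$ to be \emph{progressively measurable} $V^*$- and $L_2(\mathcal{H};H)$-valued processes, and this is not automatic from the hypothesis that $w$ and $w^{(m)}$ are progressively measurable in $H^1_yH^2_x$ with $\mathcal{E}_w(T_*),\mathcal{E}_{w^{(m)}}(T_*)<\infty$ a.s. Two pieces need an argument: first, the conormal derivative $Zw=y\partial_y w$ appearing inside $\mathcal{B}_2(\mathcal{R}w+u^s,w+u^s)$ is an unbounded operation on $H^1_yH^2_x$, so one must pair against smooth compactly supported test functions, use the regularity built into $\mathcal{E}_w(T_*)<\infty$, and invoke the Pettis theorem to conclude weak measurability implies strong progressive measurability in $L^2_yH^2_x$; second, the truncation factors $\chi(\mathcal{E}_w(t))$ and $\chi(\mathcal{E}_{w^{(m)}}(t))$ are nonlocal-in-time functionals of the path, and their adaptedness must also be checked through the same kind of test-function device. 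The paper devotes an entire step to exactly this, and it is the genuinely nontrivial part of the lemma; without it the stochastic integral $\int B\,dW$ and the drift integral are not even well defined, so your Step ``Existence and uniqueness'' would not close as stated. Everything else in your proposal is sound and aligned with the paper.
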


    \subsection{Convergence of the approximate schemes I-I\!I\!I}

    In this subsection, we are going to prove the convergence of the approximate schemes \eqref{LWP4}, \eqref{LWP2} and \eqref{LWP1} respectively, which concludes the well-posedness of the problem \eqref{AP1}.

\subsubsection{Convergence of the approximate scheme I\!I\!I}
    
    The goal of this subsection is to prove the convergence of the iteration scheme I\!I\!I given in \eqref{LWP4} as $m\to +\infty$, which implies that the iteration \eqref{LWP2} makes sense. 
    
    \begin{prop}\label{LWP_proposition1} For a given progressively measurable process $w$ in $H^1_yH^2_x$ satisfying \eqref{LWP5}, and 
    \[\mathbb{E}\|w_0\|^2_{X^s_{\gamma_0,\sigma_0}}<\infty,\]
    under the assumption of Proposition \ref{AP_proposition1}, there exists a unique solution $\tilde{w}$ up to $T_*$ of \eqref{LWP2} with $w^m=w$ which is progressively measurable in $H^1_yH^2_x$ and satisfies \eqref{LWP5}.
    \end{prop}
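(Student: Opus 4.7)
The plan is to construct $\tilde{w}$ as the limit of the Picard-type sequence $\{w^{(m)}\}_{m\ge 0}$ generated by approximation scheme I\!I\!I in \eqref{LWP4}, with the role of the fixed input played by the given $w$. The crucial observation is that once $w$ and $w^{(m)}$ are given, \eqref{LWP4} is \emph{linear} in $w^{(m+1)}$ with additive data: the unknown appears only in $\mathrm{d}w^{(m+1)} - \partial_y^2 w^{(m+1)}\mathrm{d}t$, while all bilinear, drift and noise coefficients are functionals of $w$, $u^s$, $U$ and $\nabla_x P$ modulated by the scalar prefactor $\chi(\mathcal{E}_{w^{(m)}}(t))\chi(\mathcal{E}_w(t))$. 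Lemma \ref{LWP_approximation} therefore delivers iterates $\{w^{(m)}\}$ progressively measurable in $H^1_yH^2_x$ satisfying \eqref{LWP5}, initialized from $w^{(0)}$ solving \eqref{LWP3}.

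The first technical ingredient is a uniform bound, which I would obtain by applying It\^o's formula to $\|w^{(m+1)}_\sigma\|^2_{X^s_\gamma}$ and $\|\partial_y w^{(m+1)}_\sigma\|^2_{H^{s-1}_{\gamma,\co}}$ and mimicking Steps I-I\!I\!I of Section \ref{AP_for_solution}. Lemmas \ref{APlemma1}-\ref{APlemma3}, the Burkholder-Davis-Gundy inequality, and the truncation $\chi(\mathcal{E}_w)\le 1$ combine to yield
\[
\sup_{m\ge 0}\mathbb{E}\,\mathcal{E}_{w^{(m)}}^2(T_*) \lesssim_{s,\gamma_0,\mathcal{M},\mathcal{N}} \mathbb{E}\|w_0\|^2_{X^s_{\gamma_0,\sigma_0}} + 1,
\]
and this step is in fact simpler than Proposition \ref{AP_proposition1} because \eqref{LWP4} carries no feedback from $w^{(m+1)}$. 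For contraction, set $z^{(m)} := w^{(m+1)} - w^{(m)}$; the $m$-dependence of the right-hand side of \eqref{LWP4} enters solely through $\chi(\mathcal{E}_{w^{(m)}}(t))$, so $z^{(m)}$ satisfies a linear heat equation whose source and diffusion both carry the prefactor $\Delta\chi^{(m)}(t) := \chi(\mathcal{E}_{w^{(m)}}(t)) - \chi(\mathcal{E}_{w^{(m-1)}}(t))$. The triangle inequality in $X^s_\gamma$ gives $|\mathcal{E}_a(t) - \mathcal{E}_b(t)| \le \mathcal{E}_{a-b}(t)$, whence the Lipschitz property of $\chi$ delivers $|\Delta\chi^{(m)}(t)| \le \|\chi'\|_\infty\,\mathcal{E}_{z^{(m-1)}}(t)$, while Lemmas \ref{APlemma1}-\ref{APlemma3} and Proposition \ref{Proposition_struc1} bound the remaining factors uniformly in $m$. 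Redoing the It\^o expansion of Section \ref{AP_for_solution} then produces
\[
\mathbb{E}\,\mathcal{E}_{z^{(m)}}^2(T_*) \le C\int_0^{T_*} \mathbb{E}\,\mathcal{E}_{z^{(m-1)}}^2(t)\,\mathrm{d}t,
\]
and iterating this Volterra-type inequality gives $\mathbb{E}\,\mathcal{E}_{z^{(m)}}^2(T_*) \le (CT_*)^m/m!\cdot\mathbb{E}\,\mathcal{E}_{z^{(0)}}^2(T_*)$, which is summable.

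The Cauchy property produces a limit $\tilde{w}$, progressively measurable in $H^1_y H^2_x$ with $\mathcal{E}_{\tilde{w}}(T_*) < \infty$ almost surely; continuity of $\chi$ together with the a.s.\ convergence $\mathcal{E}_{w^{(m)}}(t) \to \mathcal{E}_{\tilde{w}}(t)$ permits passage to the limit in each term of \eqref{LWP4}, producing a solution of \eqref{LWP2} with $w^m = w$ and $w^{m+1} = \tilde{w}$. Uniqueness follows by applying the same contraction estimate to the difference of two candidate solutions. The main obstacle I anticipate is closing the contraction in the strong parabolic $\mathcal{E}$-norm: because $\chi$ is only Lipschitz, the bound on $\Delta\chi^{(m)}$ delivers only the ``$\sup + L^2$'' energy of $z^{(m-1)}$, whereas the remaining coefficients in the difference equation must be bounded in a \emph{higher} norm carrying the full parabolic gains $\||\nabla_x|^{1/2}\cdot\|_{X^s_\gamma}$ and $\|\partial_y\cdot\|_{X^s_\gamma}$ using $w$ alone -- this is precisely why the mixed truncation $\chi(\mathcal{E}_{w^{m+1}})\chi(\mathcal{E}_{w^m})$ of \eqref{LWP2} and its factorization in \eqref{LWP4} into the roles of ``given input'' $w$ and ``previous iterate'' $w^{(m)}$ are essential to the scheme.
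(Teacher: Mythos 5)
The overall strategy—setting $z^{(m)}=w^{(m+1)}-w^{(m)}$, observing that the $m$-dependence enters only through the prefactor $\chi(\mathcal{E}_{w^{(m)}}(t))-\chi(\mathcal{E}_{w^{(m-1)}}(t))$, and exploiting the Lipschitz bound $|\Delta\chi^{(m)}(t)|\le\|\chi'\|_\infty\,\mathcal{E}_{z^{(m-1)}}(t)$—matches the paper, and you correctly flag the dangerous point at the end. But the claimed Volterra-type inequality
\[
\mathbb{E}\,\mathcal{E}_{z^{(m)}}^2(T_*) \le C\int_0^{T_*}\mathbb{E}\,\mathcal{E}_{z^{(m-1)}}^2(t)\,\mathrm{d}t
\]
is exactly what \emph{cannot} be established, and asserting it sweeps the obstacle you identified back under the rug rather than resolving it. The problem is the term coming from $\mathcal{B}_2$: after the Lipschitz expansion of $\Delta\chi^{(m)}$ and an integration by parts you face
\[
\mathbb{E}\int_0^{\tau}\mathcal{E}_{z^{(m-1)}}^2(t)\,\|\partial_y w_\sigma(t)\|^2_{X^s_\gamma}\,\mathrm{d}t,
\]
in which $\|\partial_y w_\sigma(t)\|^2_{X^s_\gamma}$ is a random process that is bounded only in $L^1_t$ (up to the stopping time $\tau_{3\mathcal{M}}$), not pointwise in $t$. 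You therefore cannot dominate this by $C\int_0^\tau\mathbb{E}\,\mathcal{E}_{z^{(m-1)}}^2(t)\,\mathrm{d}t$, and the Picard factorial $\frac{(CT_*)^m}{m!}$ never materialises.

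What the paper does instead is a fundamentally different mechanism. Using that $\mathcal{E}_{z^{(m-1)}}$ is non-decreasing, it pulls $\mathcal{E}_{z^{(m-1)}}^2(\tau)$ out of the time integral and uses the cutoff to control $\int_0^{\tau\wedge\tau_{3\mathcal{M}}}\|\partial_y w_\sigma\|^2_{X^s_\gamma}\mathrm{d}t\le 9\mathcal{M}^2$, then introduces a small Young parameter $\epsilon$ chosen so that $9\epsilon\mathcal{M}^2C_{s,\gamma_0,\mathcal{M},\mathcal{N}}\le\frac{1}{16}$ (the estimate \eqref{LWP9}). This absorbs the troublesome term into a small constant multiple of $\mathbb{E}\,\mathcal{E}_{z^{(m-1)}}^2(\tau)$. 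The remaining contributions either come with a factor of $\tau$ (so are small once $\tau\le\frac{1}{16C_{s,\gamma_0,\mathcal{M},\mathcal{N}}}\wedge T_*$) or are handled by the BDG inequality in the same absorbing fashion, and the net result is the \emph{constant} contraction
\[
\mathbb{E}\,\mathcal{E}_{z^{(m)}}^2(\tau)\le\tfrac{5}{13}\,\mathbb{E}\,\mathcal{E}_{z^{(m-1)}}^2(\tau),
\]
valid on a short interval whose length does not depend on the initial data. The existence up to $T_*$ then follows by iterating in time, precisely because the contraction interval is data-independent. Your proposal is missing both the $\epsilon$-absorption step and the time-iteration step; replacing the Volterra claim with these two ingredients would complete the argument.
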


    \begin{proof}
    Write 
    \[z^{(m)}:=w^{(m+1)}-w^{(m)}.\]
    From \eqref{LWP4}, we know that $z^{(m)}$ satisfies the following problem, 
    \begin{align}\label{LWP6}
    \begin{cases}
        \mathrm{d}z^{(m)}-\partial_y^2z^{(m)}\mathrm{d}t+[\chi(\mathcal{E}_{w^{(m)}}(t))-\chi(\mathcal{E}_{w^{(m-1)}}(t))]\chi(\mathcal{E}_{w}(t))\left[\mathcal{B}_2(\mathcal{R}w+u^s,w+u^s)\right]\mathrm{d} t\\
        \qquad\qquad+[\chi(\mathcal{E}_{w^{(m)}}(t))-\chi(\mathcal{E}_{w^{(m-1)}}(t))]\chi(\mathcal{E}_{w}(t))\left[\mathcal{B}_1(w,\mathcal{R}w)+\mathcal{B}_1(w,u^s)+\mathcal{B}_1(u^s,\mathcal{R}w)\right]\mathrm{d} t\\
        \qquad\qquad= [\chi(\mathcal{E}_{w^{(m)}}(t))-\chi(\mathcal{E}_{w^{(m-1)}}(t))]\chi(\mathcal{E}_{w}(t))[\mathbb{F}(\mathcal{R}w+u^s)-\psi \overline{\mathbb{F}}(U)]_{\sigma}\mathrm{d}W,\\
        z^{(m)}(0)=0,\qquad z^{(m)}|_{y=0}=\lim_{y\to\infty} z^{(m)}=0.
    \end{cases}
    \end{align}
    As demonstrated previously in Section \ref{AP}, estimates for $z^{(m)}$ will be established with the help of an induction-based argument, once bounds for the nonlinear convection terms and the force terms are available. To this end, by applying Lemma \ref{product_estimate1}, Lemma \ref{product_estimate2}, the estimates \eqref{AP8}--\eqref{AP10} and utilizing the tangential regularizing operator $\mathcal{R}$, 
   \begin{align}\label{LWP7}
       \notag&\quad\ \mathbb{E}\int_0^{\tau}[\chi(\mathcal{E}_{w^{(m)}}(t))-\chi(\mathcal{E}_{w^{(m-1)}}(t))]\chi(\mathcal{E}_{w}(t))\langle[\mathcal{B}_1(w,\mathcal{R}w)+\mathcal{B}_1(w,u^s)+\mathcal{B}_1(u^s,\mathcal{R}w)]_{\sigma},z^{(m)}_{\sigma} \rangle_{X^s_{\gamma}}\mathrm{d}t\notag\\&\lesssim\mathbb{E}\int_0^{\tau\wedge\tau_{3\mathcal{M}}}\mathcal{E}_{z^{(m-1)}}(t)|\langle[\mathcal{B}_1(w,\mathcal{R}w)+\mathcal{B}_1(w,u^s)+\mathcal{B}_1(u^s,\mathcal{R}w),z^{(m)}_{\sigma} \rangle_{X^s_{\gamma}}|\mathrm{d}t\notag\\&\lesssim_s\mathbb{E}\int_0^{\tau\wedge\tau_{3\mathcal{M}}}\mathcal{E}_{z^{(m-1)}}(t)\|z^{(m)}_{\sigma}\|_{X^s_{\gamma}}\|w_{\sigma}\|_{X^s_{\gamma}}(\|w_{\sigma}\|_{X^s_{\gamma}}+\|(u^s-U))_{\sigma}\|_{X^s_{\gamma}}+\|U_{\sigma}\|_{H^{s+1}_{x}})\mathrm{d}t\notag\\&\lesssim_{s,\gamma_0,\mathcal{M},\mathcal{N}}\mathbb{E}\left(\tau\mathcal{E}_{z^{(m-1)}}^2(\tau)\right)+\mathbb{E}\left(\tau\mathcal{E}_{z^{(m)}}^2(\tau)\right)
    \end{align} 
    for any stopping time $\tau\le T_*$, where $\tau_{3\mathcal{M}}$ is given by \eqref{AP50_1}. Proceeding similarly as in the derivation of \eqref{AP13_1}, 
    \begin{align*}
        &\langle\mathcal{B}_2(\mathcal{R}w+u^s,w+u^s)_{\sigma},z^{(m)}_{\sigma} \rangle_{X^s_{\gamma}}\\&\qquad\qquad\qquad\lesssim_{s,\gamma_0} C_{\delta}\|z^{(m)}_{\sigma} \|_{X^s_{\gamma}}(\|w_{\sigma} \|_{X^s_{\gamma}}+\|U_{\sigma}\|_{H^{s+1}_x})(\|w_{\sigma} \|_{X^s_{\gamma}}+\|\partial_yw_{\sigma} \|_{X^s_{\gamma}}+\|U_{\sigma}\|_{H^{s+1}_x})\\&\qquad\qquad\qquad\quad+\|z^{(m)}_{\sigma} \|_{\tilde{X}^s_{\gamma}}\|U_{\sigma}\|_{H^{s+1}_x}(\|\partial_yw_{\sigma} \|_{X^s_{\gamma}}+\|U_{\sigma}\|_{H^{s+1}_x}),
    \end{align*}
    which implies
    \begin{align}
        \label{LWP8}
        &\mathbb{E}\int_0^{\tau}[\chi(\mathcal{E}_{w^{(m)}}(t))-\chi(\mathcal{E}_{w^{(m-1)}}(t))]\chi(\mathcal{E}_{w}(t))\langle\mathcal{B}_2(\mathcal{R}w+u^s,w+u^s)_{\sigma},z^{(m)}_{\sigma} \rangle_{X^s_{\gamma}}\mathrm{d}t\notag\\&\qquad\qquad\qquad\qquad\qquad\lesssim \mathbb{E}\int_0^{\tau\wedge\tau_{3\mathcal{M}}}\mathcal{E}_{z^{(m-1)}}(t)|\langle\mathcal{B}_2(\mathcal{R}w+u^s,w+u^s)_{\sigma},z^{(m)}_{\sigma} \rangle_{X^s_{\gamma}}|\mathrm{d}t\notag\\&\qquad\qquad\qquad\qquad\qquad\lesssim_{s,\gamma_0,\mathcal{M},\mathcal{N}}C_{\delta}\mathbb{E}\int_0^{\tau\wedge\tau_{3\mathcal{M}}}\mathcal{E}_{z^{(m-1)}}(t)\|z^{(m)}_{\sigma} \|_{X^s_{\gamma}}(1+\|\partial_yw_{\sigma} \|_{X^s_{\gamma}})\mathrm{d}t\notag\\&\qquad\qquad\qquad\qquad\qquad\qquad+\mathbb{E}\int_0^{\tau\wedge\tau_{3\mathcal{M}}}\mathcal{E}_{z^{(m-1)}}(t)\frac{1}{\langle t\rangle^{\delta}}\|z^{(m)}_{\sigma} \|_{\tilde{X}^s_{\gamma}}(1+\|\partial_yw_{\sigma} \|_{X^s_{\gamma}})\mathrm{d}t\notag\\&\qquad\qquad\qquad\qquad\qquad\lesssim_{s,\gamma_0,\mathcal{M},\mathcal{N}} C_{\delta}\mathbb{E}\left(\tau\mathcal{E}_{z^{(m-1)}}^2(\tau)\right)+C_{\delta}\mathbb{E}\left(\tau\sup_{t\le\tau}\|z^{(m)}_{\sigma}\|^2_{X^s_{\gamma}}\right)\notag\\&\qquad\qquad\qquad\qquad\qquad\qquad+\mathbb{E}\int_0^{\tau}\frac{1}{\langle t\rangle^{2\delta}}\|z^{(m)}_{\sigma} \|_{\tilde{X}^s_{\gamma}}^2\mathrm{d}t+\epsilon\mathbb{E}\int_0^{\tau\wedge\tau_{3\mathcal{M}}}\mathcal{E}_{z^{(m-1)}}^2(t)\|\partial_yw_{\sigma} \|^2_{X^s_{\gamma}}\mathrm{d}t,
    \end{align}
    where $\epsilon>0$ is a small parameter such that 
    \begin{align}\label{LWP9}
        \epsilon C_{s,\gamma_0,\mathcal{M},\mathcal{N}}\mathbb{E}\int_0^{\tau\wedge\tau_{3\mathcal{M}}}\mathcal{E}_{z^{(m-1)}}^2(t)\|\partial_yw_{\sigma} \|^2_{X^s_{\gamma}}\mathrm{d}t &\le \epsilon C_{s,\gamma_0,\mathcal{M},\mathcal{N}}\mathbb{E}\left(\mathcal{E}_{z^{(m-1)}}^2(\tau)\int_0^{\tau\wedge\tau_{3\mathcal{M}}}\|\partial_yw_{\sigma} \|^2_{X^s_{\gamma}}\mathrm{d}t\right)\notag\\&\le 9\epsilon \mathcal{M}^2C_{s,\gamma_0,\mathcal{M},\mathcal{N}}\mathbb{E}\mathcal{E}_{z^{(m-1)}}^2(\tau)\le \frac{1}{16}\mathbb{E}\mathcal{E}_{z^{(m-1)}}^2(\tau).
    \end{align}
    Plugging \eqref{LWP9} into \eqref{LWP8}, it follows
    \begin{align}
        \label{LWP10}
        &\mathbb{E}\int_0^{\tau}[\chi(\mathcal{E}_{w^{(m)}}(t))-\chi(\mathcal{E}_{w^{(m-1)}}(t))]\chi(\mathcal{E}_{w}(t))\langle\mathcal{B}_2(\mathcal{R}w+u^s,w+u^s)_{\sigma},z^{(m)}_{\sigma} \rangle_{X^s_{\gamma}}\mathrm{d}t\notag\\&\qquad\le C_{s,\gamma_0,\mathcal{M},\mathcal{N}}\left[C_{\delta}\mathbb{E}\left(\tau\mathcal{E}_{z^{(m-1)}}^2(\tau)\right)+C_{\delta}\mathbb{E}\left(\tau\mathcal{E}_{z^{(m)}}^2(\tau)\right)+\mathbb{E}\int_0^{\tau}\frac{1}{\langle t\rangle^{2\delta}}\|z^{(m)}_{\sigma} \|_{\tilde{X}^s_{\gamma}}^2\mathrm{d}t\right]+\frac{1}{16}\mathbb{E}\mathcal{E}_{z^{(m-1)}}^2(\tau).
    \end{align}
    As for the random force term given in \eqref{LWP6}, by using the Burkholder-Davis-Gundy inequality \eqref{pre6} and Lemma \ref{APlemma3},
    \begin{align}
        \label{LWP11}
        &\mathbb{E}\int_0^{\tau}\langle z^{(m)}_{\sigma}, [\chi(\mathcal{E}_{w^{(m)}}(t))-\chi(\mathcal{E}_{w^{(m-1)}}(t))]\chi(\mathcal{E}_{w}(t))[\mathbb{F}(\mathcal{R}w+u^s)-\psi \overline{\mathbb{F}}(U)]_{\sigma}\mathrm{d}W\rangle_{X^s_{\gamma}}\notag
        \\&\qquad\qquad\qquad\qquad\le C\mathbb{E}\left(\int_0^{\tau\wedge\tau_{3\mathcal{M}}}\mathcal{E}_{z^{(m-1)}}^2(t)\|z^{(m)}_{\sigma}\|_{X^s_{\gamma}}^2\|[\mathbb{F}(\mathcal{R}w+u^s)-\psi \overline{\mathbb{F}}(U)]_{\sigma}\|^2_{\mathbb{X}^s_{\gamma}}\mathrm{d}t\right)^{\frac{1}{2}}\notag
        \\&\qquad\qquad\qquad\qquad\le C_{s,\gamma_0,\mathcal{M},\mathcal{N}}\mathbb{E}\left(\int_0^{\tau}\mathcal{E}_{z^{(m-1)}}^2(t)\|z^{(m)}_{\sigma}\|_{X^s_{\gamma}}^2\mathrm{d}t\right)^{\frac{1}{2}}\notag
        \\&\qquad\qquad\qquad\qquad\le \frac{1}{16}\mathbb{E}\mathcal{E}_{z^{(m-1)}}^2(\tau)+C_{s,\gamma_0,\mathcal{M},\mathcal{N}}\mathbb{E}\left(\tau\mathcal{E}_{z^{(m)}}^2(\tau)\right).
    \end{align}
     Similarly, for the quadratic variation term which appears from applying the It\^o formula,
    \begin{align}
        \label{LWP12}
         \mathbb{E}\int_0^{\tau}[\chi(\mathcal{E}_{w^{(m)}}(t))-\chi(\mathcal{E}_{w^{(m-1)}}(t))]^2\chi^2(\mathcal{E}_{w}(t))&\|[\mathbb{F}(\mathcal{R}w+u^s)-\psi \overline{\mathbb{F}}(U)]_{\sigma}\|^2_{\mathbb{X}^s_{\gamma}}\mathrm{d}t\notag\\&\qquad\qquad\lesssim_{s,\gamma_0,\mathcal{M},\mathcal{N}}\mathbb{E}\left(\tau\mathcal{E}_{z^{(m-1)}}^2(\tau)\right).
    \end{align}
    Combining the estimates \eqref{LWP7}, \eqref{LWP10}--\eqref{LWP12} with the induction-based argument and choosing 
    \begin{align}\label{LWP12_1}
        2\lambda\gg1,\qquad \delta>1\vee\frac{C_{s,\gamma_0,\mathcal{M},\mathcal{N}}}{2\gamma_0},\qquad \tau\le \frac{1}{16C_{s,\gamma_0,\mathcal{M},\mathcal{N}}}\wedge T_*,
    \end{align}
   from \eqref{LWP6}, we can obtain
    \begin{align}
        \label{LWP13}
        \mathbb{E}\mathcal{E}_{z^{(m)}}^2(\tau)\le \frac{5}{13}\mathbb{E}\mathcal{E}_{z^{(m-1)}}^2(\tau),\qquad \forall m\ge 1.
    \end{align}
    Hence, there exists a process $\tilde{w}$ satisfying 
    \[\lim_{m\to\infty}\mathbb{E}\mathcal{E}_{w^{(m)}-\tilde{w}}^2(\tau)=0.\]
    Utilizing the above convergence, one may directly verify that $\tilde{w}$ is a solution of \eqref{LWP2} with $w^m=w$. 
    
    It is standard to adapt the above contraction argument to prove the pathwise uniqueness, and we omit the details. Notice that the lifespan does not depend on the initial data, then we conclude that the existence and uniqueness holds up to $T_*$. This completes the proof of Propostion \ref{LWP_proposition1}.
    \end{proof}

    \subsubsection{Convergence of the scheme I\!I}
    
    Next, we address the convergence of the iteration scheme I\!I as $m\to +\infty$, from which we conclude that the approximate scheme \eqref{LWP1} makes sense. The corresponding result is stated below. 
    
    \begin{prop}\label{LWP_proposition2}
        Let $n\in\mathbb{N}$ be fixed and
    \[\mathbb{E}\|w_0\|^2_{X^s_{\gamma_0,\sigma_0}}<\infty.\]
    Then, under the assumption of Proposition \ref{AP_proposition1}, there exists a unique solution $w_n$ of \eqref{LWP1} up to $T_*$ which is progressively measurable in $H^1_yH^2_x$ and satisfies \eqref{LWP5}.
    \end{prop}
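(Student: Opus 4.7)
The plan is to obtain $w_n$ as a fixed point of the map $w^m\mapsto w^{m+1}$ defined by Proposition \ref{LWP_proposition1}, and then to extend the lifespan to $T_*$ by iterating in time. Starting from the solution $w^0$ of the heat equation \eqref{LWP3}, I would apply Proposition \ref{LWP_proposition1} inductively to construct a sequence $\{w^m\}_{m\ge 0}$ of progressively measurable processes in $H^1_yH^2_x$, each satisfying \eqref{LWP5} up to $T_*$. The goal is then to show that $\{w^m\}$ is Cauchy in the parabolic energy $\mathcal{E}_{\cdot}$.

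Setting $z^m:=w^{m+1}-w^m$ and subtracting \eqref{LWP2} at levels $m$ and $m-1$, each nonlinear and stochastic coefficient splits, exactly as in the model identity \eqref{LWP3_2}, into a \emph{truncation-difference part} $\chi(\mathcal{E}_{w^m})\bigl[\chi(\mathcal{E}_{w^{m+1}})-\chi(\mathcal{E}_{w^{m-1}})\bigr]\,\Phi(w^m)$ and an \emph{input-difference part} $\chi(\mathcal{E}_{w^m})\chi(\mathcal{E}_{w^{m-1}})\bigl[\Phi(w^m)-\Phi(w^{m-1})\bigr]$, where $\Phi$ stands for one of $\mathcal{B}_1(\cdot,\mathcal{R}\cdot),\mathcal{B}_1(\cdot,u^s),\mathcal{B}_1(u^s,\mathcal{R}\cdot),\mathcal{B}_2,\mathbb{F}(\mathcal{R}\cdot+u^s)$. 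The input-difference is linear in $z^{m-1}$ and its derivatives, whereas the triangle inequality for the seminorm-like functional $\mathcal{E}$ combined with the Lipschitz character of $\chi$ yields
\[\bigl|\chi(\mathcal{E}_{w^{m+1}})-\chi(\mathcal{E}_{w^{m-1}})\bigr|\le \|\chi'\|_{\infty}\bigl(\mathcal{E}_{z^m}+\mathcal{E}_{z^{m-1}}\bigr).\]
The affine forcing $\mathcal{B}_1(U-u^s,u^s)-(1-\psi)\nabla_xP$ is $m$-independent and therefore drops in $z^m$, and with it also the boundary trace: $\partial_y^2 z^m|_{y=0}=0$.

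I would then run the same conormal induction as in Proposition \ref{AP_proposition1} on $\|z^m_\sigma\|_{X^s_\gamma}^2$, using Lemmas \ref{APlemma2}--\ref{APlemma3} (the smoother $\mathcal{R}$ trades the tangential half-derivative for an $L^2$ factor in the input-difference terms, exactly as it did in \eqref{LWP7}--\eqref{LWP8}), the BDG inequality \eqref{pre6} for the stochastic differential, and the uniform bound $\mathcal{E}_{w^j}\le 3\mathcal{M}$ for $j\in\{m-1,m,m+1\}$ on the support of the truncations. With $\delta,\lambda$ chosen as in \eqref{LWP12_1}, this should produce
\[\mathbb{E}\mathcal{E}^2_{z^m}(\tau)\le C_*\tau\bigl(\mathbb{E}\mathcal{E}^2_{z^{m-1}}(\tau)+\mathbb{E}\mathcal{E}^2_{z^m}(\tau)\bigr),\qquad \forall \tau\le T_*,\]
for a constant $C_*=C_*(s,\gamma_0,\mathcal{M},\mathcal{N})$. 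Taking $\tau_0:=T_*\wedge (4C_*)^{-1}$ absorbs the self-term and gives the contraction $\mathbb{E}\mathcal{E}^2_{z^m}(\tau_0)\le \tfrac12\mathbb{E}\mathcal{E}^2_{z^{m-1}}(\tau_0)$. Hence $\{w^m\}$ converges to a limit $w_n$ on $[0,\tau_0]$; because $\mathcal{R}$ makes each nonlinear operator continuous in $w^m$ with respect to the parabolic-energy norm, we may pass to the limit termwise in the weak formulation and conclude that $w_n$ solves \eqref{LWP1}. Uniqueness follows from the same contraction applied to the difference of two solutions of \eqref{LWP1}. Finally, since $\tau_0$ depends only on $s,\gamma_0,\mathcal{M},\mathcal{N}$ and not on the size of the initial datum, finitely many restarts extend the solution to $[0,T_*]$.

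The main obstacle is the truncation-difference contribution: the factor $\mathcal{E}_{w^{m+1}}$ is the energy of the \emph{current} iterate, so the bound above unavoidably produces an $\mathcal{E}_{z^m}$ term on the right, which is only absorbable for short time. This is exactly the reason the scheme \eqref{LWP2} is designed with the product $\chi(\mathcal{E}_{w^{m+1}})\chi(\mathcal{E}_{w^m})$: the factorisation $\chi(\mathcal{E}_{w^m})[\chi(\mathcal{E}_{w^{m+1}})-\chi(\mathcal{E}_{w^{m-1}})]$ transfers the pre-factor onto the energies of the successive differences, which is what allows the fixed-point argument to close.
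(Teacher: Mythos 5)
Your proposal matches the paper's proof of Proposition \ref{LWP_proposition2}: both set $z^m:=w^{m+1}-w^m$, split the difference of coefficients into truncation-difference and input-difference parts (your splitting $\chi(\mathcal{E}_{w^m})[\chi(\mathcal{E}_{w^{m+1}})-\chi(\mathcal{E}_{w^{m-1}})]$ is the sum of the paper's two pieces $[\chi(\mathcal{E}_{w^{m+1}})-\chi(\mathcal{E}_{w^m})]\chi(\mathcal{E}_{w^m})$ and $[\chi(\mathcal{E}_{w^m})-\chi(\mathcal{E}_{w^{m-1}})]\chi(\mathcal{E}_{w^m})$), use the Lipschitz bound on $\chi$ together with $\mathcal{R}$ and the paper's product/force lemmas plus BDG, obtain a short-time contraction as in \eqref{LWP24}, and invoke the data-independence of the contraction window to restart and reach $T_*$. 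The only shorthand worth flagging is your summary inequality $\mathbb{E}\mathcal{E}^2_{z^m}\le C_*\tau(\mathbb{E}\mathcal{E}^2_{z^{m-1}}+\mathbb{E}\mathcal{E}^2_{z^m})$: in the paper the right-hand side also carries $\tau$-independent but absorbably small self-terms (the $\frac{1}{16}$ factors from BDG/Cauchy), which is why the final ratio in \eqref{LWP24} is $\tfrac{4}{11}$ rather than something that vanishes as $\tau\to0$; this does not affect the argument.
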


    \begin{proof}
       Let $z^m:=w^{m+1}-w^m$. Then, from \eqref{LWP2} we know that $z^m$ satisfies the following problem:
    \begin{align}\label{LWP15}
    \begin{cases}
        \mathrm{d}z^{m}-\partial_y^2z^{m}\mathrm{d}t+(\mathcal{I}_1+\mathcal{I}_2+\mathcal{I}_3+\mathcal{I}_4+\mathcal{I}_5+\mathcal{I}_6)\mathrm{d}t=(\mathcal{J}_1+\mathcal{J}_2+\mathcal{J}_3)\mathrm{d}W\\
        z^{m}(0)=0,\qquad z^{m}|_{y=0}=\lim_{y\to\infty} z^{m}=0,
    \end{cases}
    \end{align}
    where
    \begin{align}
        &\mathcal{I}_1:=[\chi(\mathcal{E}_{w^{m+1}}(t))-\chi(\mathcal{E}_{w^{m}}(t))]\chi(\mathcal{E}_{w^m}(t))\left[\mathcal{B}_2(\mathcal{R}w^m+u^s,w^m+u^s)\right],\notag\\
        &\mathcal{I}_2:=[\chi(\mathcal{E}_{w^{m+1}}(t))-\chi(\mathcal{E}_{w^{m}}(t))]\chi(\mathcal{E}_{w^m}(t))\left[\mathcal{B}_1(w^m,\mathcal{R}w^m)+\mathcal{B}_1(w^m,u^s)+\mathcal{B}_1(u^s,\mathcal{R}w^m)\right],\notag\\&\mathcal{I}_3:=[\chi(\mathcal{E}_{w^{m}}(t))-\chi(\mathcal{E}_{w^{m-1}}(t))]\chi(\mathcal{E}_{w^m}(t))\left[\mathcal{B}_2(\mathcal{R}w^m+u^s,w^m+u^s)\right],\notag\\
        &\mathcal{I}_4:=[\chi(\mathcal{E}_{w^{m}}(t))-\chi(\mathcal{E}_{w^{m-1}}(t))]\chi(\mathcal{E}_{w^m}(t))\left[\mathcal{B}_1(w^m,\mathcal{R}w^m)+\mathcal{B}_1(w^m,u^s)+\mathcal{B}_1(u^s,\mathcal{R}w^m)\right],\notag\\
        &\mathcal{I}_5:=\chi(\mathcal{E}_{w^{m}}(t))\chi(\mathcal{E}_{w^{m-1}}(t))\left[\mathcal{B}_2(\mathcal{R}z^{m-1},w^m+u^s)+\mathcal{B}_2(\mathcal{R}w^{m-1}+u^s,z^{m-1})\right],\notag\\
        &\mathcal{I}_6:=\chi(\mathcal{E}_{w^{m}}(t))\chi(\mathcal{E}_{w^{m-1}}(t))\left[\mathcal{B}_1(z^{m-1},\mathcal{R}w^m)+\mathcal{B}_1(w^{m-1},\mathcal{R}z^{m-1})+\mathcal{B}_1(z^{m-1},u^s)+\mathcal{B}_1(u^s,\mathcal{R}z^{m-1})\right],\notag
    \end{align}
    and 
    \begin{align}
        \mathcal{J}_1&:=[\chi(\mathcal{E}_{w^{m+1}}(t))-\chi(\mathcal{E}_{w^{m}}(t))]\chi(\mathcal{E}_{w^m}(t))[\mathbb{F}(\mathcal{R}w^m+u^s)-\psi \overline{\mathbb{F}}(U)],\notag\\
        \mathcal{J}_2&:=[\chi(\mathcal{E}_{w^{m}}(t))-\chi(\mathcal{E}_{w^{m-1}}(t))]\chi(\mathcal{E}_{w^m}(t))[\mathbb{F}(\mathcal{R}w^m+u^s)-\psi \overline{\mathbb{F}}(U)],\notag\\
        \mathcal{J}_3&:=\chi(\mathcal{E}_{w^{m}}(t))\chi(\mathcal{E}_{w^{m-1}}(t))[\mathbb{F}(\mathcal{R}w^m+u^s)-\mathbb{F}(\mathcal{R}w^{m-1}+u^s)].\notag
    \end{align}
    Repeating the derivation of \eqref{LWP7} and \eqref{LWP10}, one has 
    \begin{align}
        \label{LWP16}
        &\mathbb{E}\int_0^{\tau} \langle (\mathcal{I}_1+\mathcal{I}_2+\mathcal{I}_3+\mathcal{I}_4)_{\sigma}, z^m_{\sigma}\rangle_{X^s_{\gamma}}\mathrm{d}t\le C_{s,\gamma_0,\mathcal{M},\mathcal{N},\delta}\left[\mathbb{E}\left(\tau\mathcal{E}_{z^{m-1}}^2(\tau)\right)+\mathbb{E}\left(\tau\mathcal{E}_{z^{m}}^2(\tau)\right)\right]\notag\\& \qquad\qquad\qquad\qquad\qquad\qquad+C_{s,\gamma_0,\mathcal{M},\mathcal{N}}\mathbb{E}\int_0^{\tau}\frac{1}{\langle t\rangle^{2\delta}}\|z^{m}_{\sigma} \|_{\tilde{X}^s_{\gamma}}^2\mathrm{d}t+\frac{1}{16}\mathbb{E}\mathcal{E}_{z^{m}}^2(\tau)+\frac{1}{16}\mathbb{E}\mathcal{E}_{z^{m-1}}^2(\tau).
    \end{align}
    For $\mathcal{I}_5$ and $\mathcal{I}_6$, by applying Lemma \ref{product_estimate1}, \ref{product_estimate2} and combining with the estimates \eqref{AP8}--\eqref{AP10},
    \begin{align*}
        &\langle\left[\mathcal{B}_2(\mathcal{R}z^{m-1},w^m+u^s)+\mathcal{B}_2(\mathcal{R}w^{m-1}+u^s,z^{m-1})\right]_{\sigma},z^m_{\sigma}\rangle_{X^s_{\gamma}}\notag\\&\qquad\lesssim_{s,\gamma_0} C_{\delta} \|z^m_{\sigma}\|_{X^s_{\gamma}}\left(\|z^{m-1}_{\sigma}\|_{X^s_{\gamma}}\|\partial_yw^{m}_{\sigma}\|_{X^s_{\gamma}}+\|\partial_yz^{m-1}_{\sigma}\|_{X^s_{\gamma}}\|w^{m-1}_{\sigma}\|_{X^s_{\gamma}}\right)\notag\\
        &\qquad\quad+ C_{\delta} \|z^m_{\sigma}\|_{X^s_{\gamma}}\|U_{\sigma}\|_{H^{s+1}_x}(\|z^{m-1}_{\sigma}\|_{X^s_{\gamma}}+\|\partial_yz^{m-1}_{\sigma}\|_{X^s_{\gamma}})+\|U_{\sigma}\|_{H^{s+1}_x}\|\partial_yz^{m-1}_{\sigma}\|_{X^s_{\gamma}}\|z^{m}_{\sigma}\|_{\tilde{X}^s_{\gamma}},
    \end{align*}
    and
    \begin{align*}
        &\langle\left[\mathcal{B}_1(z^{m-1},\mathcal{R}w^m)+\mathcal{B}_1(w^{m-1},\mathcal{R}z^{m-1})+\mathcal{B}_1(z^{m-1},u^s)+\mathcal{B}_1(u^s,\mathcal{R}z^{m-1})\right]_{\sigma},z^m_{\sigma}\rangle_{X^s_{\gamma}}\notag\\&\qquad\qquad\qquad\qquad\lesssim_{s,\gamma_0}\|z^m_{\sigma}\|_{X^s_{\gamma}}\|z^{m-1}_{\sigma}\|_{X^s_{\gamma}}\left(\|w^m_{\sigma}\|_{X^s_{\gamma}}+\|w^{m-1}_{\sigma}\|_{X^s_{\gamma}}+\|U_{\sigma}\|_{H^{s+1}_x}\right).
    \end{align*}
    Thus, one has
    \begin{align}
        \label{LWP19}
        \mathbb{E}\int_0^{\tau} \langle (\mathcal{I}_5+\mathcal{I}_6)_{\sigma}, z^m_{\sigma}\rangle_{X^s_{\gamma}}\mathrm{d}t&\le C_{s,\gamma_0,\mathcal{M},\mathcal{N},\delta}\left[\mathbb{E}\left(\tau\mathcal{E}_{z^{m-1}}^2(\tau)\right)+\mathbb{E}\left(\tau\mathcal{E}_{z^{m}}^2(\tau)\right)\right]\notag\\&\quad+\frac{1}{16}\mathbb{E}\mathcal{E}_{z^{m}}^2(\tau)+\frac{1}{16}\mathbb{E}\mathcal{E}_{z^{m-1}}^2(\tau)+C_{s,\gamma_0,\mathcal{M},\mathcal{N}}\mathbb{E}\int_0^{\tau}\frac{1}{\langle t\rangle^{2\delta}}\|z^{m}_{\sigma} \|_{\tilde{X}^s_{\gamma}}^2\mathrm{d}t.
    \end{align}
    It remains to bound the random force terms of \eqref{LWP15}. By applying Lemma \ref{APlemma3},
    \begin{align}
        \label{LWP20}
        \mathbb{E}\int_0^{\tau}\|(\mathcal{J}_1)_{\sigma}\|^2_{\mathbb{X}^s_{\gamma}}\mathrm{d}t&\lesssim \mathbb{E}\int_0^{\tau}\mathcal{E}_{z^m}^2(t)\chi^2(\mathcal{E}_{w^m}(t))\|[\mathbb{F}(\mathcal{R}w^m+u^s)-\psi \overline{\mathbb{F}}(U)]_{\sigma}\|^2_{\mathbb{X}^s_{\gamma}}\mathrm{d}t\notag\\&\lesssim_{s,\gamma_0,\mathcal{M},\mathcal{N}}\mathbb{E}\left(\tau\mathcal{E}_{z^m}^2(\tau)\right),
    \end{align}
    and 
    \begin{align}
        \label{LWP21}
        \mathbb{E}\int_0^{\tau}\|(\mathcal{J}_2)_{\sigma}\|^2_{\mathbb{X}^s_{\gamma}}\mathrm{d}t&\lesssim \mathbb{E}\int_0^{\tau}\mathcal{E}_{z^{m-1}}^2(t)\chi^2(\mathcal{E}_{w^m}(t))\|[\mathbb{F}(\mathcal{R}w^m+u^s)-\psi \overline{\mathbb{F}}(U)]_{\sigma}\|^2_{\mathbb{X}^s_{\gamma}}\mathrm{d}t\notag\\&\lesssim_{s,\gamma_0,\mathcal{M},\mathcal{N}}\mathbb{E}\left(\tau\mathcal{E}_{z^{m-1}}^2(\tau)\right).
    \end{align}
    On the other hand, by using Proposition \ref{Proposition_struc1} one has 
    \begin{align}
        \label{LWP22}
        \mathbb{E}\int_0^{\tau}\|(\mathcal{J}_3)_{\sigma}\|^2_{\mathbb{X}^s_{\gamma}}\mathrm{d}t&\lesssim \mathbb{E}\int_0^{\tau}\chi^2(\mathcal{E}_{w^m}(t))\chi^2(\mathcal{E}_{w^{m-1}}(t))\|[\mathbb{F}(\mathcal{R}w^m+u^s)-\mathbb{F}(\mathcal{R}w^{m-1}+u^s)]_{\sigma}\|^2_{\mathbb{X}^s_{\gamma}}\mathrm{d}t\notag\\&\lesssim_{s,\mathcal{M},\mathcal{N}}\mathbb{E}\int_0^{\tau}\|z_{\sigma}^{m-1}\|^2_{X^s_{\gamma}}\mathrm{d}t\lesssim_{s,\mathcal{M},\mathcal{N}}\mathbb{E}\left(\tau\mathcal{E}_{z^{m-1}}^2(\tau)\right).
    \end{align}
    Combining the estimates \eqref{LWP20}--\eqref{LWP22} with the Burkholder-Davis-Gundy inequality \eqref{pre6}, one gets
    \begin{align}\label{LWP23}
        \mathbb{E}\int_0^{\tau}\langle z^m_{\sigma},(\mathcal{J}_1+\mathcal{J}_2+\mathcal{J}_3)_{\sigma}\mathrm{d}W\rangle_{X^s_{\gamma}}&\le C\mathbb{E}\left(\int_0^{\tau}\|z^m_{\sigma}\|^2_{X^s_{\gamma}}\|(\mathcal{J}_1+\mathcal{J}_2+\mathcal{J}_3)_{\sigma}\|^2_{\mathbb{X}^s_{\gamma}}\mathrm{d}t\right)^{\frac{1}{2}}\notag\\&\le \frac{1}{16}\mathbb{E}\sup_{t\le\tau}\|z^m_{\sigma}\|^2_{X^s_{\gamma}}+ C\mathbb{E}\int_0^{\tau}\|(\mathcal{J}_1+\mathcal{J}_2+\mathcal{J}_3)_{\sigma}\|^2_{\mathbb{X}^s_{\gamma}}\mathrm{d}t\notag\\
        &\le \frac{1}{16}\mathbb{E}\mathcal{E}_{z^m}^2(\tau)+C_{s,\gamma_0,\mathcal{M},\mathcal{N}}\left[\mathbb{E}\left(\tau\mathcal{E}_{z^{m-1}}^2(\tau)\right)+\mathbb{E}\left(\tau\mathcal{E}_{z^{m}}^2(\tau)\right)\right].
    \end{align}
    Let $\lambda,\delta,\tau$ satisfy \eqref{LWP12_1}. Then, by utilizing the estimates \eqref{LWP16}--\eqref{LWP23} together with the induction-based argument as presented in Section \ref{AP}, from the problem \eqref{LWP15} there holds
    \begin{align}
        \label{LWP24}
        \mathbb{E}\mathcal{E}_{z^{m}}^2(\tau)\le \frac{4}{11}\mathbb{E}\mathcal{E}_{z^{m-1}}^2(\tau),\qquad \forall m\ge 1.
    \end{align}
    The estimate \eqref{LWP24} implies the convergence of $w^m$, which yields the conclusion of this proposition.
    \end{proof}

    \subsubsection{Convergence of the scheme I}\label{convergence}

    As the final step of the approximation arguments, the subsection is devoted to the construction of a convergent subsequence from the solutions $w_n$ of the approximation scheme \eqref{LWP1}. 
    
    Instead of applying fix-point argument, here the key role is played by the following abstract Cauchy lemma whose proof might be found in Section 5 of \cite{GZ2009}. 
    \begin{lemma}
        \label{abstract_cauchy_lemma} 
        Let $\{w_n\}_{n=1}^{\infty}$ be a sequence of progressively measurable processes in $H^1_yH_x^2$ satisfying
        \[\mathcal{E}_{w_n}(T_*)<\infty,\qquad \forall n\in\mathbb{N}\]
        almost surely. Suppose that 
        \begin{align}
            \label{LWP25}
            \lim_{n\to\infty}\sup_{m\ge n}\mathbb{E}\mathcal{E}_{w_n-w_m}(T_*\wedge \Lambda^{n,m}_{\mathcal{M}})=0
        \end{align}
        and 
        \begin{align}
            \label{LWP26}
            \lim_{t\to0}\sup_{n\ge 1}\mathbb{P}\left\{\mathcal{E}_{w_n}(t\wedge T_*\wedge \Lambda^{n}_{\mathcal{M}})\ge \mathcal{E}_{w_n}(0)+\mathcal{M}\right\}=0,
        \end{align}
        where 
        \[\Lambda^n_{\mathcal{M}}:=\left\{\mathcal{E}_{w_n}(t)\ge \mathcal{E}_{w_n}(0)+2\mathcal{\mathcal{M}}\right\}\]
        and $\Lambda^{n,m}_{\mathcal{M}}:=\Lambda^n_{\mathcal{M}}\wedge \Lambda^m_{\mathcal{M}}$. Then, there exists a positive stopping time $\tau$, a subsequence $\{w_{n_l}\}_{l=1}^\infty$ and a process $w$ satisfying
        \[\lim_{l\to\infty}\mathcal{E}_{w_{n_l}-w}(\tau)=0\]
        almost surely.
    \end{lemma}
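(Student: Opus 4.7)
The plan is to combine the $L^1$-Cauchy control \eqref{LWP25} with the equicontinuity at $t=0$ condition \eqref{LWP26}, in the spirit of Glatt-Holtz--Ziane, to extract an almost-surely convergent subsequence on a strictly positive random time interval. The two hypotheses play complementary roles: \eqref{LWP25} is what yields the Cauchy property (once upgraded from $L^1(\Omega)$ to a.s.\ via a fast subsequence), while \eqref{LWP26} is what prevents the truncation stopping times $\Lambda^n_{\mathcal M}$ from degenerating to $0$ as $n\to\infty$, so that the limit lives on a genuinely positive interval.

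Step 1 (Cauchy subsequence). Using \eqref{LWP25}, inductively choose $n_l\uparrow\infty$ with
\[\mathbb{E}\,\mathcal{E}_{w_{n_l}-w_{n_{l+1}}}\!\left(T_*\wedge\Lambda^{n_l,n_{l+1}}_{\mathcal{M}}\right)\le 4^{-l}.\]
By Chebyshev and Borel--Cantelli, outside a $\mathbb{P}$-null set $N_1$ one has $\mathcal{E}_{w_{n_l}-w_{n_{l+1}}}(T_*\wedge\Lambda^{n_l,n_{l+1}}_{\mathcal{M}})\le 2^{-l}$ for all $l$ large, so that $\{w_{n_l}\}$ is Cauchy in the $\mathcal{E}_\cdot$-seminorm on the stochastic interval $[0,T_*\wedge\inf_{k\ge l}\Lambda^{n_k}_{\mathcal{M}}]$ for $\omega\in \Omega\setminus N_1$.

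Step 2 (Uniform lower bound on truncation stopping times). Using \eqref{LWP26}, pick a deterministic sequence $t_k\downarrow 0$ with
\[\sup_{n\ge 1}\mathbb{P}\!\left\{\mathcal{E}_{w_n}(t_k\wedge T_*\wedge \Lambda^n_{\mathcal{M}})\ge \mathcal{E}_{w_n}(0)+\mathcal{M}\right\}\le 2^{-k}.\]
Monotonicity of $t\mapsto \mathcal{E}_{w_n}(t)$ together with the interpretation $\Lambda^n_{\mathcal{M}}=\inf\{t:\mathcal{E}_{w_n}(t)\ge \mathcal{E}_{w_n}(0)+2\mathcal{M}\}$ forces, on the complement of the above event, the inequality $\Lambda^n_{\mathcal{M}}\ge t_k\wedge T_*$. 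Restricting to the subsequence $\{n_l\}$ and applying Borel--Cantelli in $k$, we obtain a full-measure event $\Omega^*$, a measurable $l_0(\omega)\in\mathbb N$ and a measurable $k_0(\omega)\in\mathbb N$ such that $\Lambda^{n_l}_{\mathcal{M}}\ge t_{k_0(\omega)}\wedge T_*$ for every $l\ge l_0(\omega)$.

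Step 3 (Definition of $\tau$ and passage to the limit). Set $\tau:=t_{k_0(\omega)}\wedge T_*$ on $\Omega^*\setminus N_1$ and $\tau:=0$ elsewhere. Then $\tau>0$ almost surely, and on $\{\tau>0\}$ one has $T_*\wedge \Lambda^{n_l,n_{l+1}}_{\mathcal{M}}\ge \tau$ for every $l\ge l_0$. Combined with Step 1, this shows that $\{w_{n_l}\}$ is Cauchy in the $\mathcal{E}_\cdot(\tau)$-norm almost surely, and completeness of the underlying parabolic energy space yields a limit process $w$ with $\mathcal{E}_{w_{n_l}-w}(\tau)\to 0$ a.s.

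The main obstacle is synchronizing the two Borel--Cantelli extractions so that a single subsequence $\{n_l\}$ simultaneously witnesses the $\mathcal{E}$-Cauchy property from \eqref{LWP25} and the lower bound on the truncation stopping times from \eqref{LWP26}, while ensuring $k_0(\omega)$ is measurable with respect to the appropriate $\sigma$-algebra so that $\tau$ is genuinely a stopping time. Once this measurable selection is carried out, the remainder reduces to routine a.s.\ Cauchy convergence in a complete function space.
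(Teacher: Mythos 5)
The paper does not prove this lemma itself; it cites Section~5 of \cite{GZ2009}, so there is no in-paper proof to compare against. That said, your plan follows the right high-level structure (fast subsequence from the $L^1$ Cauchy control, then a positivity argument for the truncation times), and Step~1 is correct as written.

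The gap is in Step~2. You claim that from \eqref{LWP26} alone, ``restricting to the subsequence $\{n_l\}$ and applying Borel--Cantelli in $k$'' produces a full-measure event on which $\Lambda^{n_l}_{\mathcal M}\ge t_{k_0(\omega)}\wedge T_*$ for all $l\ge l_0(\omega)$, i.e.\ $\inf_{l\ge l_0}\Lambda^{n_l}_{\mathcal M}>0$ a.s. This does not follow. Condition \eqref{LWP26} gives a uniform-in-$n$ bound $\sup_n\mathbb{P}\{\Lambda^{n}_{\mathcal M}<t_k\wedge T_*\}\le 2^{-k}$; a union bound over the infinitely many indices $n_l$ then yields $\mathbb{P}\{\exists l:\Lambda^{n_l}_{\mathcal M}<t_k\wedge T_*\}\le\sum_l 2^{-k}=\infty$, which is vacuous, so Borel--Cantelli ``in $l$'' fails. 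Borel--Cantelli along the diagonal $(n_k,t_k)$ gives only $\Lambda^{n_k}_{\mathcal M}\ge t_k\wedge T_*$ for $k$ large, and since $t_k\downarrow 0$ this does not produce a uniform positive lower bound either. In general, a sequence of events each with probability $\le 2^{-k}$ can still cover $\Omega$, so \eqref{LWP26} by itself cannot rule out $\inf_l\Lambda^{n_l}_{\mathcal M}=0$ on a set of positive measure. The two hypotheses must be combined at this step: fix a finite index $L_0$ so that $\mathbb{P}\{L(\omega)>L_0\}\le\varepsilon$ (where $L(\omega)$ is the a.s.\ finite threshold from Step~1 after which $\mathcal{E}_{w_{n_l}-w_{n_{l'}}}(T_*\wedge\Lambda^{n_l}_{\mathcal M}\wedge\Lambda^{n_{l'}}_{\mathcal M})\le\mathcal M/2$), apply \eqref{LWP26} to the \emph{single} fixed index $n_{L_0}$ to choose $S>0$ with $\mathbb{P}\{\mathcal{E}_{w_{n_{L_0}}}(S\wedge T_*\wedge\Lambda^{n_{L_0}}_{\mathcal M})\ge\mathcal{E}_{w_{n_{L_0}}}(0)+\mathcal M\}\le\varepsilon$, and then argue by contradiction that on the good event $\Lambda^{n_l}_{\mathcal M}<S\wedge T_*$ for some $l\ge L_0$ would force $\mathcal{E}_{w_{n_l}-w_{n_{L_0}}}(T_*\wedge\Lambda^{n_l}_{\mathcal M}\wedge\Lambda^{n_{L_0}}_{\mathcal M})>\mathcal M/2$. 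Letting $\varepsilon\to 0$ then gives $T_*\wedge\liminf_l\Lambda^{n_l}_{\mathcal M}>0$ a.s., after which your Step~3 goes through. As written, your Step~2 omits this interaction and the asserted Borel--Cantelli extraction cannot be carried out.
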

    The main goal in this subsection is to prove the following result.

    \begin{prop}
            \label{LWP_proposition3}
            Suppose that 
            \begin{align}\label{LWP50}
                \|\langle\nabla_x \rangle w_0\|_{X^s_{\gamma_0,\sigma_0}}\le \mathcal{M}
            \end{align}
            almost surely. Then, under the assumption of Proposition \ref{AP_proposition1}, there exist parameters $\lambda,\delta>0$ and a unique local pathwise solution $(w,\tau)$ of the homogenized and truncated stochastic Prandtl equation \eqref{AP1} such that
        \[w\in L^{\infty}\left(0,\tau; E^s_{\gamma(\cdot),\sigma(\cdot)}\right)\]
        almost surely, with $\gamma(\cdot),\sigma(\cdot)$ being given by \eqref{MR1}.
        \end{prop}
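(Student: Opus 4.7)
The plan is to apply the abstract Cauchy lemma (Lemma \ref{abstract_cauchy_lemma}) to the sequence of approximate solutions $\{w_n\}_{n\ge 1}$ produced by Proposition \ref{LWP_proposition2}, extract a subsequence converging in the parabolic energy norm, and then verify that the limit solves \eqref{AP1}. This reduces matters to checking the two hypotheses \eqref{LWP25} and \eqref{LWP26} for $\{w_n\}$, after which one has to pass to the limit in the equation and establish uniqueness.

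For the H\"older continuity condition \eqref{LWP26}, I would first observe that the a-priori estimates of Propositions \ref{AP_proposition1}--\ref{AP_proposition3} apply uniformly to each $w_n$, since the tangential regularizer $\mathcal{R}_n$ commutes with $\partial_x,\partial_y,Z^j$ and is a contraction on all relevant norms. On the event $\{\mathcal{E}_{w_n}(t\wedge\cdot)\ge \mathcal{E}_{w_n}(0)+\mathcal{M}\}$ the inequality $\mathcal{E}_{w_n}^2(t)-\mathcal{E}_{w_n}^2(0)\ge \mathcal{M}^2$ holds, so Markov's inequality combined with Proposition \ref{AP_proposition3} and the assumption \eqref{LWP50} will give a uniform-in-$n$ bound of order $(\sqrt{t}+t)(\mathcal{M}^2+1)/\mathcal{M}^2$, which tends to $0$ as $t\to 0^+$.

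The more involved step is verifying the Cauchy condition \eqref{LWP25}. Setting $z_{n,m}:=w_n-w_m$ (WLOG $n\le m$) and subtracting the two copies of \eqref{LWP1}, I would split the drift and noise differences into three families: (a) terms produced by the truncation difference $\chi^2(\mathcal{E}_{w_n})-\chi^2(\mathcal{E}_{w_m})$, bounded via the mean-value theorem as in Propositions \ref{LWP_proposition1}--\ref{LWP_proposition2}; (b) terms arising from the decomposition $\mathcal{R}_nw_n-\mathcal{R}_mw_m=\mathcal{R}_nz_{n,m}+(\mathcal{R}_n-\mathcal{R}_m)w_m$, where the first summand is of $z_{n,m}$-type while the second is a pure approximation error; and (c) genuine differences $z_{n,m}\cdot\nabla w_n+w_m\cdot\nabla z_{n,m}$ controlled by Lemmas \ref{product_estimate1}--\ref{product_estimate2}. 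On the stopping time $\Lambda^{n,m}_{\mathcal{M}}$ the key inequality
\[
\|(\mathcal{R}_n-\mathcal{R}_m)w_m\|_{X^s_{\gamma,\sigma}}\lesssim \tfrac{1}{n}\|\nabla_xw_m\|_{X^s_{\gamma,\sigma}},
\]
combined with the higher-order bound of Proposition \ref{AP_proposition2}, makes the family (b) contribution of order $O(1/n)$ in expectation; the remaining families are absorbed through the induction-based argument of Section \ref{AP} with $\lambda,\delta$ chosen as in \eqref{LWP12_1}, and a final Gronwall step closes $\mathbb{E}\mathcal{E}_{z_{n,m}}^2(T_*\wedge \Lambda^{n,m}_{\mathcal{M}})\to 0$. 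The stochastic contributions are handled in parallel via \eqref{pre5}, \eqref{pre6} and Proposition \ref{Proposition_struc1}.

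Once Lemma \ref{abstract_cauchy_lemma} yields a positive stopping time $\tau$, a subsequence $w_{n_l}$ and a limit $w$, I would close the argument by passing to the limit in \eqref{LWP1}: the drift converges thanks to Lemmas \ref{product_estimate1}--\ref{product_estimate2} together with $\mathcal{R}_{n_l}\to\mathrm{Id}$ in the analytic norm, while the stochastic integral converges via the It\^o isometry \eqref{pre5} and the Lipschitz estimate \eqref{estimate_noise1}; progressive measurability is inherited from $\{w_{n_l}\}$, and the stated regularity follows from Propositions \ref{AP_proposition1}--\ref{AP_proposition2}. Uniqueness is then obtained by an energy estimate on the difference of two local pathwise solutions that is entirely analogous to the Cauchy argument above but without the regularizer-difference terms. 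The anticipated main obstacle is family (b) in the Cauchy step: the $1/n$ gain is fragile because the product estimates of Section \ref{pre} consume several derivatives on one factor, and it is precisely the higher-order control of $\nabla_xw$ supplied by Proposition \ref{AP_proposition2} that allows the argument to close at the prescribed level of regularity.
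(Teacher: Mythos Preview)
Your proposal is correct and follows essentially the same route as the paper: verify the two hypotheses of Lemma \ref{abstract_cauchy_lemma} for $\{w_n\}$ via Proposition \ref{AP_proposition3} (for \eqref{LWP26}) and a difference-energy estimate exploiting the $1/n$ gain from $(\mathcal{R}_n-\mathcal{R}_m)$ together with the higher-order bound of Proposition \ref{AP_proposition2} (for \eqref{LWP25}), then pass to the limit and prove uniqueness by the same energy argument. The only point on which the paper is more explicit is your ``final Gronwall step'': because the coefficient multiplying $\mathcal{E}^2_{z^{n,m}}$ involves $\||\nabla_x|^{1/2}(w_n)_\sigma\|_{X^s_\gamma}^2$, which is only $L^1$-in-time (not $L^\infty$) before $\Lambda^{n,m}_{\mathcal{M}}$, the paper carries this out as a stochastic Gronwall via a finite partition into stopping-time subintervals on which the time-integral of the coefficient is uniformly small.
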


        \begin{remark}
            \rm The restriction \eqref{LWP50} on the initial data will be removed in Section \ref{proof_LWP}.
        \end{remark}
        \begin{proof} Before proceeding, it is worthy noting that by combining Propositions \ref{AP_proposition1} and \ref{AP_proposition2} with \eqref{LWP50}, there holds: 
    \begin{align}\label{LWP27}
        \mathbb{E}\sup_{t\le T_*}\|\langle\nabla_x\rangle w_{\sigma}\|^2_{X^s_{\gamma}}+\mathbb{E}\int_0^{T_*}&\left(\|\langle\nabla_x\rangle^{\frac{3}{2}} w_{\sigma}\|^2_{X^s_{\gamma}}+\|\partial_y\langle\nabla_x\rangle w_{\sigma}\|^2_{X^s_{\gamma}}\right)\mathrm{d}t\le C_{s,\gamma_0,\mathcal{M},\mathcal{N}}
    \end{align}
    for any smooth solution $w$ of the homogenized and truncated stochastic Prandtl equation \eqref{AP1}. To get the solution of \eqref{AP1} from the approximation scheme \eqref{LWP1} by employing Lemma \ref{abstract_cauchy_lemma}, the crucial steps are to verify the assumptions \eqref{LWP25} and \eqref{LWP26} for the solution sequence of  \eqref{LWP1}.
    
    \vspace{.1in}
    \noindent\underline{\textbf{Step I. Verification of \eqref{LWP25}}}. Let $z^{n,m}:=w_n-w_m$. Then, from \eqref{LWP1} we know that $z^{n,m}$ satisfies the following problem: 
    \begin{align}\label{LWP28}
    \begin{cases}
        \mathrm{d}z^{n,m}-\partial_y^2z^{n,m}\mathrm{d}t+(\mathcal{I}^1+\mathcal{I}^2+\mathcal{I}^3+\mathcal{I}^4)\mathrm{d}t=(\mathcal{J}^1+\mathcal{J}^2)\mathrm{d}W\\
        z^{n,m}(0)=0,\qquad z^{n,m}|_{y=0}=\lim_{y\to\infty} z^{n,m}=0,
    \end{cases}
    \end{align}
    where 
    \begin{align*}
    &\mathcal{I}^1:=[\chi^2(\mathcal{E}_{w_n}(t))-\chi^2(\mathcal{E}_{w_m}(t))]\left[\mathcal{B}_1(w_n,\mathcal{R}_{n}w_n)+\mathcal{B}_1(w_n,u^s)+\mathcal{B}_1(u^s,\mathcal{R}_{n}w_n)\right], \\&
    \mathcal{I}^2:=[\chi^2(\mathcal{E}_{w_n}(t))-\chi^2(\mathcal{E}_{w_m}(t))]\mathcal{B}_2(\mathcal{R}_nw_n+u^s
    ,w_n+u^s),\\&\mathcal{I}^3:=\chi^2(\mathcal{E}_{w_m}(t))[\mathcal{B}_1(z^{n,m},\mathcal{R}_nw_n)+\mathcal{B}_1(w_{m},\mathcal{R}_nw_n-\mathcal{R}_mw_m)\\&\qquad\qquad\qquad\qquad+\mathcal{B}_1(z^{n,m},u^s)+\mathcal{B}_1(u^s,\mathcal{R}_nw_n-\mathcal{R}_mw_m)],\\&\mathcal{I}^4:=\chi^2(\mathcal{E}_{w_m}(t))[\mathcal{B}_2(\mathcal{R}_nw_n+u^s
    ,w_n+u^s)-\mathcal{B}_2(\mathcal{R}_mw_m+u^s
    ,w_m+u^s)],
    \end{align*}
    and 
    \begin{align}
        \mathcal{J}^1&:=[\chi^2(\mathcal{E}_{w_{n}}(t))-\chi^2(\mathcal{E}_{w_{m}}(t))][\mathbb{F}(\mathcal{R}_nw_n+u^s)-\psi \overline{\mathbb{F}}(U)],\notag\\
        \mathcal{J}^2&:=\chi^2(\mathcal{E}_{w_{m}}(t))[\mathbb{F}(\mathcal{R}_nw_n+u^s)-\mathbb{F}(\mathcal{R}_mw_{m}+u^s)].\notag
    \end{align}
    By applying Lemmas \ref{product_estimate1}, \ref{product_estimate2} and using the estimates \eqref{AP8}--\eqref{AP10}, 
    \begin{align}
        \label{LWP29}
        |\langle z^{n,m}_{\sigma},\mathcal{I}^1_{\sigma}\rangle_{X^s_{\gamma}}|&\lesssim_{s,\gamma_0}\mathcal{E}_{z^{n,m}}(t)\|\langle\nabla_x\rangle^{\frac{1}{2}}z_{\sigma}^{n,m}\|_{X^s_{\gamma}}\|\langle\nabla_x\rangle^{\frac{1}{2}}(w_n)_{\sigma}\|_{X^s_{\gamma}}\left(\|(w_n)_{\sigma}\|_{X^s_{\gamma}}+\|U_{\sigma}\|_{H^{s+1}_x}\right).
    \end{align}
    Proceeding similarly as in the derivation of \eqref{AP13_1}, 
    \begin{align}
        \label{LWP30}
        |\langle z^{n,m}_{\sigma},\mathcal{I}^2_{\sigma}\rangle_{X^s_{\gamma}}|&\lesssim_{s,\gamma_0} C_{\delta}\mathcal{E}_{z^{n,m}}(t)\|\langle\nabla_x\rangle^{\frac{1}{2}}z_{\sigma}^{n,m}\|_{X^s_{\gamma}}\|\langle\nabla_x\rangle^{\frac{1}{2}}(w_n)_{\sigma}\|_{X^s_{\gamma}}\|(w_n+u^s-U)_{\sigma}\|_{X^s_{\gamma}}\notag
        \\&\quad+C_{\delta}\mathcal{E}_{z^{n,m}}(t)\|\langle\nabla_x\rangle^{\frac{1}{2}}z_{\sigma}^{n,m}\|_{X^s_{\gamma}}\|(w_n)_{\sigma}\|_{X^s_{\gamma}}\|\partial_y(w_n+u^s)_{\sigma}\|_{X^s_{\gamma}}\notag\\&\quad+C_{\delta}\mathcal{E}_{z^{n,m}}(t)\|z_{\sigma}^{n,m}\|_{X^s_{\gamma}}\|U_{\sigma}\|_{H^{s+1}_x}\|\partial_y(w_n+u^s)_{\sigma}\|_{X^s_{\gamma}}\notag\\&\quad+\mathcal{E}_{z^{n,m}}(t)\|z_{\sigma}^{n,m}\|_{\tilde{X}^s_{\gamma}}\|U_{\sigma}\|_{H^{s+1}_{x}}\|\partial_y(w_n+u^s)_{\sigma}\|_{X^s_{\gamma}}.
    \end{align}
    As for $\mathcal{I}^3$, write 
    \begin{align*}    
    \mathcal{I}^3&=\chi^2(\mathcal{E}_{w_m}(t))[\mathcal{B}_1(z^{n,m},\mathcal{R}_nw_n)+\mathcal{B}_1(w_{m},\mathcal{R}_mz^{n,m})+\mathcal{B}_1(z^{n,m},u^s)+\mathcal{B}_1(u^s,\mathcal{R}_mz^{n,m})]\\
    &\quad+\chi^2(\mathcal{E}_{w_m}(t))[\mathcal{B}_1(w_{m},(\mathcal{R}_n-\mathcal{R}_m) w_{n})+\mathcal{B}_1(u^s ,(\mathcal{R}_n-\mathcal{R}_m) w_{n})]=:\mathcal{I}^{3,1}+\mathcal{I}^{3,2},
    \end{align*}
    for which one has 
    \begin{align}
        \label{LWP31}
        |\langle z^{n,m}_{\sigma},\mathcal{I}^{3,1}_{\sigma}\rangle_{X^s_{\gamma}}|&\lesssim_{s,\gamma_0} \|\langle\nabla_x\rangle^{\frac{1}{2}}z_{\sigma}^{n,m}\|_{X^s_{\gamma}}\|z^{n,m}_{\sigma}\|_{X^s_{\gamma}}\|\langle\nabla_x\rangle^{\frac{1}{2}}(w_n)_{\sigma}\|_{X^s_{\gamma}}+\|\langle\nabla_x\rangle^{\frac{1}{2}}z_{\sigma}^{n,m}\|_{X^s_{\gamma}}^2\|(w_m)_{\sigma}\|_{X^s_{\gamma}}\notag\\&\quad+\|z_{\sigma}^{n,m}\|_{X^s_{\gamma}}^2\|U_{\sigma}\|_{H^{s+1}_{x}}+\|\langle\nabla_x\rangle^{\frac{1}{2}}z_{\sigma}^{n,m}\|_{X^s_{\gamma}}^2\|U_{\sigma}\|_{H^{s+1}_{x}}
    \end{align}
    and 
    \begin{align}
        \label{LWP32}
        |\langle z^{n,m}_{\sigma},\mathcal{I}^{3,2}_{\sigma}\rangle_{X^s_{\gamma}}|&\lesssim_{s,\gamma_0} \|\langle\nabla_x\rangle^{\frac{1}{2}}z_{\sigma}^{n,m}\|_{X^s_{\gamma}}\|\langle\nabla_x\rangle^{\frac{1}{2}}(\mathcal{R}_n-\mathcal{R}_m) (w_{n})_{\sigma}\|_{X^s_{\gamma}}\left(\|(w_m)_{\sigma}\|_{X^s_{\gamma}}+\|U_{\sigma}\|_{H^{s+1}_{x}}\right).
    \end{align}
    For $\mathcal{I}^4$, one decomposes
    \begin{align*}
       \mathcal{I}^4&=\chi^2(\mathcal{E}_{w_m}(t))[\mathcal{B}_2(\mathcal{R}_nw_n+u^s
    ,w_n+u^s)-\mathcal{B}_2(\mathcal{R}_mw_m+u^s
    ,w_m+u^s)]\\&=\chi^2(\mathcal{E}_{w_m}(t))[\mathcal{B}_2((\mathcal{R}_n-\mathcal{R}_m)w_n,w_n+u^s)+\mathcal{B}_2(\mathcal{R}_mz^{n,m}
    ,w_n+u^s)+\mathcal{B}_2(\mathcal{R}_mw_m+u^s
    ,z^{n,m})]\\&=:\mathcal{I}^{4,1}+\mathcal{I}^{4,2}+\mathcal{I}^{4,3},
    \end{align*}
    and there hold
    \begin{align}
        \label{LWP33}
        |\langle z^{n,m}_{\sigma},\mathcal{I}^{4,1}_{\sigma}\rangle_{X^s_{\gamma}}|&\lesssim_{s ,\delta}\|\langle\nabla_x\rangle^{\frac{1}{2}}z_{\sigma}^{n,m}\|_{X^s_{\gamma}}\|\langle\nabla_x\rangle^{\frac{1}{2}}(\mathcal{R}_n-\mathcal{R}_m) (w_{n})_{\sigma}\|_{X^s_{\gamma}}\|(w_n+u^s-U)_{\sigma}\|_{X^s_{\gamma}}\notag\\&\quad +\|\langle\nabla_x\rangle^{\frac{1}{2}}z_{\sigma}^{n,m}\|_{X^s_{\gamma}}\|(\mathcal{R}_n-\mathcal{R}_m) (w_{n})_{\sigma}\|_{X^s_{\gamma}}\|\partial_y(w_n+u^s)_{\sigma}\|_{X^s_{\gamma}},
    \end{align}
    \begin{align}
        \label{LWP34}
        |\langle z^{n,m}_{\sigma},\mathcal{I}^{4,2}_{\sigma}\rangle_{X^s_{\gamma}}|&\lesssim_{s ,\delta}\|\langle\nabla_x\rangle^{\frac{1}{2}}z_{\sigma}^{n,m}\|_{X^s_{\gamma}}\|\langle\nabla_x\rangle^{\frac{1}{2}}\mathcal{R}_m z^{n,m}_{\sigma}\|_{X^s_{\gamma}}\|(w_n+u^s-U)_{\sigma}\|_{X^s_{\gamma}}\notag\\&\quad +\|\langle\nabla_x\rangle^{\frac{1}{2}}z_{\sigma}^{n,m}\|_{X^s_{\gamma}}\|\mathcal{R}_m z^{n,m}_{\sigma}\|_{X^s_{\gamma}}\|\partial_y(w_n+u^s)_{\sigma}\|_{X^s_{\gamma}},
    \end{align}
    and 
    \begin{align}
        \label{LWP35}
        |\langle z^{n,m}_{\sigma},\mathcal{I}^{4,3}_{\sigma}\rangle_{X^s_{\gamma}}|& \lesssim_{s} C_{\delta}\|\langle\nabla_x\rangle^{\frac{1}{2}}z_{\sigma}^{n,m}\|_{X^s_{\gamma}}\|\langle\nabla_x\rangle^{\frac{1}{2}}\mathcal{R}_m(w_m)_{\sigma}\|_{X^s_{\gamma}}\|z^{n,m}_{\sigma}\|_{X^s_{\gamma}}\notag
        \\&\quad+C_{\delta}\|\langle\nabla_x\rangle^{\frac{1}{2}}z_{\sigma}^{n,m}\|_{X^s_{\gamma}}\|\mathcal{R}_m(w_m)_{\sigma}\|_{X^s_{\gamma}}\|\partial_yz^{n,m}_{\sigma}\|_{X^s_{\gamma}}\notag\\&\quad+C_{\delta}\|z_{\sigma}^{n,m}\|_{X^s_{\gamma}}\|U_{\sigma}\|_{H^{s+1}_x}\|\partial_yz_{\sigma}^{n,m}\|_{X^s_{\gamma}}\notag\\&\quad+\|z_{\sigma}^{n,m}\|_{\tilde{X}^s_{\gamma}}\|U_{\sigma}\|_{H^{s+1}_{x}}\|\partial_yz_{\sigma}^{n,m}\|_{X^s_{\gamma}}.
    \end{align}
    Combining the estimates \eqref{LWP29}--\eqref{LWP35}, it follows that 
    \begin{align}\label{LWP36}
    &\notag\mathbb{E}\int_{\tau_1}^{\tau_2}\langle z^{n,m}_{\sigma}, (\mathcal{I}^{1}+\mathcal{I}^{2}+\mathcal{I}^{3}+\mathcal{I}^{4})_{\sigma}\rangle_{X^{s}_{\gamma}}\mathrm{d}t\\&\quad\lesssim_{s,\gamma_0,\mathcal{M},\mathcal{N}} C_{\delta}\mathbb{E}\int_{\tau_1}^{\tau_2}\mathcal{E}^2_{z^{n,m}}(t)\mathrm{d}t+C_{\delta}\mathbb{E}\int_{\tau_1}^{\tau_2}\|\langle\nabla_x\rangle^{\frac{1}{2}}z_{\sigma}^{n,m}\|^2_{X^s_{\gamma}}\mathrm{d}t+C_{\delta}\mathbb{E}\int_{\tau_1}^{\tau_2}\|\langle\nabla_x\rangle^{\frac{1}{2}}z_{\sigma}^{n,m}\|_{X^s_{\gamma}}\|\partial_yz_{\sigma}^{n,m}\|_{X^s_{\gamma}}\mathrm{d}t\notag\\&\qquad+C_{\delta}\mathbb{E}\int_{\tau_1}^{\tau_2}\mathcal{E}_{z^{n,m}}(t)\|\langle\nabla_x\rangle^{\frac{1}{2}}z_{\sigma}^{n,m}\|_{X^s_{\gamma}}\left(\|\langle\nabla_x\rangle^{\frac{1}{2}}(w_n)_{\sigma}\|_{X^s_{\gamma}}+\|\langle\nabla_x\rangle^{\frac{1}{2}}(w_m)_{\sigma}\|_{X^s_{\gamma}}+\|\partial_y(w_n)_{\sigma}\|_{X^s_{\gamma}}+1\right)\mathrm{d}t\notag\\\notag&\qquad +C_{\delta}\mathbb{E}\int_{\tau_1}^{\tau_2}\|\langle\nabla_x\rangle^{\frac{1}{2}}z_{\sigma}^{n,m}\|_{X^s_{\gamma}}\|\langle\nabla_x\rangle^{\frac{1}{2}}(\mathcal{R}_n-\mathcal{R}_m)(w_n)_{\sigma}\|_{X^s_{\gamma}}\mathrm{d}t\\\notag&\qquad +C_{\delta}\mathbb{E}\int_{\tau_1}^{\tau_2}\|\langle\nabla_x\rangle^{\frac{1}{2}}z_{\sigma}^{n,m}\|_{X^s_{\gamma}}\|(\mathcal{R}_n-\mathcal{R}_m)(w_n)_{\sigma}\|_{X^s_{\gamma}}\|\partial_y(w_n)_{\sigma}\|_{X^s_{\gamma}}\mathrm{d}t\\&\qquad+\mathbb{E}\int_{\tau_1}^{\tau_2}\frac{1}{\langle t\rangle^{\delta}}\|z_{\sigma}^{n,m}\|_{\tilde{X}^s_{\gamma}}\left(\|\partial_yz_{\sigma}^{n,m}\|_{X^s_{\gamma}}+\left(1+\|\partial_y(w_n)_{\sigma}\|_{X^s_{\gamma}}\right)\mathcal{E}_{z^{n,m}}(t)\right)\mathrm{d}t,
    \end{align}
    for any stopping times $\tau_1\le \tau_2\le T_*\wedge \Lambda^{n,m}_{\mathcal{M}}$. As for the force terms given in \eqref{LWP28}, by Lemma \ref{APlemma3},
    \begin{align}
        \label{LWP37}
        \mathbb{E}\int_{\tau_1}^{\tau_2}\|\mathcal{J}^1_{\sigma}\|^2_{\mathbb{X}^{s}_{\gamma}}\mathrm{d}t\lesssim_{s,\gamma_0,\mathcal{M},\mathcal{N}} \mathbb{E}\int_{\tau_1}^{\tau_2} \mathcal{E}^2_{z^{n,m}}(t)\left(1+\||\nabla_x|^{\frac{1}{2}}(w_n)_{\sigma}\|^2_{X^s_{\gamma}}\right)\mathrm{d}t,\qquad \forall 0\le \tau_1\le\tau_2\le T_*\wedge\Lambda^{n,m}_{\mathcal{M}}.
    \end{align}
    Therefore, by applying the Burkholder-Davis-Gundy inequality combined with the Cauchy inequality, there holds
    \begin{align}
        \label{LWP38}
        \mathbb{E}\int_{\tau_1}^{\tau_2}\langle z^{n,m}_{\sigma},\mathcal{J}^1_{\sigma} \mathrm{d}W\rangle_{X^{s}_{\gamma}}\le \epsilon \mathbb{E}\mathcal{E}^2_{z^{n,m}}(\tau_2)+C_{s,\gamma_0,\mathcal{M},\mathcal{N}}\mathbb{E}\int_{\tau_1}^{\tau_2} \mathcal{E}^2_{z^{n,m}}(t)\left(1+\||\nabla_x|^{\frac{1}{2}}(w_n)_{\sigma}\|^2_{X^s_{\gamma}}\right)\mathrm{d}t,
    \end{align}
    where $\epsilon$ is a small constant depending on $s,\gamma_0,\mathcal{M},\mathcal{N}$ which will be specified later, see \eqref{LWP44}. Similarly, by using Proposition \ref{Proposition_struc1}, 
    \begin{align}
        \label{LWP39}
        \mathbb{E}\int_{\tau_1}^{\tau_2}\|\mathcal{J}^2_{\sigma}\|^2_{\mathbb{X}^{s}_{\gamma}}\mathrm{d}t\lesssim_{s,\gamma_0,\mathcal{M},\mathcal{N}} \mathbb{E}\int_{\tau_1}^{\tau_2} \left(\|\langle\nabla_x\rangle^{\frac{1}{2}}(\mathcal{R}_n-\mathcal{R}_m)(w_n)_{\sigma}\|_{X^s_\gamma}^2+\|\langle\nabla_x\rangle^{\frac{1}{2}}z^{n,m}_{\sigma}\|_{X^s_\gamma}^2\right)\mathrm{d}t,
    \end{align}
    which implies
    \begin{align}
        \label{LWP40}
        \mathbb{E}\int_{\tau_1}^{\tau_2}\langle z^{n,m}_{\sigma},\mathcal{J}^2_{\sigma} \mathrm{d}W\rangle_{X^{s}_{\gamma}}&\le \epsilon \mathbb{E}\mathcal{E}^2_{z^{n,m}}(\tau_2)\notag\\&\ +C_{s,\gamma_0,\mathcal{M},\mathcal{N}}\mathbb{E}\int_{\tau_1}^{\tau_2} \left(\|\langle\nabla_x\rangle^{\frac{1}{2}}(\mathcal{R}_n-\mathcal{R}_m)(w_n)_{\sigma}\|_{X^s_\gamma}^2+\|\langle\nabla_x\rangle^{\frac{1}{2}}z^{n,m}_{\sigma}\|_{X^s_\gamma}^2\right)\mathrm{d}t.
    \end{align}
    For the terms involving the operator $\mathcal{R}_n-\mathcal{R}_m$, one has
    \begin{align}
        \label{LWP41}
        \|\langle\nabla_x\rangle^{\frac{1}{2}}(\mathcal{R}_n-\mathcal{R}_m)(w_n)_{\sigma}\|_{X^s_\gamma}\le \frac{1}{n}\|\langle\nabla_x\rangle^{\frac{3}{2}}(w_n)_{\sigma}\|_{X^s_\gamma}
    \end{align}
    and 
    \begin{align}
        \label{LWP42}
        \|(\mathcal{R}_n-\mathcal{R}_m)(w_n)_{\sigma}\|_{X^s_\gamma}\le \frac{1}{n}\|\langle\nabla_x\rangle(w_n)_{\sigma}\|_{X^s_\gamma}
    \end{align}
    for any $m\ge n\ge N$. Hence, by combining the estimates \eqref{LWP36}--\eqref{LWP42} and repeating the induction-based argument as presented in Section \ref{AP}, from the problem \eqref{LWP28} there holds
    \begin{align}
        \label{LWP43}
        \mathbb{E}&\sup_{\tau_1\le t\le \tau_2}\|z^{n,m}_{\sigma}\|^2_{X^{s}_{\gamma}}+2\mathbb{E}\int_{\tau_1}^{\tau_2}\left(\lambda\||\nabla_x|^{\frac{1}{2}}z^{n,m}_{\sigma}\|^2_{X^s_{\gamma}}+\frac{1}{2}\|\partial_yz^{n,m}_{\sigma}\|^2_{X^s_{\gamma}}+\frac{\gamma_0\delta}{\langle t\rangle^{\delta+1}}\|z^{n,m}_{\sigma}\|^2_{\tilde{X}^s_{\gamma}}\right)\mathrm{d}t\notag
        \\&\qquad \le C\mathbb{E}\|z^{n,m}_{\sigma}(\tau_1)\|^2_{X^s_{\gamma}}+C_{s,\gamma_0,\mathcal{M},\mathcal{N},\delta}\mathbb{E}\int_{\tau_1}^{\tau_2}\left(\mathcal{E}^2_{z^{n,m}}(t)+\||\nabla_x|^{\frac{1}{2}}z^{n,m}_{\sigma}\|^2_{X^s_{\gamma}}\right)\mathrm{d}t+\frac{1}{16}\mathbb{E}\int_{\tau_1}^{\tau_2}\|\partial_yz^{n,m}_{\sigma}\|^2_{X^s_{\gamma}}\mathrm{d}t\notag
        \\&\qquad\quad+\epsilon C_{s,\gamma_0,\mathcal{M},\mathcal{N},\delta}\mathbb{E}\int_{\tau_1}^{\tau_2}\mathcal{E}^2_{z^{n,m}}(t)\left(\||\nabla_x|^{\frac{1}{2}} (w_n)_{\sigma}\|^2_{X^s_{\gamma}}+\||\nabla_x|^{\frac{1}{2}} (w_m)_{\sigma}\|^2_{X^s_{\gamma}}+\|\partial_y (w_n)_{\sigma}\|^2_{X^s_{\gamma}}+1\right)\notag
        \\&\qquad\quad+\frac{C_{s,\gamma_0,\mathcal{M},\mathcal{N},\delta}}{n^2}\mathbb{E}\int_{\tau_1}^{\tau_2}\|\langle\nabla_x\rangle^{\frac{3}{2}}(w_n)_{\sigma}\|^2_{X^s_\gamma}\mathrm{d}t+\frac{C_{s,\gamma_0,\mathcal{M},\mathcal{N},\delta}}{n^2}\mathbb{E}\int_{\tau_1}^{\tau_2}\|\langle\nabla_x\rangle(w_n)_{\sigma}\|_{X^s_\gamma}^2\|^2_{X^s_{\gamma}}\|\partial_y(w_n)_{\sigma} \|^2_{X^s_{\gamma}}\mathrm{d}t\notag\\&\qquad\quad+C_{s,\gamma_0,\mathcal{M},\mathcal{N}}\mathbb{E}\int_{\tau_1}^{\tau_2}\frac{1}{\langle t\rangle^{2\delta}}\| z^{n,m}_{\sigma}\|^2_{\tilde{X}^s_{\gamma}}\mathrm{d}t+\frac{1}{72\mathcal{M}^2}\mathbb{E}\int_{\tau_1}^{\tau_2}\mathcal{E}^2_{z^{n,m}}(t)\|\partial_y(w_n)_{\sigma} \|^2_{X^s_{\gamma}}\mathrm{d}t\notag\\&\qquad\quad +2\epsilon\mathbb{E}\mathcal{E}^2_{z^{n,m}}(\tau_2)+C_{s,\gamma_0,\mathcal{M},\mathcal{N}}\mathbb{E}\int_{\tau_1}^{\tau_2}\mathcal{E}^2_{z^{n,m}}(t)\||\nabla_x|^{\frac{1}{2}}(w_n)_{\sigma}\|^2_{X^s_{\gamma}}\mathrm{d}t.
    \end{align}
    Let
    \begin{align}
        \label{LWP44}
        \delta>1\vee\frac{C_{s,\gamma_0,\mathcal{M},\mathcal{N}}}{2\gamma_0},\qquad 2\lambda\ge C_{s,\gamma_0,\mathcal{M},\mathcal{N},\delta}+1,
    \end{align}
    and set $\epsilon<\frac{1}{32}$ sufficiently small such that
    \begin{align}
        \label{LWP44_1}
        \epsilon C_{s,\gamma_0,\mathcal{M},\mathcal{N},\delta}\mathbb{E}&\int_{\tau_1}^{\tau_2}\mathcal{E}^2_{z^{n,m}}(t)\left(\||\nabla_x|^{\frac{1}{2}} (w_n)_{\sigma}\|^2_{X^s_{\gamma}}+\||\nabla_x|^{\frac{1}{2}} (w_m)_{\sigma}\|^2_{X^s_{\gamma}}+\|\partial_y (w_n)_{\sigma}\|^2_{X^s_{\gamma}}+1\right)\notag\\&\qquad\qquad\quad\le \epsilon C_{s,\gamma_0,\mathcal{M},\mathcal{N},\delta}\mathbb{E}\mathcal{E}^2_{z^{n,m}}(\tau_2)\le \frac{1}{16}\mathbb{E}\mathcal{E}^2_{z^{n,m}}(\tau_2),
    \end{align}
    then by utilizing \eqref{LWP27}, 
    \begin{align}
        \label{LWP45}
        \mathbb{E}&\sup_{\tau_1\le t\le \tau_2}\|z^{n,m}_{\sigma}\|^2_{X^{s}_{\gamma}}+\mathbb{E}\int_{\tau_1}^{\tau_2}\left(\||\nabla_x|^{\frac{1}{2}}z^{n,m}_{\sigma}\|^2_{X^s_{\gamma}}+\|\partial_yz^{n,m}_{\sigma}\|^2_{X^s_{\gamma}}\right)\mathrm{d}t\notag\\&\qquad\qquad\qquad\qquad\qquad\quad\le C_{s,\gamma_0,\mathcal{M},\mathcal{N}}\mathbb{E}\|z^{n,m}_{\sigma}(\tau_1)\|^2_{X^s_{\gamma}}+\frac{1}{4}\mathbb{E}\mathcal{E}_{z^{n,m}}^2(\tau_2)+\frac{C_{s,\gamma_0,\mathcal{M},\mathcal{N}}}{n^2}\notag\\&\qquad\qquad\qquad\qquad\qquad\qquad +C_{s,\gamma_0,\mathcal{M},\mathcal{N}}\mathbb{E}\int_{\tau_1}^{\tau_2}\mathcal{E}^2_{z^{n,m}}(t)\left(1+\||\nabla_x|^{\frac{1}{2}}(w_n)_{\sigma}\|^2_{X^s_{\gamma}}\right)\mathrm{d}t.
        \end{align}
        To conclude the estimate for $z^{n,m}$, we use a stochastic version of the Gronwall-type argument. First, notice that 
        \[\int_0^{T_*\wedge\Lambda^{n,m}_{\mathcal{M}}}\left(1+\||\nabla_x|^{\frac{1}{2}}(w_n)_{\sigma}\|^2_{X^s_{\gamma}}\right)\mathrm{d}t\le 9\mathcal{M}^2+1.\]
        Then, there exist a deterministic parameter $N_0$ and a sequence of stopping times $0=t_0<t_1<\cdots<t_{N_0}=T_*\wedge\Lambda^{n,m}_{\mathcal{M}}$ such that 
        \begin{align}\label{LWP46}
            \int_{t_j}^{t_{j+1}}\left(1+\||\nabla_x|^{\frac{1}{2}}(w_n)_{\sigma}\|^2_{X^s_{\gamma}}\right)\mathrm{d}t\le \frac{1}{4C_{s,\gamma_0,\mathcal{M},\mathcal{N}}},\qquad \forall  j\le {N_0-1}.
        \end{align}
        Consider the estimate \eqref{LWP45} on the interval $[t_0,t_1]$, then by plugging \eqref{LWP46} into \eqref{LWP45}, one has
        \begin{align}
        \label{LWP47}
        \mathbb{E}&\sup_{0\le t\le t_1}\|z^{n,m}_{\sigma}\|^2_{X^{s}_{\gamma}}+\mathbb{E}\int_{0}^{t_1}\left(\||\nabla_x|^{\frac{1}{2}}z^{n,m}_{\sigma}\|^2_{X^s_{\gamma}}+\|\partial_yz^{n,m}_{\sigma}\|^2_{X^s_{\gamma}}\right)\mathrm{d}t\le \frac{C_{s,\gamma_0,\mathcal{M},\mathcal{N}}}{n^2}.
        \end{align} 
        For $j>1$, assume by induction that 
        \begin{align}
        \label{LWP48}
        \mathbb{E}&\sup_{0\le t\le t_j}\|z^{n,m}_{\sigma}\|^2_{X^{s}_{\gamma}}+\mathbb{E}\int_{0}^{t_j}\left(\||\nabla_x|^{\frac{1}{2}}z^{n,m}_{\sigma}\|^2_{X^s_{\gamma}}+\|\partial_yz^{n,m}_{\sigma}\|^2_{X^s_{\gamma}}\right)\mathrm{d}t\le \frac{C_{s,\gamma_0,\mathcal{M},\mathcal{N},j}}{n^2}
        \end{align}
        for some positive constant $C_{s,\gamma_0,\mathcal{M},\mathcal{N},j}$. Then, for the case of $j+1$, one may apply \eqref{LWP45}, \eqref{LWP46} and \eqref{LWP48} to derive 
        \begin{align*}
        \mathbb{E}&\sup_{0\le t\le t_{j+1}}\|z^{n,m}_{\sigma}\|^2_{X^{s}_{\gamma}}+\mathbb{E}\int_{0}^{t_{j+1}}\left(\||\nabla_x|^{\frac{1}{2}}z^{n,m}_{\sigma}\|^2_{X^s_{\gamma}}+\|\partial_yz^{n,m}_{\sigma}\|^2_{X^s_{\gamma}}\right)\mathrm{d}t\\&\qquad\quad\le \frac{C_{s,\gamma_0,\mathcal{M},\mathcal{N},j}}{n^2}+\mathbb{E}\sup_{t_j\le t\le t_{j+1}}\|z^{n,m}_{\sigma}\|^2_{X^{s}_{\gamma}}+\mathbb{E}\int_{t_j}^{t_{j+1}}\left(\||\nabla_x|^{\frac{1}{2}}z^{n,m}_{\sigma}\|^2_{X^s_{\gamma}}+\|\partial_yz^{n,m}_{\sigma}\|^2_{X^s_{\gamma}}\right)\mathrm{d}t\\&\qquad\quad\le \frac{C_{s,\gamma_0,\mathcal{M},\mathcal{N},j}}{n^2}+C_{s,\gamma_0,\mathcal{M},\mathcal{N}}\mathbb{E}\|z^{n,m}_{\sigma}(t_j)\|^2_{X^s_{\gamma}}+\frac{1}{2}\mathbb{E}\mathcal{E}_{z^{n,m}}^2(t_{j+1})+\frac{C_{s,\gamma_0,\mathcal{M},\mathcal{N}}}{n^2}\\&\qquad\quad\le \frac{(C_{s,\gamma_0,\mathcal{M},\mathcal{N},j}+1)(C_{s,\gamma_0,\mathcal{M},\mathcal{N}}+1)}{n^2}+\frac{1}{2}\mathbb{E}\mathcal{E}_{z^{n,m}}^2(t_{j+1}).
        \end{align*}
        Therefore, by setting 
        \[C_{s,\gamma_0,\mathcal{M},\mathcal{N},j+1}:=2(C_{s,\gamma_0,\mathcal{M},\mathcal{N},j}+1)(C_{s,\gamma_0,\mathcal{M},\mathcal{N}}+1)\]
        and $C_{s,\gamma_0,\mathcal{M},\mathcal{N},1}:=C_{s,\gamma_0,\mathcal{M},\mathcal{N}}$, one concludes that \eqref{LWP48} holds for any $j\le N_0$. In particular, there holds
        \begin{align}\label{LWP49}
        \mathbb{E}&\sup_{0\le t\le T_*\wedge \Lambda^{n,m}_{\mathcal{M}}}\|z^{n,m}_{\sigma}\|^2_{X^{s}_{\gamma}}+\mathbb{E}\int_{0}^{T_*\wedge \Lambda^{n,m}_{\mathcal{M}}}\left(\||\nabla_x|^{\frac{1}{2}}z^{n,m}_{\sigma}\|^2_{X^s_{\gamma}}+\|\partial_yz^{n,m}_{\sigma}\|^2_{X^s_{\gamma}}\right)\mathrm{d}t\lesssim_{s,\gamma_0,\mathcal{M},\mathcal{N}} \frac{1}{n^2},
        \end{align}
        which implies 
        \begin{align*}
            \sup_{m\ge n}\mathbb{E}\mathcal{E}_{w_n-w_m}(T_*\wedge \Lambda^{n,m}_{\mathcal{M}})\le \left(\sup_{m\ge n}\mathbb{E}\mathcal{E}^2_{z^{n,m}}(T_*\wedge \Lambda^{n,m}_{\mathcal{M}})\right)^{\frac{1}{2}}\lesssim_{s,\gamma_0,\mathcal{M},\mathcal{N}} \frac{1}{n}\to0,
        \end{align*}
        as $n\to\infty$. This verifies that the condition \eqref{LWP25} holds for the solution sequence $\{w_n\}_{n=1}^\infty$ of \eqref{LWP1}.

\vspace{.1in}
        \noindent\underline{\textbf{Step I\!I. Verification of} \eqref{LWP26}}. As $w_n$ is the solution to the approximate problem \eqref{LWP1}, then by applying Proposition \ref{AP_proposition3}, it follows that
        \begin{align}
            \mathbb{P}\left\{\mathcal{E}_{w_n}(t\wedge T_*\wedge\Lambda^{n}_{\mathcal{M}})\ge \mathcal{E}_{w_n}(0)+\mathcal{M}\right\}&\le \mathbb{P}\left\{\mathcal{E}^2_{w_n}(t\wedge T_*\wedge \Lambda^{n}_{\mathcal{M}})\ge \mathcal{E}_{w_n}^2(0)+\mathcal{M}^2\right\}\notag\\
            &\le \frac{1}{\mathcal{M}^2}\mathbb{E}\left(\mathcal{E}^2_{w_n}(t\wedge T_*\wedge \Lambda^{n}_{\mathcal{M}})- \mathcal{E}_{w_n}^2(0)\right)\notag\\&\lesssim_{s,\gamma_0,\mathcal{M},\mathcal{N}} \sqrt{t}+t,\notag
        \end{align}
        which implies the condition \eqref{LWP26} as desired. 

        \vspace{.1in}
        
        \noindent\underline{\textbf{Step I\!I\!I. Constructing solutions of} \eqref{AP1}}. By utilizing Lemma \ref{abstract_cauchy_lemma}, one might extract a convergent subsequence $w_{n_l}$ from the solutions $w_n$ of the approximate problem \eqref{LWP1}. Again, the convergence is sufficient to ensure that the limiting process $w$ is a local pathwise solution of the homogenized and truncated stochastic Prandtl equation \eqref{AP1}. The pathwise uniqueness could be established by proceeding similarly as presented in the derivation of \eqref{LWP49}. We omit the details. This completes the proof of Proposition \ref{LWP_proposition3}.
    \end{proof}

    \subsection{Proof of Theorem \ref{MT1}}\label{proof_LWP}
    Now we are in a position to conclude the proof of Theorem \ref{MT1}. Indeed, it suffices to remove the restrictions \eqref{APassumption} and \eqref{LWP50}, due to Proposition \ref{LWP_proposition3}. Assume that  
    \begin{align}\label{LWP50_1}
        \|\langle\nabla_x\rangle w_0\|_{X^s_{\gamma_0,\sigma_0}}<\infty,\qquad\|U_0\|_{H^{s+2}_{x,\sigma_0}}<\infty,\qquad \|\nabla_x P|_{t=0}\|_{H^{s+1}_{x,\sigma_0}}<\infty
    \end{align}
    almost surely. Let $(u_{\mathcal{M},\mathcal{N}_0,\mathcal{N}_1},\tau_{\mathcal{M},\mathcal{N}_0,\mathcal{N}_1})$ denote the solution to the stochastic Prandtl equation \eqref{Intro1} with respect to the data
    \[w_0^{\mathcal{M},\mathcal{N}_0,\mathcal{N}_1}:=1_{\mathcal{A}_{\mathcal{M},\mathcal{N}_0,\mathcal{N}_1}}w_0,\qquad U_0^{\mathcal{M},\mathcal{N}_0,\mathcal{N}_1}:=1_{\mathcal{A}_{\mathcal{M},\mathcal{N}_0,\mathcal{N}_1}}U_0,\qquad \nabla_x P_0^{\mathcal{M},\mathcal{N}_0,\mathcal{N}_1}:=1_{\mathcal{A}_{\mathcal{M},\mathcal{N}_0,\mathcal{N}_1}}\nabla_x P_0, \]
    where
    \begin{align}
        \mathcal{A}_{\mathcal{M},\mathcal{N}_0,\mathcal{N}_1}&:=\left\{\|\langle\nabla_x\rangle w_0\|_{X^s_{\gamma_0,\sigma_0}}\in [\mathcal{M}-1,\mathcal{M})\right\}\notag\\&\qquad\qquad\qquad\bigcap \left\{\|U_0\|_{H^{s+2}_{x,\sigma_0}}\in [\mathcal{N}_0-1,\mathcal{N}_0)\right\}\bigcap \left\{\|\nabla_x P|_{t=0}\|_{H^{s+1}_{x,\sigma_0}}\in[\mathcal{N}_1-1,\mathcal{N}_1)\right\}.
    \end{align}
     Moreover, let $\lambda^{\mathcal{M},\mathcal{N}_0,\mathcal{N}_1},\delta^{\mathcal{M},\mathcal{N}_0,\mathcal{N}_1}>0$ be the parameters indicated by Proposition \ref{LWP_proposition3}. Define
    \[u:=\sum_{\mathcal{M},\mathcal{N}_0,\mathcal{N}_1\in\mathbb{N}}1_{\mathcal{A}_{\mathcal{M},\mathcal{N}_0,\mathcal{N}_1}}u^{\mathcal{M},\mathcal{N}_0,\mathcal{N}_1},\qquad \tau:=\sum_{\mathcal{M},\mathcal{N}_0,\mathcal{N}_1\in\mathbb{N}}1_{\mathcal{A}_{\mathcal{M},\mathcal{N}_0,\mathcal{N}_1}}\tau^{\mathcal{M},\mathcal{N}_0,\mathcal{N}_1}\]
    and
    \[\lambda:=\sum_{\mathcal{M},\mathcal{N}_0,\mathcal{N}_1\in\mathbb{N}}1_{\mathcal{A}_{\mathcal{M},\mathcal{N}_0,\mathcal{N}_1}}\lambda^{\mathcal{M},\mathcal{N}_0,\mathcal{N}_1},\qquad \delta:=\sum_{\mathcal{M},\mathcal{N}_0,\mathcal{N}_1\in\mathbb{N}}1_{\mathcal{A}_{\mathcal{M},\mathcal{N}_0,\mathcal{N}_1}}\delta^{\mathcal{M},\mathcal{N}_0,\mathcal{N}_1}.\]
    Notice that
    \[\mathbb{P}\left(\bigcup_{\mathcal{M},\mathcal{N}_0,\mathcal{N}_1\in\mathbb{N}}\mathcal{A}_{\mathcal{M},\mathcal{N}_0,\mathcal{N}_1}\right)=1,\]
    then
    \[\mathbb{P}[\tau>0]=1.\]
    It is clear to see that $(u,\tau)$ is a local pathwise solution of the stochastic Prandtl equation \eqref{Intro1}, and $\lambda,\delta$ are $\mathscr{F}_0$-measurable random parameters. Finally, as 
    \[\|\langle\nabla_x\rangle w_0\|_{X^s_{\gamma_0,\sigma_0}}\lesssim_{\sigma_0}\|w_0\|_{X^s_{\gamma_0,2\sigma_0}},\]
    then the stochastic Prandtl equation is locally solvable for the data $w_0$ valued in $X^s_{\gamma_0,2\sigma_0}$. This completes the proof of Theorem \ref{MT1}.

    \section{Probabilistic global existence in the Gevrey class}

    In the section, we are going to study the  global well-posedness with high probability for the problem \eqref{Intro3} of the stochastic Prandtl equation.
    
    To begin with, write
    \[w:=u-u^s,\]
    where $u^s(t,y)=\Psi(y) U(t)$ with the function $\Psi$ being given by \eqref{MR3_1}. Then, $w$ satisfies the following problem:
    \begin{align}
    \label{GWP0}
    \begin{cases}
        \mathrm{d}w-\partial_y^2w\mathrm{d}t+\left[(w+u^s)\nabla_x w-\partial_y^{-1}\nabla_xw\partial_y(w+u^s)\right]\mathrm{d} t=(\alpha_1+\alpha_2|\nabla_x|^{\frac{1}{2}})w\mathrm{d}B_t,\\
        w|_{y=0}=\lim_{y\to\infty} w=0,\\
        w(0)=w_0:=u_0-U_0\Psi.
    \end{cases}
    \end{align}
    One may similarly establish a local well-posedness theory for the problem \eqref{GWP0} by repeating the argument presented in Section \ref{LWP}. In particular, as there is no linearly increasing term, like $\partial_y^{-1}\nabla_x u^s \partial_yw$, in \eqref{GWP0}, we are able to work with the polynomial weight functions $\langle y\rangle^{\gamma}$ instead of the exponential weight function $e^{\gamma(t)y^2}$. To be precise, for $s\ge 4$, $\gamma>1$ and $\sigma_0>0$, given the initial data 
    \[u_0=w_0+U_0\Psi,\]
    where $w_0,U_0$ are $\mathscr{F}_0$-measurable random variables satisfying
    \[\| e^{2\sigma_0|\nabla_x|} w_0\|_{\mathcal{X}^{s}_{\gamma}}<\infty\]
    almost surely, there exists a unique local pathwise solution of \eqref{GWP0} in the sense of Definition \ref{DEFLOCAL}. 

    Now we are in a position to prove Theorem \ref{MT2}.
    \begin{proof}[Proof of Theorem \ref{MT2}] The proof is divided into the following steps.

    \noindent\underline{\textbf{Step I. Reduction to a PDE with random parameter}}.
    By applying the random transformation $\Gamma_t$ defined by \eqref{Intro3_1}, $w_{\Gamma}:= \Gamma_t w$ satisfies the following problem:
    \begin{align}\label{GWP1}
    \begin{cases}
    \partial_t w_{\Gamma}+\beta w_{\Gamma}+\left((\alpha_1\alpha_2-\lambda)|\nabla_{x}|^{\frac{1}{2}}+\frac{\alpha_2^2}{2}|\nabla_{x}|\right)w_{\Gamma}\\\qquad\qquad\qquad+\left((w+u^s)\cdot\nabla_x w\right)_{\Gamma}-\left(\partial_y^{-1}(\nabla_x\cdot w)\partial_y(w+u^s)\right)_{\Gamma}=\partial_y^2w_{\Gamma},\\
    w_{\Gamma}(0)=\mathcal{U}_0^{-1}e^{\sigma_0|\nabla_{x}|^{\frac{1}{2}}}w_0,\qquad w_{\Gamma}|_{y=0}=0,\qquad \lim_{y\to\infty}w_{\Gamma}=0.
    \end{cases}
    \end{align}
    The equation given in \eqref{GWP1} is a Prandtl-type PDE with random parameter, but having tangentially dissipation and a damping term. To obtain its global existence, one needs to control the time evolution of the random transformation $\Gamma_t$. Notice that the process $\mathcal{U}$ which appears in the definition of $\Gamma_t$ is given by the Ornstein-Uhlenbeck equation \eqref{MR3_3}, which admits an explicit solution formula:
    \[\mathcal{U}(t)=\mathcal{U}_0\exp\left(\alpha_1B_t+\left(\beta-\frac{\alpha_1^2}{2}\right)t\right).\]
    Then, before the stopping time 
    \[\mathcal{T}_*:=\inf\left\{t\ge0\bigg| \alpha_1B_t\ge R+\left(\frac{\alpha_1^2}{2}-\beta\right)t\right\}\wedge\inf\left\{t\ge0\bigg| \alpha_2B_t\ge \frac{\sigma_0+\lambda t}{2}\right\},\]
    where $R>0$ is a parameter to be determined, one has
   \begin{align}\label{GWP2}
       \mathcal{U}(t)\le e^R\mathcal{U}_0,\qquad \sigma_0+\lambda t-\alpha_2B_t>0.
   \end{align}
   Moreover, the following probabilistic estimate for $\mathcal{T}_*$ is available:
   \begin{align}
        \label{GWP10}
        \mathbb{P}\{\mathcal{T}_*=\infty\}\ge 1-\exp\left(-R\left(1-\frac{2\beta}{\alpha_1^2}\right)\right)-\exp\left(-\frac{\lambda\sigma_0}{2\alpha^2_2}\right),
    \end{align}
    cf. Chapter 3 in \cite{KS91} for its proof.
    
    \noindent\underline{\textbf{Step I\!I. Energy estimate for \eqref{GWP1} before} $\mathcal{T}_*$}. The following product estimates can be established by similarly repeating the proof of Lemmas \ref{product_estimate1} and \ref{product_estimate2}.

    \begin{lemma}\label{product_estimate3}
    Let $s\ge 4$ and $\sigma,\gamma>0$. The following estimates hold:
    \begin{enumerate}
        \item $\|\langle \nabla_x\rangle^{-\frac{1}{2}}(u\cdot\nabla_x v)\|_{\mathcal{X}^s_{\gamma,\sigma,2}}\lesssim_{s} \|u \|_{\mathcal{X}^s_{\gamma,\sigma,2}}\|\langle \nabla_x\rangle^{\frac{1}{2}} v \|_{\mathcal{X}^s_{\gamma,\sigma,2}}$,
        \item $\|\langle \nabla_x\rangle^{-\frac{1}{2}}[(\partial_y^{-1}\nabla_x  \cdot u) \partial_y v]\|_{\mathcal{X}^s_{\gamma,\sigma,2}}\lesssim_{s,\gamma} \|u\|_{\mathcal{X}^s_{\gamma,\sigma,2}}\| \partial_y v\|_{\mathcal{X}^s_{\gamma,\sigma,2}}+\|v\|_{\mathcal{X}^s_{\gamma,\sigma,2}}\|\langle\nabla_x\rangle^{\frac{1}{2}}u\|_{\mathcal{X}^s_{\gamma,\sigma,2}}$.
    \end{enumerate}
    \end{lemma}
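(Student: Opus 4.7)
The plan is to parallel the proof of Lemma~\ref{product_estimate1} (parts 3 and 4) with two adaptations to handle the Gevrey/polynomial setting: replace the analytic Fourier reduction $|\xi|\le |\xi-\eta|+|\eta|$ by its Gevrey-2 counterpart, and replace the Gaussian weight splitting $e^{\gamma y^2}$ by the polynomial-weight trick $\langle y\rangle^{\gamma}\le C\langle y\rangle^{\gamma}\cdot 1$.

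\textbf{Step 1: Reduce to Gevrey-free estimates.} Write $U:=e^{\sigma|\nabla_x|^{1/2}}u$, $V:=e^{\sigma|\nabla_x|^{1/2}}v$. Using the subadditivity $|\xi|^{1/2}\le|\xi-\eta|^{1/2}+|\eta|^{1/2}$, one has, in the Fourier variable $\xi$,
\[
\bigl|\mathcal{F}_x\!\bigl[e^{\sigma|\nabla_x|^{1/2}}(uw)\bigr](\xi)\bigr|\le\bigl(|\widehat{U}|*_\xi|\widehat{W}|\bigr)(\xi),
\]
for $W:=e^{\sigma|\nabla_x|^{1/2}}w$, so the Gevrey-weighted norm of any bilinear expression is dominated pointwise in Fourier space by the corresponding expression with $U,V$ in place of $u,v$. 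This reduces both claims to the Gevrey-free estimates
\[
\|\langle\nabla_x\rangle^{-1/2}(U\cdot\nabla_x V)\|_{\mathcal{X}^s_\gamma}\lesssim_s\|U\|_{\mathcal{X}^s_\gamma}\|\langle\nabla_x\rangle^{1/2}V\|_{\mathcal{X}^s_\gamma},
\]
\[
\|\langle\nabla_x\rangle^{-1/2}[(\partial_y^{-1}\nabla_x\!\cdot\!U)\partial_yV]\|_{\mathcal{X}^s_\gamma}\lesssim_{s,\gamma}\|U\|_{\mathcal{X}^s_\gamma}\|\partial_yV\|_{\mathcal{X}^s_\gamma}+\|V\|_{\mathcal{X}^s_\gamma}\|\langle\nabla_x\rangle^{1/2}U\|_{\mathcal{X}^s_\gamma}.
\]

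\textbf{Step 2: The first inequality.} Expand $Z^j\partial_x^k\partial_y^l$ on the product by Leibniz. For each resulting bilinear term of type $(Z^{j_1}\partial_x^{k_1}\partial_y^{l_1}U)(Z^{j_2}\partial_x^{k_2+1}\partial_y^{l_2}V)$, apply the half-derivative gain $\langle\nabla_x\rangle^{-1/2}$ in Fourier in $x$: the elementary inequality $\langle\xi\rangle^{-1/2}\lesssim \langle\xi-\eta\rangle^{-1/2}+\langle\eta\rangle^{-1/2}$ splits the estimate into one where the high-frequency derivative sits on $V$ (yielding $\|\langle\nabla_x\rangle^{1/2}V\|$) and one where it sits on $U$ (yielding $\|U\|$). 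The polynomial weight is placed entirely on the high-$Z^j$-factor; the other factor is controlled in $L^\infty_{x,y}$ through the embedding $\mathcal{X}^{s-2}_\gamma\hookrightarrow L^\infty$ (valid for $s\ge 4$ and $\gamma>1$, since $\langle y\rangle^{-\gamma}\in L^2_y$).

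\textbf{Step 3: The second inequality.} The new ingredient here is the $\partial_y^{-1}$ operator combined with the polynomial weight. Use the Hardy-type inequality
\[
\|\langle y\rangle^{\gamma-1}\partial_y^{-1}f\|_{L^2_y}\lesssim_\gamma\|\langle y\rangle^\gamma f\|_{L^2_y},\qquad\gamma>1,
\]
together with the commutator identity $[Z^j,\partial_y^{-1}]f=-j\partial_y^{-1}Z^{j-1}f$ to produce bounds for $\langle y\rangle^\gamma Z^j\partial_x^{k}\partial_y^l(\partial_y^{-1}\nabla_x\!\cdot\!U)$ in terms of $\|U\|_{\mathcal{X}^s_\gamma}$. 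The factor of $\langle y\rangle$ lost through $\partial_y^{-1}$ is absorbed by pairing with $\partial_yV$, after which Step~2's Leibniz/high-low argument finishes the bound. Note that the $\langle\nabla_x\rangle^{-1/2}$ gain is exactly what is needed to allow $\nabla_x\!\cdot\!U$ to contribute only $\|\langle\nabla_x\rangle^{1/2}U\|$ on the right-hand side.

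\textbf{Main obstacle.} The principal difficulty is the interaction of $\partial_y^{-1}$ with the polynomial weight $\langle y\rangle^\gamma$. Unlike the Gaussian weight $e^{\gamma y^2}$ used in Lemma~\ref{product_estimate1}, which provides super-polynomial decay and absorbs the growth introduced by $\partial_y^{-1}$ trivially, the polynomial weight forces the use of a Hardy inequality that is sharp and requires $\gamma>1$ (hence the hypothesis of Theorem~\ref{MT2}). Careful bookkeeping of the loss of one power of $\langle y\rangle$ through $\partial_y^{-1}$, and its restoration through pairing with $\partial_yV$, is the crucial place where the argument could fail if $\gamma$ were not strictly greater than $1$.
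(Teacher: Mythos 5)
You reproduce the paper's overall strategy: reduce the Gevrey-2 weighted estimate to a Gevrey-free one via $|\xi|^{1/2}\le|\xi-\eta|^{1/2}+|\eta|^{1/2}$, then repeat the conormal product estimates of Lemma~\ref{product_estimate1} and Lemma~\ref{product_estimate2} with the polynomial weight. This matches the paper's one-line deferral. However, each of the specific technical tools you invoke to carry this out is incorrect, and the ``main obstacle'' you identify is resolved by a different mechanism than the one you propose.

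The pointwise split $\langle\xi\rangle^{-1/2}\lesssim\langle\xi-\eta\rangle^{-1/2}+\langle\eta\rangle^{-1/2}$ is false: take $\xi=0$ and $|\eta|\to\infty$, so the left side equals $1$ while the right side tends to $0$. Placing the half-derivative gain is not a pointwise Fourier manipulation; the paper does it via the paraproduct estimate Lemma~\ref{lemmaA1}, applied fiberwise in $y$, exactly as in the proof of Lemma~\ref{product_estimate1}(3) in Appendix~\ref{proof_product_estimate}. Next, the Hardy inequality $\|\langle y\rangle^{\gamma-1}\partial_y^{-1}f\|_{L^2_y}\lesssim_\gamma\|\langle y\rangle^\gamma f\|_{L^2_y}$ for $\gamma>1$ is also false: for compactly supported $f$ with $\int_0^\infty f\ne0$, the function $\partial_y^{-1}f$ tends to a nonzero constant as $y\to\infty$, while $\langle y\rangle^{\gamma-1}\notin L^2(\mathbb{R}_+)$ once $\gamma>1/2$, so the left-hand side is infinite. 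The $\partial_y^{-1}$ factor must be placed in $L^\infty_y$, not in a weighted $L^2_y$: the correct substitute for (A.9) is $\|\partial_y^{-1}g\|_{L^\infty_y}\le\|g\|_{L^1_y}\lesssim_\gamma\|\langle y\rangle^\gamma g\|_{L^2_y}$, which only needs $\gamma>1/2$. Finally, your commutator identity has the wrong sign (with $Z^j=y^j\partial_y^j$ one computes $[Z^j,\partial_y^{-1}]=+j\,\partial_y^{-1}Z^{j-1}$); more to the point, the paper bypasses commutators entirely using the direct identity $Z^j\partial_y^{-1}f=yZ^{j-1}f$ for $j\ge1$, together with $1+y\lesssim\langle y\rangle^\gamma$ (which is where the restriction on $\gamma$ actually enters) as the polynomial-weight analogue of (A.10). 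Thus the Hardy inequality plays no role; the argument parallels (A.9)--(A.12) with $e^{\gamma y^2}$ replaced by $\langle y\rangle^\gamma$ throughout.
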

    By applying Lemma \ref{product_estimate3} combined with \eqref{GWP2},
    \begin{align}
        \label{GWP3}
        \langle w_{\Gamma}, (w\cdot\nabla_x w)_{\Gamma}\rangle_{\mathcal{X}^{s}_{\gamma}}&\le \|\langle \nabla_x \rangle^{-\frac{1}{2}} (w\cdot\nabla_x w)_{\Gamma}\|_{\mathcal{X}^s_{\gamma}}\|\langle \nabla_x \rangle^{\frac{1}{2}} w_{\Gamma} \|_{\mathcal{X}^s_{\gamma}}\notag\\&\lesssim_{R,\mathcal{U}_0} \mathcal{U}(t)^{-2} \|\langle \nabla_x \rangle^{-\frac{1}{2}} e^{(\sigma_0+\lambda t-\alpha_2B_t)|\nabla_x |^{\frac{1}{2}}}(w\nabla_x w)\|_{\mathcal{X}^s_{\gamma}}\|\langle \nabla_x \rangle^{\frac{1}{2}} w_{\Gamma} \|_{\mathcal{X}^s_{\gamma}}\notag\\
        &\lesssim_{s,R,\mathcal{U}_0}\|w_{\Gamma} \|_{\mathcal{X}^s_{\gamma}}\|\langle \nabla_x \rangle^{\frac{1}{2}} w_{\Gamma} \|_{\mathcal{X}^s_{\gamma}}^2
    \end{align}
    and 
    \begin{align}
        \label{GWP4}
        \langle w_{\Gamma}, (\partial_y^{-1}(\nabla_x  \cdot w)\partial_yw)_{\Gamma}\rangle_{\mathcal{X}^{s}_{\gamma}}\lesssim_{s,\gamma,R,\mathcal{U}_0} \|w_{\Gamma} \|_{\mathcal{X}^s_{\gamma}}\|\langle \nabla_x \rangle^{\frac{1}{2}} w_{\Gamma} \|_{\mathcal{X}^s_{\gamma}}^2+\|w_{\Gamma} \|_{\mathcal{X}^s_{\gamma}}\|\langle \nabla_x \rangle^{\frac{1}{2}} w_{\Gamma} \|_{\mathcal{X}^s_{\gamma}}\|\partial_y w_{\Gamma} \|_{\mathcal{X}^s_{\gamma}}.
    \end{align}
    As $u^s=\Psi U$ is independent of $x$ and 
    \begin{align*}
        \|Z^j \Psi\|_{L^2_y}+\|Z^j \Psi\|_{L^\infty_y}\le C_{s,\beta},\qquad  1\le j\le s,
    \end{align*}
    then 
    \begin{align}
        \label{GWP6}
        \langle w_{\Gamma}, (u^s\cdot\nabla_x w)_{\Gamma}\rangle_{\mathcal{X}^s_{\gamma}}=\mathcal{U}\langle w_{\Gamma}, \Psi \vec{e}\cdot\nabla_x w_{\Gamma}\rangle_{\mathcal{X}^s_{\gamma}}\lesssim_{s,\beta,R,\mathcal{U}_0}\||\nabla_x|^{\frac{1}{2}} w_{\Gamma}\|^2_{\mathcal{X}^s_{\gamma}}.
    \end{align}
    and 
    \begin{align}
        \label{GWP7}
        \langle w_{\Gamma}, (\partial_y^{-1}(\nabla_x\cdot w)\partial_yu^s)_{\Gamma}\rangle_{\mathcal{X}^s_{\gamma}}=\mathcal{U}\langle w_{\Gamma}, \partial_y^{-1}(\nabla_x\cdot w_{\Gamma})\partial_y\Psi \vec{e}\rangle_{\mathcal{X}^s_{\gamma}}\lesssim_{s,\beta,\gamma,R,\mathcal{U}_0}\||\nabla_x|^{\frac{1}{2}} w_{\Gamma}\|^2_{\mathcal{X}^s_{\gamma}}.
    \end{align}
    Using the estimates \eqref{GWP3}--\eqref{GWP7}, it follows from \eqref{GWP1} that
    \begin{align}\label{6.9}
        \frac{1}{2}\frac{\mathrm{d}}{\mathrm{d}t}\|w_{\Gamma}\|^2_{\mathcal{X}^s_{\gamma}}&+\beta\|w_{\Gamma}\|^2_{\mathcal{X}^s_{\gamma}}+(\alpha_1\alpha_2-\lambda)\||\nabla_x|^{\frac{1}{4}}w_{\Gamma}\|^2_{\mathcal{X}^s_{\gamma}}+\frac{\alpha_2^2}{2}\||\nabla_x|^{\frac{1}{2}}w_{\Gamma}\|^2_{\mathcal{X}^s_{\gamma}}+\frac{1}{2}\|\partial_yw_{\Gamma}\|^2_{\mathcal{X}^s_{\gamma}}\notag\\&\ \lesssim_{s,\gamma,R,\mathcal{U}_0} (C_{\beta}+\|w_{\Gamma}\|^2_{\mathcal{X}^s_{\gamma}})\||\nabla_x|^{\frac{1}{2}}w_{\Gamma}\|^2_{\mathcal{X}^s_{\gamma}}+(1+\|w_{\Gamma}\|^2_{\mathcal{X}^s_{\gamma}})\|w_{\Gamma}\|^2_{\mathcal{X}^s_{\gamma}}.
    \end{align}
    To complete the energy estimate, one has to determine the parameters. To begin with, let $\epsilon\in (0,1)$ be given and let the initial data $u_0$ satisfy \eqref{MR5_1}, which implies
    \begin{align}
        \label{GWP7_1}\|w_{\Gamma}(0)\|_{\mathcal{X}^s_{\gamma}}\le \delta\mathcal{U}_0^{-1}.
    \end{align}
    Introduce the stopping time
    \begin{align}
        \label{GWP8}\mathcal{T}_{2\delta}:=\inf\{t\ge0| \|w_{\Gamma}(t)\|_{\mathcal{X}^s_{\gamma}}\ge 2\delta\mathcal{U}_0 ^{-1}\}.
    \end{align}
    Then, by first setting 
    \[R=-2\log\left(\frac{\epsilon}{2}\right),\qquad\beta>C_{s,\gamma,R,\mathcal{U}_0},\qquad \alpha_1^2=4\beta,\]
    and then choosing $\alpha_2$ sufficiently large such that
    \[\alpha_1\alpha_2>\lambda,\qquad \frac{\alpha^2_2}{2}\ge C_{s,\gamma,\beta,R,\mathcal{U}_0},\]
    there holds from \eqref{6.9} that
    \[\frac{\mathrm{d}}{\mathrm{d}t}\|w_{\Gamma}\|^2_{\mathcal{X}^s_{\gamma}}+\||\nabla_x|^{\frac{1}{2}}w_{\Gamma}\|^2_{\mathcal{X}^s_{\gamma}}+\|\partial_yw_{\Gamma}\|^2_{\mathcal{X}^s_{\gamma}}\le 0,\qquad \forall t\le \mathcal{T}_{2\delta} \wedge \mathcal{T}_{*},\]
    provided that $\delta$ is sufficiently small. This implies $\mathcal{T}_{2\delta} \ge \mathcal{T}_{*}$ almost surely. Otherwise, one might derive 
    \[\|w_{\Gamma}(\mathcal{T}_{2\delta})\|^2_{\mathcal{X}^s_{\gamma}}\le \|w_{\Gamma}(0)\|^2_{\mathcal{X}^s_{\gamma}}\le \delta\mathcal{U}_0^{-1},\]
    which contradicts with the definition \eqref{GWP8}. Therefore, it follows that 
    \begin{align}\label{GWP9}
        \sup_{t\le \mathcal{T}_*}\|w_{\Gamma}\|^2_{\mathcal{X}^s_{\gamma}}+ \int_0^{\mathcal{T}_*}(\||\nabla_x|^{\frac{1}{2}}w_{\Gamma}\|^2_{\mathcal{X}^s_{\gamma}}+\|\partial_yw_{\Gamma}\|^2_{\mathcal{X}^s_{\gamma}})\mathrm{d}t\le \delta\mathcal{U}_0^{-1}.
    \end{align}

    \noindent\underline{\textbf{Step I\!I\!I. Constructing the global existence with high probability}}. With the above choice of the parameters, one obtains from applying \eqref{GWP10} that
    \begin{align}
        \label{GWP11}
        \mathbb{P}\{\mathcal{T}_*=\infty\}\ge 1-\frac{\epsilon}{2}-\exp\left(-\frac{\lambda\sigma_0}{2\alpha^2_2}\right).
    \end{align}
    Choose 
    \[\sigma_0=-\frac{2\alpha_2^2}{\lambda} \log\left(\frac{\epsilon}{2}\right),\]
    then one obtains 
    \begin{align}
        \label{GWP12}
        \mathbb{P}\{\mathcal{T}_*=\infty\}\ge 1-\epsilon.
    \end{align}
     Finally, to conclude the proof of Theorem \ref{MT2}, one has to combine the estimates \eqref{GWP9}, \eqref{GWP12} and the local wellposeness theory for the equation \eqref{GWP0} with respect to the analytic data which is demonstrated in the beginning of the section. For simplicity, we outline a sketch instead of giving the details. Suppose that the initial data $w_0$ satisfies \eqref{GWP7_1}. Then, for any $n\ge1$, there exists a local pathwise solution $w^n$ of the equation \eqref{GWP0} which corresponds to the initial data $\mathcal{R}_nw_0$. Here, $\mathcal{R}_n$ denotes the tangential regularizing operator defined in Section \ref{schemes}. By utilizing the estimate \eqref{GWP9}, one could prove the almost-sure convergence of the approximate solution $w^n$. The limiting process is the desired solution. This completes the proof of Theorem \ref{MT2}.
    \end{proof}

    \appendix
    \section{Formal derivation of the stochastic Prandtl equation}\label{derivation}
     Let us outline a formal derivation of the stochastic Prandtl equation via the method of multi-scale analysis. Consider the following initial boundary-value problem of the stochastic NS equation:
    \begin{align}\label{pre7}
        \begin{cases}
    \mathrm{d} u^{\epsilon}(t)+((u^{\epsilon}\cdot\nabla) u^{\epsilon}+\nabla p^{\epsilon}-\epsilon\Delta u^{\epsilon})\mathrm{d}t=\mathbb{F}^{\epsilon}(x,y,u^{\epsilon})\mathrm{d}W,\qquad(t,x,y)\in\mathbb{R}_+\times\mathbb{R}^{d}_+,\\
    \dive u^{\epsilon}=0,\\
    u^{\epsilon}(0)=u_0,\qquad u^{\epsilon}|_{y=0}=0,
    \end{cases}
    \end{align}
    where $\epsilon$ is the kinematic viscosity, the velocity field $u^{\epsilon}:=(u_h^{\epsilon},u_v^{\epsilon})$ for $d=2$ and $u^{\epsilon}:=(u_h^{\epsilon},u_v^{\epsilon})=(u_{h,1}^{\epsilon},u_{h,2}^{\epsilon},u_v^{\epsilon})$ for $d=3$, $p^{\epsilon}$ is the pressure and $\mathbb{F}^{\epsilon}(\cdot)\mathrm{d}W$ denotes the random force. Due to Prandtl's original idea in \cite{P1904}, we take the following ansatz for $u^{\epsilon}, p^{\epsilon}$ and $\mathbb{F}^{\epsilon}$:
    \begin{align}\label{pre8}
    &u^{\epsilon}(t,x,y)=\sum_{j\ge 0}\epsilon^{\frac{j}{2}}(u^{I,j}(t,x,y)+u^{B,j}(t,x,y/\sqrt{\epsilon})),\\\label{pre9}
    &p^{\epsilon}(t,x,y)=\sum_{j\ge 0}\epsilon^{\frac{j}{2}}(p^{I,j}(t,x,y)+p^{B,j}(t,x,y/\sqrt{\epsilon})),\\\label{pre10}
    &\mathbb{F}^{\epsilon}(x,y,\cdot)=\sum_{j\ge 0}\epsilon^{\frac{j}{2}}(\mathbb{F}^{I,j}(x,y,\cdot)+\mathbb{F}^{B,j}(x,y/\sqrt{\epsilon},\cdot)),
    \end{align}
    where for each $j\in\mathbb{N}$, $
    u^{B,j}(t,x,Y), p^{B,j}(t,x,Y),\mathbb{F}^{B,j}(x,Y,\cdot)$ together with their derivatives are rapidly decaying to zero, as the fast variable $Y:=y/\sqrt{\epsilon}\to \infty$. Therefore, when restricted away from boundary, the expansions \eqref{pre8}, \eqref{pre9} and \eqref{pre10} are given by 
    \begin{align}\label{pre11}
    &u^{\epsilon}(t,x,y)=\sum_{j\ge 0}\epsilon^{\frac{j}{2}}u^{I,j}(t,x,y),\qquad p^{\epsilon}(t,x,y)=\sum_{j\ge 0}\epsilon^{\frac{j}{2}}p^{I,j}(t,x,y),\\\label{pre12}
    &\qquad\qquad\qquad\quad\mathbb{F}^{\epsilon}(x,y,\cdot)=\sum_{j\ge 0}\epsilon^{\frac{j}{2}}\mathbb{F}^{I,j}(x,y,\cdot),
    \end{align}
    while inside the boundary layer, by using Taylor's expansion, the ansatz \eqref{pre8}--\eqref{pre10} turns out to be the following:
    \begin{align}\label{pre13}
    &u^{\epsilon}(t,x,y)=\sum_{j\ge 0}\epsilon^{\frac{j}{2}}\left(u^{B,j}(t,x,Y)+\sum_{k=0}^j\frac{Y^k}{k!}\overline{\partial_y^k u^{I,j-k}(t,x)}\right),\\\label{pre14}
    &p^{\epsilon}(t,x,y)=\sum_{j\ge 0}\epsilon^{\frac{j}{2}}\left(p^{B,j}(t,x,Y)+\sum_{k=0}^j\frac{Y^k}{k!}\overline{\partial_y^k p^{I,j-k}(t,x)}\right),\\\label{pre15}
    &\mathbb{F}^{\epsilon}(x,y,\cdot)=\sum_{j\ge 0}\epsilon^{\frac{j}{2}}\left(\mathbb{F}^{B,j}(x,Y,\cdot)+\sum_{k=0}^j\frac{Y^k}{k!}\overline{\partial_y^k \mathbb{F}^{I,j-k}(x,\cdot)}\right),
    \end{align}
    with $Y=y/\sqrt{\epsilon}$, and the notation
    \[\overline{u(t,x)}:=u(t,x,0)\]
    for any function $u$ defined on $\{(t,x,y)| t\ge0, x\in \mathbb{R}^{d-1}, y\ge0\}$. Now let us plug the ansatz \eqref{pre11} into the divergence-free condition given in \eqref{pre7}$_2$, then 
    \begin{align*}
        \sum_{j\ge 0}\epsilon^{\frac{j}{2}}(\nabla_x\cdot u^{I,j}_h(t,x,y)+\partial_y u^{I,j}_v(t,x,y))=0,
    \end{align*}
    which implies 
    \begin{align}
        \label{pre16}
        \nabla_x\cdot u^{I,j}_h(t,x,y)+\partial_y u^{I,j}_v(t,x,y)=0
    \end{align}
    holds on the region away from boundary. Similarly, by plugging \eqref{pre13} into \eqref{pre7}$_2$ and noting that $\partial_y=\frac{1}{\sqrt{\epsilon}}\partial_Y$, one has 
    \begin{align}
        \label{pre17}
        \nabla_x\sum_{j\ge 0}\epsilon^{\frac{j}{2}}&\left(u^{B,j}_h(t,x,Y)+\sum_{k=0}^j\frac{Y^k}{k!}\overline{\partial_y^k u_h^{I,j-k}(t,x)}\right)\notag\\&+\sum_{j\ge 0}\epsilon^{\frac{j}{2}}\left(\partial_Yu_v^{B,j+1}(t,x,Y)+\sum_{k=0}^{j}\frac{Y^k}{k!}\overline{\partial_y^{k+1} u_v^{I,j-k}(t,x)}\right)+\epsilon^{-\frac{1}{2}}\partial_Y u_v^{B,0}(t,x,Y)=0.
     \end{align}
     The vanishing of $O(\epsilon^{-\frac{1}{2}})$ terms in \eqref{pre17} implies $\partial_Y u^{B,0}_v\equiv 0$, which in turn shows that 
    \begin{align}
        \label{pre18}
        u^{B,0}_v(t,x,Y)\equiv 0,
    \end{align}
    because of the fast decay of $u_v^{B,0}$ in the fast variable $Y$. For $j\in\mathbb{N}$, it follows from the vanishing of $O(\epsilon^{\frac{j}{2}})$ terms in \eqref{pre17} that 
    \begin{align}
        \label{pre19}
        \nabla_x\left(u_h^{B,j}(t,x,Y)+\sum_{k=0}^j\frac{Y^k}{k!}\overline{\partial_y^k u_h^{I,j-k}(t,x)}\right)+\partial_Y\left(u_v^{B,j+1}(t,x,Y)+\sum_{k=0}^{j+1}\frac{Y^k}{k!}\overline{\partial_y^{k} u_v^{I,j+1-k}(t,x)}\right)=0
     \end{align}
     inside the boundary layer.

     Next, we turn to the multi-scale analysis on \eqref{pre7}$_1$. Combining \eqref{pre11} and \eqref{pre12}, there holds
     \begin{align}
         \label{pre20}
         \mathbb{F}^{\epsilon}(x,y,u^{\epsilon
         }(t,x,y))=\mathbb{F}^{I,0}(x,y,u^{I,0}(t,x,y))+o(1)
     \end{align}
     away from the boundary $\{y=0\}$. Then, by plugging \eqref{pre11} and \eqref{pre20} into \eqref{pre7}$_1$, one could obtain from the vanishing of $O(1)$ terms that 
     \begin{align}\label{pre21}
     \mathrm{d}u^{I,0}(t,x,y)+\left(u^{I,0}(t,x,y)\cdot\nabla u^{I,0}(t,x,y)+\nabla p^{I,0}(t,x,y)\right)\mathrm{d}t=\mathbb{F}^{I,0}(x,y,u^{I,0}(t,x,y))\mathrm{d}W
     \end{align}
    away from the boundary. Similarly, the following asymptotic expansion for the force term holds inside the boundary layer:
     \begin{align}
         \label{pre22}
         \mathbb{F}^{\epsilon}\left(x,y,u^{\epsilon
         }(t,x,y)\right)=\mathbb{F}^{B,0}&\left(x,Y,u^{B,0}(t,x,Y)+\overline{u^{I,0}(t,x)}\right)\notag\\&\qquad\qquad\qquad +\overline{\mathbb{F}^{I,0}\left(x,u^{B,0}(t,x,Y)+\overline{u^{I,0}(t,x)}\right)}+o(1).
     \end{align}
     Plugging the ansatz \eqref{pre13}, \eqref{pre14}, \eqref{pre22} into the equation \eqref{pre7}$_1$, the vanishing of $O(\epsilon^{-\frac{1}{2}})$ terms then implies that
     \begin{align}
         \label{pre23}
         (u_v^{B,0}(t,x,Y)+\overline{u_v^{I,0}(t,x)})\partial_Y(u^{B,0}(t,x,Y)+\overline{u^{I,0}(t,x)})+\left(\begin{matrix}0\\ \partial_Yp^{B,0}(t,x,Y)\end{matrix}\right)=0.
     \end{align}
    Moreover, since $u_v^{B,0}(t,x,Y)\equiv 0$, then
    \begin{align}\label{pre24}
    u_v^{\epsilon}(t,x,y)=u^{I,0}_v(t,x,y)+O(\sqrt{\epsilon}).
    \end{align}
    By comparing both sides of \eqref{pre24},
    \begin{align}\label{pre25}
    \overline{u^{I,0}_v(t,x)}\equiv0.\end{align}
    Combining \eqref{pre23} with \eqref{pre18} and \eqref{pre25}, there holds
    \begin{align}
         \label{pre26}
         u^{B,0}_v(t,x,Y)+\overline{u_v^{I,0}(t,x)}\equiv0,\qquad p^{B,0}(t,x,Y)\equiv 0.
    \end{align} 
    We turn to the vanishing of $O(1)$ terms in the asymptotic expansion of the equation \eqref{pre7}$_1$. It follows that 
    \begin{align}\label{pre27}
    &\mathrm{d}(\overline{u^{I,0}(t,x)}+u^{B,0}(t,x,Y))+(\overline{u^{I,0}_h(t,x)}+u^{B,0}_h(t,x,Y))\cdot\nabla_x (\overline{u^{I,0}(t,x)} +u^{B,0}(t,x,Y))\mathrm{d}t\notag\\&\quad+(u_v^{B,1}(t,x,Y)+\overline{u_v^{I,1}(t,x)}+Y\overline{\partial_yu_v^{I,0}(t,x)})\partial_Yu^{B,0}(t,x,Y)\mathrm{d}t\notag\\&\quad+\left(\begin{matrix}
    \overline{\nabla_xp^{I,0}(t,x)}\\\overline{\partial_y p^{I,0}(t,x)}+\partial_Yp^{B,1}(t,x,Y)\end{matrix}\right)\mathrm{d}t\notag=\partial_Y^2 u^{B,0}(t,x,Y)\mathrm{d}t\\&\quad+\left(\mathbb{F}^{B,0}\left(x,Y,u^{B,0}(t,x,Y)+\overline{u^{I,0}(t,x)}\right)+\overline{\mathbb{F}^{I,0}\left(x,u^{B,0}(t,x,Y)+\overline{u^{I,0}(t,x)}\right)}\right)\mathrm{d}W.
    \end{align}
    Define
    \begin{align}
    \label{pre28}&u^{P,0}_h(t,x,Y):=\overline{u^{I,0}_h(t,x)}+u^{B,0}_h(t,x,Y),\\\label{pre29}
     &u^{P,1}_v(t,x,Y):=u_v^{B,1}(t,x,Y)+\overline{u_v^{I,1}(t,x)}+Y\overline{\partial_yu_v^{I,0}(t,x)},\\\label{pre30}
     &\mathbb{F}^{P,0}(x,Y,\cdot):=\mathbb{F}^{B,0}\left(x,Y,\cdot\right)+\overline{\mathbb{F}^{I,0}\left(x,\cdot\right)},
     \end{align}
    then the tangential component of \eqref{pre27} reads
    \begin{align}\label{pre31}
    \mathrm{d} u_h^{P,0}+(u_h^{P,0}\cdot \nabla_x u_h^{P,0}+u_v^{P,1}\partial_Y u_h^{P,0}+\overline{\nabla_x p^{I,0}(t,x)})\mathrm{d}t=\partial_Y^2u_h^{P,0}+\mathbb{F}^{P,0}_h(x,Y,u_h^{P,0})\mathrm{d}W,\end{align}
    where $\mathbb{F}_h^{P,0}$ denotes the tangential component of $\mathbb{F}^{P,0}$ and we notice that $\mathbb{F}_h^{P,0}$ only depends on $u_h^{P,0}$, due to the vanishing of the leading order approximation of $u_v^{\epsilon}$ as given in \eqref{pre26}. On the other hand, it follows that
    \begin{align}\label{pre32}\nabla_x\cdot u_h^{P,0}+\partial_Y u_v^{P,1}=0,\end{align}
    due to \eqref{pre19}. By comparing the ansatz with the no-slip boundary condition \eqref{pre7}$_3$ implemented for $u^{\epsilon}$, one may obtain
    \begin{align}\label{pre33}
    u_h^{P,0}(t,x,0)=u^{P,1}_v(t,x,0)=0.
    \end{align}
    For the state at far field, one has
    \begin{align}        \label{pre34}
        \lim_{Y\to\infty}u^{P,0}_h(t,x,Y)=\overline{u_h^{I,0}(t,x)}
    \end{align}
    and 
    \begin{align}
        \label{pre35}
        \lim_{Y\to\infty}\mathbb{F}^{P,0}_h(x,Y,u_h^{P,0})=\overline{\mathbb{F}_h^{I,0}\left(x,\overline{u_h^{I,0}(t,x)}\right)},
    \end{align}
    where by taking traces at boundary on both sides of the equation \eqref{pre21}, the equation for the state of $u^{P,0}_h$ at far field is derived as follows:
    \begin{align}\label{pre36}
     \mathrm{d}\overline{u^{I,0}_h(t,x)}+\left(\overline{u^{I,0}_h(t,x)}\cdot\nabla_x \overline{u^{I,0}_h(t,x)}+\overline{\nabla_x p^{I,0}(t,x)}\right)\mathrm{d}t=\overline{\mathbb{F}_h^{I,0}(x,\overline{u^{I,0}_h(t,x)})}\mathrm{d}W.
     \end{align}
    To summarize, one concludes that the solutions $(u^{\epsilon}, p^{\epsilon})$ of the stochastic Navier-Stokes equation \eqref{pre7} together with the force $\mathbb{F}^{\epsilon}$ formally have the following asymptotic expansion:
    \begin{align*}
    &u_h^{\epsilon}(t,x,y)=\sum_{j\ge 0}\epsilon^{\frac{j}{2}}(u^{I,j}_h(t,x,y)+u^{B,j}_h(t,x,y/\sqrt{\epsilon})),\\
    &u_v^{\epsilon}(t,x,y)=u_v^{I,0}(t,x,y)+\sum_{j\ge 1}\epsilon^{\frac{j}{2}}(u^{I,j}_v(t,x,y)+u^{B,j}_v(t,x,y/\sqrt{\epsilon})),\\
    &p^{\epsilon}(t,x,y)=p^{I,0}(t,x,y)+\sum_{j\ge 1}\epsilon^{\frac{j}{2}}(p^{I,j}(t,x,y)+p^{B,j}(t,x,y/\sqrt{\epsilon})),\\
    &\mathbb{F}^{\epsilon}(x,y,\cdot)=\sum_{j\ge 0}\epsilon^{\frac{j}{2}}(\mathbb{F}^{I,j}(x,y,\cdot)+\mathbb{F}^{B,j}(x,y/\sqrt{\epsilon},\cdot)),
    \end{align*}
    where $(u^{I,0},p^{I,0})$ satisfies the following problem for the stochastic Euler equation
    \[\begin{cases}
    \mathrm{d}u^{I,0}(t,x,y)+\left(u^{I,0}(t,x,y)\cdot\nabla u^{I,0}(t,x,y)+\nabla p^{I,0}(t,x,y)\right)\mathrm{d}t=\mathbb{F}^{I,0}(x,y,u^{I,0}(t,x,y))\mathrm{d}W,\\
    \nabla_x\cdot u^{I,0}_h(t,x,y)+\partial_y u^{I,0}_v(t,x,y)=0,\\
    u_v^{I,0}|_{y=0}=0,
    \end{cases}\]
    and the boundary layer profiles $(u^{p,0}_h, u^{p,1}_v)$ defined by \eqref{pre28} and \eqref{pre29} satisfy the following problem for the stochastic Prandtl equation
    \begin{align}\label{pre37}
        \begin{cases}
        \mathrm{d} u_h^{P,0}+\left(u_h^{P,0}\cdot \nabla_x u_h^{P,0}+u_v^{P,1}\partial_Y u_h^{P,0}+\overline{\nabla_x p^{I,0}(t,x)}\right)\mathrm{d}t=\partial_Y^2u_h^{P,0}+\mathbb{F}^{P,0}_h(x,Y,u_h^{P,0})\mathrm{d}W,\\
        \nabla_x\cdot u_h^{P,0}+\partial_Y u_v^{P,1}=0,\\
        u_h^{P,0}(t,x,0)=u^{P,1}_v(t,x,0)=0,\qquad
        \lim_{Y\to\infty}u^{P,0}_h(t,x,Y)=\overline{u_h^{I,0}(t,x)},
    \end{cases}
    \end{align}
    where the forcing term $\mathbb{F}_h^{p,0}$ is given by \eqref{pre30} and $\overline{u_h^{I,0}(t,x)}$, the limit state as $Y\to \infty$ of $u^{p,0}_h$, satisfies the stochastic Bernoulli's law \eqref{pre36}, and the forcing terms in \eqref{pre36} and \eqref{pre37} match via the relation \eqref{pre35}.

    \section{Proof of auxiliary estimates}
    \subsection{Proof of product estimates}\label{proof_product_estimate}
    This part is devoted to the proof of Lemma \ref{product_estimate1} and Lemma \ref{product_estimate2}. Let us recall from \cite{BCD2011} that 
    \[\Delta_j f:=\mathcal{F}_x^{-1}(\varphi(2^{-j}|\xi|)\mathcal{F}_xf),\qquad S_jf:=\mathcal{F}_x^{-1}(\chi(2^{-j}|\xi|)\mathcal{F}_xf)\]
    for $j\ge 0$ and 
    \[\Delta_{-1} f:=S_0f,\qquad \Delta_{j}f:=0\]
    for $j\le -2$, where $\varphi,\chi$ are smooth radial functions valued in $[0,1]$ such that 
    \[\supp \varphi\subset \left\{\xi\in \mathbb{R}^{d-1}\bigg| \frac{3}{4}\le |\xi|\le \frac{8}{3}\right\},\qquad \supp \chi\subset \left\{\xi\in \mathbb{R}^{d-1}\bigg| |\xi|\le \frac{4}{3}\right\}\]
    and for any $\xi\in\mathbb{R}^{d-1}$,
    \[\chi(\xi)+\sum_{j\ge 0}\varphi(2^{-j}\xi)=1.\]
    The Bony's para-product operator is defined by 
    \[T_f g:=\sum_{j}S_{j-1}f\Delta_jg.\]
    Then, we have the following Bony's decomposition 
    \[fg=T_f g+T_gf+\mathcal{R}(f,g),\]
    where the reminder $\mathcal{R}(f,g)$ is given by 
    \[\mathcal{R}(f,g):=\sum_{j}\tilde{\Delta}_jf\Delta_jg\]
    with $\tilde{\Delta}_jf:=\sum_{|j^{'}-j|\le 1}\Delta_{j^{'}}f$. Let us introduce the following auxiliary estimate. 
    
    \begin{lemma}\label{lemmaA1} Let $f,g\in \mathcal{S}(\mathbb{R}^{d-1})$ and $|a|\le d-1$. Then,
    \[\|\langle \nabla_x\rangle^a (fg)\|_{L^2}\le C\|\langle \nabla_x\rangle^a f\|_{L^2}\|g\|_{H^{d-1}}.\]
    \end{lemma}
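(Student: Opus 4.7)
The plan is to decompose the product via Bony's paraproduct introduced above, $fg = T_f g + T_g f + \mathcal{R}(f,g)$, and estimate the three pieces in $H^a$ separately. For $a \ge 0$ the main ingredients are Littlewood-Paley almost-orthogonality and the Sobolev embedding $H^{d-1}(\mathbb{R}^{d-1}) \hookrightarrow L^\infty(\mathbb{R}^{d-1})$, valid since $d-1 > (d-1)/2$. The case $-(d-1) \le a < 0$ will then be recovered by a standard duality argument: pairing $\langle\nabla_x\rangle^a(fg)$ against $\phi \in L^2$ and writing $\psi = \langle\nabla_x\rangle^a\phi$, the estimate already proved at the positive index $-a \in (0, d-1]$ applied to $g\psi$ yields $\|\langle\nabla_x\rangle^{-a}(g\psi)\|_{L^2} \lesssim \|g\|_{H^{d-1}}\|\langle\nabla_x\rangle^{-a}\psi\|_{L^2} = \|g\|_{H^{d-1}}\|\phi\|_{L^2}$, and Hölder finishes the job.

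For $a \ge 0$, the piece $T_g f = \sum_j S_{j-1}g\,\Delta_j f$ is spectrally supported in annuli of size $2^j$ term by term, so almost-orthogonality together with $\|S_{j-1}g\|_{L^\infty} \le \|g\|_{L^\infty} \lesssim \|g\|_{H^{d-1}}$ immediately gives $\|T_g f\|_{H^a} \lesssim \|g\|_{H^{d-1}}\|f\|_{H^a}$. For the remainder $\mathcal{R}(f,g) = \sum_j \tilde{\Delta}_j f\,\Delta_j g$, each summand has Fourier support in a ball of radius $\lesssim 2^j$, so Bernstein moves $\langle\nabla_x\rangle^a$ onto a factor $2^{ja}$, and the further Bernstein bound $\|\Delta_j g\|_{L^\infty} \lesssim 2^{j(d-1)/2}\|\Delta_j g\|_{L^2}$ combined with Cauchy-Schwarz in $j$ produces $\|\mathcal{R}(f,g)\|_{H^a} \lesssim \|f\|_{H^a}\|g\|_{H^{(d-1)/2}} \le \|f\|_{H^a}\|g\|_{H^{d-1}}$, the last inequality using $(d-1)/2 \le d-1$.

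The delicate term is $T_f g = \sum_j S_{j-1}f\,\Delta_j g$, again frequency-localized near $2^j$. The straightforward Hölder split pairing $\|S_{j-1}f\|_{L^2}\,\|\Delta_j g\|_{L^\infty}$ only closes when $2a + (d-1) \le 2(d-1)$, i.e.\ $a \le (d-1)/2$, which I would cover using $\|S_{j-1}f\|_{L^2} \le \|f\|_{L^2} \le \|f\|_{H^a}$. When $(d-1)/2 < a \le d-1$, the embedding $H^a \hookrightarrow L^\infty$ becomes available for $f$ itself, and a reversed Hölder split pairing $\|S_{j-1}f\|_{L^\infty} \lesssim \|f\|_{H^a}$ with $\|\Delta_j g\|_{L^2}$ produces $\|T_f g\|_{H^a} \lesssim \|f\|_{H^a}\|g\|_{H^a} \le \|f\|_{H^a}\|g\|_{H^{d-1}}$, using $a \le d-1$ at the last step.

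The main obstacle is precisely this bifurcation of $T_f g$ across the Sobolev threshold $a = (d-1)/2$: neither Hölder pairing works uniformly for all admissible $a$, so the estimate has to be carried out in two regimes and the two endpoint thresholds must align so that both produce the same right-hand side $\|f\|_{H^a}\|g\|_{H^{d-1}}$. Once this is handled, the remaining ingredients ($T_g f$, $\mathcal{R}(f,g)$, and the dualization step) are entirely routine Littlewood-Paley bookkeeping.
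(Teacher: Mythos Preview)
Your proposal is correct and follows the same overall strategy as the paper: reduce to $a\ge 0$ by duality, apply Bony's decomposition, and bound $T_g f$, $\mathcal{R}(f,g)$, $T_f g$ separately. Your treatment of $T_g f$ and $\mathcal{R}(f,g)$ matches the paper's essentially verbatim.

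The one point of departure is the $T_f g$ piece. You split into the regimes $a\le (d-1)/2$ and $a>(d-1)/2$, using the $L^2$--$L^\infty$ H\"older pairing in opposite orders according to whether the Sobolev embedding $H^a\hookrightarrow L^\infty$ is available for $f$. The paper instead handles all $a\in[0,d-1]$ in a single stroke: it applies Bernstein to write $\|S_{k'-1}f\|_{L^\infty}\lesssim \sum_{l\le k'-2}2^{(d-1)l/2}\|\Delta_l f\|_{L^2}$, then uses the monotonicity $2^{k'(a-d+1)}\le 2^{l(a-d+1)}$ (valid precisely because $a\le d-1$) to absorb the $2^{k'a}$ weight into the $l$-sum, yielding $\|T_f g\|_{H^a}\lesssim \|f\|_{H^a}\|g\|_{H^{d-1}}$ uniformly. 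Your case-split is arguably more elementary and avoids this dyadic rearrangement, at the cost of having to check that the two regimes glue at the threshold; the paper's argument is slicker but requires spotting the right inequality. Both are valid and yield the same bound.
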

    \begin{proof}
    It suffices to consider the case when $0\le a\le d-1$, since the other follows from a duality argument. By Bony's decomposition,
        \[\|\langle \nabla_x\rangle^a (fg)\|_{L^2}\le \|T_f g\|_{H^a}+\|T_g f\|_{H^a}+\|\mathcal{R}(f,g)\|_{H^a}.\]
        For any $k\ge -1$, 
        \begin{align*}
            \|\Delta_k T_f g\|_{L^2}&\le \sum_{|k-k^{'}|\le 4}\|\Delta_k (S_{k^{'}-1}f \Delta_{k^{'}}g)\|_{L^2}\\&\le C\sum_{|k-k^{'}|\le 4}\|S_{k^{'}-1}f \Delta_{k^{'}}g\|_{L^2}\le C\sum_{|k-k^{'}|\le 4}\|\Delta_{k^{'}}g\|_{L^2}\sum_{l\le k^{'}-2}2^{\frac{d-1}{2}l}\|\Delta_l f\|_{L^2},
        \end{align*}
        where we used the Bernstein's inequality (see Lemma 2.1 in \cite{BCD2011}) in the derivation of the last inequality. Since $a\le d-1$, then
        \begin{align*}
            2^{ka}\|\Delta_k T_f g\|_{L^2}&\le C\sum_{|k-k^{'}|\le 4}2^{k^{'}(d-1)}\|\Delta_{k^{'}}g\|_{L^2}2^{k^{'}(a-d+1)}\sum_{l\le k^{'}-2}2^{\frac{d-1}{2}l}\|\Delta_l f\|_{L^2}
            \\&\le C\sum_{|k-k^{'}|\le 4}2^{k^{'}(d-1)}\|\Delta_{k^{'}}g\|_{L^2}\sum_{l\le k^{'}-2}2^{\left(a-\frac{d-1}{2}\right)l}\|\Delta_l f\|_{L^2}\\&\le C\|f\|_{H^a}\sum_{|k-k^{'}|\le 4}2^{k^{'}(d-1)}\|\Delta_{k^{'}}g\|_{L^2},
        \end{align*}
        which gives the estimate
        \begin{align}
            \label{A1}
            \|T_f g\|_{H^a} \le C\|f\|_{H^a}\|g\|_{H^{d-1}}.
        \end{align}
        On the other hand, by applying the classical estimate on the para-product of $f$ by $g$ (see Theorem 2.82 in \cite{BCD2011}), it follows that
        \begin{align}
            \label{A2}
            \|T_g f\|_{H^a} \le C\|g\|_{L^{\infty}}\|f\|_{H^a} \le C\|g\|_{H^{d-1}}\|f\|_{H^a}.
        \end{align}
        As for the reminder $\mathcal{R}(f,g)$, one has
        \begin{align*}
            2^{ka}\|\Delta_k \mathcal{R}(f,g)\|_{L^2}&\le C2^{ka}\sum_{k^{'}\ge k-3}\|\tilde{\Delta}_{k^{'}}f\|_{L^2}\|\Delta_{k^{'}}g\|_{L^2}2^{\frac{d-1}{2}k^{'}}\\&\le C\sum_{k^{'}\ge k-3}2^{(k-k^{'})a}2^{k^{'}a}\|\tilde{\Delta}_{k^{'}}f\|_{L^2}\|\Delta_{k^{'}}g\|_{L^2}2^{\frac{d-1}{2}k^{'}}.
        \end{align*}
        By applying the H\"older and Young's inequality, there holds
        \begin{align}
            \label{A3}
            \|\mathcal{R}(f,g)\|_{H^a}\le C\|f\|_{H^a} \|g\|_{H^{\frac{d-1}{2}}}.
        \end{align}
        Combining the estimates \eqref{A1}--\eqref{A3}, we conclude the proof of this lemma.
    \end{proof}

    Next, we turn to the proof of Lemma \ref{product_estimate1}.
    \begin{proof}[Proof of Lemma \ref{product_estimate1}] Before proceeding, let us notice the fact that
    \[\mathcal{F}_x
    (uv)_{\sigma}(\xi)\le \mathcal{F}_x(u^+_{\sigma}v^+_{\sigma})(\xi),\]
    where $u^+:=\mathcal{F}_x^{-1}|\mathcal{F}_xu|$, and the map $u\mapsto u^+$ preserves the $L^2_x$-norm. Therefore, it suffices to prove the product estimates in the weighted conormal Sobolev space $X^s_{\gamma}$. 

    \noindent\underline{\textbf{$\bullet$ Establishing the first product estimate}}. By definition,
    \[\|uv\|_{H^s_{\gamma,\co}}\lesssim_s\sum_{|k|+j\le s}\sum_{k_1+k_2=k}\sum_{j_1+j_2=j}\|e^{\gamma y^2}Z^{j_1}\partial_x^{k_1}uZ^{j_2}\partial_x^{k_2}v\|_{L^2}.\]
    For $j_1+|k_1|\le s-4$, by using the Sobolev embedding theorem, 
    \begin{align*}
        \|e^{\gamma y^2}Z^{j_1}\partial_x^{k_1}uZ^{j_2}\partial_x^{k_2}v\|_{L^2}&\le \|Z^{j_1}\partial_x^{k_1}u\|_{L^{\infty}}\|e^{\gamma y^2}Z^{j_2}\partial_x^{k_2}v\|_{L^2}\\&\lesssim \left(\sum_{|i|\le 2}\|Z^{j_1}\partial_x^{k_1+i}u\|_{L^2_xL_y^{\infty}}\right)\|v\|_{H^s_{\gamma,\co}}\\&\lesssim \left(\sum_{|i|\le 2}(\|Z^{j_1}\partial_x^{k_1+i}u\|_{L^2}+j_1\|Z^{j_1-1}\partial_x^{k_1+i}\partial_yu\|_{L^2}+\|Z^{j_1}\partial_x^{k_1+i}\partial_yu\|_{L^2})\right)\|v\|_{H^s_{\gamma,\co}}\\&\lesssim_s (\|u\|_{H^{s-2}_{\gamma}}+\|\partial_yu\|_{H^{s-2}_{\gamma}})\|v\|_{H^s_{\gamma,\co}} \lesssim_s\|u\|_{X^{s-1}_{\gamma}}\|v\|_{X^s_{\gamma}},
    \end{align*}
    while for $j_1+|k_1|= s-3$,
    \begin{align*}
        \|e^{\gamma y^2}Z^{j_1}\partial_x^{k_1}uZ^{j_2}\partial_x^{k_2}v\|_{L^2}&\le \|e^{\gamma y^2}Z^{j_1}\partial_x^{k_1}u\|_{L^{2}_yL^{\infty}_{x}}\|Z^{j_2}\partial_x^{k_2}v\|_{L^{\infty}_yL_x^2}\\&\lesssim_s \|u\|_{H^{s-1}_{\gamma,\co}}\left(\|v\|_{H^3_{\gamma}}+\|\partial_yv\|_{H^3_{\gamma}}\right)\lesssim_s\|u\|_{X^{s-1}_{\gamma}} \|v\|_{X^s_{\gamma}}.
    \end{align*}
    As for $j_1+|k_1|=s-2$, 
    \[\|e^{\gamma y^2}Z^{j_1}\partial_x^{k_1}uZ^{j_2}\partial_x^{k_2}v\|_{L^2}\le\|Z^{j_1}\partial_x^{k_1}u\|_{L^{2}_xL^{\infty}_{y}}\|e^{\gamma y^2}Z^{j_2}\partial_x^{k_2}v\|_{L^{\infty}_xL_y^2}\lesssim_s \|u\|_{X^{s-1}_{\gamma}}\|v\|_{H^4_{\gamma}}\lesssim_s\|u\|_{X^{s-1}_{\gamma}} \|v\|_{X^s_{\gamma}},\]
    and for $j_1+|k_1|=s-1$,
    \[\|e^{\gamma y^2}Z^{j_1}\partial_x^{k_1}uZ^{j_2}\partial_x^{k_2}v\|_{L^2}\le\|e^{\gamma y^2}Z^{j_1}\partial_x^{k_1}u\|_{L^2}\|Z^{j_2}\partial_x^{k_2}v\|_{L^{\infty}}\lesssim_s \|u\|_{X^{s-1}_{\gamma}}\|v\|_{X^4_{\gamma}}\lesssim_s \|u\|_{X^{s-1}_{\gamma}}\|v\|_{X^{s}_{\gamma}}.\]
    Finally, if $j_1+|k_1|=s$, then
    \[\|e^{\gamma y^2}Z^{j_1}\partial_x^{k_1}uZ^{j_2}\partial_x^{k_2}v\|_{L^2}\le\|e^{\gamma y^2}Z^{j_1}\partial_x^{k_1}u\|_{L^2}\|v\|_{L^{\infty}}\lesssim\|u\|_{H^s_{\gamma,\co}}\|v\|_{X^3_{\gamma}}\lesssim\|u\|_{X^s_{\gamma}}\|v\|_{X^{s-1}_{\gamma}}.\]
    Combining the above estimates,
    \begin{align}
        \label{A4}
        \|uv\|_{H^s_{\gamma,\co}}\lesssim_s \|u\|_{X^{s-1}_{\gamma}}\|v\|_{X^s_{\gamma}}+\|u\|_{X^s_{\gamma}} \|v\|_{X^{s-1}_{\gamma}}.
    \end{align}
    Similarly, 
    \[\|\partial_y uv\|_{H^{s-1}_{\gamma,\co}}\lesssim_s\sum_{|k|+j\le s-1}\sum_{k_1+k_2=k}\sum_{j_1+j_2=j}\|e^{\gamma y^2}Z^{j_1}\partial_x^{k_1}\partial_y uZ^{j_2}\partial_x^{k_2}v\|_{L^2}.\]
    If $|j_1|+k_1\le s-4$, then 
    \begin{align*}
        \|e^{\gamma y^2}Z^{j_1}\partial_x^{k_1}\partial_yuZ^{j_2}\partial_x^{k_2}v\|_{L^2}&\le \|e^{\gamma y^2}Z^{j_1}\partial_x^{k_1}\partial_yu\|_{L^{2}_yL^{\infty}_x}\|Z^{j_2}\partial_x^{k_2}v\|_{L^{\infty}_yL^2_x}\\&\lesssim_s\|\partial_y u\|_{H^{s-2}_{\gamma}} \|v\|_{X^s_{\gamma}}\lesssim_s\| u\|_{X^{s-1}_{\gamma}} \|v\|_{X^s_{\gamma}}.
    \end{align*}
    If $|j_1|+k_1= s-3$, then by H\"older's inequality and the Sobolev embedding $H^1_x\hookrightarrow L^4_x$,
    \begin{align*}
        \|e^{\gamma y^2}Z^{j_1}\partial_x^{k_1}\partial_yuZ^{j_2}\partial_x^{k_2}v\|_{L^2}&\le \|e^{\gamma y^2}Z^{j_1}\partial_x^{k_1}\partial_yu\|_{L^{2}_yL^4_x}\|Z^{j_2}\partial_x^{k_2}v\|_{L^{\infty}_yL^4_x}\\&\lesssim_s \|\partial_yu\|_{H^{s-2}_{\gamma}}(\|v\|_{H^3_{\gamma}}+\|\partial_yv\|_{H^3_{\gamma}})\lesssim_s \|u\|_{X^{s-1}_{\gamma}} \|v\|_{X^{4}_{\gamma}}\lesssim_s\|u\|_{X^{s-1}_{\gamma}} \|v\|_{X^{s}_{\gamma}}.
    \end{align*}
    For $|j_1|+k_1=s-2$, 
    \begin{align*}
        \|e^{\gamma y^2}Z^{j_1}\partial_x^{k_1}\partial_yuZ^{j_2}\partial_x^{k_2}v\|_{L^2}&\le \|e^{\gamma y^2}Z^{j_1}\partial_x^{k_1}\partial_yu\|_{L^{2}}\|Z^{j_2}\partial_x^{k_2}v\|_{L^{\infty}}\lesssim_s\|\partial_yu\|_{H^{s-2}_{\gamma}}\|v\|_{X^4_{\gamma}}\lesssim_s\|u\|_{X^{s-1}_{\gamma}} \|v\|_{X^{s}_{\gamma}}.
    \end{align*}
    Finally, for $|j_1|+k_1=s-1$, 
    \begin{align*}
        \|e^{\gamma y^2}Z^{j_1}\partial_x^{k_1}\partial_yuZ^{j_2}\partial_x^{k_2}v\|_{L^2}&\le \|e^{\gamma y^2}Z^{j_1}\partial_x^{k_1}\partial_yu\|_{L^{2}}\|v\|_{L^{\infty}}\lesssim_s\|u\|_{X^{s}_{\gamma}}\|v\|_{X^3_{\gamma}}\lesssim_s\|u\|_{X^{s}_{\gamma}} \|v\|_{X^{s-1}_{\gamma}}.
    \end{align*}
    Thus, 
    \begin{align}
        \label{A5}
        \|\partial_y uv\|_{H^{s-1}_{\gamma,\co}}\lesssim_s \|u\|_{X^{s-1}_{\gamma}}\|v\|_{X^s_{\gamma}}+\|u\|_{X^{s}_{\gamma}} \|v\|_{X^{s-1}_{\gamma}}.
    \end{align}
    Combining \eqref{A4} and \eqref{A5}, the first product estimate of Lemma \ref{product_estimate1} holds.

    \noindent\underline{\textbf{$\bullet$ Establishing the second and third product estimate}}. The second product estimate of Lemma \ref{product_estimate1} follows similarly as above, if one notice 
    \[\|\partial_y^{-1} f\|_{L_y^{\infty}L^2_x}\le \|f\|_{L_y^{1}L^2_x} \lesssim_{\gamma} \|e^{\gamma y^2}f\|_{L_2}\]
    and $1+y\lesssim_{\gamma} e^{\gamma y^2}$. Now we turn to the third product estimate of Lemma \ref{product_estimate1}. By definition,
    \[\|\langle\nabla_x\rangle^{-\frac{1}{2}} (u\cdot \nabla_xv)\|_{H^s_{\gamma,\co}}\lesssim_s\sum_{|k|+j\le s}\sum_{k_1+k_2=k}\sum_{j_1+j_2=j}\|e^{\gamma y^2}\langle\nabla_x\rangle^{-\frac{1}{2}} (Z^{j_1}\partial_x^{k_1}u\cdot Z^{j_2}\partial_x^{k_2}\nabla_x v)\|_{L^2}.\]
    For $j_1+|k_1|\le s-3$, by using Lemma \ref{lemmaA1}, 
    \begin{align*}
        \|e^{\gamma y^2}\langle\nabla_x\rangle^{-\frac{1}{2}}& (Z^{j_1}\partial_x^{k_1}u\cdot Z^{j_2}\partial_x^{k_2}\nabla_x v)\|_{L^2}\lesssim \|Z^{j_1}\partial_x^{k_1}u\|_{L^{\infty}_yH^2_x}\|e^{\gamma y^2} Z^{j_2}\partial_x^{k_2}\langle\nabla_x \rangle^{\frac{1}{2}} v\|_{L^2}\lesssim_s\|u\|_{X^s_{\gamma}}\|\langle\nabla_x \rangle^{\frac{1}{2}} v\|_{X^s_{\gamma}},
    \end{align*} 
    while for $j_1+|k_1|= s-2$, 
    \begin{align*}
        \|e^{\gamma y^2}\langle\nabla_x\rangle^{-\frac{1}{2}} (Z^{j_1}\partial_x^{k_1}u\cdot Z^{j_2}\partial_x^{k_2}\nabla_x v)\|_{L^2}&\lesssim \|e^{\gamma y^2}Z^{j_1}\partial_x^{k_1}u\|_{L^2_yL^{\infty}_x}\|Z^{j_2}\partial_x^{k_2}\nabla_x  v\|_{L^{\infty}_yL^2_x}\\&\lesssim_s\|u\|_{X^s_{\gamma}}\|v\|_{X^4_{\gamma}}\lesssim_s\|u\|_{X^s_{\gamma}}\|\langle\nabla_x \rangle^{\frac{1}{2}} v\|_{X^s_{\gamma}}.
    \end{align*} 
    For $j_1+|k_1|=s-1$, 
    \begin{align*}
        \|e^{\gamma y^2}\langle\nabla_x\rangle^{-\frac{1}{2}} (Z^{j_1}\partial_x^{k_1}u\cdot Z^{j_2}\partial_x^{k_2}\nabla_x v)\|_{L^2}&\lesssim \|Z^{j_1}\partial_x^{k_1}u\|_{L^2_xL^{\infty}_y}\|e^{\gamma y^2}Z^{j_2}\partial_x^{k_2}\nabla_x  v\|_{L^{\infty}_xL^2_y}\\&\lesssim_s\|u\|_{X^s_{\gamma}}\|v\|_{X^4_{\gamma}}\lesssim_s\|u\|_{X^s_{\gamma}}\|\langle\nabla_x \rangle^{\frac{1}{2}} v\|_{X^s_{\gamma}}.
    \end{align*} 
    As for $j_1+|k_1|=s$, 
    \begin{align*}
        \|e^{\gamma y^2}\langle\nabla_x\rangle^{-\frac{1}{2}} (Z^{j_1}\partial_x^{k_1}u\cdot Z^{j_2}\partial_x^{k_2}\nabla_x v)\|_{L^2}&\lesssim \|e^{\gamma y^2}Z^{j_1}\partial_x^{k_1}u\|_{L^2}\|\nabla_x  v\|_{L^{\infty}}\\&\lesssim_s\|u\|_{X^s_{\gamma}}\|v\|_{X^4_{\gamma}}\lesssim_s\|u\|_{X^s_{\gamma}}\|\langle\nabla_x \rangle^{\frac{1}{2}} v\|_{X^s_{\gamma}}.
    \end{align*} 
    Hence, 
    \begin{align}\label{A6}
        \|\langle\nabla_x\rangle^{-\frac{1}{2}} (u\cdot \nabla_xv)\|_{H^s_{\gamma,\co}}\lesssim_s \|u\|_{X^s_{\gamma}} \|\langle\nabla_x\rangle^{\frac{1}{2}} v\|_{X^s_{\gamma}}.
    \end{align}
    For 
    \[\|\langle\nabla_x\rangle^{-\frac{1}{2}} (\partial_yu\cdot \nabla_xv)\|_{H^{s-1}_{\gamma,\co}}\lesssim_s\sum_{|k|+j\le s-1}\sum_{k_1+k_2=k}\sum_{j_1+j_2=j}\|e^{\gamma y^2}\langle\nabla_x\rangle^{-\frac{1}{2}} (Z^{j_1}\partial_x^{k_1}\partial_yu\cdot Z^{j_2}\partial_x^{k_2}\nabla_x v)\|_{L^2},\]
    one may derive that for $j_1+|k_1|\le s-3$,
    \begin{align*}
        \|e^{\gamma y^2}\langle\nabla_x\rangle^{-\frac{1}{2}} (Z^{j_1}\partial_x^{k_1}\partial_yu\cdot Z^{j_2}\partial_x^{k_2}\nabla_x v)\|_{L^2}&\lesssim \|e^{\gamma y^2}Z^{j_1}\partial_x^{k_1}\partial_yu\|_{L^2_yH^2_x}\|Z^{j_2}\partial_x^{k_2}\langle\nabla_x\rangle^{\frac{1}{2}}  v\|_{L^{\infty}_yL^2_x}\\&\lesssim_s \|\partial_yu\|_{H^{s-1}_{\gamma,\co}}\|\langle\nabla_x\rangle^{\frac{1}{2}}  v\|_{X^s_{\gamma}}\lesssim_s\|u\|_{X^s_{\gamma}}\|\langle\nabla_x \rangle^{\frac{1}{2}} v\|_{X^s_{\gamma}}
    \end{align*}
    while for $j_1+|k_1|=s-2$, 
    \begin{align*}
        \|e^{\gamma y^2}\langle\nabla_x\rangle^{-\frac{1}{2}}(Z^{j_1}\partial_x^{k_1}\partial_yu\cdot Z^{j_2}\partial_x^{k_2}\nabla_x v)\|_{L^2}&\lesssim \|e^{\gamma y^2}Z^{j_1}\partial_x^{k_1}\partial_yu\|_{L^2_yL^{4}_x}\|Z^{j_2}\partial_x^{k_2}\nabla_xv\|_{L^{\infty}_yL^4_x}\\&\lesssim_s\|\partial_y u\|_{H^{s-1}_{\gamma,\co}}\|v\|_{X^4_{\gamma}}\lesssim_s\|u\|_{X^s_{\gamma}}\|\langle\nabla_x \rangle^{\frac{1}{2}} v\|_{X^s_{\gamma}}.
    \end{align*}
    For $j_1+|k_1|=s-1$, 
    \begin{align*}
        \|e^{\gamma y^2}\langle\nabla_x\rangle^{-\frac{1}{2}}(Z^{j_1}\partial_x^{k_1}\partial_yu\cdot Z^{j_2}\partial_x^{k_2}\nabla_x v)\|_{L^2}&\lesssim \|e^{\gamma y^2}Z^{j_1}\partial_x^{k_1}\partial_yu\|_{L^2}\|\nabla_xv\|_{L^{\infty}}\\&\lesssim_s\|u\|_{X^s_{\gamma}}\|v\|_{X^4_{\gamma}}\lesssim_s\|u\|_{X^s_{\gamma}}\|\langle\nabla_x \rangle^{\frac{1}{2}} v\|_{X^s_{\gamma}}.
    \end{align*}
    Therefore,
    \begin{align}\label{A7}
        \|\langle\nabla_x\rangle^{-\frac{1}{2}} (\partial_yu\cdot \nabla_xv)\|_{H^{s-1}_{\gamma,\co}}\lesssim_s \|u\|_{X^s_{\gamma}}\|\langle\nabla_x \rangle^{\frac{1}{2}} v\|_{X^s_{\gamma}}.
    \end{align}
    Finally, as for 
    \[\|\langle\nabla_x\rangle^{-\frac{1}{2}} (u\cdot \nabla_x\partial_yv)\|_{H^{s-1}_{\gamma,\co}}\lesssim_s\sum_{|k|+j\le s-1}\sum_{k_1+k_2=k}\sum_{j_1+j_2=j}\|e^{\gamma y^2}\langle\nabla_x\rangle^{-\frac{1}{2}} (Z^{j_1}\partial_x^{k_1}u\cdot Z^{j_2}\partial_x^{k_2}\nabla_x \partial_yv)\|_{L^2},\]
    one has for $j_1+|k_1|\le s-3$, 
    \begin{align*}
        \|e^{\gamma y^2}\langle\nabla_x\rangle^{-\frac{1}{2}} (Z^{j_1}\partial_x^{k_1}u\cdot Z^{j_2}\partial_x^{k_2}\nabla_x \partial_yv)\|_{L^2}&\lesssim\|Z^{j_1}\partial_x^{k_1}u\|_{L^{\infty}_yH^2_x} \|e^{\gamma y^2}Z^{j_2}\partial_x^{k_2}\langle\nabla_x\rangle^{\frac{1}{2}}  \partial_yv\|_{L^2}\\&\lesssim_s \|u\|_{X^s_{\gamma}}\|\langle\nabla_x \rangle^{\frac{1}{2}} v\|_{X^s_{\gamma}},
    \end{align*}
    while for $j_1+|k_1|= s-2$,
    \begin{align*}
        \|e^{\gamma y^2}\langle\nabla_x\rangle^{-\frac{1}{2}}(Z^{j_1}\partial_x^{k_1}u\cdot Z^{j_2}\partial_x^{k_2}\nabla_x \partial_yv)\|_{L^2}& \lesssim\|Z^{j_1}\partial_x^{k_1}u\|_{L^{\infty}_yL^4_x} \|e^{\gamma y^2}Z^{j_2}\partial_x^{k_2}\nabla_x \partial_yv\|_{L^2_yL^4_x}\\&\lesssim_s \|u\|_{X^s_{\gamma}}\|v\|_{X^4_{\gamma}}\lesssim_s \|u\|_{X^s_{\gamma}}\|\langle\nabla_x \rangle^{\frac{1}{2}} v\|_{X^s_{\gamma}}.
    \end{align*}
    For $j_1+|k_1|=s-1$,
    \begin{align*}
        \|e^{\gamma y^2}\langle\nabla_x\rangle^{-\frac{1}{2}}(Z^{j_1}\partial_x^{k_1}u\cdot Z^{j_2}\partial_x^{k_2}\nabla_x \partial_yv)\|_{L^2}& \lesssim\|Z^{j_1}\partial_x^{k_1}u\|_{L^{\infty}_yL^2_x} \|e^{\gamma y^2}\nabla_x \partial_yv\|_{L^2_yL^{\infty}_x}\\&\lesssim_s \|u\|_{X^s_{\gamma}}\|v\|_{X^4_{\gamma}}\lesssim_s \|u\|_{X^s_{\gamma}}\|\langle\nabla_x \rangle^{\frac{1}{2}} v\|_{X^s_{\gamma}}.
    \end{align*}
    Therefore, 
    \begin{align}\label{A8}
        \|\langle\nabla_x\rangle^{-\frac{1}{2}} (u\cdot \nabla_x\partial_yv)\|_{H^{s-1}_{\gamma,\co}}\lesssim_s \|u\|_{X^s_{\gamma}}\|\langle\nabla_x\rangle^{\frac{1}{2}} v\|_{X^s_{\gamma}}.
    \end{align}
    Combining the estimates \eqref{A6}--\eqref{A8}, the third estimate of Lemma \ref{product_estimate1} is proved.

    \noindent\underline{\textbf{$\bullet$ Establishing the last product estimate}}. We turn to the last product estimate of Lemma \ref{product_estimate1}. By definition, 
    \[\|\langle \nabla_x\rangle^{-\frac{1}{2}}[(\partial_y^{-1}\nabla_x \cdot u) \partial_y v]\|_{H^s_{\gamma,\co}}\lesssim_s\sum_{|k|+j\le s}\sum_{k_1+k_2=k}\sum_{j_1+j_2=j}\|e^{\gamma y^2}\langle \nabla_x\rangle^{-\frac{1}{2}}[Z^{j_1}\partial_x^{k_1}(\partial_y^{-1}\nabla_x \cdot u) Z^{j_2}\partial_x^{k_2}\partial_y v]\|_{L^2}.\]
    Notice that if $j_1=0$
    \begin{align}
        \label{A9}\|Z^{j_1}\partial_x^{k_1}(\partial_y^{-1}\nabla_x \cdot u) \|_{L^{\infty}_yL^2_x}\lesssim_{\gamma} \|e^{\gamma y^2}\partial_x^{k_1}\nabla_x u\|_{L^2}
    \end{align}
    and if $j_1>0$,
    \begin{align}
        \label{A10}\|Z^{j_1}\partial_x^{k_1}(\partial_y^{-1}\nabla_x \cdot u) \|_{L^{\infty}_yL^2_x}&= \|y Z^{j_1-1}\partial_x^{k_1}\nabla_x u\|_{L^{\infty}_yL^2_x}\notag\\&\lesssim \|y Z^{j_1-1}\partial_x^{k_1}\nabla_x u\|_{L^2}+j_1\| Z^{j_1-1}\partial_x^{k_1}\nabla_x u\|_{L^2}+\| Z^{j_1}\partial_x^{k_1}\nabla_x u\|_{L^2}\notag\\&\lesssim_{s,\gamma} \|e^{\gamma y^2} Z^{j_1-1}\partial_x^{k_1}\nabla_x u\|_{L^2}+\| e^{\gamma y^2}Z^{j_1}\partial_x^{k_1}\nabla_x u\|_{L^2},
    \end{align}
    where we used the fact that $1+y\lesssim_{\gamma} e^{\gamma y^2}$ and $y Z^{j-1}\partial_y=Z^j$. Hence, by utilizing the estimates \eqref{A9} and \eqref{A10}, one has for $j_1+|k_1|\le s-3$,
    \begin{align*}
        \|e^{\gamma y^2}\langle \nabla_x\rangle^{-\frac{1}{2}}[Z^{j_1}\partial_x^{k_1}(\partial_y^{-1}\nabla_x \cdot u) Z^{j_2}\partial_x^{k_2}\partial_y v]\|_{L^2}&\lesssim \|Z^{j_1}\partial_x^{k_1}(\partial_y^{-1}\nabla_x \cdot u)\|_{L^{\infty}}\|e^{\gamma y^2}Z^{j_2}\partial_x^{k_2}\partial_y v\|_{L^2}\\&\lesssim_{s,\gamma}\|u\|_{X^s_{\gamma}}\|\partial_yv\|_{X^s_{\gamma}},
    \end{align*}
    and for $j_1+|k_1|= s-2$,
    \begin{align*}
         \|e^{\gamma y^2}\langle \nabla_x\rangle^{-\frac{1}{2}}[Z^{j_1}\partial_x^{k_1}(\partial_y^{-1}\nabla_x \cdot u) Z^{j_2}\partial_x^{k_2}\partial_y v]\|_{L^2}&\lesssim \|Z^{j_1}\partial_x^{k_1}(\partial_y^{-1}\nabla_x\cdot u)\|_{L^{\infty}_yL^4_x}\|e^{\gamma y^2}Z^{j_2}\partial_x^{k_2}\partial_y v\|_{L^2_yL^4_x}\\&\lesssim_{s,\gamma} \|u\|_{X^s_{\gamma}}\|\partial_y v\|_{H^3_{\gamma}}\lesssim_{s,\gamma}\|u\|_{X^s_{\gamma}}\|\partial_yv\|_{X^s_{\gamma}}.
    \end{align*}
    For $j_1+|k_1|\ge s-1$, by using Lemma \ref{lemmaA1} combined with the estimates \eqref{A9} and \eqref{A10},
    \begin{align*}
        \|e^{\gamma y^2}\langle \nabla_x\rangle^{-\frac{1}{2}}[Z^{j_1}\partial_x^{k_1}(\partial_y^{-1}\nabla_x \cdot u) Z^{j_2}\partial_x^{k_2}\partial_y v]\|_{L^2}&\lesssim \|Z^{j_1}\partial_x^{k_1}(\partial_y^{-1}\langle\nabla_x \rangle^{\frac{1}{2}}u)\|_{L^{\infty}_yL^2_x}\|e^{\gamma y^2}Z^{j_2}\partial_x^{k_2}\partial_y v\|_{L^2_yH^2_x}\\&\lesssim_{s,\gamma}\|\langle\nabla_x \rangle^{\frac{1}{2}}u\|_{X^s_{\gamma}}\|v\|_{X^4_{\gamma}}\lesssim_{s,\gamma}\|\langle\nabla_x \rangle^{\frac{1}{2}}u\|_{X^s_{\gamma}}\|v\|_{X^s_{\gamma}}.
    \end{align*}
    Hence, 
    \begin{align}
        \label{A11}
        \|\langle \nabla_x\rangle^{-\frac{1}{2}}[(\partial_y^{-1}\nabla_x \cdot u) \partial_y v]\|_{H^s_{\gamma,\co}}\lesssim_{s,\gamma} \|u\|_{X^s_{\gamma}}\|\partial_y v\|_{X^s_{\gamma}}+\| \langle\nabla_x \rangle^{\frac{1}{2}}u\|_{X^s_{\gamma}}\|v\|_{X^{s}_{\gamma}}.
    \end{align}
    Note that 
    \[\partial_y[(\partial_y^{-1}\nabla_x \cdot u) \partial_y v]=(\nabla_x \cdot u)\partial_y v+(\partial_y^{-1}\nabla_x \cdot u) \partial^2_y v,\]
    where estimate for the first term on the right hand-side has been established in \eqref{A7}. As for 
    \[ \|\langle \nabla_x\rangle^{-\frac{1}{2}}[(\partial_y^{-1}\nabla_x \cdot u) \partial^2_y v]\|_{H^{s-1}_{\gamma,\co}}\lesssim_s \sum_{|k|+j\le s-1}\sum_{k_1+k_2=k}\sum_{j_1+j_2=j}\|e^{\gamma y^2}\langle \nabla_x\rangle^{-\frac{1}{2}}[Z^{j_1}\partial_x^{k_1}(\partial_y^{-1}\nabla_x \cdot u) Z^{j_2}\partial_x^{k_2}\partial^2_y v]\|_{L^2},\]
    one has for $j_1+|k_1|\le s-3$, 
    \begin{align*}
        \|e^{\gamma y^2}\langle \nabla_x\rangle^{-\frac{1}{2}}[Z^{j_1}\partial_x^{k_1}(\partial_y^{-1}\nabla_x \cdot u) Z^{j_2}\partial_x^{k_2}\partial^2_y v]\|_{L^2}&\lesssim \|Z^{j_1}\partial_x^{k_1}(\partial_y^{-1}\nabla_x \cdot u)\|_{L^{\infty}}\|e^{\gamma y^2}Z^{j_2}\partial_x^{k_2}\partial^2_y v\|_{L^2}\\&\lesssim_{s,\gamma} \|u\|_{X^s_{\gamma}} \|\partial_y v\|_{X^s_{\gamma}},
    \end{align*}
    and for $j_1+|k_1|\ge s-2$, 
    \begin{align*}
        \|e^{\gamma y^2}\langle \nabla_x\rangle^{-\frac{1}{2}}[Z^{j_1}\partial_x^{k_1}(\partial_y^{-1}\nabla_x \cdot u) Z^{j_2}\partial_x^{k_2}\partial^2_y v]\|_{L^2}&\lesssim \|Z^{j_1}\partial_x^{k_1}(\partial_y^{-1}\langle\nabla_x\rangle^{\frac{1}{2}}u)\|_{L^{\infty}_yL^2_x}\|e^{\gamma y^2}Z^{j_2}\partial_x^{k_2}\partial^2_y v\|_{L^2_yH^2_x}\\&\lesssim_{s,\gamma}\|\langle \nabla_x\rangle^{\frac{1}{2}}u\|_{H^{s-1}_{\gamma,\co}}\|\partial^2_y v\|_{H^{3}_{\gamma}}\lesssim_{s,\gamma}\|u\|_{X^{s}_{\gamma}}\|\partial_y v\|_{X^{s}_{\gamma}}.
    \end{align*}
    Hence, 
    \begin{align}
        \label{A12}
        \|\langle \nabla_x\rangle^{-\frac{1}{2}}[(\partial_y^{-1}\nabla_x \cdot u) \partial^2_y v]\|_{H^{s-1}_{\gamma,\co}}\lesssim_{s,\gamma}\|u\|_{X^s_{\gamma}}\|\partial_y v\|_{X^{s}_{\gamma}}.
    \end{align}
    Combining the estimates \eqref{A7}, \eqref{A11} and \eqref{A12}, the last product estimate of Lemma \ref{product_estimate1} holds. This completes the proof of Lemma \ref{product_estimate1}.
    \end{proof}
    Lemma \ref{product_estimate2} follows from a similar argument as presented above, since $U$ does not depend on $y$ and is therefore trivially bounded in normal direction. We omit the details of the proof.

    \subsection{Proof of nonlinear estimates in the analytic class}
    \label{proof_nonlinear_estimate}
    The subsection is devoted to the proof of Proposition \ref{Proposition_struc1}, which is mainly based on the product estimates obtained in Lemma \ref{product_estimate1}. 

    \begin{proof}[Proof of Proposition \ref{Proposition_struc1}] We only present the proof of the estimates \eqref{estimate_noise1}--\eqref{estimate_noise3} for the nonlinearity $\mathbb{F}(x,y,u)$ given by \eqref{struc0_1}--\eqref{struc0_3}, as the estimates for the generalized linear multiplicative term $\mathbb{G}|\nabla_x|^a u$ follow similarly.

    \noindent\underline{\textbf{$\bullet$ Establishing the estimate} \eqref{estimate_noise1}}. By the definition \eqref{struc0_1}, 
    \begin{equation}\label{struc1}
        \mathbb{F}(u)-\mathbb{F}(\tilde{u})=\sum_{n\in\mathbb{N}^{d-1}\backslash\{0\}}a_n(x,y)\sum_{l=1}^{d-1}\left(\prod_{i=1}^{l-1}u_i^{n_i}\right)(u_l^{n_l}-\tilde{u_l}^{n_l})\left(\prod_{i=l+1}^{d-1}\tilde{u_i}^{n_i}\right).
    \end{equation}
     Taking $\mathbb{X}^{s}_{\gamma,\sigma}$ norms on both sides of \eqref{struc1}, one may derive with the help of Lemma \ref{product_estimate1} and Lemma \ref{product_estimate2} that
    \begin{align}
        \label{struc2}
        \|\mathbb{F}(u)-\mathbb{F}(\tilde{u})\|_{\mathbb{X}^{s}_{\gamma,\sigma}}&\le \sum_{n\in\mathbb{N}^{d-1}\backslash\{0\}}\left(\|a_n^0\|_{\mathbb{X}^s_{\gamma_0,2\sigma_0 }}+\|\overline{a}_n\|_{\mathbb{H}^s_{x,2\sigma_0}}\right)\sum_{l=1}^{d-1}\left\|\left(\prod_{i=1}^{l-1}u_i^{n_i}\right)(u_l^{n_l}-\tilde{u_l}^{n_l})\left(\prod_{i=l+1}^{d-1}\tilde{u_i}^{n_i}\right)\right\|_{X^s_{\gamma,\sigma}}\notag\\&\le \left(\sum_{l=1}^{d-1}\sum_{n\in\mathbb{N}^{d-1}\backslash\{0\}}n_l\left(\|a_n^0\|_{\mathbb{X}^s_{\gamma_0,2\sigma_0}}+\|\overline{a}_n\|_{\mathbb{H}^s_{x,2\sigma_0}}\right) (C_s M)^{|n|-1}\right)\|u-\tilde{u}\|_{X^s_{\gamma,\sigma}},
    \end{align}
    where 
    \[M:=\max\{\|v\|_{X^s_{\gamma,\sigma}},\|\tilde{v}\|_{X^s_{\gamma,\sigma}},\|U\|_{H^s_{x,\sigma}}\}.\]
    Notice that 
    \begin{align}
        \label{struc3}\sum_{n\in\mathbb{N}^{d-1}\backslash\{0\}}n_l\left(\|a_n^0\|_{\mathbb{X}^s_{\gamma_0,2\sigma_0}}+\|\overline{a}_n\|_{\mathbb{H}^s_{x,2\sigma_0}}\right) z^{n-e_l}=\partial_{z_l}\left(\sum_{n\in\mathbb{N}^{d-1}\backslash\{0\}}\left(\|a_n^0\|_{\mathbb{X}^s_{\gamma_0,2\sigma_0}}+\|\overline{a}_n\|_{\mathbb{H}^s_{x,2\sigma_0}}\right) z^{n},\right)
    \end{align}
    where $e_l$ denotes the unit vector with $l$-th component being one and the series on the right hand-side of \eqref{struc3} converges absolutely, due to \eqref{struc0_3}. Therefore, 
    \[\mathcal{K}_s(\|v\|_{X^s_{\gamma,\sigma}},\|\tilde{v}\|_{X^s_{\gamma,\sigma}},\|U\|_{H^s_{x,\sigma}}):=\left(\sum_{l=1}^{d-1}\sum_{n\in\mathbb{N}^{d-1}\backslash\{0\}}n_l\left(\|a_n^0\|_{\mathbb{X}^s_{\gamma_0,2\sigma_0}}+\|\overline{a}_n\|_{\mathbb{H}^s_{x,2\sigma_0}}\right) (C_s M)^{|n|-1}\right)<\infty,\]
    which leads to the estimate \eqref{estimate_noise1}.

    \noindent\underline{\textbf{$\bullet$ Establishing the estimate} \eqref{estimate_noise2}}. In view of the definition \eqref{struc0_1}, one has
    \begin{align}
        \label{struc4}
        \mathbb{F}(u)-\overline{\mathbb{F}}(U):=\sum_{n\in\mathbb{N}^{d-1}\backslash\{0\}}a_n^0(x,y)u^n+\sum_{n\in\mathbb{N}^{d-1}\backslash\{0\}}\bar{a}_n(x)(u^n-U^n),
    \end{align}
    where the second term on the right hand-side of \eqref{struc4} can be handled similarly as presented in the derivation of \eqref{estimate_noise1}: 
    \begin{align}
        \label{struc5}
        \left\|\sum_{n\in\mathbb{N}^{d-1}\backslash\{0\}}\bar{a}_n(x)(u^n-U^n)\right\|_{\mathbb{X}^s_{\gamma,\sigma}}\le \kappa^{'}_s(\|v\|_{X^s_{\gamma,\sigma}},\|U\|_{H^s_{x,\sigma}})\|v\|_{X^s_{\gamma,\sigma}},
    \end{align}
    with $\kappa^{'}_s$ a positive function which is non-decreasing in its arguments. As for the first term on the right hand-side of \eqref{struc4}, one could derive by using Lemma \ref{product_estimate1} and Lemma \ref{product_estimate2} that
    \begin{align}
        \label{struc6}
        \left\|\sum_{n\in\mathbb{N}^{d-1}\backslash\{0\}}a_n^0(x,y)u^n\right\|_{\mathbb{X}^s_{\gamma,\sigma}}\le \sum_{n\in\mathbb{N}^{d-1}\backslash\{0\}}\left\|a_n^0(x,y)\right\|_{\mathbb{X}^s_{\gamma_0,2\sigma_0}}(C_s M)^{|n|},
    \end{align}
    where 
    \[M:=\|v\|_{X^s_{\gamma,\sigma}}+\|U\|_{H^s_{x,\sigma}}.\]
    Define 
    \[\kappa_s(\|v\|_{X^s_{\gamma,\sigma}},\|U\|_{H^s_{x,\sigma}}):=\kappa^{'}_s(\|v\|_{X^s_{\gamma,\sigma}},\|U\|_{H^s_{x,\sigma}})+\sum_{n\in\mathbb{N}^{d-1}\backslash\{0\}}\left\|a_n^0(x,y)\right\|_{\mathbb{X}^s_{\gamma_0,2\sigma_0}}(C_s M)^{|n|-1}<\infty,\]
    then by combining \eqref{struc5} and \eqref{struc6},
    \begin{align*}
        \|\mathbb{F}(u)-\overline{\mathbb{F}}(U)\|_{\mathbb{X}^s_{\gamma,\sigma}}\le \kappa_s(\|v\|_{X^s_{\gamma,\sigma}},\|U\|_{H^s_{x,\sigma}})(\|v\|_{X^s_{\gamma,\sigma}}+\|U\|_{H^s_{x,\sigma}})
    \end{align*}
    as desired.

    \noindent\underline{\textbf{$\bullet$ Establishing the estimate} \eqref{estimate_noise3}}. Direct calculation shows
    \begin{align}\label{struc7}
       \nabla_x(\mathbb{F}(u)-\overline{\mathbb{F}}(U))&=\sum_{n\in\mathbb{N}^{d-1}\backslash\{0\}}a_n^0(x,y)\nabla_xu^n+\sum_{n\in\mathbb{N}^{d-1}\backslash\{0\}}\bar{a}_n(x)\nabla_x(u^n-U^n)\notag\\&\quad+\sum_{n\in\mathbb{N}^{d-1}\backslash\{0\}}\nabla_xa_n^0(x,y)u^n+\sum_{n\in\mathbb{N}^{d-1}\backslash\{0\}}\nabla_x\bar{a}_n(x)(u^n-U^n).
    \end{align}
    Notice that for $\gamma\le\gamma_0$ and $\sigma\le \sigma_0$, 
    \[\|\nabla_x a_n^0(x,y)\|_{\mathbb{X}^s_{\gamma,\sigma}}\lesssim \|a_n^0(x,y)\|_{\mathbb{X}^s_{\gamma_0,2\sigma_0}},\qquad\|\nabla_x\bar{a}_n(x)\|_{\mathbb{H}^s_{x,\sigma}}\lesssim \|\bar{a}_n(x)\|_{\mathbb{H}^s_{x,2\sigma_0}},\]
    then by repeating the derivation of \eqref{estimate_noise2}, there exists a positive non-decreasing function $\kappa^{'}_s$ such that 
    \begin{align}
        \label{struc8}
        &\left\|\sum_{n\in\mathbb{N}^{d-1}\backslash\{0\}}\nabla_xa_n^0(x,y)u^n+\sum_{n\in\mathbb{N}^{d-1}\backslash\{0\}}\nabla_x\bar{a}_n(x)(u^n-U^n)\right\|_{\mathbb{X}^s_{\gamma,\sigma}}\notag\\&\qquad\qquad\qquad\qquad\qquad\qquad\qquad\qquad\qquad\qquad\le \kappa^{'}_s(\|v\|_{X^s_{\gamma,\sigma}},\|U\|_{H^s_{x,\sigma}})(\|v\|_{X^s_{\gamma,\sigma}}+\|U\|_{H^s_{x,\sigma}}).
    \end{align}
    On the other hand, notice that
    \[\sum_{n\in\mathbb{N}^{d-1}\backslash\{0\}}a_n^0(x,y)\nabla_xu^n+\sum_{n\in\mathbb{N}^{d-1}\backslash\{0\}}\bar{a}_n(x)\nabla_x(u^n-U^n)=\nabla_u\mathbb{F}(x,y,u)\nabla_x u-\nabla_U\overline{\mathbb{F}}(x,U)\nabla_x U,\]
    where 
    \[\partial_{u_l} \mathbb{F}(x,y,u)=\sum_{n\in\mathbb{N}^{d-1}\backslash\{0\}}n_la_n(x,y)u^{n-e_l}\]
    with the coefficients $n_la_n(x,y)$ satisfying
    \[\sum_{l=1}^{d-1}\sum_{n\in\mathbb{N}^{d-1}\backslash\{0\}}n_l(\|a^0_n\|_{\mathbb{X}^s_{\gamma_0,2\sigma_0}}+\|\bar{a}_n\|_{\mathbb{H}^s_{x,2\sigma_0}})\prod_{i=1}^{d-1}|u_i|^{n_i-e^i_l}<\infty.\]
    Then, by repeating the derivation of \eqref{estimate_noise2}, there exists a positive non-decreasing function $\kappa_s^{''}$ satisfying
    \begin{align*}
        \|\nabla_u\mathbb{F}(x,y,u)-\nabla_U\overline{F}(x,U)\|_{\mathbb{X}^s_{\gamma,\sigma}}+\|\nabla_U\overline{\mathbb{F}}(x,U)\|_{\mathbb{H}^s_{x,\sigma}}\le \kappa_s^{''}(\|v\|_{X^s_{\gamma,\sigma}},\|U\|_{H^s_{x,\sigma}}),
    \end{align*}
    which implies 
    \begin{align}
        \label{struc9}
        &\|\nabla_u\mathbb{F}(x,y,u)\nabla_xu-\nabla_U\overline{\mathbb{F}}(x,U)\nabla_xU\|_{\mathbb{X}^s_{\gamma,\sigma}}\notag\\&\qquad\qquad\lesssim_s \|\nabla_u\mathbb{F}(x,y,u)-\nabla_U\overline{\mathbb{F}}(x,U)\|_{\mathbb{X}^s_{\gamma,\sigma}}(\|\nabla_x v\|_{X^s_{\gamma,\sigma}}+\|\nabla_x U\|_{H^s_{x,\sigma}})+\|\nabla_U\overline{\mathbb{F}}(x,U)\|_{\mathbb{H}^s_{x,\sigma}}\|\nabla_x v\|_{X^s_{\gamma,\sigma}}\notag\\&\qquad\qquad\le \kappa_s^{''}(\|v\|_{X^s_{\gamma,\sigma}},\|U\|_{H^s_{x,\sigma}})(\|\nabla_x v\|_{X^s_{\gamma,\sigma}}+\|\nabla_x U\|_{H^s_{x,\sigma}}).
    \end{align}
    Combining the estimates \eqref{struc8} and \eqref{struc9},
    \[\|\nabla_x(\mathbb{F}(u)-\overline{\mathbb{F}}(U))\|_{\mathbb{X}^s_{\gamma,\sigma}}\le \kappa_s(\|v\|_{X^s_{\gamma,\sigma}},\|U\|_{H^s_{x,\sigma}})(\|\langle\nabla_x \rangle v\|_{X^s_{\gamma,\sigma}}+\|U\|_{H^{s+1}_{x,\sigma}})\]
    as desired.
    \end{proof}

    \section{Existence of analytic solutions of the stochastic Bernoulli's law} \label{bernoulli}
   The section is devoted to the following result concerning the existence of analytic solutions of the stochastic Bernoulli's law \eqref{Intro2}. 
   \begin{prop}
       Let $s\ge 4$ and $\sigma_0>0$. Suppose that $\mathbb{F}$ satisfies Assumption \ref{assumption} and $\nabla_x P$ is a progressively measurable process valued in $H^{s+1}_{x,\sigma_0}$. Then, for any initial data $U_0$ which is an $\mathscr{F}_0$-measurable random variable valued in $H^{s+1}_{x,\sigma_0}$, there exists an $\mathscr{F}_0$-measurable random parameter $\lambda>0$ and a unique local pathwise solution $(U,\tau)$ of the stochastic Bernoulli's law \eqref{Intro2} such that 
       \[U\in L^{\infty}\left(0,\tau;H^{s}_{x,\sigma(\cdot)}\right)\]
       almost surely, where $\sigma(t):=\sigma_0-\lambda t$.
   \end{prop}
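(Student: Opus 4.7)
The plan is to view the stochastic Bernoulli's law \eqref{Intro2} as a Burgers-type SPDE for $U$ with $\nabla_xP$ treated as a prescribed source, and then to adapt the strategy developed in Sections \ref{AP}--\ref{LWP} to this simpler, purely tangential setting where no normal variable, no far-field corrector and no loss-of-derivative issue appears. Concretely, I would apply the Fourier multiplier $e^{\sigma(t)|\nabla_x|}$ on both sides of \eqref{Intro2} to obtain, for $U_\sigma:=e^{\sigma(t)|\nabla_x|}U$, the equation
\begin{align*}
\mathrm{d}U_\sigma+\lambda|\nabla_x|U_\sigma\,\mathrm{d}t+(U\cdot\nabla_xU)_\sigma\,\mathrm{d}t+\nabla_xP_\sigma\,\mathrm{d}t=\overline{\mathbb{F}}(x,U)_\sigma\,\mathrm{d}W,
\end{align*}
so that the shrinking of the analytic radius contributes a dissipation $-\lambda|\nabla_x|U_\sigma$ that can be used to absorb the half-derivative lost from the Burgers nonlinearity.

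The main a-priori estimate would be obtained in $H^s_{x,\sigma}$ as follows. First, a straightforward analogue of Lemma \ref{product_estimate1}(3) on $\mathbb{R}^{d-1}$ gives $\|\langle\nabla_x\rangle^{-1/2}(U\cdot\nabla_xU)\|_{H^s_{x,\sigma}}\lesssim_s\|U\|_{H^s_{x,\sigma}}\|\langle\nabla_x\rangle^{1/2}U\|_{H^s_{x,\sigma}}$. Pairing the equation with $U_\sigma$ in $H^s_x$, applying the It\^o formula and Proposition \ref{Proposition_struc1} to bound $\|\overline{\mathbb{F}}(U)_\sigma\|_{\mathbb{H}^s_{x,\sigma}}$ by $\kappa_s(\|U\|_{H^s_{x,\sigma}})(\|\langle\nabla_x\rangle^aU\|_{H^s_{x,\sigma}}+1)$ with $a\le\tfrac12$, yields
\begin{align*}
\mathrm{d}\|U_\sigma\|_{H^s_x}^2+2\lambda\||\nabla_x|^{1/2}U_\sigma\|_{H^s_x}^2\,\mathrm{d}t\lesssim \Phi\bigl(\|U_\sigma\|_{H^s_x}\bigr)\bigl(\||\nabla_x|^{1/2}U_\sigma\|_{H^s_x}^2+\|\nabla_xP_\sigma\|_{H^s_x}^2+1\bigr)\mathrm{d}t+\text{martingale},
\end{align*}
for some nondecreasing $\Phi$. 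Exactly as in the Prandtl case, pathwise control of $\|U_\sigma\|_{H^s_x}$ is unavailable, so I would insert a cutoff $\chi^2(\|U_\sigma\|_{H^s_x})$ in front of the nonlinear and stochastic terms in order to work with a truncated problem, obtain moment bounds uniform in the truncation parameter, and then remove the truncation on a positive stopping time. Choosing $\lambda$ large enough (depending on the truncation height, on $\|U_0\|_{H^{s+1}_{x,\sigma_0}}$ and on the prescribed bound for $\nabla_xP$) so that the quadratic-in-$U$ coefficient times $\Phi$ is absorbed by $2\lambda$ closes the estimate before a stopping time of the form $\tau_{\rm out}\wedge(\sigma_0/2\lambda)$.

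For existence I would run the same three-level approximation hierarchy of Section \ref{schemes}: a Friedrichs-type tangential cutoff $\mathcal{R}_n$, a fixed-point iteration with auxiliary cutoffs $\chi(\mathcal{E}_{U^m})\chi(\mathcal{E}_{U^{m+1}})$ to handle the nonlinear multiplicative noise in expectation, and a third linear/additive iteration whose solvability is standard variational SPDE theory. Contraction in a norm of the form $\mathbb{E}\bigl(\sup_{t\le\tau}\|\cdot\|_{H^s_{x,\sigma}}^2+\int_0^\tau\||\nabla_x|^{1/2}\cdot\|_{H^s_{x,\sigma}}^2\mathrm{d}t\bigr)$ follows from the bilinearity of the Burgers nonlinearity and the Lipschitz bound \eqref{estimate_noise1}, both of which produce differences controlled by the same $\||\nabla_x|^{1/2}\cdot\|_{H^s_{x,\sigma}}$ quantity that is absorbed by the radius dissipation. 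The abstract Cauchy lemma (Lemma \ref{abstract_cauchy_lemma} in its one-variable version) then extracts an almost surely convergent subsequence, giving a local pathwise solution. Pathwise uniqueness is a byproduct of the contraction estimate, and the restriction on the deterministic size of $U_0$ and $\nabla_xP$ is removed by the $\mathscr{F}_0$-measurable partition-of-probability argument used at the end of Section \ref{proof_LWP}, which produces the $\mathscr{F}_0$-measurable random $\lambda$.

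The main obstacle I expect is the same as for Theorem \ref{MT1}: the absence of a pathwise bound on the energy forces the use of the cutoff $\chi$ together with moment estimates, and the choice of $\lambda$ must be made consistent with both the truncation height and the data. Unlike the Prandtl equation there is no $v\partial_y u$ term, no corrector $\psi$, and no boundary terms arising from integrating $\partial_y^2$ by parts, so once the cutoff-based a-priori estimate is in place the rest is essentially a simplified replay of the scheme already justified in detail in Sections \ref{AP} and \ref{LWP}.
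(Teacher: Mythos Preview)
Your proposal is correct and follows essentially the same approach as the paper: the paper also truncates with $\chi^2(\|U_\sigma\|_{H^s_x})$, applies the It\^o formula in $H^s_x$, uses the product bound $\|\langle\nabla_x\rangle^{-1/2}(U\nabla_x V)_\sigma\|_{H^s_x}\lesssim\|U_\sigma\|_{H^s_x}\|V_\sigma\|_{H^{s+1/2}_x}$ together with the noise estimate \eqref{AP20}, absorbs the half-derivative loss by choosing $\lambda$ large, and then defers the construction to the machinery of Sections \ref{AP}--\ref{LWP} and Lemma \ref{abstract_cauchy_lemma}. The only minor addition in the paper is that it also records the higher-order estimate in $H^{s+1}_x$ (the analogue of Proposition \ref{AP_proposition2}), which is needed to feed into Lemma \ref{abstract_cauchy_lemma}; you implicitly assume this when invoking the abstract Cauchy lemma, so you may want to make that step explicit.
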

   \begin{proof}
       
   As presented in Section \ref{AP} and \ref{LWP}, it suffices to carry out a-priori estimate on the following truncated stochastic Bernoulli's law:
    \begin{align}
        \label{bernoulli1}
        \begin{cases}
            \mathrm{d} U+\chi^2(\|U_{\sigma}\|_{H^s_x})(U\cdot\nabla_x U+\nabla_x P)\mathrm{d}t=\chi^2(\|U_{\sigma}\|_{H^s_x})\overline{\mathbb{F}}(U)\mathrm{d}W,\qquad (t,x)\in\mathbb{R}_+\times \mathbb{R}^{d-1},\\
            U(0)=U_0,
        \end{cases}
    \end{align}
    where $\chi$ is the cut-off function defined in \eqref{chi} and $\sigma(t):=\sigma_0-\lambda t$ with the parameter $\lambda$ to be determined. Moreover, assume 
    \begin{align}
        \label{bernoulli1_1}
        \|U_0\|_{H^{s+1}_{x,\sigma_0}}\le \mathcal{M},\qquad \|\nabla_x P|_{t=0}\|_{H^{s+1}_{x,\sigma_0}}\le \mathcal{N}.
    \end{align}
    The restriction \eqref{bernoulli1_1} could be similarly removed as shown in Section \ref{proof_LWP}. Define the following positive stopping time 
    \[\mathscr{T}_*:=\inf\left\{t\ge0\big|\|\nabla_x P\|_{H^{s+1}_{x,\sigma_0}}\ge 2\mathcal{N}\right\}\wedge\frac{\sigma_0}{2\lambda}\wedge1.\]
   In order to bound the nonlinear convection term, by applying the classical Moser type estimate, cf. Proposition A.29 in \cite{BV2022}, there holds
    \begin{align}
        \label{bernoulli2}
        \|\langle\nabla_x\rangle^{-\frac{1}{2}}(U\nabla_x V)_{\sigma}\|_{H^s_{x}}\lesssim_s \|U_{\sigma}\|_{H^s_{x}}\|V_{\sigma}\|_{H^{s+\frac{1}{2}}_{x}}.
    \end{align}
    As for the force terms, by simply repeating the proof of Proposition \ref{Proposition_struc1}, it follows that 
    \begin{align}
        \label{bernoulli3}
        \|\overline{\mathbb{F}}_{\sigma}(U)\|_{\mathbb{H}^s_{x}}\le\mathcal{K}_s(\|U_{\sigma}\|_{H^s_x})\|U_{\sigma}\|_{H^{s+\frac{1}{2}}_{x}},
    \end{align}
    where $\mathcal{K}_s$ is a positive increasing function. By applying the It\^o formula, 
    \begin{align}
        \label{bernoulli4}
        &\mathrm{d}\|U_{\sigma}\|^2_{H^s_x}+2\lambda\||\nabla_x|^{\frac{1}{2}}U_{\sigma}\|^2_{H^s_x}\mathrm{d}t+2\chi^2(\|U_{\sigma}\|_{H^s_x})\langle U_{\sigma},(U\cdot\nabla_x U)_{\sigma}+\nabla_x P_{\sigma}\rangle_{H^s_x}\mathrm{d}t\notag\\&\qquad\qquad\qquad\qquad\qquad\qquad=\chi^4(\|U_{\sigma}\|_{H^s_x})\|\overline{\mathbb{F}}_{\sigma}{}(U)\|^2_{\mathbb{H}^s_{x}}\mathrm{d} t+2\chi^2(\|U_{\sigma}\|_{H^s_x})\langle U_{\sigma},\overline{\mathbb{F}}_{\sigma}(U)\mathrm{d}W\rangle_{H^s_{x}}.
    \end{align}
    To bound the stochastic integral on the right hand-side of \eqref{bernoulli4}, one derives with the help of Burkholder-Davis-Gundy inequality \eqref{pre6} that
    \begin{align}
        \label{bernoulli6}
        \mathbb{E}\sup_{t\le \tau}\int_0^t \chi^2(\|U_{\sigma}\|_{H^s_x})\langle U_{\sigma},\overline{\mathbb{F}}_{\sigma}(U)\mathrm{d}W\rangle_{H^s_{x}}&\lesssim \mathbb{E}\left(\int_0^{\tau} \chi^4(\|U_{\sigma}\|_{H^s_x})\|U_{\sigma}\|^2_{H^s_x}\|\overline{\mathbb{F}}_{\sigma}(U)\|^2_{\mathbb{H}^s_{x}}\mathrm{d}t\right)^{\frac{1}{2}}\notag\\&\le \frac{1}{2}\mathbb{E}\sup_{t\le \tau} \| U_{\sigma}\|^2_{H^s_x}+C\mathbb{E}\int_0^{\tau}\chi^4(\|U_{\sigma}\|_{H^s_x})\|\overline{\mathbb{F}}_{\sigma}(U)\|^2_{\mathbb{H}^s_{x}}\mathrm{d}t.
    \end{align}
    Combining \eqref{bernoulli2}--\eqref{bernoulli6}, it follows that 
    \begin{align}
    \label{bernoulli7}
    \mathbb{E}\sup_{t\le \tau} \| U_{\sigma}\|^2_{H^s_x}+4\lambda \mathbb{E}\int_0^{\tau}\||\nabla_x|^{\frac{1}{2}}U_{\sigma}\|^2_{H^s_x}\mathrm{d}t\lesssim_{s,\mathcal{M},\mathcal{N}}\mathbb{E}\int_0^{\tau}\left( \||\nabla_x|^{\frac{1}{2}}U_{\sigma}\|^2_{H^{s}_x}+\|U_{\sigma}\|^2_{H^s_x}+1\right)\mathrm{d}t+\mathbb{E}\|U_0\|^2_{H^s_{x,\sigma_0}}
    \end{align}
    for any stopping time $\tau\le \mathcal{T}_*$. Therefore, by taking $\lambda$ sufficiently large, the following a-priori estimate holds:
    \begin{align}
        \label{bernoulli8}\mathbb{E}\sup_{t\le \tau} \| U_{\sigma}\|^2_{H^s_x}+\mathbb{E}\int_0^{\tau}\||\nabla_x|^{\frac{1}{2}}U_{\sigma}\|^2_{H^s_x}\mathrm{d}t\lesssim_{s,\mathcal{M},\mathcal{N}}1+\mathbb{E}\|U_0\|^2_{H^s_{x,\sigma_0}}.
    \end{align}
    One may similarly obtain the following higher order a-priori estimate for $U_{\sigma}$: 
    \begin{align}
        \label{bernoulli9}
        \mathbb{E}\sup_{t\le \tau} \| U_{\sigma}\|^2_{H^{s+1}_x}+\mathbb{E}\int_0^{\tau}\||\nabla_x|^{\frac{1}{2}}U_{\sigma}\|^2_{H^{s+1}_x}\mathrm{d}t\lesssim_{s,\mathcal{M},\mathcal{N}}1+\mathbb{E}\|U_0\|^2_{H^{s+1}_{x,\sigma_0}}
    \end{align}
    for any stopping time $\tau\le \mathcal{T}_*$. Combining the estimates \eqref{bernoulli8} and \eqref{bernoulli9} with Lemma \ref{abstract_cauchy_lemma}, the proof follows similarly as presented in Section 6 and the details are omitted. 
    \end{proof}
    As a Corollary, the following result ensures the existence of the outflow required in Theorem \ref{MT1}.
    \begin{coro}\label{bernoullicoro}
        Let $s\ge 4$ and $\sigma_0>0$. Suppose that $\mathbb{F}$ satisfies Assumption \ref{assumption}. Then, there exist $H^{s+2}_{x,\sigma_0}$-valued progressive measurable processes $(U,\nabla_x P)$ which are solutions of the stochastic Bernoulli's law \eqref{Intro2}.
    \end{coro}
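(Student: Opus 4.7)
}
The stochastic Bernoulli's law \eqref{Intro2} is a single evolution equation for the pair $(U,\nabla_x P)$ and is therefore underdetermined; the plan is to exploit this freedom by prescribing $\nabla_x P$ a priori and then invoking the preceding proposition to manufacture $U$. This is precisely the strategy advertised in the third bullet following \eqref{Intro2}, and it sidesteps the classical deterministic device of defining $\nabla_x P := -\partial_t U - U\cdot \nabla_x U$, which is unavailable here because the It\^o correction renders $U$ only H\"older continuous in time.

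Concretely, I would take $\nabla_x P\equiv 0$, which is deterministic, trivially progressively measurable, and lies in $H^{s'}_{x,\tilde\sigma}$ for every $s'\in\mathbb{N}$ and every $\tilde\sigma>0$. I would then choose any $\mathscr{F}_0$-measurable initial datum $U_0$ valued in $H^{s+3}_{x,\tilde\sigma_0}$ for some $\tilde\sigma_0>\sigma_0$---for example, a deterministic Schwartz function. Applying the preceding proposition with the regularity index $s$ replaced by $s+2$ and with initial analytic radius $\tilde\sigma_0$ in place of $\sigma_0$ produces a unique local pathwise solution $(U,\tau)$ of \eqref{Intro2} with
\[
U\in L^\infty\!\left(0,\tau;\, H^{s+2}_{x,\tilde\sigma(\cdot)}\right)
\]
almost surely, where $\tilde\sigma(t)=\tilde\sigma_0-\tilde\lambda t$ for some $\mathscr{F}_0$-measurable $\tilde\lambda>0$.

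To deliver the conclusion as stated, I would finally restrict to the stopping time $\tau' := \tau \wedge \bigl((\tilde\sigma_0-\sigma_0)/\tilde\lambda\bigr)$, which remains strictly positive almost surely; on $[0,\tau']$ one has $\tilde\sigma(t)\ge \sigma_0$, and the embedding $H^{s+2}_{x,\tilde\sigma(t)}\hookrightarrow H^{s+2}_{x,\sigma_0}$ then yields $U\in L^\infty(0,\tau';H^{s+2}_{x,\sigma_0})$, as required, while $\nabla_x P\equiv 0$ is trivially in the same space. Since all the substantive work---truncation, a-priori estimates, the abstract Cauchy-type compactness argument, and pathwise uniqueness---has already been carried out in the preceding proposition, there is no further analysis to perform. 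The only obstacle, if one can call it that, is the bookkeeping adjustment of analytic radii described above, which is routine; in particular, I do not anticipate any serious difficulty beyond this small matching of parameters.
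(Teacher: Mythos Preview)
Your approach is correct and matches the paper's own strategy: the paper supplies no explicit proof of the corollary, treating it as an immediate consequence of the preceding proposition via exactly the device you describe (and which is advertised in the third bullet of the introduction)---prescribe $\nabla_x P$ a priori and solve the remaining Burgers-type SPDE for $U$.

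One minor bookkeeping refinement is worth recording. Formally invoking the proposition with $s\mapsto s+2$ and initial radius $\tilde\sigma_0$ requires Assumption~\ref{assumption} at those shifted parameters, i.e.\ the tangential noise coefficients $\bar a_n,\bar{\mathbb G}$ in $\mathbb{H}^{s+2}_{x,2\tilde\sigma_0}$; since the standing hypothesis only places them in $\mathbb{H}^s_{x,2\sigma_0}$, this is not automatic for arbitrary $\tilde\sigma_0>\sigma_0$. The fix is to choose $\tilde\sigma_0\in(\sigma_0,2\sigma_0)$ and observe that the proof of the proposition only uses the coefficients at radii $\sigma\le\tilde\sigma_0<2\sigma_0$, where the elementary embedding $\mathbb{H}^s_{x,2\sigma_0}\hookrightarrow\mathbb{H}^{s+2}_{x,\sigma}$ (trading a positive gap in analytic radius for two extra derivatives) supplies what is needed. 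Alternatively, taking $U_0\equiv0$ gives the trivial solution $U\equiv0$ outright, since $\overline{\mathbb F}(x,0)=0$ under Assumption~\ref{assumption}, and sidesteps the issue entirely.
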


    \section{Proof of Lemma \ref{LWP_approximation}}\label{proof_LWP_approximation}
    The goal of this section is to prove Lemma \ref{LWP_approximation} by verifying an abstract criterion given in \cite{LR2015}. 
    Let
    \[V\subset  H \subset V^*\]
    be a Gelfand triple with $H$ a separable Hilbert space and $V$ a reflexive Banach space. Consider the stochastic equation 
    \begin{align}\label{B1}
        \mathrm{d}X_t=A(t,X_t)\mathrm{d}t+B(t,X_t)dW,
    \end{align}
    where $W$ is a cylindrical Wiener process in an auxiliary seprable Hilbert space $\mathcal{H}$ and 
    \begin{align*}
        &A: [0,T]\times V\times \Omega\to V^*\\
        &B: [0,T]\times V\times \Omega\to L_2(\mathcal{H};H),
    \end{align*}
    are progressively measurable. Here, $L_2(\mathcal{H};H)$ denotes the space of the Hilbert-Schmidt operators from $\mathcal{H}$ to $H$. An $H$-valued continuous adapted process $X$ is said to be a variational solution of \eqref{B1}, if there exists a modification 
    \[\hat{X}\in L^{\alpha}([0,T]\times\Omega,\mathrm{d}t\otimes \mathbb{P}, V )\bigcap L^{2}([0,T]\times\Omega,\mathrm{d}t\otimes \mathbb{P}, H )\]
    with $\alpha$ as in (H3) below and if 
    \begin{align*}
        X_t-X_0=\int_0^tA(s,\hat{X}_s)\mathrm{d}s+\int_0^tB(s,\hat{X}_s)dW,\qquad \forall t\in [0,T]
    \end{align*}
    almost surely. We now cite a result from \cite[Theorem 4.2.4]{LR2015}.
    
    \begin{theorem}\label{theoremb1}
    Let $A$ and $B$ above satisfy the following assumptions (H1)--(H4):
    \begin{enumerate}
        \item[(H1)] For all $u_1,u_2,u_3\in V$ and $(t,\omega)\in [0,T]\times\Omega$, the map
        $\lambda\mapsto \langle A(t,u_1+\lambda u_2,\omega), u_3\rangle$
        is continuous.
        \item[(H2)] There exists $c\in\mathbb{R}$ such that
        \[2\langle A(\cdot, u)-A(\cdot, v),u-v\rangle+\|B(\cdot,u)-B(\cdot,v)\|^2_{L_2(\mathcal{H};H)}\le c\|u-v\|^2_{H}\]
        on $[0,T]\times\Omega$ for all $u,v\in V$.
        \item[(H3)] There exist $\alpha>1$, $c_1\in\mathbb{R}$, $c_2>0$ and an adapted process $g\in L^1([0,T]\times\Omega;\mathrm{d}t\otimes \mathbb{P})$ such that 
        \[2\langle A(t, u),u\rangle+\|B(t,u)\|^2_{L_2(\mathcal{H};H)}\le c_1\|u\|^2_H-c_2\|u\|^{\alpha}_V+g(t)\]
        on $[0,T]\times\Omega$ for all $u\in V$.
        \item[(H4)] There exist $c_3\ge 0$ and an adapted process $h\in L^{\frac{\alpha}{\alpha-1}}([0,T]\times\Omega;\mathrm{d}t\otimes \mathbb{P})$ such that
        \[\|A(t,u)\|_{V^*}\le h(t)+c_3\|u\|_V^{\alpha-1}\]
        on $[0,T]\times\Omega$ for all $u\in V$, where $\alpha$ is as in (H3).
    \end{enumerate}
    Then, for any $X_0\in L^2(\Omega,\mathscr{F}_0;H)$, there exists a unique variational solution of \eqref{B1}.
    \end{theorem}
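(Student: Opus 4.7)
The plan is to prove Theorem \ref{theoremb1} via the classical Galerkin-plus-monotonicity method of Pardoux and Krylov-Rozovskii. Let $\{e_i\}_{i\ge 1}\subset V$ be a total linearly independent sequence, set $V_n:=\mathrm{span}\{e_1,\dots,e_n\}$, and let $P_n:V^*\to V_n$ be the coordinate projection dual to the inclusion $V_n\hookrightarrow V$. First I would solve the finite-dimensional It\^o SDE
\begin{align*}
dX_t^n = P_n A(t,X_t^n)\,dt + P_n B(t,X_t^n)\,dW_t,\qquad X_0^n=P_nX_0,
\end{align*}
for a $V_n$-valued adapted continuous process $X^n$. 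Restricted to $V_n$, where all norms are equivalent, the coefficients are hemicontinuous by (H1) and locally monotone by (H2), so the standard theory for SDEs with locally monotone coefficients yields a unique local strong solution; non-explosion and globality come from (H3) applied to $\|X_t^n\|_H^2$ via It\^o's formula together with a standard localization argument.

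The second step is to derive uniform a-priori estimates. Applying It\^o's formula to $\|X_t^n\|_H^2$ together with (H3), the Burkholder-Davis-Gundy inequality, and Gr\"onwall's lemma produces
\begin{align*}
\mathbb{E}\sup_{t\le T}\|X_t^n\|_H^2 + \mathbb{E}\int_0^T \|X_s^n\|_V^\alpha\,ds \le C\Big(\mathbb{E}\|X_0\|_H^2 + \mathbb{E}\int_0^T g(s)\,ds\Big),
\end{align*}
uniformly in $n$. Combining with (H4) controls $A(\cdot,X^n)$ in $L^{\alpha/(\alpha-1)}([0,T]\times\Omega;V^*)$, and (H3) bounds $B(\cdot,X^n)$ in $L^2([0,T]\times\Omega;L_2(\mathcal{H};H))$. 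By reflexivity and the Banach-Alaoglu theorem, I would extract a subsequence such that
\begin{align*}
X^n\rightharpoonup X,\qquad A(\cdot,X^n)\rightharpoonup Y,\qquad B(\cdot,X^n)\rightharpoonup Z
\end{align*}
in the respective spaces, with $X_T^n\rightharpoonup X_T$ weakly in $L^2(\Omega;H)$. Passing to the weak limit in the Galerkin identity tested against $\varphi e_i$ with $\varphi\in L^\infty([0,T]\times\Omega)$ adapted shows that $X_t = X_0 + \int_0^t Y_s\,ds + \int_0^t Z_s\,dW_s$, which provides an $H$-valued continuous modification of $X$.

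The hardest part is the identification $Y=A(\cdot,X)$ and $Z=B(\cdot,X)$, where the full strength of (H1) and (H2) is needed. The plan is the Minty-Browder trick: apply It\^o's formula to $e^{-ct}\|X_t^n\|_H^2$ and to $e^{-ct}\|X_t\|_H^2$ with $c$ the monotonicity constant from (H2), take $\liminf_{n\to\infty}$, exploit weak lower semicontinuity of the norm in $H$, and use (H2) tested against an arbitrary adapted $\phi \in L^\alpha([0,T]\times\Omega;V) \cap L^2([0,T]\times\Omega;H)$ to deduce
\begin{align*}
\mathbb{E}\int_0^T e^{-cs}\Big[2\langle Y_s-A(s,\phi_s),X_s-\phi_s\rangle + \|Z_s-B(s,\phi_s)\|_{L_2(\mathcal{H};H)}^2\Big]ds \ge 0.
\end{align*}
Substituting $\phi_s = X_s - \lambda\psi_s v$ with $v\in V$, $\psi$ bounded adapted, and $\lambda>0$, dividing by $\lambda$, and letting $\lambda\to 0^+$ while invoking the hemicontinuity (H1) to pass inside the duality pairing forces $Y=A(\cdot,X)$ by the arbitrariness of $(v,\psi)$; the same inequality then yields $Z=B(\cdot,X)$ by choosing $\phi=X$ directly. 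Finally, uniqueness follows from (H2) alone: for two variational solutions $X,\tilde X$ with the same initial datum, It\^o applied to $e^{-ct}\|X_t-\tilde X_t\|_H^2$ produces a nonpositive drift and a local martingale vanishing at $t=0$, so $X\equiv\tilde X$ almost surely, which also upgrades the weak convergence to convergence of the entire Galerkin sequence.
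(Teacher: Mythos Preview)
Your proof proposal follows the standard Galerkin--monotonicity scheme of Pardoux and Krylov--Rozovskii, and the outline is correct. However, the paper does not prove Theorem~\ref{theoremb1} at all: it simply cites the result verbatim from \cite[Theorem~4.2.4]{LR2015} (Liu--R\"ockner) and uses it as a black box in the proof of Lemma~\ref{LWP_approximation}. So there is nothing to compare; you have supplied a proof where the paper supplies only a reference, and your sketch is essentially the argument one finds in that reference.
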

    Let us turn to the proof of Lemma \ref{LWP_approximation}.
    \begin{proof}[Proof of Lemma \ref{LWP_approximation}] Set $\alpha=2$ and
    \[H:=L^2(\mathbb{R}_+^d),\qquad V:=\{u:\mathbb{R}_+^d\to\mathbb{R}| u,\partial_yu\in L^2(\mathbb{R}_+^d), u|_{y=0}=0\}.\]
    
    \noindent\underline{\textbf{Step I. Verification of progressive measurability}}. It suffices to check the measurability of 
    \[A_1(t):=\chi(\mathcal{E}_{w^{(m)}}(t))\chi(\mathcal{E}_{w}(t))\left[\mathcal{B}_1(w,\mathcal{R}w)+\mathcal{B}_1(w,u^s)+\mathcal{B}_1(u^s,\mathcal{R}w)+\mathcal{B}_2(\mathcal{R}w+u^s,w+u^s)\right],\]
    and 
    \[A_2(t):=\mathcal{B}_1(U-u^s,u^s)-(1-\psi)\nabla_x P,\]
    and 
    \[B(t):=\chi(\mathcal{E}_{w^{(m)}}(t))\chi(\mathcal{E}_{w}(t))[\mathbb{F}(\mathcal{R}w+u^s)-\psi \overline{\mathbb{F}}(U)],\]
    for any given $H_y^1H_x^2$-valued progressively measurable processes $w^{(m)},w$ which satisfy \eqref{LWP5}. Proceeding similarly as in Lemmas \ref{product_estimate1}--\ref{product_estimate2}, the following estimates hold for the convection term $\mathcal{B}_1$ given by \eqref{bilinear_term}: 
    \begin{align}\label{B2}
        \|\mathcal{B}_1(w,\mathcal{R}w)\|_{L^2}\lesssim \|w\|_{H^1_yH_x^2}^2,\qquad \|\mathcal{B}_1(w,u^s)\|_{L^2}+\|\mathcal{B}_1(u^s,\mathcal{R}w)\|_{L^2}\lesssim \|u^s\|_{L^{\infty}_yH^1_x}\|w\|_{H^1_yH_x^2},
    \end{align}
    which shows that $\mathcal{B}_1$ maps continuously from $H^1_yH_x^2\times H^1_yH_x^2$ to $L^2$ and from $L^{\infty}_yH^1_x\times H^1_yH_x^2$ to $L^2$ as well. It is clear to have progressive measurability of $u^s$ in $L^{\infty}_yH_x^1$. Then, 
    \[A_{1,1}(t):=\mathcal{B}_1(w,\mathcal{R}w)+\mathcal{B}_1(w,u^s)+\mathcal{B}_1(u^s,\mathcal{R}w)\]
    is a progressively measurable progress valued in $L^2$. As for the term $\mathcal{B}_2$, notice that by the definition \eqref{bilinear_term},
    \begin{align}\label{B3}
    \|\mathcal{B}_2(\mathcal{R}w+u^s,w+u^s)\|_{L^2}&=\left\|\left(\frac{1}{y}\int_0^y\nabla_x\cdot(\mathcal{R}w+u^s)\mathrm{d}\tilde{y}\right)Z(w+u^s)\right\|_{L^2}\notag\\&\lesssim \|Z(w+u^s)\|_{L^2_yH_x^2}\|\mathcal{R}w+u^s\|_{L^{\infty}_yH^2_x}.
    \end{align}
    To address the progressive measurability of $Zw$, one needs to take a test function $\varphi\in C^{\infty}_0(\mathbb{R}_+^d)$. Notice that for any $|k|\le 2$, 
    \[\langle \partial_x^k Zw, \varphi\rangle_{L^2}=(-1)^{|k|}\langle w, \partial_y(y\partial_x^k\varphi)\rangle_{L^2}\]
    is progressive measurable. Then, for any $\varphi\in L^2(\mathbb{R}^d_+)$, one might approximate $\varphi$ by $\varphi_n\in C^{\infty}_0(\mathbb{R}^{d}_+)$ and obtain the progressive measurability of $\langle \partial_x^k Zw, \varphi\rangle_{L^2}$ by using 
    \[\|\partial_x Zw\|_{L^2}\le \mathcal{E}_w(T_*)<\infty,\qquad \forall t\in[0,T_*]\]
    almost surely. An application of the Pettis theorem shows that $\partial_x^kZw$ is progressively measurable in $L^2$ and thus that $Zw$ is progressively measurable in $L^2_yH^2_x$. Similarly, one has the desired measurability for $\mathcal{R}w$ by using the property of $\mathcal{R}$ joint with the Sobolev embedding theorem. Combining the above measurability of $Z(w+u^s)$ and $\mathcal{R}w+u^s$ with the estimate \eqref{B3}, \[A_{1,2}(t):=\mathcal{B}_2(\mathcal{R}w+u^s,w+u^s)\]
    is progressively measurable in $L^2$. Moreover, by utilizing the test functions and the regularity condition \eqref{LWP5}, one may obtain the desired measurability of the processes $\mathcal{E}_w(t)$ and $\mathcal{E}_{w^{(m)}}(t)$. This justifies the progressive measurability of $A_1(t)$. The verification for $A_2(t)$ and $B(t)$ follows similarly and is thus omitted.

    \noindent\underline{\textbf{Step I\!I. Justification of the conditions}}. The conditions \textit{(H1)} and \textit{(H2)} follow from the linearity of
    \[A(t,w^{(m+1)}):=\partial_y^2 w^{(m+1)}+A_1(t)+A_2(t)\]
    in $w^{(m+1)}$ and the fact that $B(t)$ does not depend on $w^{(m+1)}$. For \textit{(H3)}, by utilizing the isometry
    \[L_2(\mathcal{H};L^2(\mathbb{R}_+^d))\cong L^2(\mathbb{R}_+^d;\mathcal{H}),\]
    cf. \eqref{pre3}, there holds 
    \begin{align}\label{B4}
        2\langle A(t, w^{(m+1)}),w^{(m+1)}&\rangle+\|B(t)\|^2_{L_2(\mathcal{H};H)}\notag\\&\lesssim -\|\partial_y w^{(m+1)}\|_{L^2}^2+\|A_1(t)\|^2_{L^2}+\|A_2(t)\|^2_{L^2}+\|w^{(m+1)}\|^2_{L^2} +\|B(t)\|^2_{L^2(\mathbb{R}_+^d;\mathcal{H})},
    \end{align}
    where 
    \begin{align}\label{B5}
        \|A_1(t)\|_{L^2}&\lesssim_{\gamma_0} \chi(\mathcal{E}_{w}(t))\left(\|w\|_{H_y^1H_x^2}^2+\|Zw\|^2_{L_y^2H_x^2}+\|U\|^2_{H^2_x}\right)\notag\\&\lesssim_{\gamma_0}  \chi(\mathcal{E}_{w}(t))\left(\|w_{\sigma}\|^2_{X^s_{\gamma}}+\|U_{\sigma}\|^2_{H^s_x}\right)\le C_{_{\gamma_0} ,\mathcal{M},\mathcal{N}},
    \end{align}
    and
    \begin{align}\label{B6}
        \|A_2(t)\|_{L^2}\lesssim_{\gamma_0} \|U\|^2_{H^2_x}+\|\nabla_x P\|_{L^2_x}\le C_{\gamma_0,\mathcal{N}},
    \end{align}
    and 
    \begin{align}\label{B7}
        \|B(t)\|^2_{L^2(\mathbb{R}_+^d;\mathcal{H})}\le \|B(t)\|^2_{\mathbb{X}^s_{\gamma}}\lesssim_{s,\gamma_0,\mathcal{M},\mathcal{N}} \chi^2(\mathcal{E}_{w}(t))\left(\|w_{\sigma}\|^2_{X^s_{\gamma}}+\|U_{\sigma}\|^2_{H^s_x}\right)\le C_{s,\gamma_0,\mathcal{M},\mathcal{N}}.
    \end{align}
    Combining the estimates \eqref{B4}--\eqref{B7}, the condition \textit{(H3)} is justified. As for \textit{(H4)}, by utilizing
    \begin{align*}
       \|\partial_y^2 w^{(m+1)}\|_{V^*}\le \|\partial_yw^{(m+1)}\|_{L^2}
    \end{align*}
    and the estimates \eqref{B5} and \eqref{B6}, one may obtain its validity. Therefore, by applying Theorem \ref{theoremb1}, there exists a unique variational solution of the equation \eqref{LWP4}.

    \noindent\underline{\textbf{Step I\!I\!I. Improving the regularity of} $w^{(m+1)}$}. Repeating the derivation of the a-priori estimate \eqref{AP53}, it follows that 
    \[\mathbb{E}\mathcal{E}^2_{w^{(m+1)}}(T_*)\lesssim_{s,\gamma_0,\mathcal{M},\mathcal{N}}\mathbb{E}\|w_{0}\|^2_{X^s_{\gamma_0,\sigma_0}}+1,\]
    which implies the desired regularity of $w^{(m+1)}$ as well as its progressive measurability in $H^1_yH^2_x$. This completes the proof. 
    \end{proof}
    \vspace{.2in}
    \hspace{-.28in}
    {\bf Acknowledgments:}
    This research was partially supported by the National Natural Science Foundation of China under Grant Nos. 12171317, 12331008, 12250710674 and 12161141004, and Shanghai Municipal Education Commission under Grant No. 2021-01-07-00-02-E00087.
    
    \bibliographystyle{IEEEtran}
    \bibliography{reference}
    
    \end{document}